\theoremstyle{definition}
\newtheorem{definition}{Definition}[section]
\newtheorem{example}[definition]{Example}
\newtheorem{question}[definition]{Question}
\newtheorem{notation}[definition]{Notation}
\newtheorem{claim}[definition]{Claim}
\newtheorem{subclaim}[definition]{Subclaim}
\theoremstyle{plain}
\newtheorem{theorem}[definition]{Theorem}
\newtheorem{proposition}[definition]{Proposition}
\newtheorem{lemma}[definition]{Lemma}
\newtheorem{corollary}[definition]{Corollary}
\newtheorem{remark}[definition]{Remark}
\newcommand{\al}{\alpha}
\newcommand{\be}{\beta}
\newcommand{\ga}{\gamma}
\newcommand{\de}{\delta}
\newcommand{\ka}{\kappa}
\newcommand{\lam}{\lambda}
\newcommand{\si}{\sigma}
\newcommand{\Si}{\Sigma}
\newcommand{\vp}{\varphi}
\newcommand{\om}{\omega}
\newcommand{\seq}{\subseteq}
\newcommand{\es}{\emptyset}
\newcommand{\bsl}{\backslash}
\newcommand{\we}{\wedge}
\newcommand{\noi}{\noindent}
\newcommand{\cal}{\mathcal}
\newcommand{\bb}{\mathbb}
\newcommand{\C}{\mathbb{C}}
\newcommand{\ps}{\mathbb{P}}
\newcommand{\po}{\mathbb{P}}
\newcommand{\R}{\mathbb{R}}
\newcommand{\fwc}{\mathcal{F}_{\operatorname{WC}}}
\renewcommand{\S}{\mathbb{S}}
\newcommand{\Q}{\mathbb{Q}}
\newcommand{\RR}[2]{\ps*\dot{\C}_{#1}*\dot{\S}_{#2}}
\newcommand{\col}{\operatorname{Col}}
\newcommand{\tr}{\operatorname{tr}}
\newcommand{\Sk}{\operatorname{Sk}}
\newcommand{\cof}{\operatorname{cof}}
\newcommand{\dom}{\operatorname{dom}}
\newcommand{\ran}{\operatorname{ran}}
\newcommand{\Add}{\operatorname{Add}}
\newcommand{\crit}{\operatorname{crit}}
\newcommand{\cf}{\operatorname{cf}}
\newcommand{\name}{\dot}
\newcommand{\power}{\mathcal{P}}
\newcommand{\uhr}{\upharpoonright}
\newcommand{\res}{\upharpoonright}
\newcommand{\lra}{\longrightarrow}
\newcommand{\elem}{\prec}
\newcommand{\lb}{\left\{}
\newcommand{\rb}{\right\}}
\newcommand{\la}{\langle}
\newcommand{\ra}{\rangle}
\newcommand{\ch}{\mathsf{CH}}
\newcommand{\zfc}{\mathsf{ZFC}}
\DeclareMathOperator{\rlex}{rLex}
\DeclareMathOperator{\cp}{cp}
\DeclareMathOperator{\Ult}{Ult}
\begin{document}

\title{Club Stationary Reflection and the Special Aronszajn Tree Property}\thanks{The first author was partially supported by the Israel 
Science Foundation (Grant 1832/19).}
\author{Omer Ben-Neria and Thomas Gilton}

\address{Hebrew University of Jerusalem
The Edmond J. Safra Campus - Givat Ram, Jerusalem, Israel, 9190401}

\email{omer.bn@mail.huji.ac.il}
\address{University of Pittsburgh
Department of Mathematics.
The Dietrich School of 
Arts and Sciences,
301 Thackeray Hall,
Pittsburgh, PA 15260, United States}
\email{tdg25@pitt.edu}

\date{\today}

\subjclass[2010]{Primary 03E05, 03E35}

\keywords{forcing, Aronszajn Trees, stationary reflection, compactness, specialization}

\begin{abstract}
We prove that it is consistent that \emph{Club Stationary Reflection} and the \emph{Special Aronszajn Tree Property} simultaneously hold on $\om_2$, thereby contributing to the study of the tension between compactness and incompactness in set theory. The poset which produces the final model follows the collapse of an {ineffable} cardinal first with an iteration of club adding (with anticipation) and second with an iteration specializing Aronszajn trees.

In the first part of the paper, we prove a general theorem about specializing Aronszajn trees on $\om_2$ after forcing with what we call {$\cal{F}$\emph{-Strongly Proper} posets, where $\cal{F}$ is either the weakly compact filter or the filter dual to the ineffability ideal.} This type of poset, of which the Levy collapse is a degenerate example, uses systems of exact residue functions to create many strongly generic conditions. We prove a new result about stationary set preservation by quotients of this kind of poset; as a corollary, we show that the original Laver-Shelah model, which starts from a weakly compact cardinal, satisfies a strong stationary reflection principle, though it fails to satisfy the full Club Stationary Reflection. In the second part, we show that the composition of collapsing and club adding (with anticipation) is an $\cal{F}$-Strongly Proper poset. After proving a new result about Aronszajn tree preservation, we show how to obtain the final model.
\end{abstract}

\maketitle

\section{Introduction}

This work is a contribution to the study of the tension between compactness and incompactness principles in set theory. 
We focus on the second uncountable cardinal, $\omega_2$, and consider the strong compactness principle of Club Stationary Reflection 
and the strong incompactness principle known as the Special Aronszajn Tree Property (these are defined below).

The two properties have been shown to be consistent separately by Magidor \cite{Magidor} and Laver and Shelah \cite{LS}, respectively. Since the properties represent strong forms of opposing phenomena (compactness and incompactness) it is natural to suspect that they are jointly inconsistent.
The main result of this paper shows, on the contrary, that the conjunction of the two principles is consistent. More precisely, we prove: 

\begin{theorem}\label{thm:main}
It is consistent relative to the existence of an {ineffable} cardinal that Club Stationary Reflection and the Special Aronszajn Tree Property simultaneously hold at $\omega_2$. 
\end{theorem}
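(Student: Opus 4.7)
The plan is to construct the desired model as a three-step iteration $\ps * \dot{\C} * \dot{\S}$ over a ground model $V$ containing an ineffable cardinal $\kappa$. The first factor $\ps = \col(\omega_1,{<}\kappa)$ is the Levy collapse, making $\kappa$ into $\omega_2$ in the extension. The second factor $\dot{\C}$ is a $\kappa$-length iteration with appropriate bookkeeping that shoots a club through each non-reflecting stationary subset of $\omega_2 \cap \cof(\omega)$, carried out ``with anticipation'' so that working conditions can be extended to cohere with clubs through preselected elementary submodels. The third factor $\dot{\S}$ is the specialization iteration supplied by the first part of the paper, applied in $V^{\ps * \dot{\C}}$ to specialize every $\omega_2$-Aronszajn tree of that intermediate model.

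The first major task is to verify that $\ps * \dot{\C}$ is $\cal{F}$-Strongly Proper in the sense of the first part of the paper, where $\cal{F}$ is the filter dual to the ineffability ideal on $\kappa$. The ineffability of $\kappa$ yields a system of exact residue functions on an $\cal{F}$-large collection of models, and the anticipation feature of $\dot{\C}$ is precisely what allows those residue functions to be assembled coherently with the clubs being added along the iteration. Once this is established, the general specialization theorem of the first part produces an iteration $\dot{\S}$ in $V^{\ps * \dot{\C}}$ after which every $\omega_2$-Aronszajn tree is special; combined with the Aronszajn tree preservation result referenced in the abstract, this yields the Special Aronszajn Tree Property in $V^{\ps * \dot{\C} * \dot{\S}}$.

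The second major task is to establish Club Stationary Reflection in the final model. The intermediate model $V^{\ps * \dot{\C}}$ satisfies Club Stationary Reflection by design of the bookkeeping for $\dot{\C}$: every non-reflecting stationary subset of $\omega_2 \cap \cof(\omega)$ that appears during the iteration is eventually caught and threaded by a club. To preserve the principle through $\dot{\S}$, I would invoke the new stationary set preservation theorem for quotients of $\cal{F}$-Strongly Proper posets stated in the abstract: because the quotient of $\ps * \dot{\C} * \dot{\S}$ by an appropriate regular subforcing still supports enough strongly generic conditions, no fresh non-reflecting stationary subset of $\omega_2 \cap \cof(\omega)$ is added, and every stationary set existing in the final model already reflects on a club.

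The main obstacle is the interaction between the finite-condition specialization iteration $\dot{\S}$ and the clubs added by $\dot{\C}$. Since $\dot{\S}$ is far from countably closed, naive master-condition arguments do not preserve stationary sets in the quotients, so one must rely precisely on the $\cal{F}$-Strongly Proper structure of $\ps * \dot{\C}$ to supply the strongly generic conditions needed for the preservation arguments. I would expect the bulk of the technical difficulty to lie in two places: first, proving that the anticipation version of the club-adding iteration genuinely yields the $\cal{F}$-Strongly Proper structure (verifying the existence and coherence of the exact residue systems in the presence of the evolving clubs), and second, pushing through the quotient-level stationary preservation and Aronszajn tree preservation results that jointly carry both principles across the final specialization.
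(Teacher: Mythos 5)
Your overall skeleton (collapse an ineffable $\ka$, follow with club adding ``with anticipation,'' then a specializing iteration, and run the verification through $\cal{F}$-strong properness plus quotient preservation theorems) matches the paper, but the way you propose to obtain Club Stationary Reflection contains a genuine gap. You plan to arrange $\mathsf{CSR}(\om_2)$ already in the intermediate model $V^{\ps\ast\dot{\C}}$ and then carry it through $\dot{\S}$ by claiming that the quotient preservation theorem shows ``no fresh non-reflecting stationary subset of $\om_2\cap\cof(\om)$ is added.'' That claim is not what the preservation theorem gives, and it is not how the construction can work: $\dot{\S}$ adds new subsets of $\om_2$, hence new stationary subsets of $\om_2\cap\cof(\om)$, and nothing forces these to reflect on a club unless the club adding has already anticipated them. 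This is exactly the role of anticipation in the paper, which you describe incorrectly (it is not about cohering conditions with preselected submodels): at stage $\tau$ the bookkeeping selects a $(\ps\ast\dot{\C}_\tau\ast\dot{\S}_\tau)$-name $\dot{S}_\tau$ for a stationary set of the \emph{final} model, and $\dot{\C}(\tau)=\mathsf{CU}(\dot{S}_\tau,\dot{\S}_\tau)$ adds a closed set which is outright forced by the future specializing iteration to lie in $\tr(\dot{S}_\tau)\cup\cof(\om)$; the $\ka^+$-c.c.\ of the whole iteration then guarantees every stationary set of the final model is caught. Relatedly, one does not ``shoot a club through each non-reflecting stationary subset of $\om_2\cap\cof(\om)$'' (a club cannot lie inside a set of countable-cofinality ordinals, and non-reflecting sets play no special role); one adds, for \emph{every} stationary set named along the way, a club of its reflection points. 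The paper's stationary-preservation-by-quotients result is used in a different place than you put it: inside the proof that the club-adding iteration is $\cal{F}$-completely proper (hence $\ka$-distributive), to verify that the ``flat'' upper bound is a legitimate condition because $\dot{S}_\si\cap\ka$ remains stationary in the relevant quotient extension.

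There are secondary misconceptions worth fixing. The specializing iteration here is the Laver--Shelah countable-condition iteration, which is $\om_1$-closed; it is not a finite-condition forcing, and the real difficulty is that the quotients of $\ps\ast\dot{\C}\ast\dot{\S}$ by its restrictions to suitable models need not be $\si$-closed, which is why the ad hoc quotient preservation argument is needed. The club-adding and specializing iterations must have length $\ka^+$ (not $\ka$) and must be defined simultaneously with interleaved bookkeeping, so that trees and stationary sets appearing cofinally along the final iteration --- not merely those of $V^{\ps\ast\dot{\C}}$ --- are specialized, respectively reflected; and one still needs the separate Aronszajn-preservation argument (for the tails of the club adding, via the doubling tail products) to know that the trees handed to the specializing stages remain Aronszajn, so that specializing them does not collapse $\ka$. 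As written, your proposal would not assemble into a proof without restructuring the CSR argument around anticipation rather than preservation.
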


Our work also shows that a weaker version of stationary reflection holds in the original Laver-Shelah model ({which uses a weakly compact)}:

\begin{theorem}\label{theorem:OriginalLS}
In the original Laver-Shelah model the following stationary reflection principle holds:
for every sequence $\la S_\al \mid \al < \om_2\ra$ of stationary subsets of $\om_2 \cap \cof(\om)$ there is $\be < \om_2$ so that 
$S_\al \cap \be$ is stationary in $\be$, for every $\al < \be$. However, Club Stationary Reflection at $\omega_2$ fails.
\end{theorem}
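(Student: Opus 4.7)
The plan is to prove the two clauses separately, both relying on the $\cal{F}$-strongly proper framework developed in the first part of the paper, with $\cal{F} = \fwc$ the weakly compact filter on the ground-model weakly compact $\ka$.

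For the simultaneous reflection, the Laver-Shelah forcing factors as $\colk * \dot{\S}$, where $\colk$ is a degenerate $\cal{F}$-strongly proper poset and $\dot{\S}$ is the iteration specializing Aronszajn trees. Given a name $\la \dot{S}_\al : \al < \ka\ra$ for a sequence of stationary subsets of $\ka \cap \cof(\om)$, I would use the $\Pi^1_1$-indescribability of $\ka$ to locate $\be < \ka$ in $\fwc$ together with an elementary submodel $M \elem H_\theta$ satisfying $M \cap \ka = \be$ and capturing suitable names for each $\dot{S}_\al$ with $\al < \be$. Building an $M$-strongly generic condition $p^*$ and invoking the quotient preservation theorem from the first part of the paper would then show that below $p^*$, each $\dot{S}_\al \cap \be$ remains stationary in $\be$, so $\be$ serves as the sought simultaneous reflection point.

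For the failure of CSR, I would exhibit an explicit stationary $S$ whose non-reflection points in $\cof(\om_1) \cap \om_2$ form a stationary set. Since $\ka$ is only assumed weakly compact in $V$, the collection of $V$-inaccessibles $\ga < \ka$ which fail to be weakly compact in $V$ is stationary, and each such $\ga$ admits a $V$-stationary non-reflecting subset of $\ga \cap \cof^V(\om)$. A $\Pi^1_1$-argument in $V$ yields a single stationary set $S \seq \ka \cap \cof^V(\om)$ together with a stationary set $T \seq \ka$ of $V$-inaccessibles at which $S$ is non-reflecting. Since $\colk$ is $\si$-closed, $S$ and $T$ survive to $V[G]$ as stationary subsets of $\om_2 \cap \cof(\om)$ and $\om_2 \cap \cof(\om_1)$ respectively, and the non-reflection persists: $\si$-closed forcing adds no fresh clubs through ordinals of cofinality $\om_1$.

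The remaining step is to show that the specializing iteration $\dot{\S}$ preserves both the stationarity of $S$ and $T$ and the non-reflection of $S$ at points of $T$. Stationarity preservation is provided by the first part of the paper. The non-reflection preservation, which is the main obstacle, requires ruling out that $\dot{\S}$ adds a fresh club through some $\ga \in T$ witnessing that $S \cap \ga$ has become stationary in $\ga$. This should follow from the quotient preservation analysis applied from the opposite direction, since the $\cal{F}$-strong properness, together with the rigid interaction between the iteration and the $V$-weakly-compact structure of $\ka$, should precisely preclude the creation of such reflection witnesses.
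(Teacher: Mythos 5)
Your first clause follows the paper's own route: reduce (via the chain condition of the Laver--Shelah iteration) to a name for the sequence of stationary sets over some initial segment $\ps\ast\dot{\S}_\tau$, $\tau<\ka^+$, reflect via weak compactness to a $\be$ at which the restricted names are forced stationary by the restricted poset, and invoke Proposition \ref{prop:statsetpreservation} (with $\ps^*=\ps$, $\cal{F}=\cal{F}_{WC}$). Two small remarks: the reduction to an initial segment $\dot{\S}_\tau$ is actually needed, since the preservation machinery of Section \ref{section:stationarypreservation} is proved for $\rho<\ka^+$; and your phrase ``below $p^*$'' costs nothing only because in the degenerate case $\ps^*=\ps$ the conditions $p^*(M_\be)$ are trivial (Example \ref{example:LevyCollapseFKSP}), so the conclusion holds outright in the extension rather than merely below some condition. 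With those points made explicit, this half is essentially the paper's proof.

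The second clause is where the genuine gaps lie. First, your construction of the witnessing pair $(S,T)$ rests on the claim that every $V$-inaccessible $\ga<\ka$ which is not weakly compact carries a non-reflecting stationary subset of $\ga\cap\cof(\om)$; this is not a theorem of $\zfc$ (by Mekler--Shelah it is consistent that a non-weakly-compact inaccessible satisfies full stationary reflection), and the subsequent ``$\Pi^1_1$-argument'' assembling a single $S$ with a stationary set $T$ of non-reflection points is not an argument. None of this is needed: the paper simply takes, in the ground model, stationary $S\seq\ka\cap\cof(\om)$ and $T\seq\ka\cap\cof(\om_1)$ with $S\cap\be$ nonstationary for every $\be\in T$, which is a $\zfc$ fact (Proposition 1.1 of \cite{JechShelah}, obtained by splitting $\ka\cap\cof(\om)$ into $\om_1$ many stationary pieces and a pigeonhole argument), not a consequence of any failure of weak compactness below $\ka$. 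Second, your preservation discussion is misdirected. Stationarity of $T$ (a set of uncountable-cofinality points) is not preserved merely because the forcing is $\si$-closed --- $\si$-closed forcing can destroy stationary subsets of $\ka\cap\cof(\om_1)$ --- the correct reason is that the entire Laver--Shelah poset is $\ka$-c.c.\ (Theorem \ref{thmSkappacc}), which preserves all stationary subsets of $\ka$. Your auxiliary claim that $\si$-closed forcing adds no fresh clubs through cofinality-$\om_1$ ordinals is false (the Levy collapse itself adds such a club through every collapsed $V$-regular cardinal). Most importantly, the ``main obstacle'' you identify --- ruling out that the specializing iteration creates new reflection points --- is a non-issue with the direction of absoluteness reversed: if $S\cap\ga$ is nonstationary in $V$, the ground-model club disjoint from it survives to any outer model, so $S\cap\ga$ can never become stationary; no quotient analysis ``from the opposite direction'' is required or available. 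Once $\om_1$-preservation, $\ka$-c.c., and this upward absoluteness are noted, the failure of $\mathsf{CSR}(\om_2)$ is immediate, which is exactly how the paper concludes.
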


We proceed to define the relevant terms and contextualize our result. If $\nu$ is a regular cardinal, we use $\cof(\nu)$ denote the class of ordinals with cofinality $\nu$. We recall that if $\cf(\al)>\om$, then $S\seq\al$ is \emph{stationary} if $S\cap C\neq\es$ for each club $C\seq\al$.  We say that $S$ \emph{reflects} if there is some $\be<\al$ with $\cf(\be)>\om$ so that $S\cap\be$ is stationary in $\be$. If $\ka$ is regular, we say that \emph{stationary reflection} holds at $\ka^{++}$ if every stationary $S\seq\ka^{++}\cap\cof(\leq\ka)$ reflects. Baumgartner originally showed (\cite{Baumgartner}) that stationary reflection at $\om_2$ is consistent from a weakly compact cardinal. Harrington and Shelah (\cite{HS}) later improved this, showing that the optimal assumption of a Mahlo cardinal suffices. One obtains stronger principles by requiring that multiple stationary sets reflect simultaneously. Recall that a collection $\{ S_i \mid i < \tau\}$ of $\tau < \al$ stationary subsets of $\al$ is said to reflect \emph{simultaneously} if there is some $\be<\al$ with $\cf(\be)>\om$ so that $S_i\cap\be$ is stationary in $\be$ for every $i < \tau$. 
Magidor (\cite{Magidor}) has shown that the consistency strength of ``any two stationary subsets of $\om_2\cap\cof(\om)$ simultaneous reflect" implies the consistency of a weakly compact cardinal. One may also consider stronger diagonal versions of the above, defined in the natural way.

We are interested in the following very strong form of stationary reflection which implies all of the above: 

\begin{definition} Suppose that $\ka$ is regular. We say that \textbf{Club Stationary Reflection} holds at $\ka^{++}$ if for any stationary $S\seq\ka^{++}\cap\cof(\leq\ka)$, there exists a club $C\seq\ka^{++}$ so that for all $\be\in C\cap\cof(\ka^+)$, $S$ reflects at $\be$. We write $\mathsf{CSR}(\ka^{++})$.
\end{definition}

We will concern ourselves with the case $\ka=\om$, {i.e., with stationary subsets of $\om_2\cap\cof(\om)$.} Most relevant for us, Magidor (\cite{Magidor}) showed that $\mathsf{CSR}(\om_2)$ is consistent from a weakly compact cardinal; by the above remarks, this is the optimal hypotheses.

Extensions of $\mathsf{CSR}$ to other cardinals have been shown to have limitations. 
For example, Jech and Shelah \cite{JechShelah} proved that for every $n < \omega$, if every stationary subset of $ \omega_{n+3} \cap \cof(\omega_{n+1})$ reflects then $\mathsf{CSR}(\om_{n+2})$ fails.
{However, Jech and Shelah \cite{JechShelah}, and later Cummings and Wylie \cite{CummingsWylie}, proved the consistency of certain best possible variations of club stationary reflection below $\aleph_\om$.}

 Limitations on stationary reflection emerge from incompactness principles. One of the most prominent of these is Jensen's $\square_\kappa$. In {\cite{Jensen}}, Jensen showed that $\square_{\ka}$ holds in $L$ for all $\ka>\om$ and that $\Box_\ka$ implies the existence of many nonreflecting stationary subsets of $\ka^+$.

Further studies showed that variations of $\square_\ka$ place limitations on the cofinality of reflection points, as well as the amount of simultaneous reflection.  For instance, in \cite{SchimmerlingSquares}, Schimmerling introduced the hierarchy of square principles, 
$\square_{\kappa,\lambda}$, $1 \leq \lambda \leq \ka^+$. As $\lam$ increases, this hierarchy is strictly decreasing in strength; see Jensen \cite{Jensen:BelowZeroPistol} for $\ka$ regular and \cite{CFM} for $\ka$ singular.
For a regular cardinal $\ka$ and $\lambda\leq\kappa$, Schimmerling and independently Foreman and Magidor have observed that 
if $\ka^{<\lambda}= \ka$ and $\square_{\kappa,<\lambda}$ holds then every stationary subset of $\ka^+$ has a stationary subset which does not reflect at any point of cofinality $\geq \lambda$; see \cite{CummingsSchimmerling}.
In particular, $\kappa^{<\ka} =\ka$ and $\square_{\ka,<\ka}$ imply that every stationary subset of $\ka^+$ has a stationary subset which does not reflect at any point in $\ka^+ \cap \cof(\ka)$. In \cite{CFM}, Cummings, Foreman, and Magidor extended these results and developed the theory for $\ka$ singular. 

Other notable weakenings of $\square_\ka$ were introduced and developed by Todor{\v c}evi{\'c} (\cite{TodorcevicSquares}). These principles, denoted $\square(\ka^+)$ and $ \square(\ka^+,\lambda)$, place refined limitations on the extent of stationary reflection. See \cite{HayutFontanella}, \cite{HayutLambie}, and \cite{RinotBSL}.

The weakest nontrivial form of square studied by Jensen is the so-called Weak Square, denoted $\square_{\ka}^*$, Remarkably, $\square_\ka^*$ is equivalent to a key incompactness phenomenon, the existence of a {special $\ka^+$-Aronszajn tree.} Let us recall the relevant definitions. A \emph{tree} is a partially ordered set $(T,\leq_T)$ so that for each $x\in T$, the set of $\leq_T$-predecessors of $x$ is well-ordered; we refer to the {\emph{height}} of $x$ in $T$ as the ordertype of this set. If $\al$ is an ordinal, we use Lev$_\al(T)$ to denote all $x\in T$ of height $\al$. The \emph{height} of $T$ is the least ordinal $\al$ so that $T$ has no elements of height $\al$. A \emph{branch} through $T$ is a linearly ordered subset of $T$, and a \emph{cofinal branch} is a branch which intersects every level below the height of $T$.

Let $\ka$ be regular. A \emph{$\ka$-tree} is a tree $T$ of height $\ka$ so that each level has size $<\ka$; we will always assume that for each such tree, each node in the tree has incompatible extensions to all higher levels. $\ka$ is said to have the \emph{tree property} if every $\ka$-tree has a cofinal branch. K$\ddot{\text{o}}$nig showed (\cite{Konig}) that $\om$ has the tree property, while Aronszjan has shown that the tree property fails at $\om_1$ (the result was communicated in \cite{Kurepa}). The extent of the tree property on cardinals greater than $\om_1$, a famous question of Magidor's, is independent of $\zfc$. A watershed in our understanding is due to Mitchell and Silver (\cite{MitchellUgh}) who showed that the tree property at $\om_2$ is consistent from a weakly compact cardinal.

A tree which witnesses the failure of the tree property is said to be \emph{Aronszajn} (i.e., a $\ka$-tree which has no cofinal branches); the existence of such a tree is an instance of incompactness.  A particularly strong witness that a tree is Aronszajn is given by a \emph{specializing function}: in the case that $\ka=\lam^+$, a specializing function is an $f:T\lra\lam$ so that if $x<_Ty$, then $f(x)\neq f(y)$. (For an exploration of these concepts at an arbitrarily regular cardinal, see \cite{KruegerWeakSquare}.) Having a specializing function is a particularly strong witness to being Aronszajn since the function witnesses that $T$ remains Aronszajn in {any extension of that model in which $\ka$ is still a cardinal.} $T$ is said to be a \emph{special Aronszajn tree} if there is a specializing function for $T$. The property of interest to us is the following:

\begin{definition} Let $\ka$ be regular. We say that $\ka^+$ has the \textbf{Special Aronszajn Tree Property} if there are Aronszajn trees on $\ka^+$, and if every Aronszajn tree on $\ka^+$ is special. We denote this property by $\mathsf{SATP}(\ka^+)$.
\end{definition}

By a result of Specker (\cite{Specker}), if $\ka^{<\ka}$ holds, then there is a special Aronszajn tree on $\ka^+$; in particular, if the $\ch$ holds, then there is a special Aronszajn tree on $\om_2$. Jensen (\cite{Jensen}) later showed that the principle $\Box^*_\ka$ holds if and only if there is a special Aronszajn tree on $\ka^+$; since $\ka^{<\ka}$ implies $\Box^*_\ka$, this strengthens Specker's result. 

With regards to constructing specializing functions by forcing, Baumgartner, Malitz, and Reinhardt showed (\cite{BMR}) that $\mathsf{MA}+\neg\ch$ implies $\mathsf{SATP}(\om_1)$. Later, Laver and Shelah showed (\cite{LS}) that $\mathsf{SATP}(\om_2)$ is consistent from a weakly compact cardinal. Generalizing this further, Golshani and Hayut have recently shown (\cite{GH}), using posets which specialize with anticipation, that it is consistent that, simultaneously, for every regular cardinal $\ka$, $\mathsf{SATP}(\ka^+)$ holds. Krueger has generalized the result of Laver-Shelah (and also Abraham-Shelah, \cite{AbrahamShelah}) in a different direction (\cite{Krueger}), showing that it is consistent with the $\ch$ that any two countably closed Aronszjan trees on $\om_2$ are club isomorphic. And finally, Asper\'{o} and Golshani (\cite{AG}) have announced a positive solution to the question of whether $\mathsf{SATP}(\om_2)$ is consistent with the  $\mathsf{GCH}$.
This work continues the study of the tension between different manifestations of compactness and incompactness phenomena in set theory, which together with the study of tension with other fundamental principles such as approximation principles (e.g., \cite{BNdiamond}, \cite{GoldbergDiamond}) and cardinal arithmetic (e.g., \cite{ForemanTodor}, \cite{ShelahReflcSCH}) is central to our understanding of their extent and limitations.
\vspace{4mm}

We proceed to describe our result in general terms and highlight the challenges that appear in the process. 
Let $\ka$ be a cardinal {which is either ineffable or weakly compact} in a ground model $V$ of $\mathsf{GCH}$; {\textbf{we will specify later (Definition \ref{def:fksp}) exactly when $\ka$ is ineffable or weakly compact.}} We obtain the model which witnesses Theorem \ref{thm:main} by first defining, in the extension by $\ps = \col(\omega_1,<\ka)$, a $\ka^+$-length iteration $\C_{\ka^+} = \la \C_\tau, \C(\tau) \mid \tau < \ka^+\ra$ of adding clubs which will eventually witnesses $\mathsf{CSR}(\om_2)$.

After forcing with $\C_{\ka^+}$, we then force with a $\ka^+$-iteration $\S_{\ka^+} = \la \S_\tau,\S(\tau) \mid \tau < \ka^+\ra$ specializing the desired Aronszajn trees $\name{T}_\tau$, i.e., $\S(\tau) = \S(\name{T_\tau})$ (see Section \ref{sec:prelim} for precise definitions of the posets).

To make this strategy work, we need, among other things, that  all stationary subsets of $\om_2\cap\cof(\om)$ which appear in the final generic extension by  $\ps * \dot{\C}_{\ka^+} *\dot{\S}_{\ka^+}$ reflect as in the definition of $\mathsf{CSR}(\om_2)$. Consequently, the club adding posets must anticipate names for stationary sets added by the later specializing iteration. In order to carry this through, we define the names $\dot\C_\tau$ and $\dot\S_\tau$, for $\tau < \ka^+$, simultaneously. More precisely, for each $\tau < \ka^+$, given that the $\ps$-name $\dot\C_\tau$ and the $(\ps\ast\dot\C_\tau)$-name $\dot\S_\tau$ have been defined, we use a bookkeeping function to pick the $(\ps*\dot\C_{\tau}*\dot\S_{\tau})$-name $\name{S_{\tau}}$ of a stationary subset of $\kappa \cap \cof(\omega)$, and we set $\dot\C(\tau)$ to be the $(\ps\ast\dot\C_\tau)$-name for the poset to add, with $\dot\S_\tau$-anticipation, the desired club. Then we select the $(\ps*\dot\C_{\tau+1}*\dot\S_\tau)$-name
$\name{T}_{\tau}$ for an Aronszajn tree on $\kappa$.

As expected, the tension between compactness and incompactness gives rise to tension between the different parts of the forcing construction.  We list three notable manifestations:

    \textbf{(1) Working with Intermediate Generic Extensions.} A central property of the Laver-Shelah forcing (\cite{LS}) is the existence of intermediate forcing extensions in which regular cardinals $\al < \ka$ become $\om_2$ and the relevant portion $\name{T}_\tau \cap (\alpha \times \omega_1)$ of the trees are Aronszajn trees on $\al$. {Accompanying this is machinery for projecting conditions} of $\ps * \dot\S_\tau$ to those intermediate extensions. In \cite{LS} the existence of such intermediate extensions is secured by the weak compactness of $\ka$, and the fact that $\ps * \dot\S_\tau$ is $\ka$-c.c. However, in our case, the presence of the poset $\dot\C_\tau$ prevents the initial segment $\ps *\dot\C_\tau$ from being $\ka$-c.c. To overcome this difficulty,  we use the fact that the full collapse poset $\ps$ absorbs many restricted subforcings of $\ps * \dot\C_\tau$, which allows us to place upper bounds on various generic filters of the restricted poset. We then couple this in Section \ref{section:clubscompletelyproper} with a generalization of a result of Abraham's (\cite{Abraham}) that (stated in current language) if $\dot{\Q}$ is an $\Add(\om,\om_1)$-name for an $\om_1$-closed poset, then $\Add(\om,\om_1)\ast\dot{\Q}$ is strongly proper. This secures the existence of sufficiently many strongly generic conditions (and in turn, the existence of intermediate extensions).

    \textbf{(2) Preservation of Stationary Sets by Quotients.} The ability to add a closed unbounded set through the reflection points $\al < \ka$ of a stationary set $\name{S_\tau} \subseteq \ka \cap \cof(\omega)$ hinges upon the fact that many such points exist. {The ineffability (in fact, just weak compactness)} of $\ka$ guarantees that for many $\al < \ka$,  $\name{S_\tau} \cap \alpha$ is a stationary subset of $\al$ in the restricted generic extension where $\al = \om_2$. The forcing construction of \cite{Magidor} uses the fact that the related quotient of $\ps * \dot\C_\tau$ by its initial segment is $\sigma$-closed, and an argument of Baumgartner's ({\cite{Baumgartner}}) shows that $\sigma$-closed posets preserve the stationarity of stationary sets of countable cofinality ordinals. By contrast, for us the stationary sets $\name{S}_\tau$ further rely on the specializing poset $\dot\S_\tau$, and although the poset $\ps * \dot\C_\tau*\dot\S_\tau$ is, $\sigma$-closed, it does not in general admit  $\sigma$-closed quotients by its natural restrictions to heights $\al < \ka$. Nevertheless, in Section \ref{section:stationarypreservation} we analyze the Laver-Shelah iteration $\dot\S_\tau$ to prove that the relevant quotients preserve the stationary of $\name{S_\tau} \cap \al$ for many suitable $\al < \ka$. 
    
    \textbf{(3) Preservation of Aronszajn Trees.}
    The organization of the posets $\dot\C_\tau$ and $\dot\S_{\tau}$, described above, guarantees that for each $\tau < \ka^+$, $\name{T}_\tau$ is 
    a $(\ps * \dot\C_{\tau+1}*\dot\S_\tau)$-name of an Aronszajn tree on $\ka$, which is specialized by $\ps * \dot\C_{\tau+1} * \dot\S_{\tau+1}$. 
  
    However, in the final forcing construction, $\S_{\tau+1}$ follows the extended iteration $\ps * \dot\C_{\ka^+}$, and on its face, $\ps * \dot\C_{\ka^+} * \dot\S_{\tau}$ might introduce a cofinal branch to $T_\tau$, causing the specializing poset $\S(\tau)$ to collapse $\ka$.
    To guarantee that this cannot occur, an Aronszajn preservation theorem is required for the quotient of $\ps * \dot\C_{\ka^+} * \dot\S_\tau $ by 
    $\ps * \dot\C_{\tau+1} * \dot\S_\tau$. The fact that no new reals are added during the iteration, and that $\dot\C_{\ka^+}$ is not $\ka$-closed, prevents us from using known preservation arguments (for instance those of \cite{Unger}). Therefore, in Section \ref{section:clubscompletelyproper} we develop an alternative preservation argument which fits the properties of the poset $\ps * \dot\C_{\ka^+}$, and we apply them in Section \ref{section:final} to show that the tree $T_\tau$ remains Aronszajn.

    \textbf{Structure of this work:} 
    In the rest of this section, we review relevant preliminaries regarding forcing {as well as ineffable and weakly compact cardinals. }The first part of the work consists of Sections \ref{sec:FWCstronglyProper} through \ref{section:stationarypreservation}.
    In Section \ref{sec:FWCstronglyProper} we develop the notion and fundamental properties of posets which are strongly proper with respect to {the filter $\cal{F}$ on $\ka$, which is either the weakly compact filter on $\ka$ or the filter dual to the ineffability ideal on $\ka$ (depending on whether $\ka$ is weakly compact or ineffable, respectively).} We will later verify that initial segments of the form $\ps * \dot\C_\tau$, for $\tau < \ka^+$, are members this class. Section \ref{Section:specialize} studies an iteration of specializing posets $\S_\tau$, following an $\cal{F}$-strongly proper poset $\ps^*$. We prove that the main results of the Laver-Shelah analysis apply in this context as well. {In the case when $\ps^*$ is just the Levy collapse to make $\ka$ become $\om_2$, we only need a weakly compact (and $\cal{F}$ is the weakly compact filter). This is just the Laver-Shelah argument. However, when $\ps^*$ becomes a more complicated poset, we needed to use the stronger assumption that $\ka$ is ineffable (and $\cal{F}$ is the filter dual to the ineffability ideal). Nevertheless, we only need the ineffability for the case when $\ps^*$ is not just the collapse, and in this case, only for the proof of Proposition \ref{prop:InductiveIandII} and the corollaries of that proposition.} Section \ref{section:stationarypreservation} is devoted to showing that suitable quotients of specializing iterations of the form $\ps^*  * \dot\S_\tau$, where $\ps^*$ is {$\cal{F}$-strongly proper (and in either case for $\cal{F}$)} preserve stationary subsets of countable cofinality ordinals.

    In Part 2 of the paper, we construct specific posets playing the role of $\ps^*$ above, and we prove our theorem.
    In Section \ref{sec:FWCcompletelyproper}, we introduce the complementary notion of posets which are completely proper with respect to  $\cal{F}$, and later we apply this analysis to $\dot\C_\tau$, $\tau < \ka^+$. We show that the composition of the Levy collapse and a poset which is completely proper with respect to $\cal{F}$ is strongly proper with respect to $\cal{F}$. Section \ref{section:clubscompletelyproper} develops the main properties of the club adding iteration $\C_\tau$. 
    And finally, we combine the results of the previous sections in Section \ref{section:final} to prove Theorem \ref{thm:main}.

\subsection{Forcing}\label{sec:prelim}

In this subsection, we review our conventions about forcing and provide explicit definitions of posets which we will use throughout the paper.

To begin, in order to anticipate working with iterations later, we will work with \textbf{pre-orderings} (i.e., relations which are transitive and reflexive) rather than partial orders. Moreover, we will use the \textbf{Jerusalem convention for forcing}. Thus we view a forcing poset as a triple $(\Q,\leq_\Q,0_\Q)$, where $\leq_\Q$ is a pre-ordering and where $0_\Q$ is a smallest element; for conditions $p,q\in\Q$, we will write $p\geq_\Q q$ to indicate that $p$ is an extension of $q$. When context is clear, we will drop explicit mention of $\Q$ in $0_\Q$ and $\leq_\Q$. Given that we are only working with pre-orderings rather than partial orders, we will often have conditions $p,q\in\Q$ so that $p\leq_\Q q$ and $q\leq_\Q p$ but $q$ and $p$ are not literally equal as sets. In this case, we will write $p=^*_\Q q$,
or simply $p=^*q$ if $\Q$ is clear from context.

If $\Q$ is a poset, we say that $\Q$ is \textbf{$\om_1$-closed with sups} if for any increasing sequence $\la q_n:n\in\om\ra$ of conditions in $\Q$, there exists a $\leq_\Q$-least upper bound $q$ of the sequence. Any such $q$ is referred to as \textbf{a sup} of the sequence. Note that this does \textbf{not} say that any two compatible conditions in $\Q$ have a sup. Moreover, it also does not require that a sup of an increasing $\om$-sequence is unique. However, if $q_1$ and $q_2$ are two sups of such a sequence, then $q_1=^*_\Q q_2$. These observations will be important later when we deal with iterations of posets with this property.

If $\Q$ is a poset and $q\in\Q$, then we use $\Q/q$ to denote all conditions in $\Q$ which extend $q$. 
\vspace{3mm}

Let $M \elem H(\theta)$ be an elementary substructure and $\bb{U} \in M$ a poset. 
A condition $u$ in  $\mathbb{U}$ is $(M,\bb{U})$-\textbf{completely generic}, if the set $\lb \bar{u}\in\bb{U}\cap M:u\geq\bar{u}\rb$ of weaker conditions in $M$ meets all dense subsets $D \subseteq \bb{U}$ which belong to $M$, and thus forms a $(M,\bb{U})$-generic filter.

For the remainder of the paper, we fix a {\textbf{cardinal $\ka$ which will be either weakly compact or ineffable} (we specify in Definition \ref{def:fksp} exactly when $\ka$ is ineffable or weakly compact)}. In the next subsection, we will review facts about ineffability and weak compactness.

Throughout the paper, we will use $\ps$ to denote the Levy collapse $\col(\om_1,<\ka)$. If $\al<\ka$ is inaccessible, we use $\ps\res\al$ to denote the collapse $\col(\om_1,<\al)$. We view conditions in $\col(\om_1,<\ka)$ as countable functions $p$ so that $\dom(p)\seq\ka$ and so that for each $\nu\in\dom(p)$, $p(\nu)$ is a countable, partial function from $\om_1$ to $\nu$. 
If $G$ is a $V$-generic filter over $\ps$, then we use $G\res\al$ to denote the $V$-generic filter $\lb p\res\al:p\in G\rb$ over $\ps\res\al$.

For adding clubs, we generalize the club-adding poset of Magidor (\cite{Magidor}) by incorporating anticipation; we only state the definition in the generality needed for our paper. Recall that if $S$ is stationary in $\al$, then the \emph{trace} of $S$, denoted $\tr(S)$, consists of all $\be<\al$ so that $S$ reflects at $\be$.

\begin{definition}\label{def:clubaddingposet} Let $\bb{S}$ be a cardinal-preserving poset in some model $W$, and let $\dot{S}$ be an $\bb{S}$-name for a stationary subset of $\om_2\cap\cof(\om)$. We let $\mathsf{CU}(\dot{S},\bb{S})$ denote the poset, defined in $W$, where conditions are closed, bounded subsets $c$ of $\om_2$ so that 
$$
\Vdash_{\bb{S}}\check{c}\seq\tr(\dot{S})\cup\left(\om_2\cap\cof(\om)\right).
$$
The ordering is end-extension.
\end{definition}
We emphasize that in order to be a condition in $\mathsf{CU}(\dot{S},\bb{S})$, a given closed, bounded subset of $\om_2$ must be outright forced by $\bb{S}$ to be contained, mod cofinality $\om$ points, in $\tr(\dot{S})$. Since any condition $c$ in $\mathsf{CU}(\dot{S},\bb{S})$ can be extended by placing an ordinal of cofinality $\om$ above $\max(c)$, we see that $\mathsf{CU}(\dot{S},\bb{S})$ does add a club subset of $\om_2$ of the model. Moreover, the poset is trivially $\om_1$-closed, so preserves $\om_1$. However preservation of $\om_2$ is a non-trivial matter.

We now review the definition of the poset which we will use to specialize Aronszajn trees on $\om_2$. The poset itself will decompose such a tree into a union of $\om_1$-many antichains, which in this case is equivalent to having a specializing function.

\begin{definition}\label{def:specposet} Suppose that $T$ is an Aronszajn tree on $\om_2$. Let $\bb{S}(T)$ denote the poset where conditions are functions $f$ with countable domain $\dom(f)\seq\om_1$, and where for each $\al\in\dom(f)$, $f(\al)\seq T$ is a countable antichain in $<_T$. Recalling that we are using the Jerusalem convention for {forcing}, we say that $g$ extends $f$, written $f\leq g$, if $\dom(f)\seq\dom(g)$ and if for all $\al\in\dom(f)$, $f(\al)\seq g(\al)$.
\end{definition}

It is clear that $\bb{S}(T)$ is $\om_1$-closed. Moreover, if a tree $T'$ is not Aronszajn, then the analogously defined poset $\bb{S}(T')$ will collapse $\om_2$.

\subsection{Weak Compactness}

{In this final subsection, we review facts about the ineffability of $\ka$. However, we will also need various facts about the weak compactness of $\ka$, and so we begin with these.}

\begin{definition}
$\fwc$ is the filter generated by 
subsets $A$ of $\ka$  for which there is some $U \subseteq V_\ka$ and a $\Pi^1_1$-statement $\Phi$, satisfied by $(V_\ka,\in,U)$, so that 
\[A = \{ \al <\ka \mid \al \text{ is regular, and } (V_\al,\in,U \cap V_\al) \models \Phi\}.\]
\end{definition}

The filter $\fwc$ is $\ka$-closed as well as normal. It will be helpful at later parts in our argument to phrase membership in the weakly compact filter in terms of embeddings. The idea will be that for a subset $B$ of $\ka$, where $B$ is a member of a $\ka$-model $M$,  $B\in\fwc$ iff for all $M$-normal ultrafilters $U$, $\ka\in j_U(B)$, where $j_U$ is the ultrapower embedding. We make this precise in the following few items.

\begin{definition} Suppose that $\al$ is an inaccessible cardinal. We say that a transitive set $M$ is an \textbf{$\al$-model} if $M\models\zfc^-$, $|M|=\al$, $\al\in M$, and $\,^{<\al}M\seq M$.
\end{definition}

{Weak compactness is naturally associated to various embedding properties; here we mention the following result from \cite{Hauser}:}

\begin{proposition}\label{prop:jinN} For any $\ka$-model $M$, there exist a $\ka$-model $N$ and an elementary embedding $j:M\lra N$ so that $\crit(j)=\ka$ and $j,M\in N$.
\end{proposition}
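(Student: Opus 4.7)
The plan is to reduce to the standard embedding characterization of weak compactness applied to a larger $\ka$-model, and then recover $M$ and $j$ inside the target model by a Mostowski collapse argument.

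First, I would construct a $\ka$-model $M^*$ with $M \in M^*$. Take an elementary substructure $X \elem H(\ka^+)$ of size $\ka$ with $M \in X$, closed under $<\ka$-sequences (using $\ka^{<\ka} = \ka$), and apply its Mostowski collapse. Since $M$ is already transitive, the collapse fixes $M$, so the resulting transitive structure $M^*$ is a $\ka$-model with $M \in M^*$. Next, I would apply the standard consequence of weak compactness that every $\ka$-model admits a well-founded, $M^*$-normal $M^*$-ultrafilter $U$ on $\ka$ (obtainable from the normality and $\ka$-completeness of $\fwc$ together with the $\Pi^1_1$-indescribability it encodes). Forming the ultrapower gives an elementary $j^* : M^* \to N^* = \Ult(M^*, U)$ with $\crit(j^*) = \ka$ and $N^*$ a transitive $\ka$-model.

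Set $N = N^*$ and $j = j^* \uhr M$; elementarity of $j$ and $\crit(j) = \ka$ are then immediate. The content lies in checking $M, j \in N$. Fix a bijection $\pi : \ka \to M$ with $\pi \in M^*$; then $j^*(\pi) \in N^*$ is a bijection $j^*(\ka) \to j^*(M)$, and by elementarity $j^*(\pi)(\al) = j^*(\pi(\al))$ for every $\al < \ka$. Hence the restriction $f := j^*(\pi) \uhr \ka$ lies in $N^*$ and enumerates the pointwise image $j^*[M] = \{ j^*(x) : x \in M \}$, so $j^*[M] \in N^*$. Since $j^* \uhr M$ is injective and $\in$-preserving, $(j^*[M], \in)$ is well-founded and extensional, and its Mostowski collapse computed inside the $\ka$-model $N^*$ is isomorphic to the transitive structure $(M, \in)$, and hence equals $M$; so $M \in N^*$. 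The embedding $j$ is then the inverse of this collapse composed with the inclusion $j^*[M] \hookrightarrow j^*(M)$, all of which is definable inside $N^*$, so $j \in N^*$ as required.

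The main delicate point I expect is verifying that the Mostowski collapse of $j^*[M]$ inside $N^*$ literally recovers $M$ itself rather than merely an isomorphic copy; this uses that $M$ is already transitive in $V$ and that $N^*$, being a $\ka$-model, is closed enough to carry out the collapse of a size-$\ka$ extensional structure. The remaining ingredients — building $M^*$, extracting an appropriate $M^*$-ultrafilter from $\fwc$, and checking that $N^*$ is again a $\ka$-model (via $\ka$-completeness of $U$ for sequences in $M^*$) — are routine from the standard development of weak compactness.
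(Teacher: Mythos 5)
The paper itself does not give a proof of this proposition (it is quoted from Hauser), so there is no in-paper argument to compare with; judged on its own, your proposal has a genuine gap at the last step. With $N:=N^*=\Ult(M^*,U)$ and $j:=j^*\uhr M$, the elementarity of $j$ is \emph{not} immediate, and in general it fails. What elementarity of $j^*$ together with the definability of satisfaction gives you is that $j^*\uhr M$ is elementary as a map from $(M,\in)$ into $(j^*(M),\in)$; for it to be elementary into $(N^*,\in)$ you would need $M\elem M^*$, i.e.\ $M\models\varphi(a)\iff M^*\models\varphi(a)$ for all $a\in M$, and nothing in your construction provides this, since $M$ is an arbitrary $\ka$-model. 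Concretely, let $M$ be the transitive collapse of a $<\ka$-closed elementary submodel of $H(\ka^{++})$ of size $\ka$, so $M\models$ ``there is a cardinal above $\ka$''. Your $M^*$ is the collapse of some $X\elem H(\ka^+)$, so $M^*\models$ ``every set has cardinality at most $\ka$'', hence $N^*\models$ ``there is no cardinal above $j^*(\ka)$''; since $j(\ka)=j^*(\ka)$, the map $j$ cannot be elementary from $M$ into $N^*$.

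The natural repair is to take $N:=j^*(M)$ instead: then elementarity, $\crit(j)=\ka$, and the fact that $N$ is a $\ka$-model (using that $N^*$ is $<\ka$-closed and $N^*\models$ ``$j^*(M)$ is a $j^*(\ka)$-model'') all go through. But then the part your write-up actually argues has to be redone for the smaller target: you showed $M,j\in N^*$, whereas the proposition now needs $M,j\in j^*(M)$. This can be fixed, but it requires an ingredient you never use: in $V$ every subset of $M$ of size $<\ka$ is an \emph{element} of $M$, this statement is absolute enough to hold in $M^*$, so by elementarity every set in $N^*$ which is a $<j^*(\ka)$-sized subset of $j^*(M)$ already belongs to $j^*(M)$. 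Applying this to a relation $E\subseteq\ka\times\ka$ in $M^*$ coding $(M,\in)$ (noting $E=j^*(E)\cap(\ka\times\ka)\in N^*$) and to your function $j^*(\pi)\uhr\ka$ places these objects inside $j^*(M)$, where the Mostowski-collapse argument you describe then recovers $M$ and $j=j^*\uhr M$ as elements of $N=j^*(M)$. (The small point that a bijection $\pi:\ka\lra M$ lies in $M^*$ is fine, via elementarity of $X$ in $H(\ka^+)$.) As written, however, with $N=N^*$ the proof does not establish the proposition.
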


{However, we are mostly interested a different case, namely, when $j$ is the elementary embeddings associated with an $M$-normal ultrafilter on $\ka$ and where $N$ is the ultrapower of $M$. A filter $U \subseteq \power(\kappa) \cap M$ is an \textbf{$M$-normal ultrafilter} if
$U$ is an $M$-ultrafilter, and for every $A \in U$ and regressive function $f : A \to \kappa$ in $M$ there exists some $A' \subseteq A$ in $U$ so that $f\uhr A'$ is constant.} We note that being $M$-normal implies that $U$ is closed under intersections of $<\kappa$-sequences in $M$ consisting of sets in $U$. 

It is routine to verify that each elementary embedding $j : M \to N$ as in the proposition above gives rise to an $M$-normal ultrafilter 
$U_j = \{ A \in \power(\kappa) \cap M \mid \kappa \in j(A)\}$.
Conversely, we can associate to each $M$-normal ultrafilter $U$ its ultrapower embedding
$j_U : M \to N \cong \Ult(M,U)$.

\begin{proposition}\label{proposition:characterizefwc}
Let $M$ be a $\kappa$-model and $B \in M$ a subset of $\kappa$.
If $B \in U$ for every $M$-normal ultrafilter $U$ on $\kappa$, then 
$B\in\fwc$.
\end{proposition}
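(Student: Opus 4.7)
The plan is to prove the contrapositive: assuming $B \notin \fwc$, construct an $M$-normal ultrafilter $U$ on $\ka$ such that $\ka \setminus B \in U$, which will contradict the hypothesis. The strategy is to extend the family $(\fwc \cap M) \cup \{\ka \setminus B\}$ to a full $M$-normal ultrafilter by a standard transfinite bookkeeping argument of length $\ka$, with the normality of $\fwc$ used to verify consistency of the base family with the extension.

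The first step is to verify that $\ka \setminus B$ is ``$M$-normally positive'' relative to $\fwc$: for every $\ka$-sequence $\la D_\al : \al < \ka \ra \in M$ with each $D_\al \in \fwc$, the set $(\ka \setminus B) \cap \Delta_{\al < \ka} D_\al$ is nonempty. Indeed, by normality of $\fwc$, one has $\Delta_{\al<\ka} D_\al \in \fwc$; and since the sequence lies in $M \models \zfc^-$ with $\ka \in M$, the diagonal intersection is definable in $M$ and hence also belongs to $M$. Were $\Delta_\al D_\al \seq B$, then $B$ would contain a set in $\fwc$ and therefore lie in $\fwc$, contradicting the assumption.

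Next, using $|M| = \ka$, I would enumerate $\power(\ka) \cap M$ as $\la A_\be : \be < \ka \ra$ and construct a $\seq$-increasing chain $\la \cal{U}_\be : \be \leq \ka \ra$ of subfamilies of $\power(\ka) \cap M$, with $\cal{U}_0 = (\fwc \cap M) \cup \{\ka \setminus B\}$, maintaining the invariant that every diagonal intersection of a $\ka$-sequence in $M$ of elements of $\cal{U}_\be$ is nonempty, and ensuring that $A_\be \in \cal{U}_{\be+1}$ or $\ka \setminus A_\be \in \cal{U}_{\be+1}$ at each successor step. The critical point is that such a successor extension is always possible: otherwise there would exist $M$-sequences $\la D^i_\al\ra \in M$ of elements of $\cal{U}_\be$ (for $i = 1,2$) with $A_\be \cap \Delta_\al D^1_\al = \es = (\ka \setminus A_\be) \cap \Delta_\al D^2_\al$, and then by the identity $\Delta_\al(D^1_\al \cap D^2_\al) = \Delta_\al D^1_\al \cap \Delta_\al D^2_\al$, the diagonal intersection of the joint sequence $\la D^1_\al \cap D^2_\al\ra$ (which is in $M$) would be empty, violating the invariant for $\cal{U}_\be$. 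Setting $U = \cal{U}_\ka$ then yields an $M$-normal ultrafilter on $\ka$ containing $\ka \setminus B$, the desired contradiction.

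The main obstacle is propagating the inductive invariant through limit stages while ensuring that $\cal{U}_\be \seq \power(\ka) \cap M$ and that the positivity property is verifiable in $M$ at every stage $\be < \ka$. This works because $|\cal{U}_\be| < \ka$ and $M$ is closed under $<\ka$-sequences, so $\cal{U}_\be$ remains a set in $M$, and diagonal intersections of $\ka$-sequences from $M$ are definable in $M$; once these bookkeeping details are in place, the construction delivers the ultrafilter extension and completes the proof.
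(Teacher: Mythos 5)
There is a genuine gap, and it sits exactly where you wave at it: the limit stages. Your base step (properness and normality of $\fwc$ give the starting ``normal positivity'') and your successor dichotomy (after the minor repair of stating the invariant for finite intersections of members of $\cal{U}_\be$, since the sets $D^1_\al\cap D^2_\al$ need not themselves lie in $\cal{U}_\be$) are fine. But at a limit stage $\be\leq\ka$ you take a union of the $\cal{U}_\ga$'s, and a $\ka$-sequence in $M$ of members of $\bigcup_{\ga<\be}\cal{U}_\ga$ may interleave sets chosen at cofinally many earlier stages; the invariant at each individual stage $\ga$ says nothing about such a sequence, and the justification you offer (that $\cal{U}_\be$ is small, lies in $M$, and that diagonal intersections are definable in $M$) addresses only bookkeeping, not the nonemptiness of the diagonal intersection. (Those bookkeeping claims are themselves shaky: $\cal{U}_0\supseteq\fwc\cap M$ has size $\ka$, and $\fwc\cap M$ need not be an element of $M$; also ``nonempty'' would have to be strengthened to something like ``unbounded'' to get $M$-normality of the final family.) The decisive objection is that your argument uses nothing about $\fwc$ beyond its being a proper normal filter, and nothing about $\ka$ beyond inaccessibility. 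If the induction went through, the same construction run with the club filter in place of $\fwc$ would produce, for every $\ka$-model $M$ over any inaccessible $\ka$, an $M$-normal ultrafilter on $\ka$; the existence of such ultrafilters for all $\ka$-models is equivalent to weak compactness, so every inaccessible would be weakly compact. Hence the limit step cannot be free: that is precisely the point where the $\Pi^1_1$-indescribability of $\ka$ must be invoked, and your proposal never invokes it.

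For comparison, the paper does not argue via filter extension at all. It codes $M$ by a relation $E_M\seq\ka\times\ka$ and writes down a $\Pi^1_1$ sentence over $(V_\ka,\in,E_M,B)$ asserting that $E_M$ codes a $\ka$-model in which $B$ is represented and that every coded $M$-normal ultrafilter contains $B$. The set of regular $\al$ reflecting this sentence generates a member of $\fwc$ by definition, and at each such $\al$ one gets an $\al$-model $M_\al$, an embedding $i_\al:M_\al\to M$, and the derived $M_\al$-normal ultrafilter $U_\al$, which by reflection contains $B\cap\al$; chasing this through the factor map yields $\al\in B$, so the reflecting set is contained in $B$ and $B\in\fwc$. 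Any repair of your approach would have to import this kind of indescribability (or an embedding property of $\ka$-models) to handle the limit stages, at which point you are essentially reproducing the paper's argument rather than giving a more elementary one.
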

\begin{proof} 
Let $M,B$ be as in the statement of the proposition, {and fix a subset $E_M \subseteq \kappa \times \kappa$ so that $(\kappa,E_M)$ is isomorphic to $M$.} To prove that $B \in \fwc$, it suffices to show that there is a
 $\Pi^1_1$ statement $\Psi$ satisfied by $(V_\kappa,\in, E_M,B)$ so that the set
 $\{ \alpha<\ka : \al \text{ is regular and } (V_\al,\in,E_M \cap V_\al , B \cap \alpha) \models \Psi\}$
 is contained in $B$.
 
 To begin, we observe that the assertions that ``$E_M$ is well-founded'', that ``$(\kappa,E_M)$ is isomorphic to a transitive $\kappa$-model'', and that ``$B$ is represented in $(\kappa,E_M)$ by $b \in \kappa$",  are all within the class of $\Pi^1_1$ formulas  over $(V_\kappa,\in,E_M,B)$ (see \cite{KruegerWC}, Section 2 for details). Let $\Phi_0$ denote their conjunction.

We also note that for a subset $U_M \subseteq \kappa$, the assertion 
``$U_M$ codes a subset of $M \cap \power(\kappa)$ which is an $M$-normal ultrafilter," is $\Sigma^0_\omega$.
Therefore the assertion $\Phi_1$ stating that 
\[``\forall\, U_M \subseteq \kappa \text{, if } U_M \text{ codes an } M\text{-normal ultrafilter, then }B \in U_M "\]
is $\Pi^1_1$.
Let $\Phi$ be the conjunction $\Phi_0\we\Phi_1$, a $\Pi^1_1$ formula satisfied in $(V_\kappa,\in,E_M,B)$.

Define $X:=\{ \alpha<\ka : \al \text{ is regular, and } (V_\al,\in,E_M \cap V_\al , B \cap \alpha) \models \Phi\}$, and we show that $X\seq B$. Fix some $\al\in X$. Then the relation $E_M \uhr \al = E_M \cap V_\al \subseteq \al\times \al$ is well-founded, and it codes an $\al$-model $M_\al$ with $B \cap \al$ represented in $(\al,E_M\uhr \al)$ by the same element $b \in \ka$ which represents $B$ in $(\ka,E_M)$.
 Let $i_\alpha : M_\al \to M_\ka$ be the elementary embedding resulting from the identifications $M_\al \cong (\al,E_M\uhr \al) \elem (\ka,E_M) \cong M$. It is straightforward to verify that $\cp(i_\al) = \al$, $i_\al(\al) =\ka$, and $i_\al(B \cap \al) = B$. 

It follows that $U_{\al} = \{ A \subseteq \al : \al \in i_\al(A)\}$ is an $M_\al$-normal ultrafilter. 
Let 
$j_\alpha : M_\alpha \to N_\alpha$ be the induced ultrapower embedding, and let $k_\alpha : N_\alpha \to M$ be the factor map given by $k_\al([f]_{U_\al}) = i_\al(f)(\al)$. We note that ${\cp}(k_\alpha) > \alpha$. Since $(V_\al,\in,E_M\uhr\al,B \cap \al)$ satisfies $\Phi$,
$B \cap \al \in U_\al$. The last implies that 
$\al \in j_\al(B \cap \al)$, which in turn implies that 
$\al = k_\al(\al) \in k_\al \circ j_\alpha(B \cap \alpha) = i_\al(B \cap \al) = B$.
\end{proof}

{
We now review the relevant facts about ineffable cardinals; see \cite{BaumgartnerIneffability} for the details. A cardinal $\lam$ is \textbf{ineffable} if for any sequence $\vec{A}=\la A_\nu:\nu<\lam\ra$ so that $A_\nu\seq\nu$ for all $\nu<\lam$, there is an $A\seq\lam$ so that $\lb\nu<\lam:A_\nu=A\cap\nu\rb$ is stationary in $\lam$. Such a set $A$ is said to be coherent for $\vec{A}$.} 

\begin{notation}\label{notation:FI} {In the case that $\ka$ is ineffable, we will denote the \textbf{ineffability ideal} on $\ka$ by $\cal{I}_{in}$ throughout the paper, and we will let $\cal{F}_{in}$ denote the filter on $\ka$ which is dual to $\cal{I}$.}
\end{notation}
{$\cal{I}_{in}$ consists of all $S\seq\ka$ so that for some sequence $\vec{A}$ as above, no stationary subset of $S$ is coherent for $\vec{A}$. In the case that $\ka$ is ineffable, $\cal{I}_{in}$ is a proper, normal ideal on $\ka$. Finally, we mention the following theorem of Baumgartner's (\cite{BaumgartnerIneffability}, Theorem 7.2); see \cite{HolyLuecke:SmallLargeInduced} for more information.}

\begin{theorem}\label{thm:Baumgartner} {(Baumgartner) $F_{WC}$ is contained in $\cal{F}_{in}$.}
\end{theorem}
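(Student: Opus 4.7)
The plan is to fix $B \in \fwc$ together with a witnessing parameter $U \seq V_\ka$ and $\Pi^1_1$-statement $\Phi \equiv \forall X\, \psi(X)$ (where $\psi$ is first-order in the language $\{\in, U, X\}$) so that $(V_\ka,\in,U)\models\Phi$ and the reflection set
\[ T := \{\al < \ka : \al \text{ is regular and } (V_\al,\in,U\cap V_\al)\models\Phi\} \]
is contained in $B$. Since $\cal{I}_{in}$ is downward closed, to conclude $B \in \cal{F}_{in}$ it suffices to show $\ka \setminus T \in \cal{I}_{in}$: I will produce one sequence $\vec A = \la A_\nu \mid \nu < \ka\ra$ with $A_\nu \seq \nu$ such that, for every $A \seq \ka$, the set $\{\nu \in \ka \setminus T : A_\nu = A \cap \nu\}$ is non-stationary.

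The sequence $\vec A$ will code local failures of $\Phi$. I would first fix a bijection $b : V_\ka \to \ka$ coming from a definable well-ordering of $V_\ka$ stratified by rank; then for a club $C_0 \seq \ka$ of inaccessible $\nu$ (available because an ineffable cardinal is Mahlo), $b \uhr V_\nu$ is a bijection $V_\nu \to \nu$. For each inaccessible $\nu \in (\ka \setminus T) \cap C_0$, the definition of $T$ gives some $X_\nu \seq V_\nu$ with $(V_\nu,\in, U\cap V_\nu, X_\nu) \models \neg \psi$; set $A_\nu := b[X_\nu] \seq \nu$, and for other $\nu$ set $A_\nu := \es$.

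Suppose toward a contradiction there is a stationary $S \seq \ka \setminus T$ and $A \seq \ka$ with $A_\nu = A \cap \nu$ for every $\nu \in S$. Let $X := b^{-1}[A] \seq V_\ka$; by a standard Tarski--Vaught argument (using definable Skolem functions on $V_\ka$), the set $C_1 := \{\nu < \ka : (V_\nu,\in,U\cap V_\nu, X\cap V_\nu) \elem (V_\ka, \in, U, X)\}$ is club. Picking $\nu \in S \cap C_0 \cap C_1$, which is possible by stationarity, the equalities $X \cap V_\nu = b^{-1}[A \cap \nu] = b^{-1}[A_\nu] = X_\nu$ together with $(V_\nu,\in,U\cap V_\nu, X_\nu)\models \neg\psi$ yield, via elementarity and the fact that $\psi$ is first-order in the expanded language, $(V_\ka,\in,U,X)\models \neg\psi$, contradicting $(V_\ka,\in,U) \models \Phi$. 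The principal technical step is arranging the bijection $b$ to restrict coherently on a club of inaccessibles so that the pieces $X_\nu$ can be read off from the same $X \seq V_\ka$; once this coding is in place, the rest is a routine reflection argument.
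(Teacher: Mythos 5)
Your overall strategy---reduce to showing $\ka\setminus T\in\cal{I}_{in}$ for a generating set $T$ of $\fwc$, code counterexamples to the $\Pi^1_1$ statement into a $\ka$-sequence, and use a stationary coherent set together with a club of elementary substructures to reflect $\neg\psi$ up to $V_\ka$---is the right shape of argument (it is essentially Baumgartner's, which the paper only cites rather than proves), and the final step at an inaccessible coherence point is fine. The gap is in how you treat the points of $\ka\setminus T$ at which no counterexample gets coded. There is no ``club $C_0\seq\ka$ of inaccessible $\nu$'': ineffability (via Mahloness) gives a \emph{stationary} set of inaccessibles, never a club, since limits of inaccessibles are usually singular. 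So either $C_0$ is a genuine club of closure points of $b$, in which case a point $\nu\in S\cap C_0\cap C_1$ may be singular and then $A_\nu=\es$ by your default, or $C_0$ consists of inaccessibles and then $S\cap C_0\cap C_1$ may be empty. Worse, with the default $A_\nu=\es$ your sequence simply does not witness $\ka\setminus T\in\cal{I}_{in}$: every non-regular ordinal lies in $\ka\setminus T$ by the definition of $T$, the non-regular ordinals form a stationary set, and on that set coherence holds trivially with $A=\es$. Thus the statement you try to refute by contradiction is actually false for the particular $\vec A$ you constructed; the problem is in the construction, not in the reflection step.

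The standard repair is to make the ``junk'' values anti-coherent rather than empty: at $\nu$ which is not an inaccessible closure point of $b$, let $A_\nu$ code, with a flag bit distinguishing the cases, a witness to that failure---for instance a cofinal subset of $\nu$ of order type $<\nu$ when $\nu$ is singular, or an injection of $\nu$ into $\power(\be)$ for some $\be<\nu$ when $\nu$ is not a strong limit---so that a stationary set of coherence points of this kind yields, after pressing down on the relevant parameter ($\cal{I}_{in}$ is normal), a single short cofinal subset of $\ka$ or an injection of $\ka$ into some $\power(\be)$, contradicting the inaccessibility of $\ka$. Only once the coherent stationary set is forced into the inaccessible closure points does your decoding $X\cap V_\nu=X_\nu$ and the elementarity argument apply. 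A smaller point: $B\in\fwc$ only guarantees that $B$ contains a \emph{finite intersection} of sets of the displayed form; this is harmless, since a conjunction of $\Pi^1_1$ statements over a coded tuple of predicates is again $\Pi^1_1$, but it should be said.
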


{In fact, Baumgartner showed that $\cal{F}_{WC}$ is contained in the filter dual to the \emph{weak} ineffability ideal on $\ka$.}

\begin{notation}\label{WhatisF} {$\cal{F}$ will denote either $\cal{F}_{WC}$ or $\cal{F}_{in}$ throughout this paper depending on whether $\ka$ is weakly compact or ineffable (respectively). $\cal{I}$ denotes the ideal dual to $\cal{F}$. We will specify in Definition \ref{def:fksp} in the next section exactly when $\cal{F}$ is equal to $\cal{F}_{WC}$ or equal to $\cal{F}_{in}$.}
\end{notation}

\begin{remark}\label{rmk:iewc}\hfill
\begin{enumerate}
\item {In our paper, we only use the ineffability of $\ka$ to prove Proposition \ref{prop:InductiveIandII} and in the corollaries of this proposition. All other results in our paper can be carried out assuming only that $\ka$ is weakly compact.}
\item {As is immediate from Theorem \ref{thm:Baumgartner} and the fact that $\cal{F}$ is either $\cal{F}_{WC}$ or $\cal{F}_{in}$, all $\cal{F}$-positive sets are also $F_{WC}$-positive.}
\item {Thus, if $B\in \cal{F}^+$ and if $M$ is a $\ka$-model with $B\in M$, then by Proposition \ref{proposition:characterizefwc} there is some $M$-normal ultrafilter $U$ on $\ka$ so that $B\in U$. We will use this fact throughout the paper.}
\end{enumerate}
\end{remark}

{As an illustration of the type of argument in (3) above, we prove the following fact which we will need in the proof of Proposition \ref{prop:statsetpreservation}.}

\begin{lemma}\label{lemma:SetAndTrace} 
{Suppose that $B\in\cal{F}^+$. Then $B\bsl\operatorname{tr}(B)\in\cal{I}$.}
\end{lemma}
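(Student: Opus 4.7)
The plan is a proof by contradiction. Suppose $B_0 := B \bsl \tr(B) \in \cal{F}^+$; by Remark \ref{rmk:iewc}(2), $B_0$ is also $\fwc$-positive. The goal is to exhibit a club $C \seq \ka$ disjoint from $B$, which will contradict the stationarity of $B$ in $V$ (this follows from $B \in \cal{F}^+$, since both $\fwc$ and $\cal{F}_{in}$ are normal filters extending the club filter on $\ka$).

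I would start by fixing a $\ka$-model $M$ containing $B$. Since $M$ is transitive and closed under $<\ka$-sequences, $M$ contains $\power(\be)$ for every $\be < \ka$, so stationarity and cofinality at ordinals below $\ka$ are absolute between $M$ and $V$; in particular $\tr(B) \in M$ and $B_0 \in M$. By Remark \ref{rmk:iewc}(3) applied to $B_0$, there is an $M$-normal ultrafilter $U$ on $\ka$ with $B_0 \in U$. Let $j := j_U : M \to N$ be the associated ultrapower embedding, with $\crit(j) = \ka$ and $N$ identified with its transitive collapse. Because $B_0 \in U$, elementarity gives $\ka \in j(B_0) = j(B) \bsl \tr^N(j(B))$. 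By standard facts about $M$-normal ultrapowers on weakly compact cardinals (in particular, the set of inaccessibles below $\ka$ lies in $U$), $\ka$ is inaccessible in $N$, so $\cf^N(\ka) > \om$. Hence $\ka \notin \tr^N(j(B))$ forces $j(B) \cap \ka$ to be non-stationary in $\ka$ in $N$. Since $\crit(j) = \ka$, $j(B) \cap \ka = B$, so I can pick a witnessing club $C \in N$, $C \seq \ka$, with $C \cap B = \es$.

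To conclude, I would invoke the standard absoluteness of the predicate ``closed unbounded subset of $\ka$'' between transitive inner models containing $\ka$: since $N$ is transitive and $C, \ka \in N$, closedness and unboundedness of $C$ below the genuine ordinal $\ka$ are computed identically in $N$ and in $V$. Thus $C$ is a $V$-club in $\ka$ disjoint from $B$, producing the desired contradiction. I do not anticipate a serious obstacle; the one point to verify carefully is the regularity of $\ka$ in $N$, which is what legitimizes reading $\ka \notin \tr^N(j(B))$ as ``$B$ is non-stationary in $\ka$ from $N$'s perspective,'' and this is exactly what the concentration of $\fwc$ on inaccessibles delivers.
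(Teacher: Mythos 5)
Your argument is correct and takes essentially the paper's own route: both fix a $\ka$-model containing $B$ (and hence $B\bsl\tr(B)$), invoke Proposition \ref{proposition:characterizefwc} (via Remark \ref{rmk:iewc}(3)) to get an $M$-normal ultrafilter with $\ka\in j(B\bsl\tr(B))$, and then contradict the stationarity of $B$ using the absoluteness of clubs/stationarity at $\ka$ between the transitive ultrapower $N$ and $V$ — the paper phrases this as ``$B$ is stationary, hence $\ka\in j(\tr(B))$, so $\ka\in j(B\cap\tr(B))$,'' while you equivalently extract a club of $N$ disjoint from $B$. The only cosmetic difference is your detour through the inaccessibility of $\ka$ in $N$ to secure $\cf^N(\ka)>\om$; this is more than needed, since any countable cofinal sequence in the transitive $N$ would already be cofinal in $\ka$ in $V$, contradicting the regularity of $\ka$.
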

\begin{proof} {Suppose otherwise, for a contradiction. Then $B\bsl\tr(B)\in\cal{F}^+$. Let $M^*$ be a $\ka$-model containing $B$, and hence $B\bsl\tr(B)$. By Proposition \ref{proposition:characterizefwc}, there is an $M^*$-normal ultrafilter $U$ so that, letting $j:M^*\lra N$ be the ultrapower embedding, $\ka\in j(B\bsl\tr(B))$. However, $B$ is a stationary subset of $\ka$, since $B\in\cal{F}^+$. Thus $\ka\in j(\tr(B))$. Since $\ka\in j(B)$ also, we have $\ka\in j(B)\cap j(\tr(B))=j(B\cap\tr(B))$, which is a contradiction.}
\end{proof}

\section{{$\cal{F}$}-strongly proper posets}\label{sec:FWCstronglyProper}

In this section we transition into the main body of the paper. After briefly reviewing some important facts about strong genericity in Subsection \ref{subsec:ReviewStonrgGen}, we then define, in Subsection \ref{subsec:FwcStrongProper}, the class of {$\cal{F}$-strongly proper posets (see Notation \ref{WhatisF} for some information about $\cal{F}$ and see Definition \ref{def:fksp} for the exact definition).} This class, which includes the collapse $\ps$, consists of posets for which we may build various residue systems and thereby obtain strongly generic conditions for models of interest. 


\subsection{Review of Strongly Generic Conditions}\label{subsec:ReviewStonrgGen}

Here we review the definition and basic properties of strongly generic conditions. Much of this material was originally developed by Mitchell \cite{MitchellUgh}. Parts of our exposition here summarize the exposition in \cite{Krueger}, Section 1, to which we refer the reader for proofs.

\begin{definition} Let $N\prec H(\theta)$, where $\theta$ is regular. Let $\Q\in N$ be a poset. A condition $q\in\Q$ is said to be a \textbf{strongly $(N,\Q)$-generic condition} if for any set $D$ which is dense in $\Q\cap N$, $D$ is predense above $q$ in $\Q$.
\end{definition}

\begin{remark}
Note that if $\Q\in N$, with $N$ as above, then any strongly $(N,\Q)$-generic condition is also an $(N,\Q)$-generic condition. Moreover, $q$ is a strongly $(N,\Q)$-generic condition iff $q\Vdash_{\Q}\dot{G}_\Q\cap N$ is a $V$-generic filter over $\Q\cap N$.
\end{remark}

We now review a combinatorial characterization of strongly generic conditions, implicit in \cite{MitchellUgh}, Proposition 2.15, in terms of the existence of residue functions.

\begin{definition}\label{def:residuestuff} Suppose that $\Q\in N\prec H(\theta)$, $q\in\Q$, and $s\in\Q\cap N$. $s$ is said to be a \textbf{residue of $q$ to $N$} if for all $t\geq_{\Q\cap N}s$, $t$ and $q$ are compatible in $\Q$.

A \textbf{residue function for $N$ above $q$} is a function $f_N$  defined on $\Q/q$ so that for each $r\in\Q/q$, $f_N(r)$ is a residue of $r$ to $N$.

Finally, if $q,r\in\Q$ and $s\in\Q\cap N$, we say that $s$ is a \textbf{dual residue of $q$ and $r$ to $N$} if $s$ is a residue for both $q$ and $r$ to $N$.
\end{definition}

\begin{lemma} $q\in\Q$ is $(N,\Q)$-strongly generic iff there is a residue function for $N$ above $q$.
\end{lemma}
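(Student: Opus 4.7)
The plan is to prove both directions directly from the definitions, with the forward direction requiring a density argument inside $\Q \cap N$ and the backward direction being essentially immediate.

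For the backward direction, suppose $f_N$ is a residue function for $N$ above $q$, and fix a set $D$ dense in $\Q \cap N$. To show $D$ is predense above $q$, take any $r \geq_\Q q$ and set $s := f_N(r)$, which by definition lies in $\Q \cap N$ and is a residue of $r$ to $N$. By density of $D$ in $\Q \cap N$, choose $t \in D$ with $t \geq_{\Q \cap N} s$. Since $s$ is a residue of $r$, the condition $t$ is compatible with $r$ in $\Q$. Hence $D$ meets every extension of $q$ in a compatible way, so $D$ is predense above $q$; this gives that $q$ is strongly $(N,\Q)$-generic.

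For the forward direction, assume $q$ is strongly $(N,\Q)$-generic. Given $r \in \Q/q$, the key idea is to build a residue of $r$ by locating a suitable element in a well-chosen dense set of $\Q \cap N$. Define
\[ D_r := \{ s \in \Q \cap N : s \text{ is incompatible with } r \text{ in } \Q, \text{ or } s \text{ is a residue of } r \text{ to } N \}. \]
I would first verify that $D_r$ is dense in $\Q \cap N$: given any $s_0 \in \Q \cap N$, either $s_0$ is already in $D_r$, or else $s_0$ fails to be a residue, in which case the defining failure witnesses some $s_1 \geq_{\Q \cap N} s_0$ that is incompatible with $r$, so $s_1 \in D_r$. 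By strong genericity of $q$, $D_r$ is predense above $q$, hence above $r$, so some $s \in D_r$ is compatible with $r$. Such an $s$ cannot be the incompatible alternative, so $s$ must be a residue of $r$ to $N$; set $f_N(r) := s$.

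The only mild subtlety to keep in mind is the use of the axiom of choice to simultaneously select a residue $f_N(r)$ for every $r \in \Q/q$, but this is routine. The main conceptual point—and the step worth stating carefully—is the density of $D_r$, since it encodes exactly the ``either avoid $r$ or dominate it via residues'' dichotomy that makes the equivalence work. No large cardinal or filter-theoretic input is needed at this stage; the lemma is purely about the combinatorics of strong genericity.
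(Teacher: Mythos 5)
Your proof is correct: both directions follow the definitions exactly as they should, and the density of $D_r$ (the ``incompatible with $r$ or a residue of $r$'' set) is indeed the one nontrivial point, handled correctly. The paper itself omits the proof of this lemma, referring to Krueger's and Mitchell's expositions, and your argument is precisely the standard one given there, so there is nothing to add.
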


In the next subsection, we will isolate further properties of residue functions of interest. For now, we review the process by which strongly generic conditions allow us to break apart the forcing $\Q$ into a two-step iteration. 

\begin{notation} Let $\Q$ be a poset and $q\in\Q$. Suppose $\Q\in N\prec H(\theta)$ and $q$ is a strongly $(N,\Q)$-generic condition. Fix a $V$-generic filter $\bar{G}$ over $\Q\cap N$. In $V[\bar{G}]$, let $(\Q/q)/\bar{G}$ denote the poset where conditions are all $r\in (\Q/q)$ which are $\Q$-compatible with every condition in $\bar{G}$. The ordering is the same as in $\Q$. 
\end{notation}

The following two results originate in \cite{Mitchell}; our formulation of them follows \cite{Krueger}.

\begin{lemma}\label{lemma:residueForceQuotient} Suppose $\Q\in N\prec H(\theta)$ and $q$ is a strongly $(N,\Q)$-generic condition. Then for all $r\geq q$ and $s\in \Q\cap N$, $s$ is a residue of $r$ to $N$ iff $s\Vdash_{\Q\cap N} r\in(\Q/q)/\dot{G}_{\Q\cap N}$.
\end{lemma}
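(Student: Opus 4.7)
The plan is to prove both directions by directly unpacking the definition of a residue together with the definition of the quotient poset $(\Q/q)/\bar{G}$. No combinatorial ingenuity should be needed; the content is essentially notational.

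First I will handle the forward direction. Suppose $s$ is a residue of $r$ to $N$. Let $\bar{G}$ be an arbitrary $V$-generic filter over $\Q\cap N$ with $s\in\bar{G}$; it suffices to show that $r\in(\Q/q)/\bar{G}$ in $V[\bar{G}]$. Membership in $\Q/q$ is immediate from $r\geq q$, so I only need to verify that $r$ is $\Q$-compatible with every $t\in\bar{G}$. Given such a $t$, since $\bar{G}$ is a filter on $\Q\cap N$ containing both $s$ and $t$, there is some $t'\in\bar{G}$ with $t'\geq_{\Q\cap N} s$ and $t'\geq_{\Q\cap N} t$. The residue property applied to $t'\geq s$ yields a common $\Q$-extension of $t'$ and $r$; any such extension is also $\geq t$, so $t$ and $r$ are compatible in $\Q$, as desired.

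For the reverse direction, I will argue by contrapositive. Assume $s$ is not a residue of $r$, and produce a $V$-generic filter $\bar{G}$ over $\Q\cap N$ containing $s$ in which $r\notin(\Q/q)/\bar{G}$. By failure of the residue property, there exists $t\in\Q\cap N$ with $t\geq_{\Q\cap N}s$ and $t$ incompatible with $r$ in $\Q$. Taking any $V$-generic $\bar{G}$ over $\Q\cap N$ with $t\in\bar{G}$ (which exists since $t$ extends $s$, and $\bar{G}$ as a filter is downward closed, so it automatically contains $s$), we see that $t\in\bar{G}$ witnesses $\Q$-incompatibility of $r$ with a condition in $\bar{G}$, hence $r\notin(\Q/q)/\bar{G}$. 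This contradicts $s\Vdash_{\Q\cap N}r\in(\Q/q)/\dot{G}_{\Q\cap N}$.

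The only mildly delicate step is the one in the forward direction where we need to replace a general $t\in\bar{G}$ by a common extension $t'\geq s,t$ before applying the residue property — this is exactly the point where we use that filters in $\Q\cap N$ are directed and contain $s$. Aside from that, both implications are a direct translation between the combinatorial and forcing formulations of residues, so no further machinery (in particular, no use of strong genericity of $q$ beyond what is implicit in the definition of $(\Q/q)/\bar{G}$) is required.
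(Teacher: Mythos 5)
Your proof is correct: both directions are exactly the standard unwinding of the definitions of residue and of the quotient $(\Q/q)/\bar{G}$, and the directedness point you flag in the forward direction is handled properly. The paper itself gives no proof of this lemma (it is quoted from Mitchell/Krueger), and your argument is the same routine one found in those sources, so there is nothing further to compare.
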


\begin{lemma}\label{lemma:quotientFUN} Suppose $\Q\in N\prec H(\theta)$ and $q$ is a strongly $(N,\Q)$-generic condition. Then
\begin{enumerate}
\item if $r\geq q$, $s\in \Q\cap N$, and $r$ and $s$ are $\Q$-compatible, then there exists $t\geq_{\Q\cap N} s$ so that $t$ is a residue of $r$ to $N$;
\item if $D\seq\Q$ is dense above $q$, then $\Q\cap N$ forces that $D\cap (\Q/q)/\dot{G}_{\Q\cap N}$ is dense in $(\Q/q)/\dot{G}_{\Q\cap N}$.
\end{enumerate}
\end{lemma}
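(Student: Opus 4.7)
My plan is to deduce both parts from Lemma \ref{lemma:residueForceQuotient}, together with two elementary observations I will use repeatedly. First, whenever $t, s \in \Q \cap N$ are compatible in $\Q$, the elementarity $N \elem H(\theta)$ produces a common upper bound in $\Q \cap N$. Second, if $u$ is a residue of $r'$ to $N$ and $r' \geq r$, then $u$ is automatically a residue of $r$ to $N$ as well, and any extension of $u$ in $\Q \cap N$ remains a residue of $r'$ (both follow directly from the definition).

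For (1), I would pick $r' \geq r, s$ in $\Q$, note that $r' \geq q$, and use strong genericity of $q$ to obtain a residue $u$ of $r'$ to $N$. Since $r' \geq s$, the conditions $u$ and $s$ are $\Q$-compatible, and both lie in $\Q \cap N$, so elementarity yields some $t \in \Q \cap N$ with $t \geq u, s$. The two observations above make $t$ a residue of $r$ to $N$ extending $s$.

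For (2), fix a $V$-generic $\bar{G}$ over $\Q \cap N$ and $r \in (\Q/q)/\bar{G}$. Part (1) shows that the ground-model set of $s \in \Q \cap N$ that are either residues of $r$ to $N$ or incompatible with $r$ in $\Q$ is dense in $\Q \cap N$; since no element of $\bar{G}$ is incompatible with $r$ in $\Q$, genericity produces some $s \in \bar{G}$ which is a residue of $r$ to $N$. Next, define in $V$
\[ E' = \{ t \in \Q \cap N : t \geq s \text{ and } \exists\, r^* \in D \text{ with } r^* \geq r \text{ and } t \text{ a residue of } r^* \text{ to } N \}. \]
I would verify that $E'$ is dense above $s$ in $\Q \cap N$: given $t_0 \geq s$, the residue property of $s$ makes $t_0$ compatible with $r$ in $\Q$, so a common upper bound of $r$ and $t_0$ extended inside $D$ (using density of $D$ above $q$) gives some $r^* \in D$ with $r^* \geq r, t_0$; strong genericity applied to $r^*$ furnishes a residue $u$, and a final invocation of elementarity produces $t \geq u, t_0$ in $\Q \cap N$, which by the monotonicity observation is again a residue of $r^*$ to $N$ and hence lies in $E'$. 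Genericity of $\bar{G}$ then yields some $t \in \bar{G} \cap E'$ witnessed by $r^* \in D$ with $r^* \geq r$; Lemma \ref{lemma:residueForceQuotient} gives $t \Vdash_{\Q \cap N} r^* \in (\Q/q)/\dot{G}_{\Q \cap N}$, and since $t \in \bar{G}$ this places $r^*$ in $D \cap (\Q/q)/\bar{G}$.

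The main subtlety I anticipate is making sure the two dense subsets used in part (2) are actually ground-model objects, so that genericity of $\bar{G}$ applies to them; this is why I would carefully define them only from $r$, $s$, $q$, $\Q$, $N$, and $D$, avoiding any reference to $\bar{G}$. Beyond this bookkeeping, the entire argument reduces to a disciplined alternation between $\Q$-compatibility and $\Q \cap N$-compatibility via elementarity, combined with the monotonicity of residues.
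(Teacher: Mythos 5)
Your proof is correct: part (1) via a common extension $r'\geq r,s$, a residue of $r'$ obtained from the residue-function characterization of strong genericity, and amalgamation inside $N$ by elementarity; part (2) via the two ground-model dense sets and Lemma \ref{lemma:residueForceQuotient}. The paper itself omits the proof (deferring to Mitchell and to Krueger's exposition), and your argument is essentially the standard one given there, so there is nothing substantive to contrast.
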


\subsection{Exact Residue Functions and {$\cal{F}$}-Strong Properness}\label{subsec:FwcStrongProper}

Following Neeman (\cite{GiltonNeeman}), we next isolate the properties of residue functions (see Definition \ref{def:residuestuff}) of interest. We will apply this in our work to the iteration $\ps * \dot{\C}$, consisting of the collapse poset $\ps$ followed by a Magidor-style, club-adding iteration $\dot{\C}$. {Neeman also connected this with countable closure of the quotient forcing (\cite{GiltonNeeman}, subsection 2.2). However, we were not able to apply this analysis to the final poset, which also includes the specializing iteration, as we do not know if the quotients involving the specializing iteration are even strategically closed. This will lead us later, in Section \ref{section:stationarypreservation}, to an ad-hoc proof that the quotients of the final poset preserve stationary sets consisting of countable cofinality ordinals, without having $\om_1$-closed quotients. }

Recalling that we are working with pre-orders (in anticipation of working with iterations later), we begin our discussion with the following definition.


\begin{definition} Let $\Q$ be a poset which is $\om_1$-closed with sups. $D\seq\Q$ is said to be \textbf{countably $=^*$-closed} if
\begin{enumerate}
\item[(a)] for each $q\in D$ and  $r\in\Q$, if  $r=^*q$, then $r\in D$;
\item[(b)] if $\la q_n:n\in\om\ra$ is an increasing sequence of conditions all of which are in $D$ and if $q^*$ is a sup of the sequence, then $q^*\in D$.
\end{enumerate}
\end{definition}

Such sets $D$ will arise later as the domains of \emph{exact} (see below) residue functions, whose domains need not in general be all of the poset under consideration, but only a dense, $=^*$-closed subset. We will construct such functions in Proposition \ref{prop:gettingESRFs}.

The following is Neeman's notion of an exact strong residue function for $N$ with dense domain {above} $q$ (\cite{GiltonNeeman}, Definitions 1.6, 2.10), but with the requirement of strategic continuity strengthened to continuity.

\begin{definition}\label{def:ESRF} Let $\Q$ be a poset which is $\om_1$-closed with sups, and fix $N$ with $\Q\in N\prec H(\theta)$. Let $q\in\Q$.

A partial function $f:\Q/q\rightharpoonup\Q\cap N$ is said to be an \textbf{exact, strong residue function for $N$ above $q$} if it satisfies the following properties:
\begin{enumerate}
\item (dense domain) the domain of $f$ is a dense, countably $=^*$-closed subset $D$ of $\Q/q$;
\item (projection) $r\geq f(r)$ for all $r\in D$;
\item (order preservation) for all $r^*,r\in D$, if $r^*\geq r$, then $f(r^*)\geq f(r)$;
\item (strong residue) for any $r\in D$ and any $u\in\Q\cap N$ so that $u\geq f(r)$, there exists $r^*\geq r$ with $r^*\in D$ so that $f(r^*)\geq u$;
\item (countable continuity) if $\la r_n:n\in\om\ra$ is an increasing sequence of conditions in $D$ with a sup $r^*$, then $f(r^*)$ is a sup of $\la f(r_n):n\in\om\ra$.\footnote{Note that $r^*$ is in $D$ by (1) and also that the sequence $\la f(r_n):n\in\om\ra$ is increasing by (3).}
\end{enumerate}
We call such a pair $\la q,f\ra$ a \textbf{residue pair for $(N,\Q)$}, or just a \textbf{residue pair for $N$} if $\Q$ is clear from context.
\end{definition}

The following appears in \cite{GiltonNeeman} (Lemma 2.11).

\begin{lemma}\label{lemma:sepOP}
Suppose that $\Q$ is separative, $q\in\Q$, and that $f:\Q/q\lra\Q\cap N$ is a function satisfying properties (2) and (4) of Definition \ref{def:ESRF}. Then $f$ is order-preserving on its domain.
\end{lemma}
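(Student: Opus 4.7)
The plan is to argue by contradiction using separativity to manufacture an obstruction, and then use property (4) to produce an extension that defeats the obstruction, thereby forcing property (3) (order preservation) to hold automatically.

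More concretely, I would fix $r, r^* \in D$ with $r^* \geq r$ and assume toward a contradiction that $f(r^*) \not\geq f(r)$. Since $\Q$ is separative and $f(r), f(r^*) \in \Q \cap N$, I invoke separativity inside $N$ (which holds there by elementarity, since $\Q \in N$): there exists $u \in \Q \cap N$ with $u \geq f(r^*)$ such that $u$ is incompatible with $f(r)$ in $\Q \cap N$, hence also in $\Q$ since incompatibility is absolute.

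Next I apply the strong residue property (4) to the pair $(r^*, u)$: because $u \in \Q \cap N$ and $u \geq f(r^*)$, there is some $r^{**} \geq r^*$ with $r^{**} \in D$ and $f(r^{**}) \geq u$. By the projection property (2), this yields $r^{**} \geq f(r^{**}) \geq u$. On the other hand, $r^{**} \geq r^* \geq r$, and another application of (2) to $r$ gives $r \geq f(r)$, so $r^{**} \geq f(r)$ as well. Thus $r^{**}$ is a common extension of $u$ and $f(r)$, contradicting their incompatibility.

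The argument is quite short and uses essentially only separativity plus the two hypothesized properties; the only subtle step is justifying that the witness $u$ produced by separativity may be chosen inside $\Q \cap N$, which is handled by elementarity of $N$. No use is made of countable continuity, sup structure, or the $=^*$-closedness of $D$, so the lemma holds in considerable generality, which is presumably why it is stated separately.
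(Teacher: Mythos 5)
Your argument is correct and is essentially the standard proof of this fact (the paper itself does not prove the lemma but cites it from Gilton--Neeman, where the same separativity-plus-strong-residue contradiction is used): assuming $f(r^*)\not\geq f(r)$, separativity reflected into $N$ yields $u\in\Q\cap N$ with $u\geq f(r^*)$ and $u\perp f(r)$, and property (4) followed by property (2) produces $r^{**}$ extending both $u$ and $f(r)$, a contradiction. One phrasing caveat: the incompatibility of $u$ and $f(r)$ in $\Q$ should be justified by elementarity applied to the statement ``$u$ is incompatible with $f(r)$ in $\Q$'' (which $N$ satisfies), not by ``absoluteness'' from the subposet $\Q\cap N$ to $\Q$ --- incompatibility in $\Q\cap N$ does not in general imply incompatibility in $\Q$, though the elementarity argument you began with gives exactly what is needed.
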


\begin{example}\label{example:collapsetrivialresfunctions} Let $\al<\ka$ be inaccessible. Then the function $f:\ps\lra\ps\res\al$ given by $f(p)=p\res\al$ is an exact, strong residue function for any $M\prec H(\theta)$ with $M\cap\ka=\al$ above the condition $\es$ and has all of $\ps$ as its domain.
\end{example}

Our next task is to isolate the models which for us will play the role of ``$N$" in Definition \ref{def:ESRF}.  First some notation which we will fix for the remainder of the paper.

\begin{notation}\label{notation:wo}
Let $\lhd$ be a fixed well-order of $H(\ka^+)$.
\end{notation}

In the following definitions and claims we make a standard use of continuous sequences of elementary substructures $M_\alpha$, where $|M_\alpha|  = \alpha < \kappa$ and $M_\alpha \cap \kappa = \alpha$, to form natural restrictions $\ps^* \cap M_\alpha$ of posets $\ps^*$ which are members of the models on the chain.
For ease of notation in describing such chains, we use terminology similar to \cite{Krueger} and introduce the notion of a $P$-suitable sequence, for a parameter $P$.


\begin{definition}\label{def:suitable}
Let $P\in H(\ka^+)$ be a parameter. We say that a sequence $\la M_\al:\al\in A\ra$ is \textbf{$P$-suitable} if
\begin{enumerate}
\item $A\in{\cal{F}^+}$;
\item for each $\al\in A$, $\al$ is inaccessible, $M_\al\cap\ka=\al$, $\,^{<\al}M_\al\seq M_\al$, and $|M_\al|=\al$;
\item for each $\al\in A$, $M_\al\prec (H(\ka^+),\in,\lhd)$ and $P\in M_\al$;
\item if $\al<\be$ are in $A$, then $M_\al\in M_\be$, and also if $\ga\in A\cap\lim(A)$, then $M_\ga=\bigcup\lb M_\de:\de\in A\cap\ga\rb$.
\end{enumerate}
We refer to a single model $M$ satisfying (2) and (3) as a \textbf{$P$-suitable model}.
\end{definition}

It is clear from the definition that if $\vec{M}$ is $P$-suitable and $B\seq\dom(\vec{M})$ is in {$\cal{F}^+$}, then $\la M_\al:\al\in B\ra$ is also $P$-suitable. It is also clear that for any $P\in H(\ka^+)$, there exists a $P$-suitable sequence.

The next definition is the main item of this section; it specifies a class of posets which contains the Levy collapse $\ps$ and each of which can play the role of a preparatory forcing for a $\aleph_2$-c.c. iteration specializing Aronszajn trees. (The work in Section \ref{Section:specialize} is devoted to showing this.) It is helpful to recall Notation \ref{notation:FI}.

\begin{definition}\label{def:fksp}
Let $\ps^*$ be a poset in $H(\ka^+)$ which is $\om_1$-closed with sups and which collapses all cardinals in the interval $(\om_1,\ka)$. {In the case that $\ps^*=\ps$ (recall that $\ps=\col(\om_1,<\ka)$), we assume that $\ka$ is weakly compact, and in the case that $\ps^*$ is not $\ps$, we assume that $\ka$ is ineffable. Let $\cal{F}$ denote the following filter on $\ka$:}
$$
{\cal{F}=}\begin{cases}
{\cal{F}_{WC}} & {\text{if }\ps^*=\ps}\\
{\cal{F}_{in}} & {\text{otherwise,}}
\end{cases}
$$
{and let $\cal{I}$ denote the ideal dual to $\cal{F}$.}

We say that $\ps^*$ is \textbf{{$\cal{F}$}-strongly proper} if for any $\ps^*$-suitable sequence $\vec{M}$ {there exist an  $A\seq\dom(\vec{M})$ with $\dom(\vec{M})\bsl A\in\cal{I}$}, a sequence $\la p^*(M_\al):\al\in A\ra$ of conditions in $\ps^*$, and a sequence $\la\vp^{M_\al}:\al\in A\ra$ of functions satisfying the following properties, for each $\al\in A$:
\begin{enumerate}
\item $\vp^{M_\al}$ is an exact, strong residue function for $(M_\al,\ps^*)$ above $p^*(M_\al)$ {and $\vp^{M_\al}(p^*(M_\al))=0_{\ps^*}$;}
\item if $\be\in A$ is greater than $\al$, then $p^*(M_\al)$ and $\vp^{M_\al}$ are members of $M_\be$.
\end{enumerate}
We will refer to the sequence of pairs $\la\la p^*(M_\al),\vp^{M_\al}\ra:\al\in A\ra$ as a \textbf{residue system} for $\vec{M}\res A$ and $\ps^*$.
\end{definition}

\begin{remark}\hfill
\begin{enumerate}
\item {A corollary of (1) of the above definition is that $p^*(M_\al)$ is compatible with every condition in $\ps^*\cap M_\al$.} Such conditions were called \emph{universal} in \cite{CoxKrueger}.
\item {Note that any $\cal{F}$-strongly proper poset has size exactly $\ka$. It has size at least $\ka$ since the $\mathsf{GCH}$ holds and it collapses all cardinals in the interval $(\om_1,\ka)$ and has size no more than $\ka$ since it is a member of $H(\ka^+)$.}
\end{enumerate}
\end{remark}

\begin{example}\label{example:LevyCollapseFKSP} The Levy collapse poset $\ps$ is an example of a {$\cal{F}$}-strongly proper poset {(noting that $\cal{F}=\cal{F}_{WC}$ in this case)}. Indeed, letting $\vec{M}$ be any suitable sequence, {we may take the set $A$ in Definition \ref{def:fksp} to just be $\dom(\vec{M})$.} Then we define $p(M_\al):=\es$ for all $\al\in A$ and define $\vp^{M_\al}$ on the entire poset $\ps$ by $\vp^{M_\al}(p)=p\res\al$. As stated in Example \ref{example:collapsetrivialresfunctions}, each $\vp^{M_\al}$ is an exact, strong residue function for $M_\al$ above $\es$. The remaining properties of Definition \ref{def:fksp} are trivial.
\end{example}

In our intended applications, the posets playing the role of $\ps^*$ in Definition \ref{def:fksp} will be of the form $\ps\ast\dot{\C}$, where $\dot{\C}$ is a $\ps$-name for an iteration of club-adding with anticipation.

We now check, by a standard argument, that forcing with an {$\cal{F}$}-strongly proper poset preserves $\ka$.

\begin{lemma}\label{lemma:fkspPreserves} Suppose that $\ps^*$ is $\cal{F}$-strongly proper. Then forcing with $\ps^*$ preserves $\ka$.
\end{lemma}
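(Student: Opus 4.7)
The plan is to run the standard strongly-proper cardinal-preservation argument, using the residue system guaranteed by $\cal{F}$-strong properness. Since $\ps^*$ is $\om_1$-closed with sups it preserves $\om_1$, and since by hypothesis it collapses every cardinal in the open interval $(\om_1,\ka)$, the only remaining task is to show $\ka$ itself is not collapsed. For this it suffices to show that no condition forces a cofinal map from some $\lambda < \ka$ into $\ka$.

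Suppose toward a contradiction that $p \in \ps^*$, $\lambda < \ka$, and a $\ps^*$-name $\dot f$ satisfy $p \Vdash \dot f : \check{\lambda} \to \check{\ka}$ is cofinal in $\ka$. I would first fix a $\ps^*$-suitable sequence $\vec M$ in which $(p,\dot f,\lambda)$ belongs to every model, and then apply Definition \ref{def:fksp} to extract a residue system $\la (p^*(M_\al), \vp^{M_\al}) \mid \al \in A\ra$ for some $A \in \cal{F}^+$. Since $\cal{F}^+$-sets are unbounded in $\ka$, I can pick some $\al \in A$ with $\al > \lambda$.

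The next step is to build a common extension of $p$ and $p^*(M_\al)$. Since $\vp^{M_\al}(p^*(M_\al)) = 0_{\ps^*}$ and $p \in \ps^* \cap M_\al$ trivially extends $0_{\ps^*}$, the strong residue clause of Definition \ref{def:ESRF} produces $r^* \geq p^*(M_\al)$ in $\dom(\vp^{M_\al})$ with $\vp^{M_\al}(r^*) \geq p$; the projection property then yields $r^* \geq \vp^{M_\al}(r^*) \geq p$, so $r^*$ extends both $p^*(M_\al)$ and $p$. Because extensions of strongly $(M_\al,\ps^*)$-generic conditions remain strongly $(M_\al,\ps^*)$-generic, $r^*$ forces that $\dot G \cap M_\al$ is a $V$-generic filter over $\ps^* \cap M_\al$.

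Finally, since $\dot f \in M_\al$ every ordered pair occurring in the name $\dot f$ lies in $M_\al$, so the interpretation $\dot f^{\dot G}$ coincides with the interpretation $\dot f^{\dot G \cap M_\al}$ computed in $V[\dot G \cap M_\al]$, and its range is contained in $M_\al \cap \ka = \al$. Thus $r^* \Vdash \ran(\dot f) \seq \check{\al}$, which contradicts $p \Vdash \dot f$ being cofinal in $\ka$ since $r^* \geq p$ and $\al < \ka$. The step I expect to be the most delicate is this final identification of interpretations, which is the standard consequence of $M_\al$-genericity of $\dot G \cap M_\al$ combined with the fact that $\dot f$ is a name inside $M_\al$; the earlier steps are direct applications of the properties packaged into the definition of $\cal{F}$-strong properness.
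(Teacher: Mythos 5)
Your proposal is correct and follows essentially the same route as the paper's proof: fix a suitable sequence containing $p$ and (an $H(\ka^+)$-sized version of) $\dot f$, take a residue pair for a single model $M_\al$ on the sequence, amalgamate $p$ with $p^*(M_\al)$ (the paper cites the universality of $p^*(M_\al)$, you derive it from the strong residue and projection clauses, which is fine), and use strong genericity to force $\ran(\dot f)\seq M_\al\cap\ka=\al<\ka$, contradicting cofinality. One small repair in the final step: since the name $\dot f$ may have size $\ka$ while $|M_\al|=\al$, it is not true that every ordered pair occurring in $\dot f$ lies in $M_\al$; the correct justification, which you also invoke, is that for each $\xi<\lambda\seq M_\al$ the genericity of $\dot G_{\ps^*}\cap M_\al$ together with $\dot f,\xi\in M_\al$ (so the relevant maximal antichain of $\ps^*\cap M_\al$ deciding $\dot f(\xi)$ lies in $M_\al$ and is predense above $r^*$) yields that $\dot f(\xi)$ is forced into $M_\al\cap\ka$.
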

\begin{proof} By Definition \ref{def:fksp}, we know that $\ps^*$ preserves $\om_1$ and collapses all cardinals in the interval $(\om_1,\ka)$. Thus if $\ps^*$ does not preserve $\ka$, then we may find a condition $p\in\ps^*$ and a $\ps^*$-name $\dot{f}$ for a function with domain $\om_1$ which $p$ forces is cofinal in $\ka$. Since $\ps^*$ has size $\ka$, we may assume that the name $\dot{f}$ is a member of $H(\ka^+)$. Let $\vec{M}$ be a $\lb\dot{f},p,\ps^*\rb$-suitable sequence. Also let $A\seq\dom(\vec{M})$ witness Definition \ref{def:fksp}, and let $N$ denote the least model on the sequence $\vec{M}\res A$. By Definition \ref{def:fksp}, we may find a condition $p^*(N)$ and an exact, strong residue function for $(N,\ps^*)$ above $p^*(N)$. Since $p\in N$, Definition \ref{def:fksp}(1) implies that $p^*(N)$ and $p$ are compatible. So let $q$ be an extension of them both. Then since $q$ is an $(N,\ps^*)$-strongly generic condition and $\dot{f}\in N$, $q$ forces that $\ran(\dot{f})\seq N\cap\ka<\ka$. But this contradicts the fact that $p$ forces that $\dot{f}$ is unbounded in $\ka$.
\end{proof}

The remainder of the subsection is dedicated to proving lemmas about how suitable sequences interact with the weak compactness of $\ka$.

\begin{lemma}\label{lemma:UsefulSuitable0} Let $P\in H(\ka^+)$ and $\vec{M}$ be $P$-suitable. Then $P\seq\bigcup_{\al\in\dom(\vec{M})}M_\al$.
\end{lemma}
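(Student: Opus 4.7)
The plan is to use the fixed well-ordering $\lhd$ on $H(\ka^+)$ to find a canonical bijection onto $P$ which sits in every $M_\al$, and then to use the fact that $A$ is unbounded in $\ka$ (since $A \in \cal{F}^+$ is stationary) to cover $P$.

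More concretely: since $P \in H(\ka^+)$, we have $|P|^{H(\ka^+)} \leq \ka$; let $\mu$ denote this cardinality. Working inside $H(\ka^+)$, let $b$ be the $\lhd$-least bijection $b : \mu \to P$. I claim that $\mu, b \in M_\al$ for every $\al \in A$. Indeed, because $\lhd$ is a predicate in the signature of $M_\al \elem (H(\ka^+), \in, \lhd)$ (see Definition \ref{def:suitable}(3)) and $P \in M_\al$, the object $b$ (and its domain $\mu$) is definable from $P$ and $\lhd$ in $H(\ka^+)$, so by elementarity it lies in $M_\al$.

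Now split into two cases. If $\mu < \ka$, then $\mu \in M_\al \cap \ka = \al$ for any $\al \in A$, and so $\mu \leq \al$ and $\mu \seq \al \seq M_\al$; since $b \in M_\al$ and $M_\al$ is closed under function application, we conclude $P = b[\mu] \seq M_\al$, which gives more than we need. Otherwise $\mu = \ka$, and we use the sequence: for any $x \in P$, write $x = b(\xi)$ with $\xi < \ka$. Because $A$ is $\cal{F}$-positive it is stationary in $\ka$, hence unbounded, so there is some $\al \in A$ with $\al > \xi$. Then $\xi \in \al = M_\al \cap \ka \seq M_\al$, and since $b \in M_\al$, we get $x = b(\xi) \in M_\al \seq \bigcup_{\be \in A} M_\be$.

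There is no real obstacle here; the only thing to verify carefully is that the $\lhd$-least bijection witnessing $|P| \leq \ka$ belongs to each $M_\al$, which is immediate from the fact that $\lhd$ is part of the signature in Definition \ref{def:suitable}(3) and $P \in M_\al$. The argument uses none of the finer structure of $\cal{F}$ beyond the fact that $\cal{F}^+$-sets are unbounded (indeed stationary) in $\ka$.
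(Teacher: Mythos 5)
Your proof is correct and is essentially the paper's argument: both rest on the observation that, since each $M_\al\prec(H(\ka^+),\in,\lhd)$ contains $P$, the $\lhd$-least enumeration of $P$ lies in every $M_\al$, and then unboundedness of $\dom(\vec{M})$ together with $M_\al\cap\ka=\al$ covers $P$. The paper simply uses the $\lhd$-least surjection $\psi:\ka\to P$ and writes $P=\psi[\ka]=\bigcup_{\al}\psi[\al]\seq\bigcup_\al M_\al$, which folds your two cases into one line.
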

\begin{proof} By definition of a suitable sequence, each model on the sequence is elementary with respect to the fixed well-order $\lhd$ on $H(\ka^+)$, and therefore each model contains the $\lhd$-least surjection $\psi$ from $\ka$ onto $P$. Then 
$$
P=\psi[\ka]=\bigcup_{\al\in\dom(\vec{M})}\psi[\al]\seq\bigcup_{\al\in\dom(\vec{M})}M_\al.
$$
\end{proof}

\begin{lemma}\label{lemma:UsefulSuitable} Let $P\in H(\ka^+)$, and let $\vec{M}$ be $P$-suitable. Suppose that $M^*$ is a $\ka$-model containing $\vec{M}$ { and that $U$ is an $M^*$-normal ultrafilter on $\ka$ so that $\dom(\vec{M})\in U$. Let $j:M^*\lra N$ be the ultrapower embedding.} Then
\begin{enumerate}
\item $\ka\in\dom(j(\vec{M}))$, and $j(\vec{M})(\ka)=\bigcup_{\al\in\dom(\vec{M})}j[M_\al]$;
\item $j(P)\cap j(\vec{M})(\ka)=j[P];$
\item $\bigcup_{\al\in\dom(\vec{M})}M_\al$ is transitive, and $j^{-1}\res M_\ka$ is the transitive collapse of $M_\ka$.
\end{enumerate}
\end{lemma}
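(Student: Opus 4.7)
The plan is to address the three parts in order, with (2) and (3) leveraging (1).

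For (1), I would use the standard {\L}o\'s identification: since $\dom(\vec{M}) \in U$, $\kappa \in \dom(j(\vec{M}))$, and $j(\vec{M})(\kappa)$ is represented in the ultrapower by the function $\alpha \mapsto M_\alpha$. The $\supseteq$ inclusion is the easy direction: given $y \in M_\alpha$ for some $\alpha \in A := \dom(\vec{M})$, clause (4) of Definition~\ref{def:suitable} gives $y \in M_\beta$ for every $\beta \in A$ above $\alpha$, so the constant function $c_y$ witnesses $j(y) \in j(\vec{M})(\kappa)$. For the $\subseteq$ direction, write an arbitrary element as $[f]_U$ with $f(\alpha) \in M_\alpha$ on a $U$-large set. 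Observe that $A \cap \lim(A)$ is a club of $\kappa$ lying in $M^*$, hence in $U$ by $M^*$-normality. For $\alpha$ in this set, continuity of the chain forces $f(\alpha) \in M_{\delta(\alpha)}$ for some $\delta(\alpha) \in A \cap \alpha$; normality applied to the regressive function $\delta$ collapses it to a constant value $\delta^*$ on a $U$-large set. Finally, $\kappa$-completeness of $U$ applied to the $M^*$-definable partition $\{\{\alpha : f(\alpha) = y\} : y \in M_{\delta^*}\}$ (of size $\delta^* < \kappa$) yields some $y^* \in M_{\delta^*}$ with $[f]_U = j(y^*)$, as required.

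Part (2) is then essentially formal. For $\supseteq$: given $x \in P$, Lemma~\ref{lemma:UsefulSuitable0} places $x$ in some $M_\alpha$, and then $j(x)$ lies in both $j(P)$ and $j[M_\alpha] \subseteq j(\vec{M})(\kappa)$ by (1). For $\subseteq$: if $z \in j(P) \cap j(\vec{M})(\kappa)$, part (1) lets me write $z = j(y)$ with $y \in \bigcup_\alpha M_\alpha$, and elementarity of $j$ converts $j(y) \in j(P)$ into $y \in P$.

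For (3), I would first prove transitivity of $\bigcup_{\alpha \in A} M_\alpha$. Given $x \in M_\alpha$, let $\mu = |x|^{H(\kappa^+)}$; elementarity places $\mu$ in $M_\alpha$, so either $\mu < \alpha$ (in which case a bijection $f : \mu \to x$ chosen inside $M_\alpha$, combined with $\mu \subseteq M_\alpha$ and closure of $M_\alpha$ under $<\alpha$-sequences, forces $x \subseteq M_\alpha$) or $\mu = \kappa$ (in which case a bijection $f : \kappa \to x$ in some $M_\beta \ni x$ satisfies $f(\eta) \in M_\gamma$ whenever $\gamma \in A$ exceeds both $\beta$ and $\eta$, so again $x \subseteq \bigcup_\gamma M_\gamma$). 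Once transitivity is established, part (1) reads as $j[\bigcup_\alpha M_\alpha] = j(\vec{M})(\kappa) =: M_\kappa$, so $j$ restricts to an $\in$-isomorphism from the transitive set $\bigcup_\alpha M_\alpha$ onto $M_\kappa$. Its inverse $j^{-1} \upharpoonright M_\kappa$ is then an $\in$-isomorphism from $M_\kappa$ onto a transitive set, which by uniqueness of the Mostowski collapse is \emph{the} transitive collapse of $M_\kappa$.

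The main obstacle I anticipate is the $\subseteq$ direction of (1): one must carefully verify that each intermediate object (the regressive function $\delta$ and the $M_{\delta^*}$-indexed partition) genuinely lies in $M^*$ so that the $\kappa$-completeness and $M^*$-normality properties of $U$ apply. A subtler point in (3) is that individual $M_\alpha$'s need not themselves be transitive—they can contain sets of size $\kappa$ such as $\kappa$ itself—so transitivity of the union is a global phenomenon that genuinely uses the chain structure rather than any local closure of a single $M_\alpha$.
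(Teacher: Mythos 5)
Your proposal is correct, but it takes a noticeably more hands-on route than the paper, so a comparison is worth recording. For (1), the paper never opens up the ultrapower: it observes that $\kappa\in j(B)$ (where $B=\dom(\vec{M})$), that $j(B)\cap\kappa=B$ is unbounded in $\kappa$, and then applies clause (4) of the suitability definition \emph{inside $N$} at the limit point $\kappa$ of $j(B)$ to get $j(\vec{M})(\kappa)=\bigcup_{\al\in B}j(M_\al)$, finishing with $j(M_\al)=j[M_\al]$ since $|M_\al|<\kappa=\crit(j)$. Your $\subseteq$ direction reproves exactly this via a \L o\'s computation with pressing down and $M^*$-$\kappa$-completeness; that works (and you correctly flag that the regressive function and the partition must be checked to lie in $M^*$), but it is machinery the paper's elementarity argument avoids. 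One small misstatement in your write-up: $A\cap\lim(A)$ is in general only stationary, not club, since $A$ need not be closed; the conclusion you need, namely $A\cap\lim(A)\in U$, still holds because $A\in U$ by hypothesis while $\lim(A)$ is a club lying in $M^*$ and hence in $U$ by the normality argument. Similarly, in your $\supseteq$ direction, clause (4) literally gives $M_\al\in M_\be$ rather than $M_\al\subseteq M_\be$; the inclusion follows from $|M_\al|=\al<\be$ and elementarity of $M_\be$, so this is harmless but worth saying. For (3), the paper gets transitivity of $\bigcup_\al M_\al$ in one line by noting that a tail of $\vec{M}$ is $x$-suitable and quoting Lemma \ref{lemma:UsefulSuitable0}, whereas you argue directly with bijections and a cardinality split $\mu<\al$ versus $\mu=\kappa$; both are fine, and your closing identification of $j^{-1}\res M_\ka$ with the Mostowski collapse coincides with the paper's. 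In short: your proof is valid and slightly more self-contained at the ultrapower level, while the paper's is shorter because it pushes the whole first clause through $j$ and reads off the continuity clause in $N$.
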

\begin{proof} Let $B:=\dom(\vec{M})$. The first part of item (1) follows {since $B\in U=\lb X\in\cal{P}(\ka)\cap M^*:\ka\in j(X)\rb$.}  Thus $\ka\in\dom(j(\vec{M}))$, and so we may let $M_\ka:=j(\vec{M})(\ka)$. Additionally, $j(B)\cap\ka=B$, and so by Definition \ref{def:suitable}(4), $M_\ka=\bigcup_{\al\in B}j(M_\al)$. But $|M_\al|=\al<\ka$ for each $\al\in B$, and hence $j(M_\al)=j[M_\al]$. Thus
$$
M_\ka=\bigcup_{\al\in B}j[M_\al],
$$
completing the proof of (1). (2) follows immediately. 

For (3), observe that if $x\in M:=\bigcup_{\al\in\dom(\vec{M})}M_\al$, then a tail of the sequence $\vec{M}$ is $x$-suitable, and so $x\seq M$ by Lemma \ref{lemma:UsefulSuitable0}. Thus $M$ is transitive. Since $j^{-1}\res M_\ka$ is an $\in$-isomorphism whose range (namely $M$) is transitive, $j^{-1}$ is the transitive collapse.
\end{proof}

\begin{lemma}\label{lemma:refineToGetSP} Suppose that $\vec{M}$ is $\ps^*$-suitable and that $\la\la p^*(M_\al),\vp^{M_\al}\ra:\al\in\dom(\vec{M})\ra$ is a residue system for $\vec{M}$ and $\ps^*$. 
Then we may { find some $A\seq\dom(\vec{M})$ with $\dom(\vec{M})\bsl A\in\cal{I}$} so that
for all $\al\in A$, $\ps^*\cap M_\al\Vdash\check{\al}=\dot{\aleph}_2$.
\end{lemma}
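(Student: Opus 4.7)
The plan is to show that $A := \{ \alpha \in B : \ps^* \cap M_\al \Vdash \check{\al} = \dot{\aleph}_2 \}$, where $B := \dom(\vec{M})$, has complement in $\cal{I}$. Since $\cal{F}_{WC} \seq \cal{F}$ and hence $\cal{F}^+ \seq \cal{F}_{WC}^+$ (Remark \ref{rmk:iewc}(2)), it will suffice to show $B \bsl A \notin \cal{F}_{WC}^+$. I will argue by contradiction using Proposition \ref{proposition:characterizefwc}: assuming $B \bsl A \in \cal{F}_{WC}^+$, for a suitable $\ka$-model $M^*$ there will exist an $M^*$-normal ultrafilter $U$ with $B \bsl A \in U$, and the associated ultrapower embedding will yield the contradiction.

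First I will fix a $\ka$-model $M^* \prec H(\theta)$ (with $\theta$ large) containing $\ps^*$, $\vec{M}$, and the residue system (so in particular $B, A \in M^*$). By Remark \ref{rmk:iewc}(3) there is an $M^*$-normal ultrafilter $U$ on $\ka$ with $B \bsl A \in U$; let $j : M^* \lra N = \Ult(M^*, U)$ be the induced ultrapower embedding, with $\crit(j) = \ka$, and set $M_\ka := j(\vec{M})(\ka)$. Lemma \ref{lemma:UsefulSuitable} gives $\ka \in \dom(j(\vec{M}))$ and $j(\ps^*) \cap M_\ka = j[\ps^*]$; since every condition in $\ps^*$ has rank less than $\ka$, $j$ will fix each such condition, so $j[\ps^*] = \ps^*$. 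From $B \bsl A \in U$ I obtain $\ka \in j(B \bsl A) = j(B) \bsl j(A)$, and elementarity of $j$ applied to the defining property of $A$ will translate $\ka \notin j(A)$ into the assertion that $\ps^*$, regarded in $N$ as the subforcing $j(\ps^*) \cap M_\ka$, does not force $\check{\ka} = \dot{\aleph}_2$ in $N$.

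To contradict this I will verify that $N \models \ps^* \Vdash \check{\ka} = \dot{\aleph}_2$. For the collapse of $(\om_1, \ka)$: since $M^* \prec H(\theta)$, for each cardinal $\mu \in (\om_1, \ka)$ the model $M^*$ contains a nice $\ps^*$-name $\dot{\psi}_\mu \in V_{\ka+1} \cap M^*$ for a surjection $\om_1 \lra \mu$. A standard transitive-collapse analysis of $\Ult(M^*, U)$ using $\crit(j) = \ka$ gives $V_\ka \cap M^* = V_\ka \cap N$ and $V_{\ka+1} \cap M^* \seq N$; hence $M^*$ and $N$ agree on cardinals in $(\om_1, \ka)$, and $\dot{\psi}_\mu \in N$ will witness the collapse there. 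For preservation of $\ka$: Lemma \ref{lemma:fkspPreserves} gives $\ps^* \Vdash_V \check{\ka}$ is a cardinal, an assertion universally quantified over nice $\ps^*$-names of rank at most $\ka$; since the forcing relation for such nice names is absolute between transitive models containing $\ps^*$ and the relevant names, this will descend to $N$. Combining will yield $N \models \ps^* \Vdash \check{\ka} = \dot{\aleph}_2$, the desired contradiction. The hard part will be carefully tracking this absoluteness between $V$, $M^*$, and $N$; this rests on the $V_{\ka+1}$-agreement that stems from $\crit(j) = \ka$.
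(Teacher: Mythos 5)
Your overall strategy is the paper's: assume the bad set is $\cal{F}_{WC}$-positive, pick a $\ka$-model $M^*$ containing the relevant parameters, use Proposition \ref{proposition:characterizefwc} to get an $M^*$-normal ultrafilter concentrating on the bad set, and then contradict the resulting statement about $j(\ps^*)\cap M_\ka$ in $N$ by transferring ``$\ps^*\Vdash\check{\ka}=\dot{\aleph}_2$'' (Lemma \ref{lemma:fkspPreserves}) into $N$. The paper's own proof simply asserts this last transfer after observing $j(\ps^*)\cap M_\ka=j[\ps^*]\cong\ps^*$, so your attempt to spell it out is in the right spirit; but two of the specific claims you lean on would fail for the posets $\ps^*$ to which the lemma is actually applied.

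First, the claim that every condition of $\ps^*$ has rank less than $\ka$, hence $j[\ps^*]=\ps^*$, is false in general: an $\cal{F}$-strongly proper $\ps^*$ is an arbitrary member of $H(\ka^+)$, and in the intended application $\ps^*=\ps\ast\dot{\C}_\tau$ its conditions are pairs whose second coordinates are $\ps$-names of size and rank at least $\ka$ (already the $<\ka$-supports contain ordinals in $[\ka,\tau)$ when $\tau\geq\ka$), so $j$ moves them. What is true, and what the paper uses, is Lemma \ref{lemma:UsefulSuitable}: $j(\ps^*)\cap M_\ka=j[\ps^*]$, and $j\res\ps^*$ is an isomorphism of $\ps^*$ onto this subforcing whose inverse (the transitive collapse of $M_\ka$) is available in $N$; the forcing statement must then be transported through this isomorphism rather than read off from a literal identity. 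Second, and for the same reason, nice $\ps^*$-names for surjections $\om_1\to\mu$ need not lie in $V_{\ka+1}$ (conditions can have rank $\geq\ka$, and since $\ps^*$ need not be $\ka$-c.c.\ the antichains involved can have size $\ka$), so the inclusion $V_{\ka+1}\cap M^*\seq N$ does not by itself put the required collapsing names into $N$. The standard repair is to choose $M^*$ so that it contains a $\ka$-sequence of such names together with codes for them as subsets of $\ka$; then $\power(\ka)\cap M^*\seq N$ and absoluteness of decoding place them in $N$, where they are converted into $j[\ps^*]$-names via the isomorphism, while the preservation of $\ka$ and of $\om_1$ descends to $N$ exactly as you say (universal statements over names, with the forcing relation absolute for fixed names). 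Finally, a transitive $\ka$-model of size $\ka$ cannot be elementary in $H(\theta)$ for $\theta>\ka^+$ (it would compute $\ka^+$ incorrectly), so ``fix a $\ka$-model $M^*\prec H(\theta)$'' should be replaced, as in the paper, by ``fix a $\ka$-model containing the relevant parameters,'' with any needed witnesses placed into $M^*$ when it is built. With these repairs your argument coincides with the paper's proof.
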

\begin{proof}{ Suppose that $\vec{M}$ and $\la\la p^*(M_\al),\vp^{M_\al}\ra:\al\in\dom(\vec{M})\ra$ are as in the statement of the lemma. For a contradiction, assume that }
$$
{B:=\lb\al\in\dom(\vec{M}):\ps^*\cap M_\al\not\Vdash\check{\al}=\dot{\aleph}_2\rb\in\cal{F}^+.}
$$
{Since $\cal{F}_{WC}\seq\cal{F}$, $B$ is also in $\cal{F}_{WC}^+$.} 

{Let $M^*$ be a $\ka$-model containing $\vec{M}$, $B$, $\ps^*$, and the sequence $\la\la p^*(M_\al),\vp^{M_\al}\ra:\al\in\dom(\vec{M})\ra$. By Proposition \ref{proposition:characterizefwc}, since $\ka\bsl B\notin\cal{F}_{WC}$, we may find some $M^*$-normal measure $U$ so that, letting $j:M^*\lra N$ be the associated ultrapower embedding, $\ka\in j(B)$.} {Let $M_\kappa=j(\vec{M})(\kappa)$. Then}   {$N$ satisfies that $j(\ps^*)\cap M_\ka$ does not force that $\check{\ka}=\dot{\aleph}_2$. On the other hand, by the previous lemma, we know that $j(\ps^*)\cap M_\ka=j[\ps^*]$. Since $j[\ps^*]$ is isomorphic to $\ps^*$ and $\ps^*$ forces that $\ka$ becomes $\aleph_2$ (by Lemma \ref{lemma:fkspPreserves}), this implies that $j[\ps^*]$ forces that $\ka=\dot{\aleph}_2$. Thus $N$ also satisfies that $j[\ps^*]=j(\ps^*)\cap M_\ka$ forces that $\check{\ka}=\dot{\aleph}_2$, a contradiction.}

\end{proof}

We recall that in Definition \ref{def:fksp}(1), the condition $p^*(M_\al)$ is required to be compatible with every condition in $\ps^*\cap M_\al$. A practical corollary of this is that any generic for $\ps^*$ contains plenty of conditions of the form $p^*(M_\al)$.

\begin{lemma}\label{lemma:unbounded} Suppose that $\vec{M}$ is $\ps^*$-suitable, that $
\la\la p^*(M_\al),\vp^{M_\al}\ra:\al\in\dom(\vec{M})\ra$
is a residue system for $\vec{M}$ and $\ps^*$, and that $B\seq\dom(\vec{M})$ is in {$\cal{F}^+$}. Suppose that there is a condition $\bar{p}\in\ps^*$ satisfying that for each $\al\in B$, there is a condition $p_\al\in\dom(\vp^{M_\al})$ so that $\bar{p}=^*\vp^{M_\al}(p_\al)$. Then

$$
\bar{p}\Vdash \dot{X}:=\lb\al<\ka:p_\al\in \dot{G}_{\ps^*}\rb\text{ is unbounded in }\ka.
$$

In particular, taking $p_\al:=p^*(M_\al)$ {with $\bar{p}$ the trivial condition, and recalling Definition \ref{def:fksp}(1),}
$$
\ps^*\Vdash \lb\al<\ka:p^*(M_\al)\in \dot{G}_{\ps^*}\rb\text{ is unbounded in }\ka.
$$
Moreover, letting $E$ abbreviate $\dom(\vec{M})$, if $\al\in E\cap\lim(E)$, then 
$$
\Vdash_{\ps^*\cap M_\al}\lb\xi<\al:p^*(M_\xi)\in\dot{G}_{\ps^*\cap M_\al}\rb\text{ is unbounded in }\al.
$$

\end{lemma}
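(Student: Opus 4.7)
The plan is to prove the main statement by a density argument using an $M^*$-normal ultrafilter, and then to deduce the subsequent clauses from it. Fix $q \geq \bar p$ in $\ps^*$ and $\gamma < \ka$; I aim to find an extension $r \geq q$ in $\ps^*$ with $r \geq p_\al$ for some $\al \in B \cap (\gamma,\ka)$. Choose a $\ka$-model $M^*$ containing $\vec M$, $\ps^*$, $B$, $\bar p$, $q$, $\gamma$, the residue system, and a fixed choice sequence $\la p_\al : \al \in B\ra$. Since $B \in \cal{F}^+ \seq \cal{F}_{WC}^+$ (Remark \ref{rmk:iewc}(2)), Remark \ref{rmk:iewc}(3) produces an $M^*$-normal ultrafilter $U$ with $B \in U$; let $j : M^* \to N$ be its ultrapower embedding, so $\ka \in j(B)$. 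Set $M_\ka := j(\vec M)(\ka)$, $p_\ka := j(\la p_\al\ra_{\al\in B})(\ka)$, and $\vp^{M_\ka} := j(\la\vp^{M_\al}\ra)(\ka)$. By elementarity, in $N$ the function $\vp^{M_\ka}$ is an exact strong residue function for $(M_\ka, j(\ps^*))$ above $j(p^*)(M_\ka)$, $p_\ka \in \dom(\vp^{M_\ka})$, and $j(\bar p) =^* \vp^{M_\ka}(p_\ka)$.

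The crux is Lemma \ref{lemma:UsefulSuitable}(2), which gives $j(\ps^*) \cap M_\ka = j[\ps^*]$; in particular $j(q) \in j(\ps^*) \cap M_\ka$. Since $j(q) \geq j(\bar p) \geq \vp^{M_\ka}(p_\ka)$, the strong residue clause (Definition \ref{def:ESRF}(4)) applied in $N$ yields $r^* \in \dom(\vp^{M_\ka})$ with $r^* \geq p_\ka$ and $\vp^{M_\ka}(r^*) \geq j(q)$; the projection clause then gives $r^* \geq j(q)$. Thus $N$ satisfies
\[\exists \al \in j(B),\ \exists r \in j(\ps^*)\,\bigl(\al > \gamma \we r \geq j(q) \we r \geq j(\la p_\al : \al \in B\ra)(\al)\bigr),\]
witnessed by $\al = \ka$ and $r = r^*$. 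By elementarity of $j$, the same (with parameters $B, \gamma, \ps^*, q, \la p_\al\ra$) holds in $M^*$, hence in $V$, producing the desired $\al$ and $r$; this $r$ forces $\al \in \dot X$, so the density argument is complete.

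For the ``in particular'' clause, take $\bar p := 0_{\ps^*}$ and $p_\al := p^*(M_\al)$; Definition \ref{def:fksp}(1) supplies $\vp^{M_\al}(p^*(M_\al)) = 0_{\ps^*}$, so the hypothesis $\bar p =^* \vp^{M_\al}(p_\al)$ is trivial. For the moreover clause with $\al \in E \cap \lim(E)$: fix $q \in \ps^* \cap M_\al$ and $\gamma < \al$. Using $M_\al = \bigcup_{\de \in E \cap \al} M_\de$, pick $\de_0 \in E \cap \al$ with $q \in M_{\de_0}$ and then $\xi \in E \cap \al$ with $\xi > \max(\gamma, \de_0)$; the inclusion $M_{\de_0} \seq M_\xi$ (a standard consequence of $M_{\de_0} \in M_\xi$, $|M_{\de_0}| < \xi$, and $\,^{<\xi} M_\xi \seq M_\xi$) gives $q \in M_\xi$. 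By the universality corollary of Definition \ref{def:fksp}(1), $q$ and $p^*(M_\xi)$ have a common extension in $\ps^*$; since both lie in $M_\al \elem H(\ka^+)$, such an extension can be chosen in $\ps^* \cap M_\al$. The delicate step of the whole proof is verifying that $j(q) \in j(\ps^*) \cap M_\ka$ so that the strong residue axiom applies, and this rests entirely on Lemma \ref{lemma:UsefulSuitable}(2).
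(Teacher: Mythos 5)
Your proof is correct, but for the main clause it takes a genuinely different, and considerably heavier, route than the paper. The paper argues directly in $V$: given $p\geq\bar p$ and $\ga<\ka$, since $B\in\cal{F}^+$ is in particular unbounded and $\ps^*\seq\bigcup_{\al\in\dom(\vec M)}M_\al$ (Lemma \ref{lemma:UsefulSuitable0}), with the models increasing one finds $\be\in B\bsl(\ga+1)$ with $p\in M_\be$; then $p\in\ps^*\cap M_\be$ extends $\bar p=^*\vp^{M_\be}(p_\be)$, so the strong-residue property makes $p$ and $p_\be$ compatible, and any common extension forces $\be\in\dot X\bsl\ga$. You instead reflect with an $M^*$-normal ultrafilter containing $B$, use $j(\ps^*)\cap M_\ka=j[\ps^*]$ (Lemma \ref{lemma:UsefulSuitable}) to see $j(q)\in M_\ka$, apply the strong-residue clause in $N$ at the point $\ka\in j(B)$, and pull the witness back by elementarity; this works (the transfer of the residue-system clauses from $V$ to the transitive $\ka$-model $M^*$ and thence to $N$ is legitimate by transitivity/absoluteness), but it invokes the ultrafilter characterization of $\cal{F}^+$-positivity where the paper only needs unboundedness of $B$, and it quietly assumes the choice sequence $\la p_\al:\al\in B\ra$ and the residue system are packaged into $M^*$, which is fine but is extra bookkeeping the direct argument avoids. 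Your treatment of the ``in particular'' clause is exactly the paper's, and your ``moreover'' argument coincides with the paper's (which calls it identical), with the useful addition that you make explicit both $M_{\de_0}\seq M_\xi$ and the elementarity-of-$M_\al$ step needed to place a common extension of $q$ and $p^*(M_\xi)$ inside $\ps^*\cap M_\al$ (where $p^*(M_\xi)\in M_\al$ comes from Definition \ref{def:fksp}(2)). The only blemishes are notational: $j(p^*)(M_\ka)$ should read $j(\la p^*(M_\al):\al\in\dom(\vec M)\ra)(\ka)$, and the bound variable clash in your displayed existential formula ($\exists\al\in j(B)$ versus $j(\la p_\al:\al\in B\ra)$) should be renamed.
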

\begin{proof} Let $p\in\ps^*$ be a condition extending $\bar{p}$, and let $\ga<\ka$. We find an extension of $p$ which forces that $\dot{X}\bsl\ga\neq\es$. Since $B$ is unbounded in $\ka$ and $\vec{M}$ is $\ps^*$-suitable, Lemma \ref{lemma:UsefulSuitable0} implies that $p\in M_\be$ for some $\be\in B\bsl(\ga+1)$. By definition of an exact, strong residue function, $p_\be$ is compatible with $p\geq\bar{p}=^*\vp^{M_\be}(p_\be)$. {Therefore, let $p^*$ be a common extension; then $p^*\Vdash\be\in\dot{X}\bsl\ga$.}

The proof in the case $\al\in E\cap\lim(E)$ is identical, using the fact that, by Definition \ref{def:suitable}, $M_\al=\bigcup_{\xi\in B\cap\al}M_\xi$ in this case.
\end{proof}

\section{{$\cal{F}$}-Strongly Proper Posets and Specializing Aronszajn Trees on $\om_2$}\label{Section:specialize}\label{section:specialize}

In this section we will prove that if $\ps^*$ is an {$\cal{F}$}-strongly proper poset, then we can iterate to specialize Aronszajn trees on $\ka$ in the extension by $\ps^*$ {(see Definition \ref{def:fksp} for the definition of $\cal{F}$).} Recall from Lemma \ref{lemma:fkspPreserves} that $\ps^*$ forces that $\ka$ becomes $\aleph_2$ and also preserves the $\ch$; thus there are in fact Aronszajn trees on $\ka$ in any $\ps^*$-extension. By Example \ref{example:LevyCollapseFKSP}, the collapse poset $\ps$ is {$\cal{F}$}-strongly proper, and therefore, {in the case that $\ps^*=\ps$,} our results here generalize those of \cite{LS} {(recall that $\cal{F}=\cal{F}_{WC}$ when $\ps^*=\ps$).}

We consider a countable support iteration $\dot{\S} = \la \dot{\S}_\xi, \dot{\S}(\xi) : \xi < \kappa^+\ra$ of length $\kappa^+$, specializing Aronszajn trees on $\ka$ in the $\ps^*$-extension. More precisely, $\dot{\S}$ is a $\ps^*$-name for an iteration with countable support so that for any $\xi<\ka^+$, $\dot{\S}(\xi)$ is an $\dot{\S}_\xi$-name for the poset $\dot{\S}(\dot{T}_\xi)$, where $\dot{T}_\xi$ is a nice $\dot{\S}_\xi$-name for an Aronszajn tree on $\ka$; see Definition \ref{def:specposet} for the exact definition of posets of the form $\S(T)$. The selection of the names $\dot{T}_\xi$ - and hence the definition of the iteration - is determined by using the fixed well-order $\lhd$ of $H(\ka^+)$ from Notation \ref{notation:wo} as a bookkeeping function. In particular, for each $\xi<\ka^+$, the name $\dot{\S}_\xi$ is definable in $(H(\ka^+),\in,\lhd)$ from $\ps^*$ and $\xi$, and consequently it is a member of any model which is suitable with respect to $\ps^*$ and $\xi$. We will use $\R_\xi$ to abbreviate $\ps^*\ast\dot{\S}_\xi$ for each $\xi< \kappa^+$.

Since the poset $\R_\xi$ is $\om_1$-closed, it is straightforward to see that $\R_\xi$ has a dense set of determined conditions, i.e., conditions $(p,\dot{f})$ for which there is some function $f$ in $V$ so that $p\Vdash_{\ps^*}\dot{f}=\check{f}$. 
The dense set of determined conditions is also closed under sups of countable increasing sequences. Thus we will assume that all future conditions are determined.

\begin{notation}\label{notation:domain}
Strictly speaking, the domain of a (determined) condition $f$ in $\R_\xi$  is a countable subset of $\xi$, and for each $\zeta\in \dom(f)$, $f(\zeta)$ is itself a function whose domain is a countable subset of $\om_1$. However, we will often make an  abuse of notation and write $f(\zeta,\nu)$ to mean the countable set of tree nodes $f(\zeta)(\nu)$.
\end{notation}

The main goal of this section is to prove that $\dot{\S}$ is forced to be $\kappa$-c.c. For this it suffices to prove the following:

\begin{theorem}\label{thmSkappacc}
For every $\rho<\kappa^+$  it is forced by the trivial condition of $\ps^*$ that $\dot{\S}_\rho$ is $\kappa$-c.c. 
\end{theorem}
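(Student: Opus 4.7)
The plan is a reflection-based contradiction argument that generalizes the Laver-Shelah $\kappa$-c.c.\ proof from the pure Levy collapse to the $\cal{F}$-strongly proper setting. Suppose toward contradiction that some determined condition $(p_0, \dot{s}_0) \in \R_\rho$ forces that $\dot{A} = \{ \dot{a}_\zeta : \zeta < \kappa \}$ is an antichain of size $\kappa$ in $\dot{\S}_\rho$. Fix a suitable sequence $\vec{M} = \la M_\al : \al \in E \ra$ containing $\ps^*$, $\rho$, $\dot{A}$, and $(p_0, \dot{s}_0)$; apply $\cal{F}$-strong properness of $\ps^*$ to obtain a residue system $\la (p^*(M_\al), \vp^{M_\al}) : \al \in A_0 \ra$; and refine via Lemma~\ref{lemma:refineToGetSP} to an $\cal{F}^+$-set $A \seq A_0$ on which $\ps^* \cap M_\al \Vdash \check{\al} = \dot{\aleph}_2$.

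For each $\al \in A$, extend $(p_0, \dot{s}_0)$ to a determined condition $(q_\al, \dot{t}_\al) \in \R_\rho$ above $(p^*(M_\al), \dot{0})$ which forces some $\dot{a}_{\zeta_\al}$, with $\zeta_\al \geq \al$, into the generic (so $\dot{t}_\al \geq \dot{a}_{\zeta_\al}$) and which decides the countable data of $\dot{t}_\al$: the support $\si_\al \seq \rho$, each domain $d_\al \seq \om_1$, and each antichain $\dot{t}_\al(\xi, \nu)$ in the sense of Notation~\ref{notation:domain}. Encode this data as an element $x_\al = (\vp^{M_\al}(q_\al), \si_\al, d_\al, \la \dot{t}_\al(\xi,\nu) : \xi \in \si_\al, \nu \in d_\al \ra) \in M_\al$. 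Work inside a $\kappa$-model $M^*$ containing $\vec{M}$, $\dot{A}$, and $\la x_\al : \al \in A \ra$; by Remark~\ref{rmk:iewc}(3), fix an $M^*$-normal ultrafilter $U$ with $A \in U$. A standard $\Delta$-system pruning inside $M^*$, combined with the normality of $U$, yields a set $A' \in U$ on which the supports $\si_\al$ form a $\Delta$-system with root $\si^*$, the residues $\vp^{M_\al}(q_\al)$ are constant up to $=^*_{\ps^*}$, and the values $\dot{t}_\al(\xi,\nu)$ at coordinates $\xi \in \si^*$ are constant.

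Pick any $\al < \be$ in $A'$. By Lemmas~\ref{lemma:residueForceQuotient} and~\ref{lemma:quotientFUN}, $q_\al$ and $q_\be$ are $\ps^*$-compatible; fix a common extension $q^*$. It then suffices to amalgamate $\dot{t}_\al$ and $\dot{t}_\be$ into a single determined condition in $\dot{\S}_\rho$ above $q^*$. On coordinates in $\si_\al \triangle \si_\be$ nothing needs checking, and on $\xi \in \si^*$, $\nu \in d_\al \cap d_\be$ the values are identical by uniformity. The main obstacle — and the combinatorial heart of the proof, inherited from Laver-Shelah — is the case $\xi \in \si^*$ with $\nu$ in the symmetric difference of $d_\al$ and $d_\be$: here we must check that no node of $\dot{t}_\al(\xi,\nu)$ is $<_T$-comparable to a node of $\dot{t}_\be(\xi,\nu)$ in $\dot{T}_\xi$. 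Strong genericity of $p^*(M_\al)$, via Lemma~\ref{lemma:residueForceQuotient}, ensures that the tree nodes used in $\dot{t}_\al$ belong to the initial segment of $\dot{T}_\xi$ visible in the intermediate extension by $\ps^* \cap M_\al$, which by the refinement to $A$ is a tree of height $\al$. The ``new'' nodes contributed by $\dot{t}_\be$ outside $M_\al$ sit at levels $\geq \al$, and the Aronszajn property of $\dot{T}_\xi$ (no cofinal branch into the intermediate model) together with the uniformity given by $U$ rules out the potential comparabilities between these disjoint parts. Thus $(q^*, \dot{t}_\al \cup \dot{t}_\be)$ is a common extension of $(q_\al, \dot{t}_\al)$ and $(q_\be, \dot{t}_\be)$ forcing both $\dot{a}_{\zeta_\al}$ and $\dot{a}_{\zeta_\be}$ into the generic, contradicting that $\dot{A}$ is an antichain.
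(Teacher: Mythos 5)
Your overall frame (suppose a $\kappa$-sized antichain, reflect to models $M_\al$ on a suitable sequence, press down/uniformize with an $M^*$-normal ultrafilter, then amalgamate two conditions) is the right shape, but the decisive step is asserted rather than proved, and as stated it fails. Having chosen a single condition $(q_\al,\dot t_\al)$ per $\al$, you must show that some common $\ps^*$-extension $q^*$ of $q_\al,q_\be$ forces $\check t_\al\cup\check t_\be\in\dot\S_\rho$, i.e.\ that \emph{every} pair of nodes $\theta\in t_\al(\xi,\nu)$, $\tau\in t_\be(\xi,\nu)$ is forced $<_{\dot T_\xi}$-incomparable. Your claim that the nodes of $t_\al$ live in the part of $\dot T_\xi$ visible to $M_\al$ is not true: $t_\al$ is a condition in the full iteration and, since it must extend $\dot a_{\zeta_\al}$ with $\zeta_\al\ge\al$, it will in general contain nodes of arbitrarily high level below $\kappa$. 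For such high nodes, neither strong genericity of $p^*(M_\al)$, nor the Aronszajn property of $\dot T_\xi$, nor ``uniformity given by $U$'' rules out that some extension of $q^*$ forces a high node of $t_\al$ below a high node of $t_\be$; comparability is decided by the forcing, and nothing in your construction pins down incompatible predecessors. This is exactly the obstruction that the paper's (and Laver--Shelah's) $\#$/$*$ machinery is built to remove: for each $\al$ one constructs a \emph{splitting pair} $u^*_\al,v^*_\al$ of twin conditions with the same restriction to $M_\al$ which, for every pair of high nodes, already force two \emph{distinct} same-level predecessors below $\al$ (recorded by a splitting function $\Si_\al\in M_\al$); one then presses down so that the $M_\al$-restrictions and the splitting functions are constant, and amalgamates $u^*_\al$ with the \emph{twin} $v^*_\be$ (Lemmas \ref{lemma:splittingpairs}, \ref{lemma:crosscompatible}, \ref{lemma:amalgamatesplittingpair}), the constant $\Si$ being what forces incomparability of the high-node pairs. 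A single condition per $\al$ with a $\Delta$-system of supports cannot substitute for this.

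A second, structural omission: constructing splitting pairs requires the dual residue property (the paper's Inductive Hypothesis II, via Lemma \ref{lemma:1split}), whose verification at $\rho$ (Proposition \ref{prop:InductiveIandII}) is where the ineffability of $\ka$ enters when $\ps^*$ is not the plain collapse; the theorem is proved by a simultaneous induction on $\rho$ carrying both hypotheses. Your argument never engages with residues beyond bare compatibility of $q_\al$ and $q_\be$ (which itself needs $q_\al\in M_\be$ extending the common residue, a point your pruning does not arrange as written, though it is repairable), never uses the second hypothesis, and never uses ineffability --- a sign that the essential content of the proof is missing rather than merely compressed.
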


We will prove Theorem \ref{thmSkappacc} by induction on $\rho$. Doing so will require two induction hypotheses, the first of which is the following:
\vspace{3mm}

\textbf{Inductive Hypothesis I:} For each $\xi<\rho$, $\ps^*\Vdash\dot{\S}_\xi$ is $\ka$-c.c.
\vspace{3mm}

We will assume Inductive Hypothesis I throughout the entire section. Later in the section, after developing more of the theory, we will introduce a second, more technical inductive hypothesis; we state this after Remark \ref{remark:splittingfunction}. Though we assume the first inductive hypothesis throughout, we will only use the second inductive hypothesis once it is introduced, and the results prior to the statement thereof do not require it.

The rest of the section will proceed as follows. We will first establish, in Proposition \ref{proposition:ccreflect}, that for all $\xi<\rho$, there are plenty of intermediate generic extensions between $V$ and the full $\R_\xi$-extension in which various restrictions of Aronszajn trees are Aronszajn in the intermediate model. In light of this, we will define analogues of the ``hashtag" and ``star" principles from \cite{LS}; the former will say that two conditions have the same restriction to a given model, whereas the latter says that two conditions have a dual residue to a given model. Afterwards, we define the notion of a splitting pair of conditions, a notion which will play a key role in later amalgamation arguments. Next, we will state our second induction hypothesis, which describes the interplay between the star and hashtag principles. Using the second induction hypothesis, we will prove that splitting pairs exist and isolate sufficient conditions under which they can be amalgamated (see Lemma \ref{lemma:amalgamatesplittingpair}). Finally, we show that $\dot{\S}_\rho$ is forced to be $\ka$-c.c., and we verify that the second induction hypothesis holds at $\rho$. As mentioned in Remark \ref{rmk:iewc}(1), the only substantial use of the ineffability of $\ka$ is in verifying that the second induction hypothesis holds at $\ka$.

\begin{definition}\label{def:almostextend} Let $G^*$ be $V$-generic over $\ps^*$. If $\xi\leq\rho$, $f\in\S_\xi$, and $\bar{f}$ is a function (not necessarily a condition), we write $f\geq\bar{f}$ to mean that $\dom(\bar{f})\seq\dom(f)$ and for all $\la\zeta,\nu\ra\in\dom(\bar{f})$, $\bar{f}(\zeta,\nu)\seq f(\zeta,\nu)$.
\end{definition}

The next item establishes the existence of the desired intermediate generic extensions between $V$ and $V[\R_\xi]$ for $\xi<\rho$, and in turn the existence of plenty of residues. We recommend recalling Lemma \ref{lemma:UsefulSuitable} and Notation \ref{notation:FI} before reading the proof. {In the statement of the following proposition, we will assume that the various names are nice names for subsets of $H(\ka^+)$. Thus, in light of our discussion about determined conditions, the name $\dot{\S}_\zeta$ will be viewed as a union of sets of the form $\lb f\rb\times A_f$, where $f:\zeta\times\om_1\rightharpoonup\ka\times\om_1$ is a countable partial function and $A_f\seq\ps^*$ is an antichain. This will ensure, for instance, that $\dot{\S}_\zeta\cap M_\al$ is really a $(\ps^*\cap M_\al)$-name. Similar considerations apply to the $\R_\zeta$-name $\dot{T}_\zeta$.}

\begin{proposition}\label{proposition:ccreflect} Suppose that $\vec{M}$ is $\R_\rho$-suitable. Then there exists {$B^*\seq\dom(\vec{M})$ with $\dom(\vec{M})\bsl B^*\in\cal{I}$} so that for any $\al\in B^*$, {for any residue pair $\la p^*(M_\al), \vp^{M_\al}\ra$ for $(M_\al,\ps^*)$, and for any $\zeta\in M_\al\cap\rho$, }the following are true:
\begin{enumerate}
\item $(p^*(M_\al),0_{\dot{\S}_\zeta})$ forces that $\dot{G}_{\R_\zeta}\cap {M_\al}$ is a $V$-generic filter for $\R_\zeta\cap M_\al$;
\item $(p^*(M_\al),0_{\dot{\S}_\zeta})$ forces that $\dot{T}_\zeta\cap (\check{\om}_1\times\check{\al})=(\dot{T}_\zeta\cap M_\al)[\dot{G}_{\R_\zeta}\cap M_\al]$;
\item $(\ps^*\cap M_\al)\Vdash(\dot{\S}_\zeta\cap M_\al)$ is $\al$-c.c. Furthermore, $(\R_\zeta\cap M_\al)\Vdash\dot{T}_\zeta\cap {M_\al}$ is an Aronszajn tree on $\al=\dot{\aleph}_2$.
\end{enumerate}
\end{proposition}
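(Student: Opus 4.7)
The plan is to construct $B^*$ by starting with the index set $A$ furnished by $\cal{F}$-strong properness of $\ps^*$, trimming via Lemma \ref{lemma:refineToGetSP} so that $\ps^* \cap M_\al \Vdash \al = \dot\aleph_2$ for each $\al \in A$, and then removing a further $\cal{I}$-small subset via a reflection argument grounded in Proposition \ref{proposition:characterizefwc} and Remark \ref{rmk:iewc}(3). The three clauses will be established in the order (3), (1), (2), since each clause depends on its predecessor. Throughout, we use Inductive Hypothesis I: $\ps^* \Vdash \dot\S_\xi$ is $\ka$-c.c.\ for all $\xi < \rho$.

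For clause (3), fix a $\ka$-model $M^*$ containing $\vec M$, $\R_\rho$, the residue system, and all nice names for the Aronszajn trees $\dot T_\xi$ ($\xi<\rho$). Let $\Phi_3(\al)$ assert that for every $\zeta \in M_\al \cap \rho$, $\ps^* \cap M_\al \Vdash \dot\S_\zeta \cap M_\al$ is $\al$-c.c.\ and $\R_\zeta \cap M_\al \Vdash \dot T_\zeta \cap M_\al$ is an Aronszajn tree on $\al$. Suppose for contradiction that $A \setminus \{\al : \Phi_3(\al)\} \in \cal{F}_{WC}^+$. By Remark \ref{rmk:iewc}(3) there is an $M^*$-normal ultrafilter $U$ containing this bad set; let $j\colon M^* \to N$ be the induced ultrapower embedding and $M_\ka := j(\vec M)(\ka)$. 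Then $\ka \in j(A)$ and $N \models \neg \Phi_3(\ka)$ with respect to $j(\vec M)$. However, by Lemma \ref{lemma:UsefulSuitable}, $j(\ps^*) \cap M_\ka = j[\ps^*]$ and $j(\dot\S_{\bar\zeta}) \cap M_\ka = j[\dot\S_{\bar\zeta}]$ for every $\bar\zeta$ in the common domain, and the transitive collapse $j^{-1}\uhr M_\ka$ carries these to $\ps^*$ and $\dot\S_{\bar\zeta}$ themselves. Under this isomorphism, ``$j[\ps^*] \Vdash j[\dot\S_{\bar\zeta}]$ is $\ka$-c.c." translates to Inductive Hypothesis I, and ``$j[\R_{\bar\zeta}] \Vdash j[\dot T_{\bar\zeta}]$ is Aronszajn on $\ka$" translates to the fact that $\dot T_{\bar\zeta}$ names an Aronszajn tree on $\ka$. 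Both hold in $V$ and transfer to $N$ by the absoluteness of ``no antichain/cofinal branch of size $\ka$'' for transitive models seeing the relevant posets. This contradicts $N \models \neg \Phi_3(\ka)$, so the bad set is in $\cal{I}_{WC} \seq \cal{I}$ by Baumgartner's theorem. Set $B_3 := \{\al \in A : \Phi_3(\al)\}$.

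For clause (1), fix $\al \in B_3$, $\zeta \in M_\al \cap \rho$, and any residue pair $\la p^*(M_\al), \vp^{M_\al}\ra$ for $(M_\al, \ps^*)$. We must show that $\R_\zeta \cap M_\al$ is a complete sub-poset of $\R_\zeta$ below $(p^*(M_\al), 0)$. The $\ps^*$-component is immediate from the residue pair. For the second component: letting $G^*$ be $\ps^*$-generic below $p^*(M_\al)$, strong genericity yields that $G^* \cap M_\al$ is $V$-generic over $\ps^* \cap M_\al$, and in $V[G^* \cap M_\al]$, the poset $(\dot\S_\zeta \cap M_\al)^{G^* \cap M_\al}$ is $\al$-c.c.\ by (3), so each of its maximal antichains has size $<\al$. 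Such an antichain lies in $M_\al[G^* \cap M_\al]$, and to show it remains maximal as a subset of $\dot\S_\zeta^{G^*}$, one argues that any condition in $\dot\S_\zeta^{G^*}$ can be restricted, using the residue function $\vp^{M_\al}$ on the $\ps^*$-side and densely pruning countable supports on the $\dot\S_\zeta$-side (both steps staying inside $M_\al$), to yield a compatible condition in the antichain---the same kind of amalgamation the paper carries out via splitting pairs in the Laver-Shelah analysis. Clause (2) is then a consequence of (1) together with the $\ka$-c.c.\ of $\R_\zeta$: every name in $\dot T_\zeta$ for a node at a level $<\al$ is decided by a maximal antichain of size $<\ka$, and by elementarity of $M_\al$ and the Aronszajn restriction from (3), such a name already appears in $\dot T_\zeta \cap M_\al$; conversely $\dot T_\zeta \cap M_\al$ only names nodes of height $<\al$ since $\ps^* \cap M_\al \Vdash \al = \dot\aleph_2$. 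Interpreting via the generic filter $\dot G_{\R_\zeta} \cap M_\al$ from (1) then yields the desired equality.

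\textbf{Main obstacle.} The hardest part is clause (1): verifying the strong genericity of $(p^*(M_\al), 0)$ for $\R_\zeta$. The chain-condition information from (3) tells us that antichains in the restricted poset are small and hence absolute, but \emph{maximality} in the enlarged poset $\dot\S_\zeta^{G^*}$ requires an amalgamation argument that threads an extension through both the $\ps^*$-residue and a countable modification of the $\dot\S_\zeta$-condition. This foreshadows the splitting-pair and hashtag/star machinery that the authors develop later in Section \ref{section:specialize}, and it is here that the full force of both the $\al$-c.c.\ from reflection and the strong genericity of $p^*(M_\al)$ are simultaneously used.
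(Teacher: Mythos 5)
Your reflection argument for clause (3) is essentially sound and in fact close to the paper's own Case 3 (a counterexample name in $N$ for the collapsed poset $\ps^*\cong j[\ps^*]$, interpreted by a $V$-generic filter, yields a $\ka$-sized antichain in $\dot{\S}_\zeta[G]$ or a cofinal branch through $\dot{T}_\zeta[G]$ in $V[G]$, contradicting Inductive Hypothesis I resp.\ the choice of $\dot{T}_\zeta$). But the heart of the proposition is clause (1), and there your proposal has a genuine gap. Knowing that $\dot{\S}_\zeta\cap M_\al$ is $\al$-c.c.\ does not give strong genericity of $(p^*(M_\al),0_{\dot{\S}_\zeta})$: your plan is to restrict an arbitrary condition of $\S_\zeta$ to $M_\al$ and amalgamate it with a member of a small maximal antichain, but this is exactly what fails in general --- the paper warns right after Definition \ref{def:restriction} that $(p,f)\res M_\al$ need not be a residue of $(p,f)$, because nodes of an $M_\al$-condition at levels below $\al$ may be tree-comparable with nodes of $f$ at or above level $\al$, so the union need not be a condition. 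The machinery you invoke to fix this (``the same kind of amalgamation the paper carries out via splitting pairs'') is developed \emph{after} and on top of Proposition \ref{proposition:ccreflect}: pre-splitting configuration, the branch names $\dot{b}_\zeta(\theta,\al)$, and Lemmas \ref{lemma:1split}--\ref{lemma:splittingpairs} all presuppose clauses (1)--(3), and the dual residue property needed for splitting comes from Inductive Hypothesis II (Proposition \ref{prop:InductiveIandII}, where ineffability enters), not from clause (3). So your route for (1) is circular as well as incomplete. The paper's actual proof of (1) is quite different: given a maximal antichain $A^*$ of $j[\R_\zeta]$ in $N$, it pulls back to a (possibly $\ka$-sized, possibly outside $M^*$) maximal antichain $A$ of $\R_\zeta$ in $V$, uses Inductive Hypothesis I to enumerate the induced antichain of $\dot{\S}_\zeta$ by fewer than $\ka$ names, replaces these by names lying in $M^*$ (Claim \ref{claim1}, via the strongly generic conditions $p^*(M_\de)$ and $<\ka$-closure of $M^*$), applies $j$, and then threads conditions using the order-preservation, exactness and strong-residue properties of $\vp^{M_\ka}$ to extend the given condition into $j[A]$.

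Clause (2) in your sketch is also flawed: you appeal to ``the $\ka$-c.c.\ of $\R_\zeta$,'' but $\R_\zeta=\ps^*\ast\dot{\S}_\zeta$ is in general \emph{not} $\ka$-c.c.\ (the whole point of the $\cal{F}$-strongly proper framework is that $\ps^*$, e.g.\ $\ps\ast\dot{\C}_\tau$, fails the $\ka$-c.c.), so names for tree relations are not decided by small antichains and need not be captured inside $M_\al$ by elementarity. The paper instead derives (2) from (1) by noting $j[G]=\bar{G}^*\seq G^*$, lifting $j$ to $j:M^*[G]\lra N[G^*]$, and reading off $T_\zeta=j(T_\zeta)\cap(\ka\times\om_1)$ from elementarity of the lift. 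In short: your order (3)$\to$(1)$\to$(2) reverses the actual dependencies ((3)'s Aronszajn part and (2) rest on (1)), and the argument you would need for (1) --- the antichain-name replacement plus residue-function threading --- is missing from the proposal.
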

\begin{proof} {Fix an $\R_\rho$-suitable sequence $\vec{M}$, and let $B:=\dom(\vec{M})$ so that $B\in\cal{F}^+$ by Definition \ref{def:suitable}. Since $\ps^*$ is $\cal{F}$-strongly proper, we may assume that $B$ satisfies the conclusion of Definition \ref{def:fksp}, by removing an $\cal{I}$-null set if necessary. Our goal is to show that each of (1)-(3) fail only on a set in $\cal{I}$. Thus we define $B_1$ to be the set of $\al\in B$ so that for some $\zeta\in M_\al$ and some residue pair $\la p^*(M_\al),\vp^{M_\al}\ra$, (1) fails for these objects; we define $B_2$ to be the set of $\al\in B$ so that for some $\zeta\in M_\al\cap\rho$ and some residue pair $\la p^*(M_\al),\vp^{M_\al}\ra$, (2) fails for $M_\al$ and $\zeta$; and we define $B_3$ similarly. We show that each of these is in $\cal{I}$.}

{Suppose for a contradiction that $B_i\in\cal{F}^+$ for some $i\in\lb 1,2,3\rb$. Then $B_i\in\cal{F}_{WC}^+$ since $\cal{F}_{WC}\seq \cal{F}$. Let $M^*$ be a $\ka$ model containing $B_i$ as well as the sequence $\vec{M}$. Applying Proposition \ref{proposition:characterizefwc}, we may fix an $M^*$-normal ultrafilter $U$ containing $B_i$, and we let $j:M^*\lra N$ be the induced ultrapower embedding. In particular $\ka\in j(B_i)$.}

Let $M_\ka:=j(\vec{M})(\ka)$. Fix $\zeta^*\in M_\ka\cap j(\rho)$ for the remainder of the proof which witnesses the relevant failure of (1), (2), or (3) on the $j$-side. Since $M_\ka=j\left[\bigcup_{\al\in B}M_\al\right]$, by Lemma \ref{lemma:UsefulSuitable}, we have that $\zeta^*\in\ran(j)$, and so $\zeta^*=j(\zeta)$ for some $\zeta<\rho$. Moreover, $M_\ka\cap j(\ps^*)=j[\ps^*]$.\\

{\underline{Case 1: $i=1$} Since $\ka\in j(B_1)$, we may fix a residue pair $\la p^*(M_\ka),\vp^{M_\ka}\ra$ for $(M_\ka,j(\ps^*))$ so that $(p^*(M_\ka),0_{j(\dot{\S}_\zeta)})$ does not force that $\dot{G}_{j(\R_\zeta)}\cap M_\ka$ is generic over $j(\R_\zeta)\cap M_\al$. }

{To obtain our contradiction, we show that $(p^*(M_\ka),0_{j(\dot{\S}_\zeta)})$ in fact \emph{does} force that $\dot{G}_{j(\R_\zeta)}\cap M_\ka$ is generic over $j(\R_\zeta)\cap M_\ka$.} Thus fix an extension $(q^*,\dot{g})$ of $(p^*(M_\ka),0_{j(\dot{\S}_\zeta)})$ in $j(\R_\zeta)$. Let $A^*\in N$ be a maximal antichain of $j(\R_\zeta)\cap M_\ka=j[\R_\zeta]$, and we will find some extension of $(q^*,\dot{g})$ which forces that $A^*\cap\dot{G}_{j(\R_\zeta)}\neq\es$. Since $q^*$ extends $p^*(M_\ka)$ in $j(\ps^*)$ and $\vp^{M_\ka}$ is an exact, strong residue function, we may extend and relabel, if necessary, to assume that $q^*\in\dom(\vp^{M_\ka})$. Then $\vp^{M_\ka}(q^*)\in j(\ps^*)\cap M_\ka=j[\ps^*]$. Therefore $\vp^{M_\ka}(q^*)=j(q)$ for some $q\in\ps^*$.

Now let $A:=j^{-1}[A^*]$; since $A^*$ is a maximal antichain in $j[\R_\zeta]$, $A$ is a maximal antichain in $\R_\zeta$. However, note that since $\R_\zeta$ is not necessarily $\ka$-c.c., $A$ could very well have size $\ka$, and therefore we cannot assume that it is an element of $M^*$. Until after the proof of the next claim, we will work in $V$, not $M^*$. Let $\dot{A}(1)$ be the $\ps^*$-name for
$
\lb f\in\dot{\S}_\zeta:(\exists p\in\dot{G}_{\ps^*})\;\;(p,f)\in A\rb.
$
Then $q\Vdash\dot{A}(1)$ is a maximal antichain in $\dot{\S}_\zeta$.

Since $\ps^*\Vdash\dot{\S}_\zeta$ is $\ka$-c.c., by Inductive Hypothesis I, we may extend $q$ in $\ps^*$ to some condition $q'$ and find an ordinal $\be<\ka$ and a sequence $\la\dot{f}_\ga:\ga<\be\ra$ of $\ps^*$-names so that
$$
q'\Vdash^V_{\ps^*}\dot{A}(1)=\lb\dot{f}_\ga:\ga<\be\rb.
$$
Since a given $\dot{f}_\ga$ needn't be a member of $M^*$, we show how to replace these names with ones that are in $M^*$, up to extending $q'$. 

\begin{claim}\label{claim1} There exist a condition $u\geq_{\ps^*}q'$
and a sequence of $\ps^*$-names $\la\dot{h}_\ga:\ga<\be\ra$ in $M^*$ so that 
$u\Vdash^V(\forall\ga<\be)\,\dot{h}_\ga=\dot{f}_\ga$.
\end{claim}
\noindent\emph{Proof.}
To find $u$, let $G^*$ be a $V$-generic filter over $\ps^*$ containing $q'$. Then $\S_\zeta:=\dot{\S}_\zeta[G^*]\seq\bigcup_{\eta\in B}M_\eta[G^*]$, since $\R_\zeta\seq\bigcup_{\eta\in B}M_\eta$.  Since $\ka=\aleph_2^{V[G^*]}$ and $\be<\ka$, there exists some $\eta\in B$ so that for all $\ga<\be$, $\dot{f}_\ga[G^*]\in M_\eta[G^*]$. By Lemma \ref{lemma:unbounded}, there exists a $\de\geq\eta$ so that $p^*(M_\de)\in G^*$. Now let $u\in G^*$ be an extension of $p^*(M_\de)$ and $q'$ so that $u\Vdash(\forall\ga<\be)\,\dot{f}_\ga\in M_\de[\dot{G}_{\ps^*}]$.

Back in $V$ we define new names $\dot{h}_\ga$ for each $\ga<\be$; recalling Notation \ref{notation:domain}, we view conditions in $\dot{\S}_\zeta$ as having a domain which is a countable subset of $\zeta\times\om_1$ so that each element in the range is a countable subset of $\ka\times\om_1$.

For each $\ga<\be$, {each} $\bar{\zeta}\in\zeta\cap M_\de$ (an iteration stage), {each} $\nu<\om_1$ (corresponding to the $\nu$th tree antichain), and {each} $\theta\in\de\times\om_1$ (a node with height below $\de$), let $A(\ga,\bar{\zeta},\nu,\theta)$ be a maximal antichain in $\ps^*\cap M_\de$ of conditions $p$ which decide whether or not $\theta$ is a member of $\dot{f}_\ga(\bar{\zeta},\nu)$. Let $\dot{h}_\ga$ be the $\ps^*$-name which is interpreted in an arbitrary generic extension via some $G^*$ as follows: $\theta\in {h}_\ga(\bar{\zeta},\nu)$ iff there is some $p\in A(\ga,\bar{\zeta},\nu,\theta)\cap G^*$ which forces that $\theta\in\dot{f}_\ga(\bar{\zeta},\nu)$. Otherwise $h_\ga$ is undefined.

We claim that $u\Vdash^V(\forall\ga<\be)\,\dot{h}_\ga=\dot{f}_\ga$. To see this, let $G^*$ be a $V$-generic filter containing $u$. Fix $\bar{\zeta}<\zeta$ and $\nu<\om_1$, and we verify that $f_\ga(\bar{\zeta},\nu)=h_\ga(\bar{\zeta},\nu)$. On the one hand, if $\theta\in h_\ga(\bar{\zeta},\nu)$, then by definition $f_\ga(\bar{\zeta},\nu)$ is defined and also contains the node $\theta$. 

On the other hand, if  $\tau\in f_\ga(\bar{\zeta},\nu)$ is a node, then since $f_\ga\in M_\de[G^*]$ has a countable domain, $\bar{\zeta}\in M_\de[G^*]$, and since $f_\ga(\bar{\zeta},\nu)\in M_\de[G^*]$ is countable, $\tau\in M_\de[G^*]$ too. But $M_\de[G^*]\cap V=M_\de$, since $u\in G^*$ is a (strongly) $(M_\de,\ps^*)$-generic condition (as it extends $p^*(M_\de)$). Hence $\bar{\zeta},\tau\in M_\de$. Thus $\bar{\zeta}\in M_\de\cap\zeta$ and $\tau$ has height below $M_\de\cap\ka=\de$. Next, since $u$ is a strongly $(M_\de,\ps^*)$-generic condition which is in $G^*$, and since $A(\ga,\bar{\zeta},\nu,\tau)$ is a maximal antichain $\ps^*\cap M_\de$, we know that $A(\ga,\bar{\zeta},\nu,\tau)\cap G^*\neq\es$, say with $\bar{u}$ in the intersection. But as $\tau\in f_\ga(\bar{\zeta},\nu)$, we must have that $\bar{u}$ forces that $\tau\in\dot{f}_\ga(\bar{\zeta},\nu)$, and hence $\tau\in h_\ga(\bar{\zeta},\nu)$. This completes the proof that $u\Vdash\dot{f}_\ga=\dot{h}_\ga$ for each $\ga<\be$.

Finally, since $M^*$ is $<\ka$-closed,
the sequence of antichains
$$
\la A(\ga,\bar{\zeta},\nu,\theta):\ga<\be,\bar{\zeta}\in M_\de\cap\zeta,\nu<\om_1,\theta\in(\de\times\om_1)\ra
$$
is a member of $M^*$. Therefore, the sequence $\la\dot{h}_\ga:\ga<\be\ra$ is a member of $M^*$ too.
\qed(Claim \ref{claim1})

\vspace{.2in}

Continuing with the main body of the argument, let $u\geq_{\ps^*}q'$ and $\la\dot{h}_\ga:\ga<\be\ra$ witness the above claim. Since $q'\Vdash_{\ps^*}\dot{A}(1)=\lb\dot{f}_\ga:\ga<\be\rb$ and $u\geq q'$, we have
$$
(*)\;\;u\Vdash_{\ps^*}\lb\dot{h}_\ga:\ga<\be\rb\;\text{ is a maximal antichain in }\;\dot{\S}_\zeta.
$$ 
Since $\la\dot{h}_\ga:\ga<\be\ra$ and $u$ are in $M^*$, $(*)$ is satisfied in $M^*$. Applying $j$, 
$$
j(u)\Vdash^N_{j(\ps^*)}\lb j(\dot{h}_\ga):\ga<\be\rb\text{ is a maximal antichain in }j(\dot{\S}_\zeta).
$$

Next, $u\geq_{\ps^*}q'\geq_{\ps^*} q$ so $j(u)\geq_{j(\ps^*)} j(q)=\vp^{M_\ka}(q^*)$, and $j(u)\in j[\ps^*]\seq M_\ka$ so $j(u)$ and $q^*$ are compatible in $j(\ps^*)$. Let $q^{**}$ be a condition extending both of them with $\vp^{M_\ka}(q^{**})\geq j(u)$. Since $q^{**}$ extends $q^*$, which forces that $\dot{g}$ is a condition in $j(\dot{\S}_\zeta)$, $q^{**}$ forces this too. As $q^{**}\geq j(u)$ also forces that $\lb j(\dot{h}_\ga):\ga<\be\rb$ is a maximal antichain in $j(\dot{\S}_\zeta)$, we may find an extension $r^*$ of $q^{**}$, a $j(\ps^*)$-name $\dot{g}^*$, and an ordinal $\ga<\be$ so that
$$
r^*\Vdash\dot{g}^*\geq\dot{g},j(\dot{h}_\ga).
$$
We may also extend, if necessary, to assume that $r^*\in\dom(\vp^{M_\ka})$, since $r^*\geq q^*\geq p^*(M_\ka)$. Let $r\in\ps^*$ so that $\vp^{M_\ka}(r^*)=j(r)$.

Now $r^*\geq_{j(\ps^*)}q^{**}$ are both in $\dom(\vp^{M_\ka})$. Since $\vp^{M_\ka}$ is order-preserving, $j(r)=\vp^{M_\ka}(r^*)\geq\vp^{M_\ka}(q^{**})\geq j(u)$. Then $r\geq u$. As a result, $r\Vdash^V\dot{h}_\ga=\dot{f}_\ga\in\dot{A}(1)$. By definition of $\dot{A}(1)$, we may find some $\ps^*$-extension $r'$ of $r$ so that $(r',\dot{h}_\ga)$ extends some element $(r'_0,\dot{f})$ of $A$. Then $j(r'_0,\dot{f})\in A^*$, as $A=j^{-1}[A^*]$. Since $r'$ extends $r$ in $\ps^*$, we get that $j(r')\geq_{j(\ps^*)}j(r)=\vp^{M_\ka}(r^*)$. So $j(r')$ and $r^*$ are compatible in $j(\ps^*)$. Let $r^{**}$ be a condition extending both of them. Then $(r^{**},\dot{g}^*)$ extends $j(r'_0,\dot{f})$. Indeed, $r^{**}$ extends $j(r')$ which extends $j(r'_0)$. Furthermore, $r^{**}$ extends $r^*$ which forces that $\dot{g}^*\geq j(\dot{h}_\ga)$, and $r^*$ extends $j(r')$ which forces that $j(\dot{h}_\ga)\geq j(\dot{f})$. Thus $(r^{**},\dot{g}^*)$ extends $j(r'_0,\dot{f})$, and therefore
$$
(r^{**},\dot{g}^*)\Vdash_{j(\R_\zeta)}j(r'_0,\dot{f})\in A^*\cap\dot{G}_{j(\R_\zeta)}\neq\es.
$$
{However, $(r^{**},\dot{g}^*)$ also extends the starting condition $(q^*,\dot{g})$. This finishes the proof that $\ka$ is not a member of $j(B_1)$, which contradicts our initial case assumption otherwise.}\\

{\underline{Case 2: $i=2$}  In this case, we are assuming that $\ka\in j(B_2)$, and we will derive a contradiction. Let $\la p^*(M_\ka),\vp^{M_\ka}\ra$ be a residue pair for $(M_\ka,j(\ps^*))$ so that $(p^*(M_\ka),0_{j(\dot{\S}_\zeta)})$ does not force the desired equality. We will show, however, that this residue pair does in fact force the desired equality.}

Towards this end, let $G^*$ be $V$-generic for $j(\R_\zeta)$ containing $(p^*(M_\ka),0_{j(\dot{\S}_\zeta)})$, and let {$\bar{G}^*:=G^*\cap M_\ka$ which, by item (1), is a $V$-generic filter over $j(\R_\zeta)\cap M_\ka=j[\R_\zeta]$. Let $G:=j^{-1}[\bar{G}^*]$, which is $V$-generic over $\R_\zeta$. Let $j(T_\zeta)$ denote $j(\dot{T}_\zeta)[G^*]$, let $T_\zeta:=\dot{T}_\zeta[G]$, and let $T'_\zeta:=(j(\dot{T}_\zeta)\cap M_\ka)[\bar{G}^*]=j[\dot{T}_\zeta][\bar{G}^*]$.}


Since $\dot{T}_\zeta$ is a nice $\R_\zeta$-name for a tree order on $\ka$, for each $\tau,\theta\in\ka\times\om_1$, there exists an antichain $B_{\theta,\tau}$ of $\R_\zeta$ so that, letting $\operatorname{op}(\theta,\tau)$ denote the canonical name for $\la\theta,\tau\ra$,
$$
<_{\dot{T}_\zeta}=\bigcup\lb\lb\operatorname{op}(\theta,\tau)\rb\times B_{\theta,\tau}:\theta,\tau<\ka\rb.
$$
{It is straightforward to see from this that $T_\zeta=T'_\zeta$. So we will show that $T_\zeta$ equals the restriction of $j(T_\zeta)$ to $\ka\times\om_1$. However, we know that $j:M^*\lra N$ lifts to $j:M^*[G]\lra N[G^*]$, since $j[G]=\bar{G}^*\seq G^*$ and since each of the filters is generic over the appropriate models. From this it follows that $T_\zeta=j(T_\zeta)\cap(\ka\times\om_1)$. Therefore, the equality in (2) is in fact satisfied, which contradicts the assumption that $\ka\in j(B_2)$.}\\


{\underline{Case 3: $i=3$} Let $j(q)\in j[\ps^*]=j(\ps^*)\cap M_\ka$ be a condition forcing that $\dot{A}$ is a name for a $\ka$-sized antichain in $j(\dot{\S}_\zeta)\cap M_\ka$. Since $B$ satisfies Definition \ref{def:fksp}, and since $B_1\seq B$ is in $U$, we may fix a residue pair $(p^*(M_\ka),\vp^{M_\ka})$ for $(M_\ka,j(\ps^*))$. Let $q^*\geq q,p^*(M_\ka)$ be a condition, and let $G^*$ be a $V$-generic filter over $j(\ps^*)$ containing $q^*$. Then $\bar{G}^*$ is a $V$-generic filter over $j(\ps^*)\cap M_\ka=j[\ps^*]$. We recall that $j^{-1}:M_\ka\lra\bigcup_{\al\in B_1}M_\al$ is the transitive collapse, and that $j^{-1}$ lifts in the standard way from $M_\ka[G^*]$ to $(\bigcup_{\al\in B_1}M_\al)[G]$. Now let $A:=\dot{A}[\bar{G}^*]$ so that $A$ is an antichain in $(j(\dot{\S}_\zeta)\cap M_\ka)[\bar{G}^*]= j[\dot{\S}_\zeta][\bar{G}^*]=j(\dot{\S}_\zeta[G^*])\cap M_\ka[G^*]$. Also, $A$ is a member of $V[\bar{G}^*]$ and has size $\ka$ there. Applying the elementarity of $j^{-1}$, we have that $j^{-1}[A]=:\bar{A}$ is an antichain in $\dot{\S}_\zeta[G]$, where $G:=j^{-1}[\bar{G}^*]$ is $V$-generic over $\ps^*$. Finally, $\bar{A}\in V[G]$ since $j^{-1}[\dot{A}]$ is a $\ps^*$-name in $V$ and $\bar{A}=j^{-1}[\dot{A}][G]$. Since $\bar{A}$ has size $\ka$ in $V[G]=V[\bar{G}^*]$, this contradicts the assumption that $\ps^*\Vdash\dot{\S}_\zeta$ is $\ka$-c.c.}

{For the ``furthermore" part of (3), suppose now that $\dot{b}'$ is a $j(\R_\zeta)\cap M_\ka=j[\R_\zeta]$-name which $j(q)$ forces is a branch through $j(\dot{T}_\zeta)\cap M_\ka$. Fix $q^*$ and $G^*$ as in the previous paragraph. Then by (2), we see that $j(\dot{T}_\zeta)[G^*]\cap(\ka\times\om_1)=\dot{T}_\zeta[G]$. Moreover, $j(\dot{T}_\zeta)[G^*]\cap(\ka\times\om_1)=j(\dot{T}_\zeta)[G^*]\cap M_\ka[G^*]=(j(\dot{T}_\zeta)\cap M_\ka)[\bar{G}^*]$. Thus $\dot{b}'[\bar{G}^*]$ is a branch through $T_\zeta:=\dot{T}_\zeta[G]$. But $\dot{b}'[\bar{G}^*]$ is a member of $V[\bar{G}^*]$ and $V[\bar{G}^*]=V[G]$. Thus $T_\zeta$ is not Aronszajn in $V[G]$, a contradiction.}

{ Thus we see that $\ka$ cannot be a member of $j(B_3)$, completing Case 3 and thereby the proof.}
\end{proof}

In both the previous result and Definition \ref{def:fksp}, there was the apparent necessity of refining the domain of a suitable sequence so that various desired behavior obtains on each level of the refined sequence. The next item amalgamates this into one definition which we use frequently throughout.

\begin{definition} Let $\vec{M}$ be an $\R_\rho$-suitable sequence. We say that $\vec{M}$ is in \textbf{pre-splitting configuration up to $\rho$} if there is a residue system $\la\la p^*(M_\al),\vp^{M_\al}\ra:\al\in \dom(\vec{M})\ra$ satisfying {items (1) and (2) of Definition \ref{def:fksp}} (with respect to $\ps^*$) as well as items (1)-(3) of Proposition \ref{proposition:ccreflect} for all $\al\in\dom(\vec{M})$ (with respect to $\R_\rho$).
\end{definition}

\begin{definition}\label{def:restriction} Suppose that $\vec{M}$ is in pre-splitting configuration up to $\rho$, $\xi\leq\rho$, and $\al\in\dom(\vec{M})$ so that $\xi \in M_\alpha$. 
Fix a residue pair $\la p^*(M_\al), \vp^{M_\al}\ra$ for $(M_\al,\ps^*)$.
\begin{enumerate}
\item For a (determined) condition $(p,f)$, we define $f\res M_\al$ to be the function $\bar{f}$ with domain $\dom(f)\cap M_\al$ so that for each $\la\zeta,\nu\ra\in\dom(f)\cap M_\al$, 
$$
\bar{f}(\zeta,\nu)=f(\zeta,\nu)\cap M_\al.
$$
\item We define $D(\vp^{M_\al},\xi)$ to be the set of conditions $( p,f)\in\R_\xi$ so that $p\in \dom(\vp^{M_\al})$ and $(\vp^{M_\al}(p),f\res M_\al)$ is a condition in $\R_\xi$ (and hence in $\R_\xi\cap M_\al$).

\item If $( p,f)\in D(\vp^{M_\al},\xi)$, we make a slight abuse of notation and define $( p,f )\res M_\al$ to be the pair $(\vp^{M_\al}(p),f\res M_\al)$, when $\vp^{M_\al}$ is clear from context.
\end{enumerate}
\end{definition} 

We observe that in general, for $(p,f) \in D(\vp^{M_\al},\xi)$, although $(p,f)\res M_\al \in \R_\xi\cap M_\al$ is a condition, it need not be {a} residue of $(p,f)$ to $M_\al$ in the sense that it is possible for some $(p',f') \in \R_\xi\cap M_\al$ which extends $(p,f)\res M_\al$ to not be compatible with $(p,f)$.
However, $(p,f)\res M_\alpha$ must have \emph{some} extension $(\bar{p},\bar{f}) \in \R_\xi\cap M_\al$ which is a residue of $(p,f)$. 
This is because if $G \subseteq \R_\xi$ is $V$-generic and contains $(p,f)$, then by Proposition \ref{proposition:ccreflect} {and the definition of pre-splitting configuration,} $\bar{G}:=G\cap M_\al$ is $V$-generic over $\R_\xi\cap M_\al$ and $(p,f)\res M_\al \in \bar{G}$. Since $(p,f)\in G$ is compatible with every condition in $\bar{G}$, there must be some $(\bar{p},\bar{f}) \in \bar{G}$ which extends $(p,f)\res M_\al$ and is a residue of $(p,f)$, by Lemma \ref{lemma:residueForceQuotient}.

\begin{lemma}\label{lemma:denserestriction} Suppose that $\vec{M}$ is in pre-splitting configuration up to $\rho$.
Then for each $\al\in\dom(\vec{M})$, each residue pair $\la p^*(M_\al),\vp^{M_\al}\ra$ for $(M_\al,\ps^*)$, and each $\xi\leq\rho$ with $\xi \in M_\al$, $D(\vp^{M_\al},\xi)$ is dense and $=^*$-countably closed in $(\ps^*/p^*(M_\al))\ast\dot{\S}_\xi$.
\end{lemma}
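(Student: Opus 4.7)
The plan is to verify both density and countable $=^*$-closure of $D(\vp^{M_\al},\xi)$.

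For density, fix a condition $(p,f) \in (\ps^*/p^*(M_\al))\ast\dot{\S}_\xi$. Using the density of $\dom(\vp^{M_\al})$ above $p^*(M_\al)$ (Definition \ref{def:ESRF}(1)), extend $p$ to some $p' \in \dom(\vp^{M_\al})$; then $(p',f)$ extends $(p,f)$ in $\R_\xi$, and I claim $(p',f) \in D(\vp^{M_\al},\xi)$. The nontrivial point is that $(\vp^{M_\al}(p'),f\res M_\al)$ is a condition in $\R_\xi$. The restriction $f\res M_\al$ lies in $M_\al$ because its domain is a countable subset of $M_\al\cap(\xi\times\om_1)$ and each value $f(\zeta,\nu)\cap M_\al$ is a countable subset of $M_\al$, so by the $<\al$-closure of $M_\al$ the whole restriction belongs to $M_\al$; since $\vp^{M_\al}(p') \in M_\al$ as well, it suffices to verify that the pair is a condition in the subforcing $\R_\xi \cap M_\al$. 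To this end, let $\bar G \ni \vp^{M_\al}(p')$ be $V$-generic over $\ps^*\cap M_\al$; by Lemma \ref{lemma:residueForceQuotient} extend $\bar G$ to a $V$-generic $G \ni p'$ over $\ps^*$, and let $H \supseteq G$ be $V$-generic over $\R_\xi$. Since $p' \in G$ forces $f$ to be a condition in $\dot{\S}_\xi$, in $V[H]$ each value $f(\zeta,\nu)$ is a countable antichain in $\dot T_\zeta[H_{\R_\zeta}]$. Restrictions of antichains are antichains, so $f(\zeta,\nu)\cap M_\al$ is an antichain in $\dot T_\zeta[H_{\R_\zeta}] \cap (\om_1 \times \al)$, which by Proposition \ref{proposition:ccreflect}(2) equals $(\dot T_\zeta \cap M_\al)[H_{\R_\zeta}\cap M_\al]$. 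Induction on $\zeta\leq\xi$, with countable support used at limit stages, then shows that $\vp^{M_\al}(p')$ forces $f\res M_\al$ to be a condition in $\dot{\S}_\xi \cap M_\al$.

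For countable $=^*$-closure, closure under $=^*$ uses that $\dom(\vp^{M_\al})$ is itself $=^*$-closed: if $(p',f') =^* (p,f) \in D(\vp^{M_\al},\xi)$, then $p' =^* p$, so $p' \in \dom(\vp^{M_\al})$ and $\vp^{M_\al}(p') =^* \vp^{M_\al}(p)$ by order preservation of $\vp^{M_\al}$; since determined conditions in the specializing iteration that are $=^*$-equivalent agree as functions, $(\vp^{M_\al}(p'),f'\res M_\al) =^* (\vp^{M_\al}(p),f\res M_\al)$ remains a condition. For sups of increasing $\om$-sequences $\la(p_n,f_n):n\in\om\ra$ in $D(\vp^{M_\al},\xi)$ with sup $(p^*,f^*)$, the countable continuity of $\vp^{M_\al}$ (Definition \ref{def:ESRF}(5)) places $p^* \in \dom(\vp^{M_\al})$ with $\vp^{M_\al}(p^*)$ a sup of $\la\vp^{M_\al}(p_n):n\in\om\ra$. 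Since restriction to $M_\al$ commutes with countable unions at both the domain and value levels, $f^*\res M_\al = \bigcup_n (f_n\res M_\al)$, which is a sup of the restrictions. Therefore $(\vp^{M_\al}(p^*),f^*\res M_\al)$ is a sup of the conditions $(\vp^{M_\al}(p_n),f_n\res M_\al)$ in $\R_\xi \cap M_\al$, and so is itself a condition; thus $(p^*,f^*) \in D(\vp^{M_\al},\xi)$.

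The main obstacle is the density step: the residue $\vp^{M_\al}(p')$ is substantially weaker than $p'$ and does not, on its face, decide anything specific about $f$. The resolution is Proposition \ref{proposition:ccreflect}(2), which identifies the tree relation on $M_\al$-nodes with the tree computed in the restricted generic extension, thereby aligning antichain-ness in the full and restricted iterations and allowing the verification to descend from $p'$ to $\vp^{M_\al}(p')$.
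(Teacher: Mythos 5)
Your countable $=^*$-closure argument is fine and matches the paper's. The density step, however, has a genuine gap: you claim that merely extending $p$ to some $p'\in\dom(\vp^{M_\al})$ already puts $(p',f)$ into $D(\vp^{M_\al},\xi)$, i.e.\ that $(\vp^{M_\al}(p'),f\res M_\al)$ is automatically a condition of $\R_\xi$. That statement means the residue $\vp^{M_\al}(p')$ itself forces, over the \emph{full} poset $\ps^*$, that $f\res M_\al\in\dot{\S}_\xi$. Your verification only inspects $\ps^*$-generics obtained by lifting a generic $\bar G\ni\vp^{M_\al}(p')$ for $\ps^*\cap M_\al$ to a generic containing $p'$; every generic you consider therefore contains $p'$ (and in particular $p^*(M_\al)$). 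But the forcing statement must hold in \emph{all} generics containing $\vp^{M_\al}(p')$, including ones containing conditions incompatible with $p'$, or omitting $p^*(M_\al)$ altogether; in those extensions Proposition \ref{proposition:ccreflect}(2) is not applicable (it is only asserted below the condition $(p^*(M_\al),0_{\dot{\S}_\zeta})$, and $\vp^{M_\al}(p')\in M_\al$ does not extend $p^*(M_\al)$), and nothing forces the restricted sets $f(\zeta,\nu)\cap M_\al$ to remain antichains in $\dot T_\zeta$. The paper itself flags exactly this point inside its proof: it notes that ``$f_n\res M_\al$ needn't be forced by $\vp^{M_\al}(p_n)$ to be a condition,'' which is precisely the assertion your one-step density argument requires.

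The paper's proof gets around this with a fusion: for $\xi<\rho$ it builds an increasing sequence $(p_n,f_n)$ together with genuine residues $(\bar p_n,\bar f_n)\in\R_\xi\cap M_\al$ of $(p_n,f_n)$ (these exist by the trace-genericity in Proposition \ref{proposition:ccreflect}(1)), arranged so that $\bar p_n\geq\vp^{M_\al}(p_n)$, $\bar f_n$ extends $f_n\res M_\al$ as a function, $(p_{n+1},f_{n+1})$ extends both $(p_n,f_n)$ and $(\bar p_n,\bar f_n)$, and $\vp^{M_\al}(p_{n+1})\geq\bar p_n$. At the limit, countable continuity makes $\vp^{M_\al}(p^*)$ a bound of all the $\bar p_n$, so it forces the increasing union $\bigcup_n\bar f_n$ to be a condition, and the sandwiching $\bar f_n\subseteq f_{n+1}\res M_\al\subseteq\bar f_{n+1}$ identifies this union with $f^*\res M_\al$; thus a further extension of your $(p',f)$, not $(p',f)$ itself, lands in $D(\vp^{M_\al},\xi)$. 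Note also that the case $\xi=\rho$ needs separate treatment (a cofinal $\om$-sequence through $M_\al\cap\rho$ when $\cf(\rho)=\om$, and a deciding argument for the top coordinate at successors), because the pre-splitting machinery used to produce residues is only available for stages in $M_\al\cap\rho$; your proposal does not address this case.
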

\begin{proof} We will first prove the result for $\xi<\rho$ and then use this to prove the result at $\rho$. Note that the $=^*$-countable closure of $D(\vp^{M_\al},\xi)$, in either case for $\xi$, follows from the continuity of $\vp^{M_\al}$ and the {$=^*$-countable} closure of the posets. We therefore concentrate on showing density.

Let $(p_0,f_0)\in\R_\xi$ be given with $p_0\geq p^*(M_\al)$. By the observations following Definition \ref{def:restriction}, we may build increasing sequences $\la (p_n,f_n):n\in\om\ra$ and $\la(\bar{p}_n,\bar{f}_n):n\in\om\ra$ so that 
\begin{enumerate}
\item[{(i)}] $(\bar{p}_n,\bar{f}_n)$ is a residue of $(p_n,f_n)$ to $M_\al$ {with $\bar{p}_n$ extending $\vp^{M_\al}(p_n)$ and with $\bar{f}_n$ extending the function $f_n\res M_\al$ in the sense of Definition \ref{def:almostextend} (note that $f_n\res M_\al$ needn't be forced by $\vp^{M_\al}(p_n)$ to be a condition); }
\item[{(ii)}] $(p_{n+1},f_{n+1})$ extends both $(p_n,f_n)$ and $(\bar{p}_n,\bar{f}_n)$; 
\item[{(iii)}] $p_{n+1}\in \dom(\vp^{M_\al})$ and  $\vp^{M_\al}(p_{n+1})\geq\bar{p}_n$.
\end{enumerate}

Now let $(p^*,f^*)$ be a sup of $\la (p_n,f_n):n\in\om\ra$. Note that $p^*\in \dom(\vp^{M_\al})$ since $p_n\in \dom(\vp^{M_\al})$ for each $n$ and also that $\vp^{M_\al}(p^*)$ is a sup of $\la\vp^{M_\al}(p_n):n\in\om\ra$. We claim that $(\vp^{M_\al}(p^*),f^*\res M_\al)$ is a condition. 
Now $\vp^{M_\al}(p^*)\geq\vp^{M_\al}(p_{n+1})\geq\bar{p}_n$ for each $n$, so $\vp^{M_\al}(p^*)$ forces that $\la\bar{f}_n:n\in\om\ra$ is an increasing sequence of conditions in $\dot{\S}_\xi$, and therefore forces that $\bigcup_n\bar{f}_n$ is a condition too. {However, $\bar{f}_{n+1}$ extends (in the sense of Definition \ref{def:almostextend}) $f_{n+1}\res M_\al$ which extends $\bar{f}_n$ for all $n$. Therefore $\bigcup_n\bar{f}_n=\bigcup_n(f_n\res M_\al)=(\bigcup_nf_n)\res M_\al=f^*\res M_\al$, which finishes the claim.}

Now we show that the lemma holds for $\xi=\rho$. We deal with $\rho$ limit first. If $\cf(\rho)>\om$, then the result holds since any $(p,f)\in\R_\rho$ is in $\R_\xi$ for some $\xi<\rho$. On the other hand, if $\cf(\rho)=\om$, then let $\la\xi_n:n\in\om\ra$ be an increasing sequence of ordinals in $M_\al$ which is cofinal in $\rho$. By applying the lemma below $\rho$, we define an increasing sequence of extensions $\la (p_n,f_n):n\in\om\ra$ of $(p,f)$ so that $f_n\res[\xi_n,\rho)=f\res[\xi_n,\rho)$ and so that $(p_n,f_n\res\xi_n)\in D(\vp^{M_\al},\xi_n)$. Now let $p^*$ be a sup of $\la p_n:n\in\om\ra$ and $f^*:=\bigcup_nf_n$. Then $(p^*,f^*)$ extends $(p,f)$ and is a member of $D(\vp^{M_\al},\rho)$.

Finally, assume that $\rho=\rho_0+1$ is a successor, and let $(p,h)\in\R_\rho$ be given. By the remarks after Definition \ref{def:restriction}, we may find a residue $(\bar{p},\bar{h}_0)$ of $(p,h\res\rho_0)$ to $M_\al$ {with respect to the poset $\R_{\rho_0}$.} Recalling Notation \ref{notation:domain}, we use $h(\rho_0)\cap M_\al$ in what follows as an abuse of notation for $\la h(\rho_0)(\nu)\cap M_\al:\nu\in\dom(h(\rho_0))\ra$. By the elementarity of $M_\al$ and the fact that $h(\rho_0)\cap M_\al$ is a member of $M_\al$, we may find an extension of $(\bar{p},\bar{h}_0)$, say $(\bar{p}',\bar{h}'_0)$, which either forces that $h(\rho_0)\cap M_\al\in\dot{\S}(\rho_0)$ or forces that $h(\rho_0)\cap M_\al\notin\dot{\S}(\rho_0)$. Since $(\bar{p}',\bar{h}'_0)$ extends $(\bar{p},\bar{h}_0)$, it is compatible with $(p,h\res\rho_0)$, and hence it must force that $h(\rho_0)\cap M_\al\in\dot{\S}(\rho_0)$. Finally, since $D(\vp^{M_\al},\rho_0)$ is dense and $\rho_0\in M_\al$, we may find an extension $(q,g_0)$ of $(\bar{p}',\bar{h}'_0)$ and $(p,h\res\rho_0)$ which is in $D(\vp^{M_\al},\rho_0)$ {and which satisfies that $\vp^{M_\al}(q)\geq\bar{p}'$.} Then $(q,g_0)\res M_\al$ is a condition
which forces that $h(\rho_0)\cap M_\al$ is a condition in $\dot{\S}(\rho_0)$. Thus $(\vp^{M_\al}(q),(g_0\res M_\al)^\frown\la h(\rho_0)\cap M_\al\ra)$ is a condition and equals $(q,g_0^\frown\la h(\rho_0)\ra)\res M_\al$.
\end{proof}

\begin{notation}\label{notation:horizontalvertical} We will often find it useful to denote conditions in $\R_\xi$ by the letters $u,v$ and $w$. If $u\in\R_\xi$, we write $p_u$ and $f_u$ to denote the objects so that $u=( p_u,f_u)$. Furthermore, if $\zeta\leq\xi$, then we write $u\res\zeta$ to denote the pair $(p_u,f_u\res\zeta)$, which restricts the length.
This should not be confused with $u\res M_\alpha = (\vp^{M_\alpha}(p),f\res M_\alpha)$ from Definition \ref{def:restriction}, which restricts the height.
\end{notation}

The following definitions of $\#$ and $*$ are taken from \cite{LS} and modified to the current presentation. {The dual residue property defined in (2) of the upcoming definition is the natural translation into the current situation of the statement that ``$\#$ implies $*$" at $\alpha$ from \cite{LS}.}

\begin{definition}\label{def:starhashtag} 
Suppose that $\vec{M}$ is in pre-splitting configuration up to $\rho$, that $\al\in\dom(\vec{M})$, and that $\zeta\leq\rho$ is in $M_\al$. 
\begin{enumerate}
\item 
Fix conditions $u,v\in\R_\zeta$ and $w\in\R_\zeta\cap M_\al$. Fix a residue pair $\la p^*(M_\al), \vp^{M_\al}\ra$ for $(M_\al,\ps^*)$. 
\begin{enumerate}
    \item We say that 
$$ 
\#^\zeta_{\vp^{M_\al}}(u,v,w) 
$$ 
holds if $u,v\in D(\vp^{M_\al},\zeta)$ and $u\res M_\al=^*w=^*v\res M_\al$.\footnote{Note that if $(p,f)$ and $(q,g)$ are (determined) conditions with $(p,f)=^*(q,g)$, then $f=g$.}

\item We say that 
$$
*^\zeta_{\vp^{M_\al}}(u,v,w)
$$ 
holds if $u,v\in D(\vp^{M_\al},\zeta)$ and if $w\geq u\res M_\al,v\res M_\al$ is a dual residue for $u$ and $v$ (see Definition \ref{def:residuestuff}).\footnote{One can in fact argue that if $w$ is a dual residue, then it follows that $w\geq u\res M_\al$ and $w\geq v\res M_\al$; however, we don't need this fact.}
\end{enumerate}
\item We say that $\R_\zeta$ satisfies the \textbf{dual residue property} at $M_\al$ if for any residue pair $\la p^*(M_\al),\vp^{M_\al}\ra$ for $(M_\al,\ps^*)$ and any conditions $u,v,w$ so that $\#^\zeta_{\vp^{M_\al}}(u,v,w)$ holds, there exists  $w^*\geq_{\R_\zeta\cap M_\al} w$ so that $*^\zeta_{\vp^{M_\al}}(u,v,w^*)$ holds.
\end{enumerate}
\end{definition}

\begin{lemma}\label{lemma:starimplieshashtag} Suppose that $*^\zeta_{\vp^{M_\al}}(u,v,w)$ holds and that $D$ is dense and countably $=^*$-closed in $\R_\zeta/(p^*(M_\al),0_{\dot{\S}_\zeta})$. Then:
\begin{enumerate}
\item there exist $u'\geq u$, $v'\geq v$ with $u',v'\in D$ and there exists $w'\geq w$ so that $u'\res M_\al\geq w$, $v'\res M_\al\geq w$, and $*^\zeta_{\vp^{M_\al}}(u',v',w')$ hold;
\item there exist $u^*\geq u$ and $v^*\geq v$ with $u^*,v^*\in D$, and there exists $w^*\geq w$ so that $\#^\zeta_{\vp^{M_\al}}(u^*,v^*,w^*)$ holds.
\end{enumerate}
\end{lemma}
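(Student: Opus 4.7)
My plan is to prove (1) by a one-shot density+residue construction and then bootstrap (2) from (1) via an $\om$-length interleaving. Preliminarily, both $D$ and $D(\vp^{M_\al},\zeta)$ are dense and countably $=^*$-closed subsets of $\R_\zeta/(p^*(M_\al),0_{\dot{\S}_\zeta})$ (by hypothesis together with Lemma~\ref{lemma:denserestriction}), so their intersection $D \cap D(\vp^{M_\al},\zeta)$ is likewise dense and countably $=^*$-closed; extensions and countable sups taken within this intersection remain in it.

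For (1): since $w$ is a dual residue, $w$ is compatible with each of $u,v$, so fix common extensions $u_0 \geq u,w$ and $v_0 \geq v,w$ in $\R_\zeta$. Extend $u_0$ into $D(\vp^{M_\al},\zeta)$ via Lemma~\ref{lemma:denserestriction} to obtain $u_1$. Noting that $p_{u_1} \geq p_w$ and that $p_{u_1}$ is compatible with every extension of $\vp^{M_\al}(p_{u_1})$ in $M_\al$, elementarity of $M_\al$ gives compatibility of $\vp^{M_\al}(p_{u_1})$ with $p_w$ in $\ps^* \cap M_\al$; applying the strong residue property (Definition~\ref{def:ESRF}(4)) to a common extension produces $u_2 \in D(\vp^{M_\al},\zeta)$ with $\vp^{M_\al}(p_{u_2}) \geq p_w$, whence $u_2 \res M_\al \geq w$ (the $\dot{\S}_\zeta$-coordinate of $u_2 \res M_\al$ automatically contains $f_w$ as $u_2 \geq w$). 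A further extension into $D \cap D(\vp^{M_\al},\zeta)$ by density preserves this bound via order-preservation of $\vp^{M_\al}$, yielding $u' \in D$ with $u' \res M_\al \geq w$; construct $v'$ symmetrically. Two applications of Lemma~\ref{lemma:quotientFUN}(1) produce $w_v \geq w$ in $\R_\zeta \cap M_\al$ that is a residue of both $u'$ and $v'$; extending further in $\R_\zeta \cap M_\al$ to also dominate $u' \res M_\al$ and $v' \res M_\al$ (compatibility by elementarity applied to the witnesses in $\R_\zeta$) produces $w'$ witnessing $*^\zeta_{\vp^{M_\al}}(u',v',w')$.

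For (2): apply (1) and set $u_0 = u'$, $v_0 = v'$, $w_0 = w'$. Inductively build $\la u_n,v_n,w_n : n < \om\ra$: at stage $n+1$, pick $u_{n+1} \in D \cap D(\vp^{M_\al},\zeta)$ with $u_{n+1} \geq u_n, w_n$ (hence $\geq v_n \res M_\al$, since $w_n \geq v_n \res M_\al$) and with $\vp^{M_\al}(p_{u_{n+1}}) \geq p_{w_n}$ via strong residue; symmetrically pick $v_{n+1} \in D \cap D(\vp^{M_\al},\zeta)$ extending $v_n, w_n, u_{n+1} \res M_\al$ with $\vp^{M_\al}(p_{v_{n+1}}) \geq \vp^{M_\al}(p_{u_{n+1}})$; then extend the dual residue to $w_{n+1} \geq w_n, u_{n+1} \res M_\al, v_{n+1} \res M_\al$ using Lemma~\ref{lemma:quotientFUN}(1). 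Taking sups, $u^* = \sup_n u_n$ and $v^* = \sup_n v_n$ lie in $D \cap D(\vp^{M_\al},\zeta)$ by countable $=^*$-closure. The alternating absorption guarantees $f_{u_{n+1}} \res M_\al \supseteq f_{v_n} \res M_\al$ and $f_{v_{n+1}} \res M_\al \supseteq f_{u_{n+1}} \res M_\al$, so $f_{u^*} \res M_\al = \bigcup_n f_{u_n} \res M_\al = \bigcup_n f_{v_n} \res M_\al = f_{v^*} \res M_\al$; countable continuity of $\vp^{M_\al}$ (Definition~\ref{def:ESRF}(5)), together with the analogous interlocking of the $\vp^{M_\al}$-values, yields $\vp^{M_\al}(p_{u^*}) =^* \vp^{M_\al}(p_{v^*})$. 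Setting $w^* := u^* \res M_\al$ then gives $u^* \res M_\al = w^* =^* v^* \res M_\al$ (using the footnote of Definition~\ref{def:starhashtag} that $=^*$ for determined conditions forces equality of second coordinates), so $\#^\zeta_{\vp^{M_\al}}(u^*,v^*,w^*)$ holds with $w^* \geq w$. The main obstacle is precisely engineering the literal equality $f_{u^*} \res M_\al = f_{v^*} \res M_\al$ in the face of an arbitrary dense $D$ that may introduce new $M_\al$-entries at each density step; the $\om$-interleaving absorbs these additions symmetrically so that the two sups agree on their $M_\al$-traces.
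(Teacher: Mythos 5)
Your proposal has a genuine gap, and it occurs at the decisive step of both parts: the existence of a \emph{dual} residue for the pair you construct. In (1) you build $u'\geq u$ and $v'\geq v$ completely independently of one another (each lands in $D\cap D(\vp^{M_\al},\zeta)$ with $\ps^*$-residue above $p_w$), and then assert that ``two applications of Lemma \ref{lemma:quotientFUN}(1) produce $w_v\geq w$ which is a residue of both $u'$ and $v'$.'' The first application gives some $t\geq w$ in $\R_\zeta\cap M_\al$ which is a residue of $u'$; but the second application requires $t$ to be compatible with $v'$, and nothing guarantees this: $t$ extends $w$, which is a dual residue of the \emph{original} pair $u,v$, so $t$ is compatible with $v$ but not necessarily with the proper extension $v'$. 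Worse, for independently chosen $u',v'$ a dual residue may simply fail to exist: already in the degenerate case where $\R_\zeta$ is essentially the Levy collapse and $w=\es$, residues of $u'$ are exactly the conditions of $\ps\res\al$ extending $u'\res\al$, so a dual residue exists only if $u'\res\al$ and $v'\res\al$ are compatible --- and your construction allows them to disagree below $\al$ outside $w$. In the genuine two-step poset the situation is even more delicate, since (as the paper notes after Definition \ref{def:restriction}) $u'\res M_\al$ need not be a residue of $u'$ at all. The same unproved assertion recurs in your (2), where at each stage you invoke a new dual residue $w_{n+1}\geq u_{n+1}\res M_\al, v_{n+1}\res M_\al$ of the freshly extended pair via Lemma \ref{lemma:quotientFUN}(1); the compatibility of $u_{n+2}\res M_\al$ with $v_{n+1}$ in the next step rests entirely on this, so the gap propagates through the whole interleaving.

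The paper's proof repairs exactly this point by \emph{correlating} the two extensions through a single $V$-generic filter $\bar{G}$ over $\R_\zeta\cap M_\al$ containing $w$: since $*^\zeta_{\vp^{M_\al}}(u,v,w)$ holds, Lemma \ref{lemma:residueForceQuotient} puts both $u$ and $v$ into the quotient $(\R_\zeta/(p^*(M_\al),0_{\dot{\S}_\zeta}))/\bar{G}$, and Lemma \ref{lemma:quotientFUN}(2) lets one extend them \emph{inside this quotient} into the dense set $E$ of conditions of $D\cap D(\vp^{M_\al},\zeta)$ whose restriction either extends $w$ or is incompatible with $w$; genericity then forces $u'\res M_\al, v'\res M_\al\in\bar{G}$, hence $\geq w$, and any $w'\in\bar{G}$ forcing both $u'$ and $v'$ into the quotient is automatically a dual residue by Lemma \ref{lemma:residueForceQuotient}. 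Part (2) is then obtained by iterating part (1) $\om$ many times and using continuity of $\vp^{M_\al}$, rather than by maintaining dual residues by hand. If you want to keep the overall shape of your argument, you must replace the ``independent extension plus two applications of Lemma \ref{lemma:quotientFUN}(1)'' step with this quotient/generic-filter mechanism (or an equivalent device that keeps $u'$ and $v'$ jointly compatible with one filter on $\R_\zeta\cap M_\al$); without it, the dual residue you need at each stage need not exist.
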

\begin{proof} First define $E$ to be the set of conditions $s$ in $\R_\zeta/(p^*(M_\al),0_{\dot{\S}_\zeta})$ so that $s\in D(\vp^{M_\al},\zeta)\cap D$ and so that either $s\res M_\al\geq w$ or $s\res M_\al$ is incompatible with $w$; then $E$ is dense in $\R_\zeta/(p^*(M_\al),0_{\dot{\S}_\zeta})$. Now fix a $V$-generic filter $\bar{G}$ over $\R_\zeta\cap M_\al$ containing $w$, and note that $u$ and $v$ are in $(\R_\zeta/(p^*(M_\al),{0_{\dot{\S}_\zeta})})/\bar{G}$. By Lemma \ref{lemma:quotientFUN}(2), we can find $u'\geq u$ and $v'\geq v$ so that $u',v'$ are in $E$ as well as in $(\R_\zeta/(p^*(M_\al),0_{\dot{\S}_\zeta}))/\bar{G}$. We next observe that $u'\res M_\al\in\bar{G}$. Indeed, since $u'\in D(\vp^{M_\al},\zeta)$, $u'\res M_\al$ is a condition in $\R_\zeta\cap M_\al$. Additionally, since  $u'\in(\R_\zeta/(p^*(M_\al),0_{\dot{\S}_\zeta}))/\bar{G}$ and $u'\geq u'\res M_\al$, we have that $u'\res M_\al$ (a condition) is compatible with every condition in $\bar{G}$. Thus $u'\res M_\al\in\bar{G}$. However, by definition of $E$, and since $w\in\bar{G}$, $u'\res M_\al$ must extend $w$. A symmetric argument shows that $v'\res M_\al\geq w$.

Now let $w'\in\bar{G}$ be a condition extending $w$ which forces that $u',v'$ are in $(\R_\zeta/(p^*(M_\al),0_{\dot{\S}_\zeta}))/\dot{\bar{G}}$. By Lemma \ref{lemma:residueForceQuotient}, we have that $*^\zeta_{\vp^{M_\al}}(u',v',w')$ holds. Since $u'\res M_\al$ and $v'\res M_\al$ both extend $w$, this completes the proof of (1).

For (2), suppose that we are given conditions {$u_0,v_0$, and $w_0$} so that $*^\zeta_{\vp^{M_\al}}(u_0,v_0,w_0)$ holds. By repeatedly applying (1), we may define a {coordinate-wise} increasing sequence $\la\la u_n,v_n,w_n\ra:n\in\om\ra$ so that for all $n\in\om$, $u_{n+1}\geq u_n$ and $v_{n+1}\geq v_n$; $u_{n+1}\res M_\al\geq w_n$ and $v_{n+1}\res M_\al\geq w_n$; and $*^\zeta_{\vp^{M_\al}}(u_n,v_n,w_n)$ holds. Let $u^*$ be a sup of $\la u_n:n\in\om\ra$, and let $v^*$ and $w^*$ be defined similarly. Since $*^\zeta_{\vp^{M_\al}}(u_n,v_n,w_n)$ holds for each $n$, by definition we have that $w_n\geq u_n\res M_\al,v_n\res M_\al$. Therefore the sequences $\la u_n\res M_\al:n\in\om\ra$ and $\la v_n\res M_\al:n\in\om\ra$ are each intertwined with $\la w_n:n\in\om\ra$, and consequently, they have suprema {which are $=^*$-related.} It follows by the continuity of $\vp^{M_\al}$ that $\#^\zeta_{\vp^{M_\al}}(u^*,v^*,w^*)$ holds. 
\end{proof}

Suppose that $\vec{M}$ is in pre-splitting configuration up to $\rho$, that $\al\in\dom(\vec{M})$ and that $\zeta\in M_\al\cap\rho$. Fix $\theta \in (\kappa\setminus \alpha) \times \om_1$, a node in the tree $\dot{T}_\zeta$ of level greater than or equal to $\al$.

Let $\dot{b}_\zeta(\theta,\al)$ denote the $\R_\zeta$-name for $\lb\bar{\theta}\in\al\times\om_1:\bar{\theta}<_{\dot{T}_\zeta}\theta\rb$. 
By Proposition \ref{proposition:ccreflect}(2), the condition 
$(p^*(M_\al),0_{\dot{\S}_\zeta})$ forces that
$\dot{b}_\zeta(\theta,\al)$ is a cofinal branch through $(\dot{T}_\zeta\cap M_\al)[\dot{G}_{\R_\zeta\cap M_\al}]$. Note that by Proposition \ref{proposition:ccreflect} and the definition of pre-splitting configuration,
$(\dot{T}_\zeta\cap M_\al)[G_{\R_\zeta\cap M_\al}]$ is an Aronsjzan tree on $\alpha$ in the $V$-generic extension $V[G_{\R_\zeta\cap M_\al}]$ over $\R_\zeta \cap M_\alpha$. In light of this, we make the following definition:

\begin{definition}\label{def:splitbelow} 

Let $\zeta<\rho$, $\al<\ka$, and $\la\theta,\tau\ra$ be a pair of tree nodes (possibly equal) at or above level $\al$, which we view as nodes in the tree $\dot{T}_\zeta$. We say that two conditions $u$ and $v$ in $\R_\zeta$ \textbf{split $\la\theta,\tau\ra$ below $\alpha$ in $\dot{T}_\zeta$} if there exist a level $\bar{\al}<\al$ and \emph{distinct} nodes $\bar{\theta},\bar{\tau}$ on level $\bar{\al}$ so that $u\Vdash\bar{\theta}<_{\dot{T}_\zeta}\theta$ and $v\Vdash\bar{\tau}<_{\dot{T}_\zeta}\tau$.
 More generally, if $\zeta\leq\xi<\rho$ and $u',v' \in\R_\xi$,
 then we say that $u'$ and $v'$ split $\la\theta,\tau\ra$ below $\alpha$ in $\dot{T}_\zeta$ if $u = u'\res\zeta$ and $v = v'\res\zeta$ do.
\end{definition}

\begin{lemma}\label{lemma:1split} 
Suppose that $\zeta \leq \xi < \rho$ and $\R_\xi$ satisfies the dual residue property at some $M_\alpha$, where $\zeta\in M_\al$ (see Definition \ref{def:starhashtag}).
Fix $u,v\in\R_\xi$  so that for some $w\in\R_\xi\cap M_\al$, $\#^\xi_{\vp^{M_\al}}(u,v,w)$. Let $\la\theta,\tau\ra$ be a pair of tree nodes (possibly equal) 
each of which is at or above level $\al$.
Then there exist extensions $u^*\geq u$, $v^*\geq v$, and $w^*\geq w$ so that $\#^\xi_{\vp^{M_\al}}(u^*,v^*,w^*)$ and so that $u^*$ and $v^*$ split $\la\theta,\tau\ra$ below $\al$ in $\dot{T}_\zeta$.
\end{lemma}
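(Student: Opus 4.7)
First, apply the dual residue property at $M_\al$ to extend $w$ to $w^+\in\R_\xi\cap M_\al$ with $*^\xi_{\vp^{M_\al}}(u,v,w^+)$, and fix a $V$-generic $\bar G$ over $\R_\xi\cap M_\al$ containing $w^+$. By Proposition \ref{proposition:ccreflect}(3), and since $\vec M$ is in pre-splitting configuration and $\zeta\in M_\al$, the tree $T':=(\dot{T}_\zeta\cap M_\al)[\bar G\cap(\R_\zeta\cap M_\al)]$ is an Aronszajn tree on $\al$ in $V[\bar G]$, and both $u,v$ belong in $V[\bar G]$ to the quotient $Q:=(\R_\xi/(p^*(M_\al),0_{\dot{\S}_\xi}))/\bar G$. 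In $V[\bar G]$ define
\[
b_u:=\{\bar\theta\in T':(\exists u^\dagger\in Q)\;u^\dagger\geq u\text{ and }u^\dagger\Vdash\bar\theta<_{\dot{T}_\zeta}\theta\}
\]
and $b_v$ analogously (using $v$ and $\tau$); by Proposition \ref{proposition:ccreflect}(2) the tree order of $\dot{T}_\zeta$ restricted to levels below $\al$ coincides with that of $T'$, so both $b_u$ and $b_v$ are cofinal in $T'$.

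The crux of the proof is to show, in $V[\bar G]$, that there exist $\bar\al<\al$ and distinct nodes $\bar\theta\in b_u,\bar\tau\in b_v$ at level $\bar\al$; this is what I expect to be the main obstacle. I would argue by contradiction: if no such triple exists, then cofinality of $b_u,b_v$ forces $b_u\cap\mathrm{Lev}_{\bar\al}(T')=b_v\cap\mathrm{Lev}_{\bar\al}(T')=\{\bar\theta_{\bar\al}\}$ for every $\bar\al<\al$, and I claim $b:=\{\bar\theta_{\bar\al}:\bar\al<\al\}$ is then a cofinal branch of $T'$ in $V[\bar G]$. Cofinality is immediate; for linearity, given $\bar\al_1<\bar\al_2$ and a witness $u^\dagger\in Q$ to $\bar\theta_{\bar\al_2}\in b_u$, the $T'$-ancestor $\bar\theta^*$ of $\bar\theta_{\bar\al_2}$ at level $\bar\al_1$ agrees with its $\dot{T}_\zeta$-ancestor by Proposition \ref{proposition:ccreflect}(2), so $u^\dagger\Vdash\bar\theta^*<_{\dot{T}_\zeta}\theta$ and $\bar\theta^*\in b_u=\{\bar\theta_{\bar\al_1}\}$. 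Such a branch contradicts $T'$ being Aronszajn in $V[\bar G]$.

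Having obtained $\bar\al,\bar\theta,\bar\tau$ and witnesses $u'\geq u,v'\geq v$ in $Q$, I would lift back to $\R_\xi$: by Lemma \ref{lemma:denserestriction} and Lemma \ref{lemma:quotientFUN}(2), extend to $u''\geq u',v''\geq v'$ in $D(\vp^{M_\al},\xi)\cap Q$, which preserves the splitting by monotonicity. By elementarity of $M_\al$ and a standard density argument, $u''\res M_\al$ and $v''\res M_\al$ lie in $\bar G$, so one can pick $w'\in\bar G$ extending $w^+,u''\res M_\al,v''\res M_\al$ and forcing $u'',v''$ into the quotient; then $*^\xi_{\vp^{M_\al}}(u'',v'',w')$ holds by Lemma \ref{lemma:residueForceQuotient}. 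Finally, apply Lemma \ref{lemma:starimplieshashtag}(2) with the dense countably $=^*$-closed set $D(\vp^{M_\al},\xi)$ to produce $u^*\geq u'',v^*\geq v'',w^*\geq w'$ with $\#^\xi_{\vp^{M_\al}}(u^*,v^*,w^*)$; the splitting of $\la\theta,\tau\ra$ below $\al$ in $\dot{T}_\zeta$ is inherited from $u',v'$ by monotonicity of forcing.
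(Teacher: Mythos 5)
Your overall strategy is close in spirit to the paper's, but one step is not justified as written. You fix $\bar G$ generic over $\R_\xi\cap M_\al$ and need the tree $T'=(\dot{T}_\zeta\cap M_\al)[\bar G\cap(\R_\zeta\cap M_\al)]$ to have no cofinal branch in $V[\bar G]$, citing Proposition \ref{proposition:ccreflect}(3). That proposition only gives that $\dot{T}_\zeta\cap M_\al$ is Aronszajn in the extension by $\R_\zeta\cap M_\al$; when $\zeta<\xi$, your $V[\bar G]$ is a further extension by the remaining coordinates of $\R_\xi\cap M_\al$, and Aronszajn-ness is not preserved by further forcing in general (indeed $T'$ certainly acquires cofinal branches in the full $\R_\xi$-extension -- that is exactly what $\dot{b}_\zeta(\theta,\al)$ is). So your contradiction needs an additional argument. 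It is true and recoverable: for instance, coordinate $\zeta$ of the restricted iteration $\dot{\S}_\xi\cap M_\al$ generically specializes $T'$ (density of placing any node of $M_\al$ into the stage-$\zeta$ function transfers to $\R_\xi\cap M_\al$ by elementarity of $M_\al$), and since $\al$ remains $\aleph_2$ in $V[\bar G]$ by the $\al$-c.c.\ of $\dot{\S}_\xi\cap M_\al$ over $\ps^*\cap M_\al$, a special tree on $\al$ has no cofinal branch there. The paper avoids the issue altogether by applying the dual residue property to $u\res\zeta$, $v\res\zeta$, taking $\bar G$ generic over $\R_\zeta\cap M_\al$ only (so that Proposition \ref{proposition:ccreflect}(3) applies verbatim), producing the split there, and reattaching the untouched tails $f_u\res[\zeta,\xi)$, $f_v\res[\zeta,\xi)$ at the end.

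Two smaller remarks. Your ``crux'' argument via the sets $b_u,b_v$ is essentially equivalent to the paper's: there, $u\res\zeta$ forces $\dot{b}_\zeta(\theta,\al)$ to be a cofinal branch of the Aronszajn tree $\bar T$, so two extensions of $u\res\zeta$ must force distinct level-$\bar\al$ predecessors of $\theta$, and a single extension of $v\res\zeta$ deciding $\tau$'s predecessor finishes; your version is fine once the Aronszajn-ness point above is repaired. Finally, by Definition \ref{def:splitbelow} the splitting of $\R_\xi$-conditions is a statement about $u^*\res\zeta$ and $v^*\res\zeta$ forcing in $\R_\zeta$; ``monotonicity of forcing'' only gives that $u^*$ forces the relevant statement in $\R_\xi$. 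The restriction does force it -- any $\R_\zeta$-generic containing $u^*\res\zeta$ extends to an $\R_\xi$-generic containing $u^*$, and the statement concerns only the $\R_\zeta$-name $\dot{T}_\zeta$ -- but this is a projection argument you should spell out, not monotonicity; the paper sidesteps it by producing the split at level $\zeta$ before appending the tails.
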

\begin{proof} Since $\R_\xi$ satisfies the dual residue property at $M_\alpha$, $\R_\zeta$ does too, and so we may find some $w'\in\R_\zeta\cap M_\al$ so that $w'\geq_{\R_\zeta} w\res\zeta$ and  $*^\zeta_{\vp^{M_\al}}(u\res\zeta,v\res\zeta,w')$. Fix a $V$-generic filter $\bar{G}$ over $\R_\zeta\cap M_\al$ containing $w'$. As a result, $u\res\zeta$ and $v\res\zeta$ are in $(\R_\zeta/(p^*(M_\al),0_{\dot{\S}_\zeta}))/\bar{G}$. By the discussion preceding Definition \ref{def:splitbelow}, we know that $u\res\zeta$ forces in the quotient that $\dot{b}_\zeta(\theta,\al)$ is a cofinal branch through $\bar{T}:=(\dot{T}_\zeta\cap M_\al)[\bar{G}]$, which by Proposition \ref{proposition:ccreflect} is an Aronszajn tree on $\al$ in $V[\bar{G}]$. Consequently, we may find two conditions $u_0,u_1$ which extend $u\res\zeta$ in $(\R_\zeta/(p^*(M_\al),0_{\dot{\S}_\zeta}))/\bar{G}$, some level $\bar{\al}<\al$, and two \emph{distinct} nodes $\theta_0,\theta_1$ on level $\bar{\al}$ of $\bar{T}$ so that $u_i$ forces in $(\R_\zeta/(p^*(M_\al),0_{\dot{\S}_\zeta}))/\bar{G}$ that $\theta_i<_{\dot{T}_\zeta}\theta$. Since $v\res\zeta$ also forces that $\dot{b}_\zeta(\tau,\al)$ is a cofinal branch through $\bar{T}$, we may find some extension $v_0$ of $v\res\zeta$ in the quotient so that $v_0$ decides the $<_{\dot{T}_\zeta}$-predecessor, say $\bar{\tau}$, of $\tau$ on level $\bar{\al}$ of $\bar{T}$. As $\theta_0\neq\theta_1$, there exists some $i\in 2$ so that $\theta_i\neq\bar{\tau}$. Set $\bar{\theta}=\theta_i$.

Now fix an extension $w''$ of $w'$ in $\bar{G}$ so that $w''$ forces the following statements: (i) $u_i,v_0$ are in the quotient; (ii) $u_i$ forces in the quotient that $\bar{\theta}<_{\dot{T}_\zeta}\theta$; (iii) $v_0$ forces in the quotient that $\bar{\tau}<_{\dot{T}_\zeta}\tau$.

By two applications of Lemma \ref{lemma:quotientFUN}(3), we may find conditions $\bar{u},\bar{v}$ in the quotient 
so that $\bar{u}$ extends $u_i$ and $w''$, so that $\bar{v}$ extends $v_0$ and $w''$, and so that $\bar{u},\bar{v}\in D(\vp^{M_\al},\zeta)$.

We now see that $\bar{u}\Vdash_{\R_\zeta}\bar{\theta}<_{\dot{T}_\zeta}\theta$, since 
$$
w''\Vdash_{\R_\zeta\cap M_\al}\left(\;u_i\Vdash_{(\R_\zeta/(p^*(M_\al),0_{\dot{\S}_\zeta}))/\dot{\bar{G}}}\bar{\theta}<_{\dot{T}_\zeta}\theta\right)
$$
and since $\bar{u}\geq w'',u_i$. Similarly, $\bar{v}\Vdash_{\R_\zeta}\bar{\tau}<_{\dot{T}_\zeta}\tau$.

Finally, let $\bar{w}\geq w''$ be a condition in $\bar{G}$ which forces that $\bar{u}$ and $\bar{v}$ are in the quotient, so that by Lemma \ref{lemma:residueForceQuotient}, $*^\zeta_{\vp^{M_\al}}(\bar{u},\bar{v},\bar{w})$ holds. By Lemma \ref{lemma:starimplieshashtag}, we can find some $\bar{u}^*\geq\bar{u}$, $\bar{v}^*\geq\bar{v}$, and $\bar{w}^*\geq w$ so that $\#^\zeta_{\vp^{M_\al}}(\bar{u}^*,\bar{v}^*,\bar{w}^*)$ holds. Now let $u^*$ be the condition where $p_{u^*}=p_{\bar{u}^*}$, and where $f_{u^*}=f_{\bar{u}^*}\,^{\frown} f_u\res[\zeta,\xi)$. Let $v^*$ and $w^*$ be defined similarly. Then $\#^\xi_{\vp^{M_\al}}(u^*,v^*,w^*)$, and $u^*$ and $v^*$ split $\la\theta,\tau\ra$ below $\al$.

\end{proof}

One of the most important uses of the dual residue property is to obtain splitting pairs of conditions. Obtaining such conditions will also crucially use the ``exactness" conditions of Definition \ref{def:fksp}.

\begin{definition}\label{def:splittingpair/function} Suppose that $\vec{M}$ is in pre-splitting configuration up to $\rho$. 
\begin{enumerate}
    \item 
Let $\al\in\dom(\vec{M})$ and $\xi\in M_\al\cap\rho$. Fix a residue pair $\la p^*(M_\al),\vp^{M_\al}\ra$ for $(M_\al,\ps^*)$ and conditions $u,v\in\R_\xi$. We say that $u$ and $v$ are a \textbf{splitting pair for $(\vp^{M_\al},\xi)$} if
\begin{itemize}
\item for some $w\in\R_\xi\cap M_\al$, $\#^\xi_{\vp^{M_\al}}(u,v,w)$;
\item for any $\la\zeta,\nu\ra\in\dom(f_u)\cap\dom(f_v)\cap M_\al$ and any
$
\la\theta,\tau\ra\in (f_u(\zeta,\nu))\times (f_v(\zeta,\nu)),
$ 
both at or above level $\al$, $u$ and $v$ split $\la\theta,\tau\ra$ below $\al$ in $\dot{T}_\zeta$.
\end{itemize}
\item 
Given fixed enumerations $f_u(\zeta,\nu)\bsl(\al\times\om_1) = \{ \theta_n \mid n < \omega\}$ and $f_v(\zeta,\nu)\bsl(\al\times\om_1) = \{ \tau_m \mid m < \omega\}$ (possibly with repetitions in the case the sets are finite, nonempty), we define a \textbf{splitting function} to be a function $\Si$ with domain\footnote{Recall our convention from Notation \ref{notation:domain} regarding conditions $f_u$ and their domains.}
$$
\dom(\Si)=(\dom(f_u)\cap\dom(f_v)\cap M_\al)\times\om\times\om,
$$
so that for any $\la\zeta,\nu,m,n\ra\in\dom(\Si)$, $\Si(\zeta,\nu,m,n)$ is a pair $\la\bar{\theta},\bar{\tau}\ra$ of tree nodes satisfying Definition \ref{def:splitbelow} with respect to $\la\theta_m,\tau_n\ra$. We will denote $\bar{\theta}$ by $\Si(\zeta,\nu,m,n)(L)$ and $\bar{\tau}$ by $\Si(\zeta,\nu,m,n)(R)$.

\end{enumerate}
\end{definition}

\begin{remark}\label{remark:splittingfunction} Let $\Si$ be as in Definition \ref{def:splittingpair/function}.
\begin{enumerate}
\item We emphasize the fact that if $\la\zeta,\nu,m,n\ra\in\dom(\Si)$, then 
$$
\Si(\zeta,\nu,m,n)(L)\neq\Si(\zeta,\nu,m,n)(R)
$$ 
are two \emph{distinct} tree nodes \emph{on the same level}. We will usually suppress explicit mention of the level.
\item Any splitting function $\Si$ is a member of $M_\al$ since $M_\al$ is countably closed and since $\Si$ maps from a countable subset of $M_\al$ into $M_\al$.
\item We only require the splitting pair in Definition \ref{def:splittingpair/function} to split nodes coming from coordinates which are members of $M_\al$. As we will see from Lemma \ref{lemma:amalgamatesplittingpair} and later thinning out arguments, this is sufficient for our purposes.
\end{enumerate}
\end{remark}

Now we are ready to state our second inductive hypothesis, the point of which is to provide plenty of instances of the dual residue property. We will assume the second inductive hypothesis for the rest of the section. {We again recall the filter $\cal{F}$ and its dual ideal $\cal{I}$ from Definition \ref{def:fksp}.}
\vspace{3mm}

\textbf{Inductive Hypothesis II}: Let $\xi<\rho$, and suppose that $\vec{M}$ is $\R_\xi$-suitable. Then {there is an $A\seq\dom(\vec{M})$ with $A\in\cal{I}$ so that for all $\al\in\dom(\vec{M})\bsl A$, $\R_\xi$ satisfies the dual residue property at $M_\al$.}\\

Now we move to the main part of the proof that $\ps^*$ satisfies inductive hypotheses I and II with respect to $\rho$. After a bit more set-up, we will verify inductive hypothesis I for $\rho$ and then use this
to verify inductive hypothesis II at $\rho$. 

{The following lemma amalgamates instance of the second induction hypothesis below $\rho$, stating that if $\vec{M}$ is $\rho$-suitable, then for almost all $\al\in\dom(\vec{M})$ and all $\xi\in M_\al\cap\rho$, $\R_\xi$ satisfies the dual residue property at $M_\al$. However, we note that the following lemma is far from showing that the second induction hypothesis holds at $\rho$ itself, only asserting (roughly) that it holds up to $\rho$. The proof involves a standard diagonal union, using the normality of $\cal{I}$. }

\begin{lemma}\label{lemma:DRPBelow} {Suppose that $\vec{M}$ is $\R_\rho$-suitable. Then there is an $A\seq\dom(\vec{M})$ with $A\in\cal{I}$ so that for all $\al\in\dom(\vec{M})\bsl A$ and all $\xi\in M_\al\cap\rho$, $\R_\xi$ satisfies the dual residue property at $M_\al$.}
\end{lemma}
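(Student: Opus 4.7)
The plan is to apply Inductive Hypothesis II individually at each $\xi<\rho$ and amalgamate the resulting exceptional sets using the normality of $\cal{I}$. Since $\vec{M}$ is $\R_\rho$-suitable, each $M_\al$ contains $\rho$; as $|\rho|\leq\ka$, the $\lhd$-least surjection $\psi:\ka\to\rho$ is definable from $\rho$ in $(H(\ka^+),\in,\lhd)$ and so belongs to each $M_\al$. A standard elementarity argument (using that $M_\al\cap\ka=\al$, $\psi\in M_\al$, and that for each $\xi\in M_\al\cap\rho$ the $\lhd$-least $\ga$ with $\psi(\ga)=\xi$ is in $M_\al\cap\ka=\al$) then yields $M_\al\cap\rho=\psi[\al]$ for every $\al\in\dom(\vec{M})$.

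Next, I would fix $\ga<\ka$ and set $\xi:=\psi(\ga)$. For every $\al\in\dom(\vec{M})$ with $\al>\ga$, we have $\xi\in\psi[\al]\seq M_\al$, and hence $\R_\xi\in M_\al$ since $\dot{\S}_\xi$ is definable from $\xi$, $\ps^*$, and $\lhd$ in $H(\ka^+)$. The restricted sequence $\vec{M}\res\{\al\in\dom(\vec{M}):\al>\ga\}$ is therefore $\R_\xi$-suitable, and Inductive Hypothesis II supplies a set $A_\ga\in\cal{I}$ such that $\R_\xi$ satisfies the dual residue property at $M_\al$ for every $\al\in\dom(\vec{M})\bsl A_\ga$ with $\al>\ga$.

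The amalgamation is then the diagonal union
$$ A:=\{\al\in\dom(\vec{M}):\al\in A_\ga\text{ for some }\ga<\al\}, $$
which is a diagonal union of $\ka$-many $\cal{I}$-null sets and hence lies in $\cal{I}$ by the normality of $\cal{I}$. For any $\al\in\dom(\vec{M})\bsl A$ and any $\xi\in M_\al\cap\rho=\psi[\al]$, we have $\xi=\psi(\ga)$ for some $\ga<\al$; since $\al\notin A_\ga$ and $\al>\ga$, $\R_\xi$ satisfies the dual residue property at $M_\al$, as desired.

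The only genuine obstacle is the case $|\rho|=\ka$, in which the $\ka$-completeness of $\cal{I}$ alone would not suffice to combine the family $\la A_\xi:\xi<\rho\ra$; reindexing through the surjection $\psi:\ka\to\rho$ reduces the amalgamation to the standard diagonal-union argument available from the normality of $\cal{I}$.
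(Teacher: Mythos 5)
Your proof is correct and follows essentially the same route as the paper: apply Inductive Hypothesis II at each $\xi<\rho$, reindex along $\ka$ via the $\lhd$-least map onto $\rho$ (using $M_\al\cap\rho=\psi[\al]$), and amalgamate the exceptional sets by a diagonal union, invoking the normality of $\cal{I}$. The only cosmetic difference is that your use of a surjection handles the cases $\rho<\ka$ and $\rho\geq\ka$ uniformly, whereas the paper treats $\rho<\ka$ separately by $\ka$-completeness.
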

\begin{proof}
{Fix $\vec{M}$ which is $\R_\rho$-suitable. If $\rho<\ka$, then the lemma follows by taking the union of $<\ka$-many sets in $\cal{I}$ which witness the second induction hypothesis below $\rho$. Suppose, then, that $\rho\geq\ka$, and let $h:\ka\lra\rho$ be the $\lhd$-least bijection from $\ka$ onto $\rho$. Note that $h$ is in $M_\al$ for all $\al\in\dom(\vec{M})$  (since $\rho$ is an element of $M_\al$) and that for each such $\al$, $M_\al\cap\rho=h[M_\al\cap\ka]$. Next observe that for all $\xi<\rho$, a tail of the sequence $\vec{M}$ is $\R_\xi$-suitable, since a tail of this sequence contains $\xi$ as an element and since each model on $\vec{M}$ is $\R_\rho$-suitable.}

{For each $\xi<\rho$, we may then find $A_\xi\in\cal{I}$ so that for all $\al\in\dom(\vec{M})\bsl A_\xi$, $M_\al$ is $\R_\xi$-suitable and so that $\R_\xi$ satisfies the dual residue property at $M_\al$. For each $\nu<\ka$, let $B_\nu:=A_{h(\nu)}$, and let $B:=\nabla_{\nu<\ka}B_\nu=\lb\be<\ka:(\exists\nu<\be)\,[\be\in B_\nu]\rb$. $B\in\cal{I}$ since $\cal{I}$ is a normal ideal.}

{We now claim that for all $\al\in\dom(\vec{M})$ and all $\xi\in M_\al\cap\rho$, $M_\al$ is $\R_\xi$-suitable and $\R_\xi$ satisfies the dual residue property at $M_\al$. So let such $\al$ and $\xi$ be given. $\xi\in M_\al\cap\rho$, and hence $\xi=h(\bar{\nu})$ for some $\bar{\nu}<\al$. However, $\al\notin B$, and therefore for all $\nu<\al$, $\al\notin B_\nu=A_{h(\nu)}$. In particular, $\al\notin A_{h(\bar{\nu})}=A_\xi$. Thus by choice of $A_\xi$, $M_\al$ is $\R_\xi$-suitable, and $\R_\xi$ satisfies the dual residue property at $M_\al$.}
\end{proof}

At important parts of the following proofs we will need to understand the circumstances under which we can amalgamate conditions in $\R_\rho$, and in particular, in $\dot{\S}_\rho.$ We will often be interested in the following strong sense of amalgamation:

\begin{definition}
\label{def:stronglycompatibleover} Let $u,v\in\R_\rho$ so that $p_u$ and $p_v$ are compatible in $\ps^*$. We say that $f_u$ and $f_v$ are \textbf{strongly compatible over $p_u$ and $p_v$} if for any condition $q\in\ps^*$ which extends $p_u$ and $p_v$, $q$ forces that $\check{f}_u\cup \check{f}_v\in\dot{\S}_\rho$.
\end{definition}

The next lemma gives sufficient conditions under which we may amalgamate conditions in $\R_\rho$ whose specializing parts are strongly compatible as above.

\begin{lemma}\label{lemma:amalgamatesplittingpair} Suppose that $\vec{M}$ is in pre-splitting configuration up to $\rho$, that $\al<\be$ are in $\dom(\vec{M})$, and that $\la p^*(M_\al),\vp^{M_\al}\ra$ and $\la p^*(M_\be),\vp^{M_\be}\ra$ are residue pairs for $(M_\al,\ps^*)$ and $(M_\be,\ps^*)$, respectively. Let $\la u_\al,v_\al\ra$ and $\la u_\be,v_\be\ra$ be two pairs of conditions in $\R_\rho$ which satisfy the following:
\begin{enumerate}
\item $\la u_\al,v_\al\ra$ is a splitting pair for $(\vp^{M_\al},\rho)$ with splitting function $\Si_\al$;
\item $\la u_\be,v_\be\ra$ is a splitting pair for $(\vp^{M_\be},\rho)$ with splitting function $\Si_\be$;
\item $\Si_\al=\Si_\be$;
\item there exists $w\in M_\al$ so that $\#^\rho_{\vp^{M_\al}}(u_\al,v_\al,w)$ and $\#^\rho_{\vp^{M_\be}}(u_\be,v_\be,w)$ both hold; and
\item $u_\al,v_\al\in M_\be$.
 \end{enumerate}
Then $u_\al$ and $v_\be$ are compatible in $\R_\rho$; in fact, $f_{u_\al}$ is strongly compatible with $f_{v_\be}$ over $p_{u_\al}$ and $p_{v_\be}$.
\end{lemma}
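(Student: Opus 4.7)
My plan is to prove the two claims in sequence: first the compatibility of $p_{u_\al}$ and $p_{v_\be}$ in $\ps^*$, and then the strong compatibility of $f_{u_\al}$ with $f_{v_\be}$ over them. For the former, hypothesis (5) places $p_{u_\al} \in \ps^* \cap M_\be$, while the identities $\#^\rho_{\vp^{M_\al}}(u_\al, v_\al, w)$ and $\#^\rho_{\vp^{M_\be}}(u_\be, v_\be, w)$ give $\vp^{M_\al}(p_{u_\al}) =^* p_w$ and $\vp^{M_\be}(p_{v_\be}) =^* p_w$; in particular both $p_{u_\al}$ and $p_{v_\be}$ extend $p_w$. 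The strong residue property of $\vp^{M_\be}$ (Definition \ref{def:ESRF}(4)), applied to $p_{u_\al} \in \ps^* \cap M_\be$ above $\vp^{M_\be}(p_{v_\be})$, then produces $q^* \geq p_{v_\be}$ with $\vp^{M_\be}(q^*) \geq p_{u_\al}$, and hence $q^* \geq p_{u_\al}$ by the projection property.

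For strong compatibility, fix an arbitrary $q \geq p_{u_\al}, p_{v_\be}$ in $\ps^*$; I would show by induction on $\zeta \leq \rho$ along the iteration that $q$ forces $\check{f}_{u_\al} \cup \check{f}_{v_\be}\res\zeta \in \dot{\S}_\zeta$. The nontrivial step reduces to verifying, for each $(\zeta,\nu) \in \dom(f_{u_\al}) \cap \dom(f_{v_\be})$ and each distinct $\theta \in f_{u_\al}(\zeta)(\nu)$, $\tau \in f_{v_\be}(\zeta)(\nu)$, that $(q,\check{f}_{u_\al}\cup\check{f}_{v_\be}\res\zeta)$ forces $\theta$ and $\tau$ to be $<_{\dot{T}_\zeta}$-incomparable. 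A quick domain calculation using $\dom(f_{u_\al}) \subseteq M_\be$ together with $\dom(f_{v_\be}) \cap M_\be = \dom(f_w) \subseteq M_\al$ places $(\zeta,\nu) \in M_\al$, so $(\zeta,\nu,m,n) \in \dom(\Si)$ for all $m,n<\om$. The algebraic identities $f_{u_\al}\res M_\al = f_w = f_{v_\be}\res M_\be$ then give the key inclusion $f_w(\zeta)(\nu) \subseteq f_{u_\al}(\zeta)(\nu) \cap f_{v_\be}(\zeta)(\nu)$.

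The case analysis is driven by the heights of $\theta$ and $\tau$. Nodes of $f_{u_\al}$ have height $<\be$ since $f_{u_\al} \in M_\be$, while any node of $f_{v_\be}$ of height strictly between $\al$ and $\be$ would lie in $f_{v_\be} \cap M_\be = f_w \subseteq M_\al$ and so actually have height $<\al$. This leaves exactly four height configurations. In three of them --- both below $\al$; $\theta$ below $\al$ and $\tau$ at or above $\be$; $\theta$ in $[\al,\be)$ and $\tau$ below $\al$ --- the inclusion above places both $\theta$ and $\tau$ in a common antichain (either $f_{u_\al}(\zeta)(\nu)$ or $f_{v_\be}(\zeta)(\nu)$), and incomparability is forced directly by $u_\al$ or $v_\be$. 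For example, if $\theta$ has height $<\al$ and $\tau$ has height $\geq\be$, then $\theta \in f_w(\zeta)(\nu) \subseteq f_{v_\be}(\zeta)(\nu)$, so both $\theta, \tau \in f_{v_\be}(\zeta)(\nu)$ and $v_\be\res\zeta$ --- hence our condition --- forces incomparability.

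The heart of the argument is the remaining configuration, where $\theta$ has height in $[\al,\be)$ and $\tau$ has height $\geq\be$. Here $\theta = \theta_m^\al$ appears in $u_\al$'s enumeration of $f_{u_\al}(\zeta)(\nu)\bsl(\al\times\om_1)$, and $\tau = \tau_n^\be$ appears in $v_\be$'s enumeration of $f_{v_\be}(\zeta)(\nu)\bsl(\be\times\om_1)$. Because $\Si_\al = \Si_\be$, the common value $(\bar\theta,\bar\tau) = \Si(\zeta,\nu,m,n)$ is a pair of \emph{distinct} nodes on a shared level below $\al$ with $u_\al \Vdash \bar\theta <_{\dot{T}_\zeta} \theta$ (from $\Si_\al$) and $v_\be \Vdash \bar\tau <_{\dot{T}_\zeta} \tau$ (from $\Si_\be$). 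Any extension of our condition forcing $\theta <_{\dot{T}_\zeta} \tau$ would, by transitivity, force $\bar\theta$ and $\bar\tau$ to be two distinct predecessors of $\tau$ on the same level --- impossible in a tree --- while the reverse $\tau <_{\dot{T}_\zeta} \theta$ is ruled out by heights. The main obstacle I anticipate is navigating the four-case split cleanly: recognizing that the $f_w$-inclusions dissolve three of the cases is the key observation, and one must then verify that the assumption $\Si_\al = \Si_\be$ supplies exactly the splitting data needed for the single hard configuration.
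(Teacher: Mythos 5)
Your proof is correct and follows essentially the same route as the paper's: first the compatibility of the $\ps^*$-parts via $p_{u_\al}\in M_\be$ extending the residue $\vp^{M_\be}(p_{v_\be})=^*p_w$, then an induction on $\zeta\leq\rho$ whose successor step treats cross pairs $\la\theta,\tau\ra$ by height, absorbing the easy configurations into a single antichain of $u_\al$ or $v_\be$ via the inclusion $f_w\seq f_{u_\al}\cap f_{v_\be}$, and handling the remaining configuration with the common splitting function $\Si_\al=\Si_\be$. The only cosmetic difference is that you make the height dichotomy for nodes of $f_{v_\be}$ explicit (below $\al$ or at/above $\be$), giving a four-case split where the paper simply argues ``$\theta$ below $\al$'', ``$\tau$ below $\be$'', or neither; the substance is identical.
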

\begin{proof} We first observe that $p_{u_\al}$ and $p_{v_\be}$ are compatible in $\ps^*$. Indeed, by (4), $\vp^{M_\al}(p_{u_\al})=^* p_w=^*\vp^{M_\be}(p_{v_\be})$, and by (5), $p_{u_\al}\in M_\be$. Thus as $p_{u_\al}\geq\vp^{M_\al}(p_{u_\al})\geq\vp^{M_\be}(p_{v_\be})$, and as $\vp^{M_\be}$ is a residue function, $p_{u_\al}$ is compatible with $p_{v_\be}$.

Now let $q\in\ps^*$ be any common extension of $p_{u_\al}$ and $p_{v_\be}$. We will argue by induction on $\zeta\leq\rho$ that $q\Vdash (\check{f}_{u_\al}\cup \check{f}_{v_\be})\res\zeta\in\dot{\S}_\zeta$. Limit stages are immediate. For the successor stage, suppose that $\la\zeta,\nu\ra\in\dom(f_{u_\al})\cap\dom(f_{v_\be})$ and that we have proven that $q\Vdash (\check{f}_{u_\al}\cup \check{f}_{v_\be})\res\zeta\in\dot{\S}_\zeta$. Since $f_{u_\al}\in M_\be$ by (5), $\la\zeta,\nu\ra\in M_\be$. Thus $\la\zeta,\nu\ra\in\dom(f_{v_\be})\cap M_\be=\dom(f_w)$, since $w=^*v_\be\res M_\be$.
Since we also have that $w=^*u_\be\res M_\be$, it follows that $\la\zeta,\nu\ra\in\dom(f_{u_\be})$. Thus $\la\zeta,\nu\ra\in\dom(f_{u_\be})\cap\dom(f_{v_\be})\cap M_\be=\dom(f_{u_\al})\cap\dom(f_{v_\al})\cap M_\al$, with equality holding by (3) and the definition of a splitting function. Moreover, $\zeta\in M_\al$ since $\la\zeta,\nu\ra\in\dom(f_w)\seq M_\al$.

Now pick a pair of distinct nodes $\la\theta,\tau\ra\in f_{u_\al}(\zeta,\nu)\times f_{v_\be}(\zeta,\nu)$, and we will show that $(q,(f_{u_\al}\cup f_{v_\be})\res\zeta)$ forces in $\R_\zeta$ that $\theta$ and $\tau$ are $\dot{T}_\zeta$-incompatible. If $\theta$ is below level $\al$, then $\theta\in (f_{u_\al}\res M_\al)(\zeta,\nu)=f_w(\zeta,\nu)\seq f_{v_\be}(\zeta,\nu)$. Thus $(q,f_{v_\be}\res\zeta)\Vdash\theta,\tau$ are $\dot{T}_\zeta$-incompatible, and so $(q,(f_{u_\al}\cup f_{v_\be})\res\zeta)$ forces this too. A similar argument applies if $\tau$ is below level $\be$.

We therefore assume that $\theta$ is at or above level $\al$ and $\tau$ is at or above level $\be$. Let $m$ and $n$ be chosen so that $\theta$ is the $m$th node in $f_{u_\al}(\zeta,\nu)\bsl(\al\times\omega_1)$ and $\tau$ is the $n$th node in $f_{v_\be}(\zeta,\nu)\bsl(\be\times\om_1)$. By assumption (3), letting $\Si:=\Si_\al=\Si_\be$, we know that $\Sigma(\zeta,\nu,m,n)(L)$ and $\Sigma(\zeta,\nu,m,n)(R)$ are two distinct nodes on the same level and also that
$$
(p_{u_\al},f_{u_\al}\res\zeta)\Vdash\Sigma(\zeta,\nu,m,n)(L)<_{\dot{T}_\zeta}\theta
$$
and
$$
(p_{v_\be},f_{v_\be}\res\zeta)\Vdash\Sigma(\zeta,\nu,m,n)(R)<_{\dot{T}_\zeta}\tau.
$$
Therefore $(q,(f_{u_\al}\cup f_{v_\be})\res\zeta)$ forces that $\tau$ and $\theta$ are incompatible in $\dot{T}_\zeta$, as we intended to show.
\end{proof}

The following item shows how we can obtain the desired splitting pairs of conditions.

\begin{lemma}\label{lemma:splittingpairs}  Suppose that $\vec{M}$ is in pre-splitting configuration up to $\rho$ {and that $\dom(\vec{M})$ satisfies the conclusion of Lemma \ref{lemma:DRPBelow}.} Fix $\al\in\dom(\vec{M})$, and suppose that $\la p^*(M_\al),\vp^{M_\al}\ra$ is a residue pair for $(M_\al,\ps^*)$. Finally, fix $u,v,w$ so that $\#^\rho_{\vp^{M_\al}}(u,v,w)$. Then there exist extensions $u^*\geq u$ and $v^*\geq v$ so that $u^*,v^*$ are a splitting pair for $(\vp^{M_\al},\rho)$.
\end{lemma}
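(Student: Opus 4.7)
The plan is to construct recursively increasing sequences $\la u_k\ra_{k<\om},\la v_k\ra_{k<\om}\seq\R_\rho$ and $\la w_k\ra_{k<\om}\seq \R_\rho\cap M_\al$ starting with $(u_0,v_0,w_0)=(u,v,w)$, maintaining throughout the invariant $\#^\rho_{\vp^{M_\al}}(u_k,v_k,w_k)$ and arranged so that every potential splitting task is eventually addressed. The sups $u^*,v^*,w^*$ will form the desired splitting pair: the $\#$-relation is preserved at stage $\rho$ by the countable continuity of $\vp^{M_\al}$ (Definition \ref{def:ESRF}(5)) together with the countable $=^*$-closure of $D(\vp^{M_\al},\rho)$ from Lemma \ref{lemma:denserestriction}.

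At stage $k$, call a \emph{task} any tuple $\la\zeta,\nu,\theta,\tau\ra$ with $\la\zeta,\nu\ra\in\dom(f_{u_k})\cap\dom(f_{v_k})\cap M_\al$, $\theta\in f_{u_k}(\zeta,\nu)$ and $\tau\in f_{v_k}(\zeta,\nu)$ both at or above level $\al$, for which $(u_k,v_k)$ does not yet split $\la\theta,\tau\ra$ below $\al$ in $\dot{T}_\zeta$. This set is countable, since the supports are countable. A standard diagonal bookkeeping on $\om\times\om$ (each pair cofinally) selects one task per stage in a way that guarantees every task appearing at any stage is eventually selected, unless it is first resolved as a side effect. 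When a task $\la\zeta,\nu,\theta,\tau\ra$ is selected, set $\xi:=\zeta\in M_\al\cap\rho$ (so $\xi<\rho$). The restrictions $u_k\res\xi, v_k\res\xi, w_k\res\xi$ inherit the relation $\#^\xi_{\vp^{M_\al}}$. By Lemma \ref{lemma:DRPBelow} and the hypothesis that $\dom(\vec M)$ satisfies its conclusion, $\R_\xi$ satisfies the dual residue property at $M_\al$, so Lemma \ref{lemma:1split} produces extensions $u'_k,v'_k$ and $w'_k$ with $\#^\xi_{\vp^{M_\al}}(u'_k,v'_k,w'_k)$ and splitting $\la\theta,\tau\ra$ below $\al$ in $\dot{T}_\zeta$. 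Lift back to $\R_\rho$ by glueing on the unchanged $[\xi,\rho)$-tails: let $u_{k+1}$ be the condition with first coordinate $p_{u'_k}$ and with $f_{u_{k+1}}$ agreeing with $f_{u'_k}$ on $[0,\xi)$ and with $f_{u_k}$ on $[\xi,\rho)$; define $v_{k+1},w_{k+1}$ analogously. Because restriction to $M_\al$ is computed coordinatewise (Definition \ref{def:restriction}), the invariant $\#^\rho_{\vp^{M_\al}}(u_{k+1},v_{k+1},w_{k+1})$ is preserved.

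After $\om$ stages take sups $u^*,v^*,w^*$. Since each sequence is increasing and composed of determined conditions, $f_{u^*}(\zeta,\nu)=\bigcup_{k<\om} f_{u_k}(\zeta,\nu)$ and likewise for $f_{v^*}$. For any $\la\zeta,\nu\ra\in\dom(f_{u^*})\cap\dom(f_{v^*})\cap M_\al$ and any pair $\la\theta,\tau\ra$ on or above level $\al$ in $f_{u^*}(\zeta,\nu)\times f_{v^*}(\zeta,\nu)$, both nodes appear by some finite stage $K$, so $\la\zeta,\nu,\theta,\tau\ra$ enters the task set at stage $K$ and is eventually split by some $(u_L,v_L)$ with $L\geq K$; by forcing the split is inherited by $(u^*,v^*)$. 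Hence $(u^*,v^*)$ is a splitting pair for $(\vp^{M_\al},\rho)$ witnessed by $w^*$. The main obstacle is coordinating the bookkeeping against the fact that the lift to $D(\vp^{M_\al},\xi)$ inside Lemma \ref{lemma:1split} may introduce new nodes above level $\al$ at later stages, so the list of tasks must be recomputed each stage rather than fixed once; combined with verifying that taking the $\om$-sup in $\R_\rho$ respects the global $\#$-relation, this is what forces the recursion to run in all three coordinates simultaneously.
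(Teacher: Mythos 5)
Your proposal is correct and follows essentially the same route as the paper's proof: a recursive construction of coordinate-wise increasing triples maintaining $\#^\rho_{\vp^{M_\al}}$, where at each stage a bookkeeping device selects a tuple $\la\zeta,\nu,\theta,\tau\ra$, Lemma \ref{lemma:1split} is applied to the restrictions to $\R_\zeta$ (using the dual residue property at $M_\al$ supplied by Lemma \ref{lemma:DRPBelow}), the result is glued to the unchanged $[\zeta,\rho)$-tails, and the sups are taken at the end, with continuity of $\vp^{M_\al}$ preserving the $\#$-relation. Your explicit remark that the task list must be recomputed at each stage corresponds exactly to the paper's requirement that the bookkeeping catch every tuple arising in the domains of the final sups, so there is nothing substantively different to flag.
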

\begin{proof} Fix $u,v,w$ as in the statement of the lemma. We define a {coordinate-wise} increasing sequence of triples $\la\la u_n,v_n,w_n\ra:n\in\om\ra$ of conditions and a sequence $\la\la\zeta_n,\nu_n,\theta_n,\tau_n\ra:n\in\om\ra$ of tuples of ordinals and tree nodes so that $\la u_0,v_0,w_0\ra=\la u,v,w\ra$ and so that for each $n$, 
\begin{itemize}
\item $\#^\rho_{\vp^{M_\al}}(u_n,v_n,w_n)$ holds;
\item $\la\zeta_n,\nu_n\ra\in\dom(f_{u_n})\cap\dom(f_{v_n})\cap M_\al$ and $\la\theta_n,\tau_n\ra\in (f_{u_n}(\zeta_n,\nu_n)\bsl(\al\times\om_1))\times(f_{v_n}(\zeta_n,\nu_n)\bsl(\al\times\om_1))$; and
\item $u_{n+1}$ and $v_{n+1}$ split $\la\theta_n,\tau_n\ra$ below $\al$.
\end{itemize}
This is done with respect to some bookkeeping device in such a way that if $u^*$ is a sup of $\la u_n:n\in\om\ra$ (and similarly for $v^*$), then for each $\la\zeta,\nu\ra\in\dom(f_{u^*})\cap\dom(f_{v^*})\cap M_\al$ and each $\la\theta,\tau\ra\in(f_{u^*}(\zeta,\nu)\bsl(\al\times\om_1))\times(f_{v^*}(\zeta,\nu)\bsl(\al\times\om_1))$, $\la\zeta,\nu,\theta,\tau\ra$ appears as the $n$th tuple for some $n$. 

To show the successor step, suppose that $u_n,v_n$ and $w_n$ are given, and consider $\la\zeta_n,\nu_n,\theta_n,\tau_n\ra$. Note that $\#^{\zeta_n}_{\vp^{M_\al}}(u_n\res\zeta_n,v_n\res\zeta_n,w_n\res\zeta_n)$ also holds. {Then Lemma \ref{lemma:1split} applies since $\zeta_n\in M_\al$ and since $\R_\eta$ satisfies the dual residue property at $M_\al$ for all $\eta\in M_\al\cap\rho$.} Thus we may find conditions $u'_n\geq u_n\res\zeta_n$, $v'_n\geq v_n\res\zeta_n$, and $w'_n\geq w_n\res\zeta_n$ so that $\#^{\zeta_n}_{\vp^{M_\al}}(u'_n,v'_n,w'_n)$ and so that $u'_n$ and $v'_n$ split $\la\theta_n,\tau_n\ra$ below $\al$. Now define $f_{u_{n+1}}$ to be the function {which equals $f_{u'_n}$ on $\zeta_n$ and which equals $f_{u_n}$ on $[\zeta_n,\rho)$.} Also, let $u_{n+1}$ be the pair $(p_{u'_{n}},f_{u_{n+1}})$. Let $v_{n+1}$ and $w_{n+1}$ be defined similarly. Then $\#^\rho_{\vp^{M_\al}}(u_{n+1},v_{n+1},w_{n+1})$ holds and $u_{n+1}$ and $v_{n+1}$ split $\la\theta_n,\tau_n\ra$ below $\al$.

This completes the construction of the sequence. Fix sups $u^*,v^*,w^*$. Since $\#^\rho_{\vp^{M_\al}}(u_n,v_n,w_n)$ holds for all $n$, $\#^\rho_{\vp^{M_\al}}(u^*,v^*,w^*)$ also holds. By the choice of bookkeeping, $u^*,v^*$ is a splitting pair for $(\vp^{M_\al},\rho)$, completing the proof.
\end{proof}


\begin{lemma}\label{lemma:crosscompatible} Suppose that $\vec{M}$ is in pre-splitting configuration up to $\rho$. Suppose that for each $\al\in \dom(\vec{M})$, there exist $u_\al,v_\al$ which are a splitting pair for $(\vp^{M_\al},\rho)$, where $\la p^*(M_\al),\vp^{M_\al}\ra$ is a residue pair for $(M_\al,\ps^*)$. {Then there exists $B\seq\dom(\vec{M})$ in $\cal{F}^+$ so that} for any $\al<\be$ in $B$, $u_\al,v_\al\in M_\be$, $u_\al\res M_\al=^*v_\be\res M_\be$, and $u_\al$ is compatible with $v_\be$.
\end{lemma}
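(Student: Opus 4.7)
The plan is to apply Lemma \ref{lemma:amalgamatesplittingpair} to the pairs $\la u_\al,v_\al\ra$ and $\la u_\be,v_\be\ra$ for $\al<\be$ on a suitable $\cal{F}^+$-restriction $B$ of $\dom(\vec{M})$. For each $\al\in\dom(\vec{M})$, fix a witness $w_\al\in\R_\rho\cap M_\al$ to $\#^\rho_{\vp^{M_\al}}(u_\al,v_\al,w_\al)$ guaranteed by Definition \ref{def:splittingpair/function}, and fix a splitting function $\Si_\al$ associated to the pair; by Remark \ref{remark:splittingfunction}(2), $\Si_\al\in M_\al$. To satisfy clauses (3) and (4) of Lemma \ref{lemma:amalgamatesplittingpair} the plan is to thin $\dom(\vec{M})$ so that both $w_\al$ and $\Si_\al$ become constant with common values $w^*$ and $\Si^*$, and to additionally arrange, for clause (5), that the models absorb $u_\al,v_\al$.

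For the first step, since $\cal{F}$ extends the club filter, the set $C=\dom(\vec{M})\cap\lim(\dom(\vec{M}))$ lies in $\cal{F}^+$. For any $\al\in C$, the continuity clause of Definition \ref{def:suitable}(4) gives $(w_\al,\Si_\al)\in M_{\xi_\al}$ for some $\xi_\al\in\dom(\vec{M})\cap\al$. Applying normality of $\cal{F}$ to the regressive map $\al\mapsto\xi_\al$ yields $B_1\seq C$ in $\cal{F}^+$ and a fixed $\xi^*\in\dom(\vec{M})$ with $(w_\al,\Si_\al)\in M_{\xi^*}$ for all $\al\in B_1$. Since $|M_{\xi^*}|=\xi^*<\ka$, the $\ka$-completeness of $\cal{F}$ permits a further refinement to $B_2\in\cal{F}^+$ on which $w_\al=w^*$ and $\Si_\al=\Si^*$ take constant values. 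For the second step, since $\R_\rho\in H(\ka^+)$, Lemma \ref{lemma:UsefulSuitable0} applied to the parameter $\R_\rho$ gives that each $u_\al,v_\al\in\R_\rho$ lies in some $M_\xi$ with $\xi\in\dom(\vec{M})$; define $g(\al)$ to be the least such $\xi$ above $\al$ (which exists because the sequence is unbounded and the models are nested). Intersecting $B_2$ with the club of $\be<\ka$ closed under $g$ produces the desired $B\in\cal{F}^+$.

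Verification on $B$ is then direct. For $\al<\be$ in $B$, $u_\al,v_\al\in M_{g(\al)}\seq M_\be$ by closure. The chain $u_\al\res M_\al=^*w_\al=w^*=w_\be=^*v_\be\res M_\be$ gives the second clause. For compatibility, note that $w^*=w_\al\in M_\al$ and $\Si_\al=\Si_\be=\Si^*$ by construction, so the hypotheses of Lemma \ref{lemma:amalgamatesplittingpair} are met with common witness $w^*$ and common splitting function $\Si^*$, yielding that $u_\al$ is compatible (indeed, $f_{u_\al}$ is strongly compatible with $f_{v_\be}$ over $p_{u_\al}$ and $p_{v_\be}$) with $v_\be$. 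The only genuine obstacle in the argument is the two-stage pigeonhole (normality of $\cal{F}$ followed by $\ka$-completeness) used to render $w_\al$ and $\Si_\al$ constant on an $\cal{F}^+$-set; everything else reduces to invoking the amalgamation lemma.
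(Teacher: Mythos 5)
Your proof is correct and follows essentially the same route as the paper: fix the witnesses $w_\al$ and splitting functions $\Si_\al\in M_\al$, stabilize them to constant values on an $\cal{F}^+$ subset of $\dom(\vec{M})$, intersect with a club so that $u_\al,v_\al\in M_\be$ for $\al<\be$, and then invoke Lemma \ref{lemma:amalgamatesplittingpair}. The only (harmless) difference is in how constancy is obtained: the paper presses down once on the regressive map $\al\mapsto\la w_\al,\Si_\al\ra$ (coded by an ordinal below $\al$), whereas you first use limit-continuity of the chain and normality to locate the pairs in a single $M_{\xi^*}$ and then apply $\ka$-completeness to fix the values.
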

\begin{proof}
Suppose that for each {$\al\in \dom(\vec{M})$}, we have a splitting pair $u_\al,v_\al$ for $(\vp^{M_\al},\rho)$; we also let $w_\al\in \R_\rho\cap M_\al$ be a condition witnessing $\#^\rho_{\vp^{M_\al}}(u_\al,v_\al,w_\al)$. Let $\Si_\al$ be a splitting function for $(u_\al,v_\al)$ with respect to $M_\al$, as in Definition \ref{def:splittingpair/function}. By Remark \ref{remark:splittingfunction}, $\Si_\al\in M_\al$. Now the function on $\dom(\vec{M})$ defined by $\al\mapsto \la w_\al,\Si_\al\ra$ is regressive (since the pair can be coded by an ordinal below $\al$). Since $\dom(\vec{M})\in{\cal{F}^+}$ and ${\cal{F}}$ is normal, there exists some ${B}\seq\dom(\vec{M})$ which is also in ${\cal{F}^+}$ on which that function takes a constant value, say $\la\bar{w},\Si\ra$. Moreover, by intersecting with a club and relabelling if necessary, we may assume that if $\al<\be$ are in ${B}$, then $u_\al,v_\al\in M_\be$. But then for any  $\al<\be$ in {$B$}, we have that $u_\al\res M_\al=^*\bar{w}=^*v_\be\res M_\be$. Therefore, for all $\al<\be$ in ${B}$, the assumptions of Lemma \ref{lemma:amalgamatesplittingpair} are satisfied, and consequently $u_\al$ and $v_\be$ are compatible.
\end{proof}

\begin{proposition}\label{prop:kappacc} $\ps^*\Vdash\dot{\S}_\rho$ is $\ka$-c.c.
\end{proposition}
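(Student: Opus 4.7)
The plan is to argue by contradiction. Assume some $p_0 \in \ps^*$ forces that $\dot\S_\rho$ has an antichain of size $\ka$; by density of determined conditions we may represent this as a sequence $\la (q_\xi, \check g_\xi) : \xi < \ka\ra$ of conditions in $\R_\rho$ forming a maximal antichain below $(p_0, 0)$. Fix an $\R_\rho$-suitable sequence $\vec{M}$ containing $p_0$ and this sequence; by refining to a set in $\cal{F}^+$ (using that $\ps^*$ is $\cal{F}$-strongly proper, Proposition \ref{proposition:ccreflect}, and Lemma \ref{lemma:DRPBelow}), assume $\vec{M}$ is in pre-splitting configuration up to $\rho$ and that $\R_\xi$ satisfies the dual residue property at every $M_\al$ for each $\xi \in M_\al \cap \rho$.

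For each $\al \in \dom(\vec{M})$, select two distinct antichain indices $\xi_\al \ne \xi'_\al$, both at least $\al$, such that both $(q_{\xi_\al}, \check g_{\xi_\al})$ and $(q_{\xi'_\al}, \check g_{\xi'_\al})$ are compatible with a common extension of $(p^*(M_\al), 0)$ and $(p_0, 0)$. Construct conditions $u^0_\al, v^0_\al \in D(\vp^{M_\al}, \rho)$ extending, respectively, these two antichain elements, together with a common residue $w \in \R_\rho \cap M_\al$ witnessing $\#^\rho_{\vp^{M_\al}}(u^0_\al, v^0_\al, w)$: the residue $w$ is first chosen in $\R_\rho \cap M_\al$ as an extension of both $u^0_\al \res M_\al$ and $v^0_\al \res M_\al$, and then $u^0_\al, v^0_\al$ are adjusted via the strong residue property of $\vp^{M_\al}$ (together with the dual residue property at stages below $\rho$) to have $w$ as their residue. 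Apply Lemma \ref{lemma:splittingpairs} to extend $(u^0_\al, v^0_\al, w)$ to a splitting pair $(u^*_\al, v^*_\al)$ for $(\vp^{M_\al}, \rho)$ with $u^*_\al \ge u^0_\al$ and $v^*_\al \ge v^0_\al$; in particular, $u^*_\al \ge (q_{\xi_\al}, \check g_{\xi_\al})$ and $v^*_\al \ge (q_{\xi'_\al}, \check g_{\xi'_\al})$.

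Apply Lemma \ref{lemma:crosscompatible} to obtain $B \in \cal{F}^+$ such that for all $\al < \be$ in $B$, $u^*_\al \in M_\be$ and $u^*_\al$ is compatible with $v^*_\be$ in $\R_\rho$. Were it the case that $\xi_\al = \xi'_\be$ for every pair $\al < \be$ in $B$, fixing the least $\al_0 \in B$ would give $\xi'_\be = \xi_{\al_0}$ for cofinally many $\be \in B$, contradicting $\xi'_\be \ge \be$ and the unboundedness of $B$ in $\ka$. Hence pick $\al < \be$ in $B$ with $\xi_\al \ne \xi'_\be$; any common extension of $u^*_\al$ and $v^*_\be$ then extends the two distinct antichain elements $(q_{\xi_\al}, \check g_{\xi_\al})$ and $(q_{\xi'_\be}, \check g_{\xi'_\be})$, contradicting the antichain property.

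The principal difficulty is the preliminary selection: ensuring that for $\al$ in a positive subset of $\dom(\vec{M})$, there exist two distinct antichain indices $\ge \al$ both compatible with $(p^*(M_\al), 0)$ joined with $(p_0, 0)$. This rests on the fact that $\dot A$ is forced by $p_0$ to have size $\ka$: since any $\R_\rho$-generic containing the strongly generic condition $(p^*(M_\al), 0)\cdot(p_0, 0)$ meets $\dot A$, and $M_\al$ sees only $\al$-many indices, a density and counting argument using the strong genericity of $p^*(M_\al)$ shows that the compatible high-index antichain elements are plentiful for positive-many $\al$.
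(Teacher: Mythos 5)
Your argument has a genuine gap at the step where, for each $\al$, you extend two \emph{distinct} antichain elements $(q_{\xi_\al},\check g_{\xi_\al})$ and $(q_{\xi'_\al},\check g_{\xi'_\al})$ to conditions $u^0_\al,v^0_\al\in D(\vp^{M_\al},\rho)$ satisfying $\#^\rho_{\vp^{M_\al}}(u^0_\al,v^0_\al,w)$. The hashtag relation requires the restrictions to $M_\al$ to literally coincide: $f_{u^0_\al}\res M_\al=f_{v^0_\al}\res M_\al$ and $\vp^{M_\al}(p_{u^0_\al})=^*\vp^{M_\al}(p_{v^0_\al})$. But the two antichain elements are incompatible in $\R_\rho$, and nothing prevents that incompatibility from being visible inside $M_\al$: for instance, there may be $\la\zeta,\nu\ra\in M_\al$ and nodes $\theta\in g_{\xi_\al}(\zeta,\nu)$, $\tau\in g_{\xi'_\al}(\zeta,\nu)$ below level $\al$ such that $(p_0,0_{\dot{\S}_\zeta})$ forces $\theta<_{\dot{T}_\zeta}\tau$. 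In that case equality of the $M_\al$-restrictions would put both $\theta$ and $\tau$ into $f_{u^0_\al}(\zeta,\nu)$, while $p_{u^0_\al}\geq q_{\xi_\al}\geq p_0$ forces that set to be an antichain in $\dot{T}_\zeta$ --- a contradiction, so no such $u^0_\al,v^0_\al$ exist for that pair of indices. Your selection requirement (indices $\geq\al$ whose elements are each compatible with a common extension of $(p^*(M_\al),0)$ and $(p_0,0)$) is far weaker than what is needed and does not rule this out, and the closing ``density and counting argument'' addresses only compatibility with the strongly generic condition, not the achievability of a common restriction. So the existence, for $\cal{F}^+$-many $\al$, of a usable pair of indices is exactly the missing content, and it is not a routine step. (Incidentally, the reduction to a \emph{maximal} antichain in $\R_\rho$ is neither justified nor used; also recall that $\ka$-sized antichains in $\R_\rho$ are not themselves contradictory since $\ps^*$ need not be $\ka$-c.c., though your final contradiction does not rely on that.)

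The paper's proof sidesteps this entirely by using only one antichain element per model: for each $\al$ it extends $p$ to $p_\al\in\dom(\vp^{M_\al})$ deciding $\dot{f}_\al=\check{f}_\al$, extends $(p_\al,f_\al)$ to $u_\al\in D(\vp^{M_\al},\rho)$, and applies Lemma \ref{lemma:splittingpairs} with $u=v=u_\al$ (the hashtag then holds trivially with $w=u_\al\res M_\al$), so that \emph{both} members $u^*_\al,v^*_\al$ of the resulting splitting pair extend the $\al$-th element. Lemma \ref{lemma:crosscompatible} then yields $\al<\be$ with $u^*_\al$ compatible with $v^*_\be$, and the $\ps^*$-part of a common extension forces $\dot{f}_\al$ and $\dot{f}_\be$ to be compatible in $\dot{\S}_\rho$, which is all that is needed. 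If you want to repair your version, you would have to either prove that for positive-many $\al$ one can find two indices whose mutual incompatibility is not reflected into $M_\al$ in the above sense, or (much more simply) abandon the two-index selection and argue as the paper does.
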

\begin{proof} Let $p\in\ps^*$ be a condition, and suppose that $p\Vdash\la\dot{f}_\ga:\ga<\ka\ra$ is a sequence of conditions in $\dot{\S}_\rho$. We will find some extension $p^*$ of $p$ which forces that this sequence does not enumerate an antichain.

Let $\vec{M}$ be a sequence which is suitable with respect to the three parameters $\R_\rho$, $p$ and $\la\dot{f}_\ga:\ga<\ka\ra$, and which is in pre-splitting configuration up to $\rho$. {By removing an $\cal{I}$-null set, we may assume that $\dom(\vec{M})$ satisfies the conclusion of Lemma \ref{lemma:DRPBelow}.}

{Let $B:=\dom(\vec{M})$. Since $\vec{M}$ is in pre-splitting configuration up to $\rho$, let $\la\la p^*(M_\al),\vp^{M_\al}\ra:\al\in B\ra$ be a residue system.} For each $\al\in B$, $p\in \ps^*\cap M_\al$, and therefore we may find some extension $p_\al$ of $p$ so that $p_\al\in \dom(\vp^{M_\al})$. We may also assume that for some function $f_\al$ in $V$, $p_\al\Vdash_{\ps^*}\dot{f}_\al=\check{f}_\al$. Now extend $\la p_\al,f_\al\ra$ to a condition $u_\al$ in $D(\vp^{M_\al},\rho)$. By Lemma \ref{lemma:splittingpairs}, we may further extend $u_\al$ to a splitting pair $\la u^*_\al,v^*_\al\ra$ for $(\vp^{M_\al},\rho)$. By Lemma \ref{lemma:crosscompatible}, we may find some $B^*\seq B$ with $B^*\in{\cal{F}^+}$ so that for all $\al<\be$ in $B^{*}$, $u^*_\al$ and $v^*_\be$ are compatible. Let $w$ be a condition extending them both. Then $p_w$ forces that $\check{f}_w$ extends both $\check{f}_{u^*_\al}$ and $\check{f}_{v^*_\be}$ and hence extends $\dot{f}_\al$ and $\dot{f}_\be$. Therefore $p_w$ forces that $\dot{f}_\al$ and $\dot{f}_\be$ are compatible in $\dot{\S}_\rho$.
\end{proof}

{We are now ready to verify that the second induction hypothesis holds at $\rho$. We again remark that this proposition (and the later results which build off of it)  is the only place in our work where we need the ineffability of $\ka$. In all other cases, the weak compactness of $\ka$ suffices.}

\begin{proposition}\label{prop:InductiveIandII} Suppose that $\vec{M}$ is in pre-splitting configuration up to $\rho$. {Then there is an $A\in\cal{I}$ so that for all $\al\in\dom(\vec{M})\bsl A$, $\R_\rho$ satisfies the dual residue property at $M_\al$. }
\end{proposition}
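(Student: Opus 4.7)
The strategy is to argue by contradiction and leverage the ineffability of $\ka$ through an ultrapower. Assume $B := \{\al \in \dom(\vec{M}) : \R_\rho \text{ fails the dual residue property at } M_\al\} \in \cal{F}^+$. For each $\al \in B$, fix witnesses to the failure: a residue pair $\la p^*(M_\al), \vp^{M_\al}\ra$ together with conditions $u_\al, v_\al, w_\al$ so that $\#^\rho_{\vp^{M_\al}}(u_\al, v_\al, w_\al)$ holds but no $w^* \geq w_\al$ in $\R_\rho \cap M_\al$ satisfies $*^\rho_{\vp^{M_\al}}(u_\al, v_\al, w^*)$. By Lemma \ref{lemma:splittingpairs}, extend $(u_\al, v_\al, w_\al)$ to a splitting pair $(u_\al^*, v_\al^*)$ for $(\vp^{M_\al}, \rho)$ with $\#$-witness $w_\al^* \geq w_\al$ in $\R_\rho \cap M_\al$ and splitting function $\Si_\al \in M_\al$; the failure is preserved, since any dual residue of the splitting pair above $w_\al^*$ would also witness the dual residue property for $(u_\al, v_\al)$ above $w_\al$. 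Since $(w_\al^*, \Si_\al) \in M_\al$ is coded below $\al$, the normality of $\cal{I}$ gives a refinement $B' \in \cal{F}^+$ on which $(w_\al^*, \Si_\al) = (w, \Si)$ is constant.

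Now I apply Remark \ref{rmk:iewc}(3): take a $\ka$-model $M^*$ containing $B'$, $\vec{M}$, the residue system, and the choice function $\al \mapsto (u_\al^*, v_\al^*)$; fix an $M^*$-normal ultrafilter $U$ with $B' \in U$ and let $j : M^* \to N$ be the ultrapower, $M_\ka := j(\vec{M})(\ka)$. Setting $u_\ka := [\al \mapsto u_\al^*]_U$ and $v_\ka := [\al \mapsto v_\al^*]_U$, elementarity transports the structure to $N$: $(u_\ka, v_\ka)$ is a splitting pair for $(\vp^{M_\ka}, j(\rho))$ with $\#$-witness $j(w)$ and splitting function $j(\Si)$, and the dual residue property fails at $M_\ka$ for this triple. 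By Lemma \ref{lemma:UsefulSuitable}, $j(\R_\rho) \cap M_\ka = j[\R_\rho]$, so $j(u_\al^*), j(v_\al^*) \in M_\ka$ for every $\al \in B'$. Applying Lemma \ref{lemma:amalgamatesplittingpair} in $N$ to the pair $(j(u_\al^*), j(v_\al^*))$ at $j(M_\al)$ and to $(u_\ka, v_\ka)$ at $M_\ka$—which share witness $j(w)$ and splitting function $j(\Si)$—yields that $j(u_\al^*)$ is strongly compatible with both $u_\ka$ and $v_\ka$. A diagonalization over a cofinal sequence of $\al \in B'$, using the countable $=^*$-closure of $D(\vp^{M_\ka}, j(\rho))$ and the continuity of the exact residue function $\vp^{M_\ka}$, produces a single condition $w^{**} \in j[\R_\rho]$ above $j(w)$ which is a dual residue of $(u_\ka, v_\ka)$ to $M_\ka$, contradicting the failure.

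The main obstacle is the final diagonalization: strong compatibility of the individual conditions $j(u_\al^*)$ with $u_\ka$ and $v_\ka$ must be upgraded to the full dual residue property, which requires \emph{every} extension of $w^{**}$ in $j[\R_\rho]$ to be compatible with $u_\ka$ and $v_\ka$. This is precisely where the ineffability of $\ka$ (rather than merely weak compactness) becomes essential: in the intended application, $\ps^*$ is more complex than the Levy collapse (involving club-adding with anticipation), the residue functions $\vp^{M_\al}$ are not simple restrictions, and coordinating extensions across the iteration demands the finer coherence of the sequence $\la u_\al^* : \al \in B'\ra$ that ineffability supplies through the ultrapower.
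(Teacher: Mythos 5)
There is a genuine gap, and it sits exactly where your last paragraph admits uncertainty: the final ``diagonalization.'' Being a dual residue of $(u_\ka,v_\ka)$ to $M_\ka$ is a universal statement about \emph{every} extension of $w^{**}$ in $j(\R_\rho)\cap M_\ka=j[\R_\rho]$, a set of size $\ka$; knowing that the particular conditions $j(u^*_\al)$, $\al\in B'$, are individually compatible with $u_\ka$ and $v_\ka$ (and even the compatibility with $u_\ka$ is not what Lemma \ref{lemma:amalgamatesplittingpair} gives as stated -- its conclusion pairs the lower $u$ with the upper $v$) comes nowhere near this. Moreover the proposed fusion has no footing: $B'$ is cofinal in $\ka$ of cofinality $\ka$, so there is no countable cofinal sequence along which countable $=^*$-closure and continuity of $\vp^{M_\ka}$ could be invoked, and a countable limit would in any case only record countably many compatibility facts. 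In effect you are trying to verify the dual residue property at $M_\ka$ directly, which is precisely the statement being proved (the ``$\#$ implies $*$'' step); in Laver--Shelah this is done using the $\ka$-c.c.\ of the whole poset, which is unavailable here because $\ps^*$ is not $\ka$-c.c.

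The paper's proof takes a different target for the contradiction and uses ineffability in a way your argument does not. It never constructs a dual residue at any model. Instead, from the assumed failure it picks, for each $\al<\be$ in a refined set, a condition $w^*_{\al,\be}\in M_\be$ extending $u^*_\al$ which is a residue of $v^*_\be$ but (by the failure at $\be$) incompatible with $u^*_\be$. Ineffability is then applied to the \emph{doubly indexed} array $\la w^*_{\al,\be}\ra$ (coded as the sets $E_\be$) to obtain a single coherent function $E$ on a stationary set; pressing down on $\al\mapsto\vp^{M_\al}(p_{E(\al)})$ yields stationarily many indices whose $\ps^*$-parts are pairwise compatible while the specializing parts are pairwise incompatible, and Lemma \ref{lemma:unbounded} then puts $\ka$-many of these $\ps^*$-parts into one generic, producing a $\ka$-sized antichain in $\dot{\S}_\rho$ and contradicting Proposition \ref{prop:kappacc}. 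Your use of a single $M^*$-normal ultrafilter with $B'\in U$ is only the weak-compactness-style reflection of Remark \ref{rmk:iewc}(3); it reflects one-parameter data but cannot stabilize the two-dimensional array $w^*_{\al,\be}$, which is exactly the point where ineffability (rather than weak compactness) enters the paper's argument. As written, your proof does not go through.
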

\begin{proof} {We only deal with the case when $\ps^*$ is not just the collapse poset $\ps$ (and hence we're in the case where $\ka$ is ineffable, and $\cal{F}=\cal{F}_{in}$). The case when $\ps^*$ is the collapse $\ps$ is simpler and taken care of in \cite{LS}.}

Suppose otherwise, for a contradiction. Then
$$
B:=\lb\al\in\dom(\vec{M}):\R_\rho \text{ does not satisfy the dual residue property at }M_\al\rb
$$
is in ${\cal{F}^+}$. {Moreover, by removing an $\cal{I}$-null set if necessary, we may assume that $B$ satisfies the conclusion of Lemma \ref{lemma:DRPBelow}. We will derive our contradiction by creating a $\ka$-sized antichain in $\R_\rho$ for which we can amalgamate many of the $\ps^*$-parts. This will then lead to a $\ka$-sized antichain in $\S_\rho$ in some $V$-generic extension over $\ps^*$.}

For each $\al\in B$, we fix a residue pair $\la p^*(M_\al),\vp^{M_\al}\ra$ for $(M_\al,\ps^*)$ and a triple $\la u_\al,v_\al,w_\al\ra$ which witnesses that $\R_\rho$  does not satisfy the dual residue property at $M_\al$. Thus $\#^\rho_{\vp^{M_\al}}(u_\al,v_\al,w_\al)$ holds, but for any $w^*\geq_{\R_\rho\cap M_\al}w_\al$, either $w^*$ is not a residue for $u_\al$ to $M_\al$ or $w^*$ is not a residue for $v_\al$ to $M_\al$. In particular, for each such $w^*$, we may find a further extension in $\R_\rho\cap M_\al$ which is either incompatible with $u_\al$ or incompatible with $v_\al$ in $\R_\rho$.

By Lemma \ref{lemma:splittingpairs}, we may extend $\la u_\al,v_\al,w_\al\ra$ to another triple $\la u^*_\al,v^*_\al,w^*_\al\ra$ so that $u^*_\al$ and $v^*_\al$ are a splitting pair for $M_\al$. By Lemma \ref{lemma:crosscompatible}, we may find some $B^*\seq B$ with $B^*\in\cal{F}^+$ so that for all $\al,\be\in B^*$ with $\al<\be$, $w^*_\al=^*w^*_\be$, $u^*_\al$ and $v^*_\al$ are in $M_\be$, and $u_\al^*$ is compatible with $v_\be^*$. We let $\bar{w}^*$ denote a condition which is $=^*$ equal to $w^*_\al$ for $\al\in B^*$.

{Next, for each $\al<\be$ both in $B^*$, we define a condition $w^*_{\al,\be}$. Fix such $\al$ and $\be$. Since $u^*_\al\in M_\be$ is compatible with $v^*_\be$, there is an extension $w^*_{\al,\be}$ of $u^*_\al$ in $\R_\rho\cap M_\be$ which is a residue for $v_\be^*$ to $\R_\rho\cap M_\be$. Since $\be\in B$ and since $w^*_{\al,\be}$ is a residue for $v^*_\be$, we may further extend (and relabel if necessary) to assume that $w^*_{\al,\be}$ is incompatible with $u^*_\be$. Since $p_{w^*_{\al,\be}}\geq p_{u^*_\al}\geq p^*(M_\al)$, we may further assume that $p_{w^*_{\al,\be}}$ is in the domain of $\vp^{M_\al}$.}

{We now set up an application of the ineffability of $\ka$. For each $\be\in B^*$, we define the function $E_\be$ by}
$$
{E_\be:=\lb (\al,w^*_{\al,\be}):\al\in B^*\cap\be\rb\seq\be\times (M_\be\cap\R_\rho).}
$$
{Formally, we ought to apply the ineffability of $\ka$ to a sequence $\vec{A}$ where the $\al$-th element on the sequence is a subset of $\al$. However, we will work with the sets $E_\be$; this poses no loss of generality since, by using the $\lhd$-least bijection from $\R_\rho$ onto $\ka$ and the G{\"o}del pairing function, we can code $E_\be$ as a subset of $\be$.}

{Since $\ka$ is ineffable and $B^*\in\cal{F}^+$, we can find a subset $E$ of $\ka\times\R_\rho$ and a stationary $S\seq B^*$ so that for all $\be\in S$, $E\cap(\be\times(M_\be\cap\R_\rho))=E_\be$. We observe that $E$ is a function: if $(\al,w)$ and $(\al,w')$ are both in $E$, fix some $\be\in S$ large enough so that $w,w'\in M_\be\cap\R_\rho$. Then $(\al,w)$ and $(\al,w')$ are in $E\cap(\be\times(M_\be\cap\R_\rho))=E_\be$. Since $E_\be$ is a function, $w=w'$. We can now rephrase the coherence as follows: if $\be\in S$, then $E\res\be=E_\be$, since $E\res\be$ and $E_\be$ are both functions with domain $B^*\cap\be$ and $E_\be\seq E\res\be$.}

{Next, $B^*\seq\dom(E)$. Indeed, for each $\be\in S$, $E\res\be=E_\be$, and the domain of $E_\be$ is $B^*\cap\be$. Since $S$ is unbounded (in fact stationary) in $\ka$, there are unboundedly-many $\be$ so that $E\res\be=E_\be$, from which the conclusion follows. And finally, if $\al\in B^*$ then for any $\be\in S\bsl (\al+1)$, $E(\al)=w^*_{\al,\be}$, since $E(\al)=E_\be(\al)=w^*_{\al,\be}$.}

{Now we press down residues for conditions indexed by $S$. Since $S\seq B^*=\dom(E)$, $E(\al)$ is defined for each $\al\in S$. And moreover, $p_{E(\al)}$ is in the domain of $\vp^{M_\al}$ since it equals $w^*_{\al,\be}$ for some/any $\be\in S\bsl(\al+1)$, and since $w^*_{\al,\be}$ is in the domain of $\vp^{M_\al}$. Then $\vp^{M_\al}(p_{E(\al)})$ is a condition in $M_\al\cap\R_\rho$, and each such condition can be coded by an element of $\al$, using the $\lhd$-least bijection from $\R_\rho$ onto $\ka$. Thus the function $\al\mapsto\vp^{M_\al}(p_{E(\al)})$ on $S$ is regressive, and so we can find a stationary $S^*\seq S$ so that it has a constant value, say the condition $p^{**}$.}

{Now fix $\al<\be$ in $S^*$, and we will show that $p_{E(\al)}$ and $p_{E(\be)}$ are compatible in $\ps^*$. Indeed, $E(\al)=w^*_{\al,\be}$ is an element of $M_\be\cap\R_\rho$. Additionally, $p_{w^*_{\al,\be}}\geq\vp^{M_\al}(p_{w^*_{\al,\be}})=p^{**}=\vp^{M_\be}(p_{E(\be)})$. Thus $p_{E(\al)}=p_{w^*_{\al,\be}}$ extends, inside of $M_\be$, the residue of $p_{E(\be)}$ to $M_\be$. $p_{E(\al)}$ and $p_{E(\be)}$ are therefore compatible in $\ps^*$.}

{However, for such $\al<\be$, we also know that $E(\al)$ and $E(\be)$ are incompatible conditions in $\R_\rho$, since $E(\be)$ extends $u^*_\be$ and since $E(\al)=w^*_{\al,\be}$ is incompatible with $u^*_\be$. Thus if $q$ is any condition which extends $p_{E(\al)}$ and $p_{E(\be)}$ in $\ps^*$, then $q$ must force that $f_{E(\al)}$ and $f_{E(\be)}$ are incompatible conditions in $\dot{\S}_\rho$.}

{Now we can create our $\ka$-sized antichain of specializing conditions. Let $G^*$ be a $V$-generic filter over $\ps^*$ which contains the condition $p^{**}$, and recall that $p^{**}=\vp^{M_\al}(p_{E(\al)})$ for all $\al\in S^*$. By Lemma \ref{lemma:unbounded}, the set }
$$
{X:=\lb\al\in S^*:p_{E(\al)}\in G^*\rb}
$$ 
{is unbounded in $\ka$. Therefore if $\al<\be$ are in $X$, then $f_{E(\al)}$ and $f_{E(\be)}$ are incompatible conditions in $\dot{\S}_\rho[G]$. Since $\ka$ is a cardinal after forcing with $\ps^*$ and since $X$ has size $\ka$, this gives a $\ka$-sized antichain in $\dot{\S}_\rho[G]$. This contradicts Proposition \ref{prop:kappacc} and completes the proof.}
\end{proof}

{Here we comment on the use of the ineffability of $\ka$. In the original Laver-Shelah argument, $\ps^*$ is just the collapse forcing. Thus their entire forcing is $\ka$-c.c. However, in our set-up, $\ps^*$ will in general fail to be $\ka$-c.c., and consequently it is not enough to find a $\ka$-sized antichain in $\ps^*\ast\dot{\S}_\rho=\R_\rho$. Rather, we need to arrange that there is a $\ka$-sized antichain in $\R_\rho$ for which we can amalgamate plenty of the $\ps^*$-parts of the conditions. This in turn requires that we be able to press down on the residues of the $\ps^*$-parts. Considering the  array $\la w^*_{\al,\be}:\al,\be\in B^*\we\al<\be\ra$ from the proof of the previous result, we need to find a stationary $S\seq B^*$ on which, for each $\al\in S$, the function on $S\bsl(\al+1)$ taking $\be$ to $w^*_{\al,\be}$ is independent of $\be$, say taking value $w^{**}_\al$. Then using the stationarity of $S$, we pressed down on the residue of $w^{**}_\al$.  The ineffability of $\ka$ allowed us to create a function, namely $E$, out of the above array with $\dom(E)$ containing a stationary set on which the approximations (the $E_\be$) cohere. We were not able to create this function and set up an application of pressing down just assuming that $\ka$ is weakly compact.}
 
{However, in the case that $\ps^*$ is just the collapse, then a weakly compact cardinal suffices for the entirety of the argument, since the entire poset $\ps\ast\dot{\S}_\rho$ is then $\ka$-c.c. In this case, we only need to create an unbounded $Z\seq B^*$ on which the function $\be\mapsto w^*_{\al,\be}$ is independent of $\be$, for each $\al\in Z$ and $\be\in Z\bsl(\al+1)$; this is because $\la w^{**}_\al:\al\in Z\ra$ would then be a $\ka$-sized antichain in $\ps\ast\dot{\S}_\rho$, a contradiction. $Z$ can be constructed by working inside a $\ka$-model $M^*$ containing all of the relevant information, for which there exists an $M^*$-normal ultrafilter containing $B$ as an element.}\\
 

We have now completed the proof of Theorem \ref{thmSkappacc}. We conclude with a corollary which adds to that theorem an additional clause about the dual residue property; this will be useful later.

\begin{corollary}\label{cor:IH1and2}
Suppose that $\ps^*$ is ${\cal{F}}$-strongly proper and that $\dot{\S}_{\ka^+}$ is a $\ps^*$-name for a  $\ka^+$-length, countable support iteration specializing Aronszajn trees on $\ka$. Then for all $\rho<\ka^+$,
\begin{enumerate}
    \item $\ps^*$ forces that $\dot{\S}_\rho$ is $\ka$-c.c.; and
    \item if $\vec{M}$ is in pre-splitting configuration up to $\rho$, then {there is some $A\seq\dom(\vec{M})$ with $A\in\cal{I}$ so that for all $\al\in \dom(\vec{M})\bsl A$, $\R_\rho$ satisfies the dual residue property at $M_\al$. Hence, for all $\zeta\in M_\al\cap(\rho+1)$, $\R_\zeta$ satisfies the dual residue property at $M_\al$.}
\end{enumerate}
\end{corollary}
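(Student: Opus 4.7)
The plan is to prove the corollary by induction on $\rho < \ka^+$, thereby simultaneously verifying Inductive Hypotheses I and II---the standing assumptions of the section---at every level. The base case $\rho = 0$ is vacuous since $\dot\S_0$ is trivial. For the inductive step, I would assume both items hold for all $\xi < \rho$. The key observation is that items (1) and (2), specialized to $\xi < \rho$, are respectively Inductive Hypotheses I and II at level $\rho$. Thus the entire machinery developed in the section is available at $\rho$.

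With these hypotheses in hand, item (1) at $\rho$ is precisely the content of Proposition \ref{prop:kappacc}, and the first clause of item (2) is precisely Proposition \ref{prop:InductiveIandII}. (Note that both propositions were proved under the very hypotheses that we have just inductively secured.)

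For the ``hence'' clause, fix $\vec{M}$ in pre-splitting configuration up to $\rho$. The first clause of (2) at $\rho$ provides a null set $A_0 \in \cal{I}$ such that $\R_\rho$ has the dual residue property at $M_\al$ for each $\al \in \dom(\vec{M}) \setminus A_0$. Invoking Lemma \ref{lemma:DRPBelow}---which relies only on Inductive Hypothesis II below $\rho$---yields a second null set $A_1 \in \cal{I}$ such that for every $\al \in \dom(\vec{M}) \setminus A_1$ and every $\zeta \in M_\al \cap \rho$, $\R_\zeta$ has the dual residue property at $M_\al$. Setting $A := A_0 \cup A_1 \in \cal{I}$ then covers all $\zeta \in M_\al \cap (\rho+1)$ uniformly, completing the induction at $\rho$.

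I do not anticipate any substantial obstacle in this concluding step: all the delicate work---the construction of splitting pairs, the cross-compatibility argument via normality of $\cal{F}$, the derivation of a $\ka$-sized antichain in $\dot{\S}_\rho$ from a failure of the dual residue property, and the use of ineffability when $\ps^* \neq \ps$ (or of weak compactness when $\ps^* = \ps$)---has already been carried out in Propositions \ref{prop:kappacc} and \ref{prop:InductiveIandII}. The corollary is merely the bookkeeping step that closes the loop on the two inductive hypotheses driving the section.
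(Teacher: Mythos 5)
Your proof is correct and follows essentially the same route as the paper: the paper argues via a least counterexample $\rho$, notes that Inductive Hypotheses I and II then hold below $\rho$, and cites Propositions \ref{prop:kappacc} and \ref{prop:InductiveIandII} to get (1) and (2) at $\rho$, exactly as in your induction. Your handling of the ``hence'' clause via Lemma \ref{lemma:DRPBelow} and a union of two $\cal{I}$-null sets is a harmless elaboration of what the paper leaves implicit (there it also follows from the observation, used in Lemma \ref{lemma:1split}, that the dual residue property passes down from $\R_\rho$ to $\R_\zeta$ for $\zeta\le\rho$).
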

\begin{proof}
If the corollary is false, let $\rho$ be the least such that it fails at $\rho$. Then Induction Hypotheses I and II hold below $\rho$, so Propositions \ref{prop:kappacc} and \ref{prop:InductiveIandII} show that (1) and (2) hold at $\rho$, a contradiction.
\end{proof}

\section{${\cal{F}}$-Strongly Proper Posets and Preserving Stationary Sets}\label{section:stationarypreservation}

In this section, we will prove that the appropriate quotients preserve stationary sets of cofinality $\om$ ordinals. We will apply this result in Section \ref{section:clubscompletelyproper} when we show that our intended club-adding iteration is ${\cal{F}}$-completely proper (see Definition \ref{def:fkcp}). In the first part of this section, we will prove some helpful lemmas which we use in the second part to complete proof of the preservation of the relevant stationary sets.

For the remainder of this section, we fix a ${\cal{F}}$-strongly proper poset $\ps^*$ and an iteration $\dot{\bb{S}}_\rho$ of length $\rho<\ka^+$ specializing Aronszajn trees in the extension by $\ps^*$; see the beginning of Section \ref{section:specialize} for a more precise definition and relevant notation. Note that the conclusions of Corollary \ref{cor:IH1and2} hold.

We first prove two lemmas which describe how the residue functions with respect to two models on a suitable sequence interact. More precisely, suppose we have a suitable sequence $\vec{M}$, where $\al<\be$ are both in $\dom(\vec{M})$ and $M_\al$ and $M_\be$ have respective residue pairs $\la p^*(M_\al),\vp^{M_\al}\ra$ and $\la p^*(M_\be),\vp^{M_\be}\ra$ with respect to $\ps^*$. A natural question is whether, on a dense set, $\vp^{M_\al}(\vp^{M_\be}(q))=^*\vp^{M_\al}(q)$, i.e., whether the $M_\al$-residue of the $M_\be$-residue is equivalent to the $M_\al$-residue. Proposition \ref{prop:twomasterconditions} below shows that this is the case.

\begin{lemma}\label{lemma:firstresiduelemma} Suppose that $\vec{M}$ is $\ps^*$-suitable with residue system 
$$
\la \la p^*(M_\ga),\vp^{M_\ga}\ra:\ga\in\dom(\vec{M})\ra
$$
and that $\al<\be$ are in $\dom(\vec{M})$. 
\begin{enumerate}
    \item For every  $p\in\ps^*$ that extends both $p^*(M_\al)$ and $p^*(M_\be)$, there is an extension $p^*\geq_{\ps^*}p$ with $p^*\in \dom(\vp^{M_\al})\cap \dom(\vp^{M_\be})$.
    \item $D_0(\vp^{M_\al},\vp^{M_\be}):=\lb q\in \dom(\vp^{M_\al})\cap \dom(\vp^{M_\be}):\vp^{M_\be}(q)\in \dom(\vp^{M_\al})\rb$ is $=^*$-countably closed and dense in $\ps^*/\lb p^*(M_\al),p^*(M_\be) \rb$.\footnote{This denotes the set of $r\in\ps^*$ which extend both $p^*(M_\al)$ and $p^*(M_\be)$.} 
\end{enumerate}
\end{lemma}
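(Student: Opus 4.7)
For part (1), I would use a straightforward interleaving. Starting from $p \geq p^*(M_\al), p^*(M_\be)$, I would alternately apply the density of $\dom(\vp^{M_\al})$ in $\ps^*/p^*(M_\al)$ and of $\dom(\vp^{M_\be})$ in $\ps^*/p^*(M_\be)$ to build an increasing sequence $\la p_n : n < \om\ra$ in $\ps^*/p$ with $p_{2n} \in \dom(\vp^{M_\al})$ and $p_{2n+1} \in \dom(\vp^{M_\be})$. Taking a sup $p^*$ and applying the countable $=^*$-closure of each domain to the appropriate subsequence yields $p^* \in \dom(\vp^{M_\al}) \cap \dom(\vp^{M_\be})$ as required.

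For part (2), the $=^*$-countable closure of $D_0$ is immediate from the countable $=^*$-closure of each of $\dom(\vp^{M_\al})$ and $\dom(\vp^{M_\be})$ together with the continuity of $\vp^{M_\be}$: given an increasing sequence $\la q_n : n<\om\ra$ in $D_0$ with sup $q^*$, the sequence lies in $\dom(\vp^{M_\al})\cap\dom(\vp^{M_\be})$, so $q^*$ does by $=^*$-closure; continuity gives that $\vp^{M_\be}(q^*)$ is a sup of $\la \vp^{M_\be}(q_n) : n<\om\ra$, a sequence in $\dom(\vp^{M_\al})$, so $\vp^{M_\be}(q^*) \in \dom(\vp^{M_\al})$ by $=^*$-closure again. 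Closure under the $=^*$-relation itself is analogous.

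The substantive content is density. Starting from $p \geq p^*(M_\al), p^*(M_\be)$, I would use part (1) to extend to $p_0 \in \dom(\vp^{M_\al}) \cap \dom(\vp^{M_\be})$, and then inductively build increasing sequences $\la p_n : n < \om\ra$ in $\dom(\vp^{M_\al}) \cap \dom(\vp^{M_\be})$ and $\la u_n : n < \om\ra$ in $\dom(\vp^{M_\al}) \cap M_\be$ satisfying the interleaving $\vp^{M_\be}(p_n) \leq u_n \leq \vp^{M_\be}(p_{n+1})$. To pass from $p_n$ to $u_n$: the conditions $\vp^{M_\be}(p_n)$ and $p^*(M_\al)$ both sit below $p_n$ in $\ps^*$ and both lie in $\ps^*\cap M_\be$, so by elementarity of $M_\be$ they admit a common extension $v_n\in \ps^*\cap M_\be$; since $\vp^{M_\al}\in M_\be$ by Definition~\ref{def:fksp}(2) and $\dom(\vp^{M_\al})$ is dense above $p^*(M_\al)$, elementarity then produces $u_n \geq v_n$ in $\dom(\vp^{M_\al})\cap M_\be$. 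To pass from $u_n$ to $p_{n+1}$: since $u_n \in \ps^*\cap M_\be$ extends $\vp^{M_\be}(p_n)$, the strong residue property (Definition~\ref{def:ESRF}(4)) of $\vp^{M_\be}$ gives $q \geq p_n$ in $\dom(\vp^{M_\be})$ with $\vp^{M_\be}(q)\geq u_n$, and a further application of (1) produces $p_{n+1} \geq q$ in $\dom(\vp^{M_\al}) \cap \dom(\vp^{M_\be})$, with $\vp^{M_\be}(p_{n+1})\geq u_n$ by order preservation.

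To finish, take $p^* = \sup_n p_n$. Countable $=^*$-closure of each domain places $p^*$ in $\dom(\vp^{M_\al})\cap\dom(\vp^{M_\be})$, and the continuity of $\vp^{M_\be}$ together with the interleaving shows that $\vp^{M_\be}(p^*)$ is a sup of $\la u_n : n < \om\ra$ up to $=^*$; since each $u_n$ lies in $\dom(\vp^{M_\al})$, the countable $=^*$-closure of $\dom(\vp^{M_\al})$ places $\vp^{M_\be}(p^*)$ in $\dom(\vp^{M_\al})$, as required. The main point of care will be the compatibility argument for $\vp^{M_\be}(p_n)$ and $p^*(M_\al)$ inside $\ps^*\cap M_\be$, which is what allows us to invoke elementarity to reach $\dom(\vp^{M_\al})$ without leaving $M_\be$; once this is in hand the rest follows by standard bookkeeping.
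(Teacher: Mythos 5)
Your proof is correct and follows essentially the same route as the paper: part (1) by the same dovetailing of the two dense domains, and part (2) by interleaving residues $\vp^{M_\be}(p_n)$ with conditions of $M_\be\cap\dom(\vp^{M_\al})$ via elementarity, the strong residue property, and countable continuity at the supremum. The only cosmetic difference is that the paper arranges $\vp^{M_\be}(q_n)\geq p^*(M_\al)$ once by a preliminary improvement, whereas you re-run the compatibility-plus-elementarity step inside $M_\be$ at every stage; this changes nothing of substance.
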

\begin{proof}
For (1), we apply a dovetailing construction using the properties of the residue functions. Define, by recursion, an increasing sequence $\la p_n:n\in\om\ra$ of extensions of $p$ so that $p_{2n+1}\in\dom(\vp^{M_\al})$ and, for $n>0$, $p_{2n}\in\dom(\vp^{M_\be})$. Let $p^*$ be a sup of $\la p_n:n\in\om\ra$. Then $p^*\in\dom(\vp^{M_\al})$ since it is also a sup of $\la p_{2n+1}:n\in\om\ra$, and $p^*\in\dom(\vp^{M_\be})$ since it is a sup of $\la p_{2n}:n>0\ra$.

For (2), fix a condition $q_{-1}\in\ps^*$ which extends both $p^*(M_\al)$ and $p^*(M_\be)$, where {by (1)} we may assume that $q_{-1}\in\dom(\vp^{M_\al})\cap\dom(\vp^{M_\be})$. We first make a cosmetic improvement to $q_{-1}$ before the main construction. Since $q_{-1}$ extends both $\vp^{M_\be}(q_{-1})$ and $p^*(M_\al)$ and since both of these conditions are in $M_\be$ (using Definition \ref{def:fksp}(2) to see that $p^*(M_\al)\in M_\be$), we may apply the elementarity of $M_\be$ to find a condition $s_{-1}\in M_\be$ which extends $\vp^{M_\be}(q_{-1})$ and $p^*(M_\al)$. Now find an extension $q_0\geq q_{-1}$ so that $\vp^{M_\be}(q_0)\geq s_{-1}$, noting that we may assume that $q_0\in\dom(\vp^{M_\al})\cap\dom(\vp^{M_\be})$.

Having completed this modification, we now define, by recursion, an increasing sequence of conditions $\la q_n:n\in\om\ra$ in $\dom(\vp^{M_\al})\cap\dom(\vp^{M_\be})$ and an increasing sequence $\la s_n:n\in\om\ra$ of conditions in $M_\be\cap\dom(\vp^{M_\al})$ so that for each $n$, $\vp^{M_\be}(q_{n+1})\geq s_n\geq\vp^{M_\be}(q_n)$. So assume that $q_n$ is defined. Since $\vp^{M_\be}(q_n)\geq p^*(M_\al)$ {(using the previous paragraph for the case $n=0$)}, we may find an extension $s_n$ of $\vp^{M_\be}(q_n)$ which is a member of $M_\be\cap\dom(\vp^{M_\al})$. Then let $q_{n+1}\geq q_n$ be a condition with $\vp^{M_\be}(q_{n+1})\geq s_n$. Finally, let $q^*$ be a sup of $\la q_n:n\in\om\ra$, and let $s^*$ be a sup of $\la s_n:n\in\om\ra$, noting by the intertwined construction that $s^*$ is also a sup of $\la\vp^{M_\be}(q_n):n\in\om\ra$. By the countable continuity of the residue functions, we have $\vp^{M_\be}(q^*)=^*s^*$. But $s^*\in\dom(\vp^{M_\al})$, since it is the sup of the increasing sequence $\la s_n:n\in\om\ra$ of conditions in $\dom(\vp^{M_\al})$. Consequently, $\vp^{M_\be}(q^*)$ is also in $\dom(\vp^{M_\al})$. Since $q_{-1}$ in $\ps^*/\lb p^*(M_\al),p^*(M_\be) \rb$ was arbitrary, this completes the proof of (2).
\end{proof}

\begin{proposition}\label{prop:twomasterconditions} Suppose that $\vec{M}$ is $\ps^*$-suitable with residue system 
$$
\la \la p^*(M_\ga),\vp^{M_\ga}\ra:\ga\in\dom(\vec{M})\ra,
$$
and let $\al<\be$ be in $\dom(\vec{M})$. Then 
$$
E(\vp^{M_\al},\vp^{M_\be}):=\lb p\in\ps^*:\vp^{M_\be}(p)\in \dom(\vp^{M_\al})\;\wedge\;\vp^{M_\al}(\vp^{M_\be}(p))=^*\vp^{M_\al}(p)\rb
$$
is $=^*$-countably closed and dense in $\ps^*/\lb p^*(M_\al),p^*(M_\be) \rb$.
\end{proposition}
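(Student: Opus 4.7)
The proof splits into verifying countable $=^*$-closure and density.

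Countable closure is direct. If $\la p_n : n < \om\ra$ is increasing in $E$ with sup $p^*$, then the $=^*$-closure of $\dom(\vp^{M_\al})$ and $\dom(\vp^{M_\be})$ together with countable continuity place $p^*$ in $D_0(\vp^{M_\al},\vp^{M_\be})$, with $\vp^{M_\be}(p^*)$ a sup of $\la\vp^{M_\be}(p_n)\ra$. Two applications of the continuity of $\vp^{M_\al}$, one to $\la p_n\ra$ and one to $\la\vp^{M_\be}(p_n)\ra$, yield $\vp^{M_\al}(p^*) =^* \sup_n \vp^{M_\al}(p_n) =^* \sup_n \vp^{M_\al}(\vp^{M_\be}(p_n)) =^* \vp^{M_\al}(\vp^{M_\be}(p^*))$, so $p^* \in E$.

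For density, the central technical point is to establish that the composition $g := \vp^{M_\al} \circ \vp^{M_\be}$, defined on $D_0(\vp^{M_\al},\vp^{M_\be})$, itself enjoys a strong-residue-type property: given $q \in D_0$ and $u \in \ps^* \cap M_\al$ with $u \geq g(q)$, there exists $q' \geq q$ in $D_0$ with $g(q') \geq u$. To prove this, first apply the strong residue property of $\vp^{M_\al}$ with $r = \vp^{M_\be}(q)$ (which lies in $\dom(\vp^{M_\al})$ because $q \in D_0$) and the given $u$, obtaining $R \geq \vp^{M_\be}(q)$ in $\dom(\vp^{M_\al})$ with $\vp^{M_\al}(R) \geq u$; since every relevant parameter belongs to $M_\be$, the elementarity of $M_\be$ lets us take $R \in M_\be$. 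Next apply the strong residue property of $\vp^{M_\be}$ to $q \in \dom(\vp^{M_\be})$ with $R \in \ps^* \cap M_\be$, producing $q^\dagger \geq q$ in $\dom(\vp^{M_\be})$ with $\vp^{M_\be}(q^\dagger) \geq R$. A dovetailing argument analogous to the proof of Lemma \ref{lemma:firstresiduelemma}(2) then upgrades $q^\dagger$ to $q' \in D_0$; by order preservation of $\vp^{M_\al}$ applied to $\vp^{M_\be}(q') \geq R$, we conclude $g(q') \geq \vp^{M_\al}(R) \geq u$.

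Density of $E$ follows by a final dovetail. Given any $p$ extending $p^*(M_\al)$ and $p^*(M_\be)$, first invoke Lemma \ref{lemma:firstresiduelemma}(2) to extend $p$ to some $p_0 \in D_0$. Then recursively choose $p_{n+1} \geq p_n$ in $D_0$ with $g(p_{n+1}) \geq \vp^{M_\al}(p_n)$, using the strong-residue property for $g$ just established with $u := \vp^{M_\al}(p_n)$ (noting $\vp^{M_\al}(p_n) \geq g(p_n)$ by order preservation). Letting $p^*$ be a sup, countable continuity of $\vp^{M_\al}$ and of $g$ identifies $\vp^{M_\al}(p^*)$ and $g(p^*)$ as sups of their respective sequences; the interleaving $g(p_n) \leq \vp^{M_\al}(p_n) \leq g(p_{n+1})$ then forces the two sups to coincide up to $=^*$, so $p^* \in E$. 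The principal obstacle is the dovetailing needed to remain in $\dom(\vp^{M_\al}) \cap \dom(\vp^{M_\be})$ across applications of both strong residue properties, which directly extends the construction from Lemma \ref{lemma:firstresiduelemma}(2).
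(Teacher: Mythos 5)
Your proof is correct and follows essentially the same route as the paper's: the paper's recursion step is exactly your ``strong residue property for $\vp^{M_\al}\circ\vp^{M_\be}$'' (elementarity of $M_\be$ plus the strong residue property of $\vp^{M_\al}$ to find $R\in M_\be$ above $\vp^{M_\be}(q)$, then the strong residue property of $\vp^{M_\be}$, then re-entering $D_0$), and the density argument concludes, as in the paper, by intertwining $\la\vp^{M_\al}(p_n)\ra$ with $\la\vp^{M_\al}(\vp^{M_\be}(p_n))\ra$ and using countable continuity. Your only departure is cosmetic --- packaging the recursion step as a separate lemma and spelling out the $=^*$-countable closure, which the paper leaves implicit.
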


\begin{proof} We begin by observing that if $q\in D_0(\vp^{M_\al},\vp^{M_\be})$, then $\vp^{M_\al}(q)$ extends $\vp^{M_\al}(\vp^{M_\be}(q))$. Indeed, since $q\in\dom(\vp^{M_\be})$, $q\geq\vp^{M_\be}(q)$, and since $\vp^{M_\al}$ is order-preserving and both $q$ and $\vp^{M_\be}(q)$ are in $\dom(\vp^{M_\al})$, we conclude that $\vp^{M_\al}(q)\geq\vp^{M_\al}(\vp^{M_\be}(q))$.

With this observation in mind, let $p\in\ps^*$ extend both $p^*(M_\al)$ and $p^*(M_\be)$, and by extending further if necessary, we may assume that $p$ is in $D_0(\vp^{M_\al},\vp^{M_\be})$. We will define by recursion an increasing sequence of conditions $\la p_n:n\in\om\ra$ in $D_0(\vp^{M_\al},\vp^{M_\be})$ with $p_0=p$ so that for all $n$, 
$$
\vp^{M_\al}(\vp^{M_\be}(p_{n+1}))\geq \vp^{M_\al}(p_{n})\geq\vp^{M_\al}(\vp^{M_\be}(p_{n}));
$$
note that all of the above items are defined, by definition of $D_0(\vp^{M_\al},\vp^{M_\be})$.

Suppose we are given $p_{n}$. As observed earlier, {since $p_n\in D_0(\vp^{M_\al},\vp^{M_\be})$, we have}  $\vp^{M_\al}(p_n)\geq\vp^{M_\al}(\vp^{M_\be}(p_n))$. {Since $\vp^{M_\al}(p_n)$ extends, in $M_\al$, the residue of $\vp^{M_\be}(p_n)$ to $M_\al$, we may find a condition $q\in M_\be$ extending $\vp^{M_\be}(p_n)$ so that $\vp^{M_\al}(q)\geq\vp^{M_\al}(p_n)$.} Since $q\in M_\be$ extends $\vp^{M_\be}(p_n)$, there is an $r\geq p_n$ so that $\vp^{M_\be}(r)\geq q$. Finally, let $p_{n+1}\geq r$ be a condition in $D_0(\vp^{M_\al},\vp^{M_\be})$. Then $\vp^{M_\be}(p_{n+1})\geq\vp^{M_\be}(r)\geq q$, and hence $\vp^{M_\al}(\vp^{M_\be}(p_{n+1}))\geq\vp^{M_\al}(q)\geq\vp^{M_\al}(p_n)$. This completes the construction of the desired sequence.


Let $p^*$ be a sup of $\la p_n:n\in\om\ra$. It is straightforward to verify that it witnesses the lemma. 
\end{proof}

The last lemma that we will need before turning to the main result of this section is a technical refinement of Lemma \ref{lemma:splittingpairs} which isolates circumstances in which for $\alpha < \beta < \kappa$ as above, we can find splitting pairs $u,v$ for $(M_\al,\rho)$ with the additional property that $u\res M_\be$ and $v\res M_\be$ also form a splitting pair for $(M_\al,\rho)$. Moreover, $u\res M_\be$ and $v\res M_\be$ will split the nodes on levels between $\al$ and $\be$ in the same way that $u$ and $v$ do. For the statement of the next result, recall the way we denote restriction of (iteration) length $p\uhr \xi$, and restriction in the poset height, $p\uhr M_\al$, from Notation \ref{notation:horizontalvertical}.

\begin{lemma}\label{lemma:splittingpairswithresidues} Suppose that $\vec{M}$ is in pre-splitting configuration up to $\rho$ and that $\dom(\vec{M})$ satisfies the conclusion of Corollary \ref{cor:IH1and2}(2). Suppose $\al<\be$ are both in $\dom(\vec{M})$ and that $\la p^*(M_\al),\vp^{M_\al}\ra$ and $\la p^*(M_\be),\vp^{M_\be}\ra$  are residue pairs for $(M_\al,\ps^*)$ and $(M_\be,\ps^*)$ respectively. Finally, fix a condition $u\in\R_\rho$ with $p_u\in\dom(\vp^{M_\al})\cap\dom(\vp^{M_\be})$. Then there exist a splitting pair $(u^*,v^*)$ for $(\vp^{M_\al},\rho)$ extending $u$ and a splitting function $\Si$ satisfying the following: 
\begin{enumerate}

\item $p_{u^*}$ and $p_{v^*}$ are both in $E(\vp^{M_\al},\vp^{M_\be})$ (see Proposition \ref{prop:twomasterconditions});
\item $(u^*\uhr M_\beta,v^*\uhr M_\beta)$ is also an 
  $(M_\al,\rho)$-splitting pair, and for any tuple $(\zeta,\nu,m,n)\in\dom(\Si)$ so that the $m$-th node of $f_{u^*}(\zeta,\nu)\bsl(\al\times\om_1)$ and the $n$-th node of $f_{v^*}(\zeta,\nu)\bsl(\al\times\om_1)$ are both in $M_\be$,
$$
(\vp^{M_\be}(p_{u^*}),(f_{u^*}\res M_\be)\res\zeta)\Vdash_{\R_\zeta}\Si(\zeta,\nu,m,n)(L)<_{\dot{T}_\zeta}\theta
$$
and
$$
(\vp^{M_\be}(p_{v^*}),(f_{v^*}\res M_\be)\res\zeta)\Vdash_{\R_\zeta}\Si(\zeta,\nu,m,n)(R)<_{\dot{T}_\zeta}\tau.
$$
\end{enumerate}
\end{lemma}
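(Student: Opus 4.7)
The plan is to refine the construction in the proof of Lemma \ref{lemma:splittingpairs} by threading the second model $M_\be$ through in parallel. First, using Proposition \ref{prop:twomasterconditions} and Lemma \ref{lemma:denserestriction}, extend $u$ so that $p_u\in E(\vp^{M_\al},\vp^{M_\be})$ and $u\in D(\vp^{M_\al},\rho)\cap D(\vp^{M_\be},\rho)$; then set $v_0:=u$ and choose $w_0 \in \R_\rho \cap M_\al$ witnessing $\#^\rho_{\vp^{M_\al}}(u,v_0,w_0)$ trivially. Using a bookkeeping enumeration analogous to that in Lemma \ref{lemma:splittingpairs}, build a coordinate-wise increasing sequence $\la(u_n,v_n,w_n):n<\om\ra$ along with tuples $\la(\zeta_n,\nu_n,m_n,k_n):n<\om\ra$ so that the following invariants are maintained: (a) $\#^\rho_{\vp^{M_\al}}(u_n,v_n,w_n)$ and $p_{u_n},p_{v_n}\in E(\vp^{M_\al},\vp^{M_\be})$; (b) whenever the corresponding $\theta_n,\tau_n$ both lie in $M_\be$, there are distinct splitting nodes $\bar\theta_n,\bar\tau_n$ on a common level below $\al$ which witness the split for both $(u_{n+1},v_{n+1})$ in $\R_{\zeta_n}$ \emph{and} for $(u_{n+1}\uhr M_\be,v_{n+1}\uhr M_\be)$ in $\R_{\zeta_n}\cap M_\be$.

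The main work is at the successor step when $\theta_n,\tau_n\in M_\be$, via a two-model strengthening of Lemma \ref{lemma:1split}. Apply the dual residue property of $\R_{\zeta_n}$ at $M_\be$ (Corollary \ref{cor:IH1and2}) to find $w''\in\R_{\zeta_n}\cap M_\be$ with $*^{\zeta_n}_{\vp^{M_\be}}(u_n\uhr\zeta_n,v_n\uhr\zeta_n,w'')$, arranged so that its $M_\al$-residue extends $w_n\uhr\zeta_n$. Fix a $V$-generic filter $\bar G_\be$ over $\R_{\zeta_n}\cap M_\be$ containing $w''$. Since $\theta_n,\tau_n$ are below level $\be$, Proposition \ref{proposition:ccreflect}(2) puts them in the Aronszajn tree $(\dot T_{\zeta_n}\cap M_\be)[\bar G_\be]$ on $\be$, whose restriction to $\al\times\om_1$ is Aronszajn on $\al$ by Proposition \ref{proposition:ccreflect}(3). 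Working in the quotient $(\R_{\zeta_n}\cap M_\be/(p^*(M_\be),0))/\bar G_\be$, find extensions $\bar u_\be,\bar v_\be$ of $u_n\uhr M_\be,v_n\uhr M_\be$ forcing distinct predecessors $\bar\theta_n,\bar\tau_n$ of $\theta_n,\tau_n$ respectively on a common level below $\al$. Then pull back to $\R_{\zeta_n}$: using Proposition \ref{prop:twomasterconditions} and $\vp^{M_\be}$, extend $u_n$ to $u_{n+1}$ with $u_{n+1}\uhr M_\be =^*\bar u_\be$ and $p_{u_{n+1}}\in E(\vp^{M_\al},\vp^{M_\be})$, and similarly for $v_{n+1}$; since $u_{n+1}\geq u_{n+1}\uhr M_\be$, the split is forced at both levels. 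Finally, apply Lemma \ref{lemma:starimplieshashtag} at $M_\al$ to recover $w_{n+1}$ witnessing $\#^\rho_{\vp^{M_\al}}$. When $\theta_n$ or $\tau_n$ is outside $M_\be$, use Lemma \ref{lemma:1split} directly, supplemented by Proposition \ref{prop:twomasterconditions} to keep the $\ps^*$-parts in $E(\vp^{M_\al},\vp^{M_\be})$.

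Take sups $u^*,v^*,w^*$ of the respective sequences. The continuity of the residue functions together with the $=^*$-countable closure of $E(\vp^{M_\al},\vp^{M_\be})$ (Proposition \ref{prop:twomasterconditions}) and of $D(\vp^{M_\al},\rho)\cap D(\vp^{M_\be},\rho)$ (Lemma \ref{lemma:denserestriction}) preserve the invariants at the limit. Assemble the splitting function $\Si$ from the chosen pairs $(\bar\theta_n,\bar\tau_n)$: the bookkeeping ensures that $\la u^*,v^*\ra$ is a splitting pair for $(\vp^{M_\al},\rho)$ exactly as in Lemma \ref{lemma:splittingpairs}, giving conclusion (1), while invariant (b) forces the same $\Si$ to also witness the splitting for the $M_\be$-restrictions, yielding conclusion (2).

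The hard part will be the intertwined nature of the successor step: one must simultaneously pull back the $M_\be$-level splitting to a condition in $\R_{\zeta_n}$, maintain $E(\vp^{M_\al},\vp^{M_\be})$-membership of the $\ps^*$-part, and ensure that the new conditions remain $\#^\rho_{\vp^{M_\al}}$-related via some witness in $M_\al$. This will require a dovetailing construction at each successor stage, iteratively applying density within $E(\vp^{M_\al},\vp^{M_\be})$, the dual residue property at both $M_\al$ and $M_\be$, and Lemma \ref{lemma:starimplieshashtag} to move between the $*$ and $\#$ relations.
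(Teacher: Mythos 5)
Your overall scaffolding (coordinate-wise increasing sequence, bookkeeping, invariants including membership of the $\ps^*$-parts in $E(\vp^{M_\al},\vp^{M_\be})$, sups at the end, assembling $\Si$ from the chosen pairs) matches the paper, but the heart of your successor step has a genuine gap. You propose to produce the $M_\be$-level split \emph{directly}: fix $\bar G_\be$ generic for $\R_{\zeta_n}\cap M_\be$ and find extensions of $u_n\res M_\be$, $v_n\res M_\be$ forcing distinct predecessors of $\theta_n,\tau_n$ below $\al$, citing Proposition \ref{proposition:ccreflect}(3) for the claim that the restriction of $(\dot T_{\zeta_n}\cap M_\be)[\bar G_\be]$ to $\al\times\om_1$ is Aronszajn on $\al$. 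That citation does not apply: Proposition \ref{proposition:ccreflect}(3) concerns the extension by $\R_{\zeta_n}\cap M_\al$, where $\al$ becomes $\aleph_2$; in $V[\bar G_\be]$ the ordinal $\al$ has size $\aleph_1$, so no tree of height $\al$ is Aronszajn there. Worse, by Proposition \ref{proposition:ccreflect}(2) the generic $\bar G_\be$ already \emph{decides} the whole tree order on $\be\times\om_1$, so in the quotient over $\bar G_\be$ the predecessors of $\theta_n$ and $\tau_n$ below $\al$ are fixed and cannot be split at all (fatal when $\theta_n=\tau_n$, a case the lemma must handle); and if instead you mean to split with conditions of $\R_{\zeta_n}\cap M_\be$ over $V$, you would need an $M_\al$-genericity/branch-newness argument \emph{relativized to the poset $\R_{\zeta_n}\cap M_\be$}, which is not part of the paper's machinery and is not supplied. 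A further unaddressed point is the pull-back: $u_n\res M_\be$ need not be a residue of $u_n$ (see the remarks after Definition \ref{def:restriction}), so you cannot extend it arbitrarily inside $M_\be$ and then find $u_{n+1}\geq u_n$ whose restriction is $=^*$ the chosen $\bar u_\be$ without first ensuring $\bar u_\be$ sits above a genuine residue of $u_n$ to $M_\be$.

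The paper's proof avoids all of this by reversing the order: first split $\la\theta_n,\tau_n\ra$ below $\al$ in the \emph{full} poset via Lemma \ref{lemma:1split} (which uses genericity over $\R_{\zeta_n}\cap M_\al$, where the tree up to $\al$ really is Aronszajn), obtaining $u'_n,v'_n$ and witnesses $\bar\theta_n,\bar\tau_n$; then, in the case $\theta_n,\tau_n\in M_\be$, it \emph{transfers the already-forced splitting down to $M_\be$} by a residue argument: extend $u'_n\res\zeta_n$ into $D(\vp^{M_\be},\zeta_n)$, take a residue $\bar u^{**}_n\in M_\be$ of that extension, and observe that since the statement $\bar\theta_n<_{\dot T_{\zeta_n}}\theta_n$ has all its parameters in $M_\be$ and is forced by the extension, the residue must force it too (otherwise some further extension in $M_\be$ would force its negation while remaining compatible); finally arrange $u^{***}_n\res M_\be\geq\bar u^{**}_n$, so the $M_\be$-restriction forces the split, all while threading dual residues at $M_\al$ ($w^*_n,w^{**}_n,w^{***}_n$) to preserve the $\ast/\#$ relation before applying Lemma \ref{lemma:starimplieshashtag}. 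This ``split at the top, push down via residues to $M_\be$'' mechanism is what your proposal is missing, and without it (or a new relativized splitting lemma you would have to prove) the construction of invariant (b) does not go through.
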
 

\begin{proof} By Lemma \ref{lemma:denserestriction}, we know that $D(\vp^{M_\al},\rho)\cap D(\vp^{M_\be},\rho)$ is dense and $=^*$-countably closed in $\R_\rho/\lb p^*(M_\al),p^*(M_\be)\rb$. Moreover, by Proposition \ref{prop:twomasterconditions} (with the notation from the statement thereof), $E(\vp^{M_\al},\vp^{M_\be})$ is dense and countably $=^*$-closed in $\ps^*/\lb p^*(M_\al),p^*(M_\be)\rb$. Consequently, 
$$
E^*(\vp^{M_\al},\vp^{M_\be},\rho):=\lb v\in\R_\rho:v\in D(\vp^{M_\al},\rho)\cap D(\vp^{M_\be},\rho)\we p_v\in E(\vp^{M_\al},\vp^{M_\be})\rb
$$ 
is dense and countably $=^*$-closed in $\R_\rho/\lb p^*(M_\al),p^*(M_\be)\rb$. For use later, we also let $E^*(\vp^{M_\al},\vp^{M_\be},\zeta)$ be defined similarly, with $\zeta$ replacing $\rho$ in the above definition.

Let $u$ be as in the assumption of the current lemma. We may extend and relabel if necessary to assume that $u\in E^*(\vp^{M_\al},\vp^{M_\be},\rho)$.  We set $u_0:=v_0:=u$ and $w_0:=u\res M_\al$. We will now define a {coordinate-wise} increasing sequence of triples $\la\la u_n,v_n,w_n\ra:n\in\om\ra$ and a sequence of tuples $\la\la\zeta_n,\nu_n,\theta_n,\tau_n\ra:n\in\om\ra$ (with respect to some bookkeeping device) of tree nodes and ordinals so that the following conditions are satisfied {for all $n$:}
\begin{enumerate}
\item $\#^\rho_{\vp^{M_\al}}(u_n,v_n,w_n)$;
\item $u_n,v_n\in D(\vp^{M_\be},\rho)$;
\item $\la\zeta_n,\nu_n\ra\in\dom(f_{u_n})\cap\dom(f_{v_n})\cap M_\al$, and $\la\theta_n,\tau_n\ra\in (f_{u_n}(\zeta_n,\nu_n)\bsl(\al\times\om_1))\times(f_{v_n}(\zeta_n,v_n)\bsl(\al\times\om_1))$;
\item $u_{n+1}$ and $v_{n+1}$ split $\la\theta_n,\tau_n\ra$ below $\al$ in $\dot{T}_{\zeta_n}$, and if $\theta_n$ and $\tau_n$ are both below level $\be$, then in fact $u_{n+1}\res M_\be$ and $v_{n+1}\res M_\be$ split $\la\theta_n,\tau_n\ra$ below $\al$ in $\dot{T}_{\zeta_n}$. Moreover, in this case, there is a pair of nodes $\la\bar{\theta}_n,\bar{\tau}_n\ra$ below level $\al$ which witnesses the splitting for both $u_{n+1}$ and $v_{n+1}$ as well as their restrictions to $M_\be$;
\item $p_{u_n}$ and $p_{v_n}$ are in $E(\vp^{M_\al},\vp^{M_\be})$.
\end{enumerate}
For $n=0$, we have that (1), (2), and (5) hold because $u\in E^*(\vp^{M_\al},\vp^{M_\be},\rho)$. (3) holds by definition and (4) is vacuous.

Suppose, then, that we have defined $u_n,v_n$, and $w_n$. By Lemma \ref{lemma:1split}, we may find extensions $u'_n\geq u_n$, $v'_n\geq v_n$, and $w'_n\geq w_n$ so that $\#^\rho_{\vp^{M_\al}}(u'_n,v'_n,w'_n)$ holds and so that $u'_n$ and $v'_n$ split $\la\theta_n,\tau_n\ra$ below $\al$ in $\dot{T}_{\zeta_n}$. Let $\bar{\theta}_n$ and $\bar{\tau}_n$ be nodes below level $\al$ which witness the splitting.

We now define conditions $u^{***}_n$, $v^{***}_n$, and $w^{***}_n$ (the superscript for later notational purposes) which extend, respectively, $u'_n\res\zeta_n$, $v'_n\res\zeta_n$, and $w'_n\res\zeta_n$. If either $\theta_n$ or $\tau_n$ are at or above level $\be$ (namely, outside of $M_\beta$), then we simply set $u^{***}_n:=u'\res\zeta_n$, $v^{***}_n:=u'\res\zeta_n$. Since $\dom(\vec{M})$ satisfies the conclusion of Corollary \ref{cor:IH1and2} (2), and since $\zeta_n\in M_\al$, we may find a dual residue $w^{***}_n$ of $u^{***}_n$ and $v^{***}_n$ to $M_\al$. This completes the definition of the triple $(u^{***}_n,v^{***}_n,w^{***}_n)$ in the case that either $\theta_n$ or $\tau_n$ are at or above level $\be$.

Suppose on the other hand that $\theta_n$ and $\tau_n$ are both below level $\be$ and therefore are in $M_\be$.  Since $\dom(\vec{M})$ satisfies the conclusion of Corollary \ref{cor:IH1and2} (2), and since $\zeta_n\in M_\al\cap\rho$ and $\#^{\zeta_n}_{\vp^{M_\al}}(u'_n\res\zeta_n,v'_n\res\zeta_n,w'_n\res\zeta_n)$, we may find a condition $w^*_n\in M_\al\cap\R_{\zeta_n}$ which is a dual residue of $u'_n\res\zeta_n$ and $v'_n\res\zeta_n$ to $M_\al$. We next extend $u'_n\res\zeta_n$ to a condition which extends not only $w^*_n$ but also some residue to $M_\be$.

Let $u^{**}_n$ be an extension in $D(\vp^{M_\be},\zeta_n)$ of $u'_n\res\zeta_n$ and $w^*_n$. By the remarks before Lemma \ref{lemma:denserestriction}, we may let $\bar{u}^{**}_n\in M_\be$ be a residue of $u^{**}_n$ to $M_\be$ in $\R_{\zeta_n}$. Finally, let $u^{***}_n$ be a condition in $D(\vp^{M_\be},\zeta_n)\cap D(\vp^{M_\al},\zeta_n)$ which extends $u_n^{**}$ and $\bar{u}^{**}_n$ and which satisfies that $u^{***}_n\res M_\be\geq_{\R_{\zeta_n}}\bar{u}^{**}_n$.

By definition of $\bar{\theta}_n$ above, we know that $u'_n\res\zeta_n\Vdash_{\R_{\zeta_n}}\bar{\theta}_n<_{\dot{T}_{\zeta_n}}\theta_n$, and hence the extension $u^{**}_n$ of $u'\res\zeta_n$ forces this too. Since $\bar{u}^{**}_n$ is a residue of $u^{**}_n$ to $M_\be$ in $\R_{\zeta_n}$ and $\bar{\theta}_n$, $\theta_n$ are nodes in $M_\be$, we conclude that $\bar{u}^{**}_n$ also forces that $\bar{\theta}_n<_{\dot{T}_{\zeta_n}}\theta_n$. Finally, since $u^{***}_n\res M_\be$ is a condition (because $u^{***}_n\in D(\vp^{M_\be},\zeta_n)$) which extends $\bar{u}^{**}_n$, we conclude that $u^{***}_n\res M_\be$ forces that $\bar{\theta}_n<_{\dot{T}_{\zeta_n}}\theta_n$. This completes the first round of extensions of $u'_n\res\zeta_n$.

We now turn to extending $v'_n\res\zeta_n$. Since $u_n^{***}\in D(\vp^{M_\al},\zeta_n)$, we may let $w^{**}_n$ be a residue of $u^{***}_n$ to $M_\al$ which extends $w_n^*$. Since $w^{**}_n\geq w_n^*$ and $w_n^*$ is a residue of $v'_n\res\zeta_n$ to $M_\al$, $w^{**}_n$ is also a residue of $v'_n\res\zeta_n$ to $M_\al$. Applying the same argument as in the previous two paragraphs to $v'_n\res\zeta_n$ with $w^{**}_n$ playing the role of $w^*_n$ and with $\bar{\tau}_n$ and $\tau_n$ playing the respective roles of $\bar{\theta}_n$ and $\theta_n$, we may find an extension $v^{***}_n$ of $v'_n\res\zeta_n$ in $D(\vp^{M_\be},\zeta_n)\cap D(\vp^{M_\al},\zeta_n)$ and a condition $w^{***}_n$ so that $v^{***}_n\res M_\be$ forces $\bar{\tau}_n<_{\dot{T}_{\zeta_n}}\tau_n$ and so that $w^{***}_n$ is a residue of $v^{***}_n$ to $M_\al$ in $\R_{\zeta_n}$ which extends $w^{**}_n$. Note that since $w^{***}_n\geq w^{**}_n$, $w^{***}_n$ is also a residue of $u^{***}_n$ to $M_\al$ in $\R_{\zeta_n}$.

To summarize, we now have extensions $u^{***}_n$ and $v^{***}_n$ of $u'_n\res\zeta_n$ and $v'_n\res\zeta_n$ respectively which are both in $D(\vp^{M_\be},\rho)$ and which satisfy that $u^{***}_n\res M_\be$ and $v^{***}_n\res M_\be$ split $\la\theta_n,\tau_n\ra$ as witnessed by $\la\bar{\theta}_n,\bar{\tau}_n\ra$. Moreover, $w^{***}_n$ is a {dual} residue of $u^{***}_n$ and $v^{***}_n$ to $M_\al$ in $\R_{\zeta_n}$, i.e., $*^{\zeta_n}_{\vp^{M_\al}}(u^{***}_n,v^{***}_n,w^{***}_n)$. This completes the definition of the triple $(u^{***}_n,v^{***}_n,w^{***}_n)$ in the case that $\theta_n$ and $\tau_n$ are below level $\be$.

We now apply Lemma \ref{lemma:starimplieshashtag}, to find $u_{n+1}\res\zeta_n\geq u^{***}_n$, $v_{n+1}\res\zeta_n\geq v^{***}_n$ and $w_{n+1}\res\zeta_n\geq w^{***}_n$ so that $\#^{\zeta_n}_{\vp^{M_\al}}(u_{n+1}\res\zeta_n,v_{n+1}\res\zeta_n,w_{n+1}\res\zeta_n)$ and so that $u_{n+1}\res\zeta_n,v_{n+1}\res\zeta_n\in E^*(\vp^{M_\al},\vp^{M_\be},\zeta_n)$. Define $u_{n+1}:=(u_{n+1}\res\zeta_n)^\frown (u'_n\res[\zeta_n,\rho))$, with $v_{n+1}$ and $w_{n+1}$ defined similarly. $u_{n+1},v_{n+1}$, and $w_{n+1}$ then satisfy (1)-(5), completing the successor step of the construction.

If we now let $u^*,v^*,w^*$ be sups of their respective sequences, it is straightforward to see that they satisfy the lemma, using (4) to secure the desired splitting function.
\end{proof}

Having laid the groundwork in the previous results, we next turn to analyzing when quotients of $\R_\rho$ preserve stationary sets of cofinality $\om$ ordinals. We will prove the following proposition:

\begin{proposition}\label{prop:statsetpreservation} Suppose that $\vec{M}$ is in pre-splitting configuration up to $\rho$ and that $\dom(\vec{M})$ satisfies Corollary \ref{cor:IH1and2}(2). {Then there exists some $B^*\seq\dom(\vec{M})$ with $\dom(\vec{M})\bsl B^*\in\cal{I}$ so that for any }$\al\in B^*$, any $(\R_\rho\cap M_\al)$-name $\dot{S}$ for a stationary subset of $\al\cap\cof(\om)$, and any residue pair $\la p^*(M_\al),\vp^{M_\al}\ra$ for $(M_\al,\ps^*)$, the poset $\R_\rho/(p^*(M_\al),0_{\dot{\S}_\rho})$ forces that $\dot{S}$ remains stationary. 
\end{proposition}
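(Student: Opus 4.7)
We will show the conclusion holds with $B^* = \dom(\vec M)$ itself, so that no further refinement is needed beyond the properties already supplied by Corollary \ref{cor:IH1and2}(2). Fix $\al \in \dom(\vec M)$, a residue pair $\la p^*(M_\al), \vp^{M_\al}\ra$, and an $(\R_\rho \cap M_\al)$-name $\dot S$ for a stationary subset of $\al \cap \cof(\om)$. Take a $V$-generic $G$ over $\R_\rho$ containing $(p^*(M_\al), 0_{\dot\S_\rho})$, set $G_\al := G \cap (\R_\rho \cap M_\al)$, which is $V$-generic over $\R_\rho \cap M_\al$ by Proposition \ref{proposition:ccreflect}, and let $\dot Q_\al := \R_\rho/(p^*(M_\al), 0)/G_\al$ be the resulting quotient forcing in $V[G_\al]$. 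Then $S := \dot S[G_\al]$ is stationary in $\al$ in $V[G_\al]$. Arguing by contradiction, suppose that some $u \in \dot Q_\al$ forces a $\dot Q_\al$-name $\dot C$ to be a club in $\al$ disjoint from $S$. Using the stationarity of $S$ in $V[G_\al]$, choose a countable $X \elem H(\Theta)^{V[G_\al]}$ with $u, \dot C, S, \R_\rho, M_\al, \vp^{M_\al}, G_\al, \dot Q_\al \in X$ and with $\de := \sup(X \cap \al) \in S$; fix a cofinal $\om$-sequence $\la \eta_n\ra$ in $X \cap \al$ with limit $\de$, together with an enumeration $\la D_n\ra$ of the dense subsets of $\dot Q_\al$ lying in $X$.

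\textbf{The recursive construction.} We construct $\la u_n, \tilde w_n, \de_n : n < \om\ra$ entirely inside $X$, with $u_0 = u$, so that: (i) $u_n$ is increasing in $\dot Q_\al \cap D(\vp^{M_\al}, \rho)$ and $u_{n+1} \in D_n$; (ii) $u_n \Vdash \de_n \in \dot C$ with $\de_n > \eta_n$; and (iii) $\tilde w_n \in G_\al$ is a residue of $u_n$ to $M_\al$ in $\R_\rho$, with the $\tilde w_n$ increasing and $u_{n+1} \res M_\al \geq \tilde w_n$, i.e., $*^\rho_{\vp^{M_\al}}(u_n, u_n, \tilde w_n)$ holds. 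At each successor step we first extend $u_n$ inside $\dot Q_\al \cap X$ to meet $D_n$ and to force some $\de_{n+1} > \eta_n$ into $\dot C$, using that $\dot C$ is forced unbounded and the elementarity of $X$; we then ensure $u_{n+1} \in D(\vp^{M_\al}, \rho)$ via its density (Lemma \ref{lemma:denserestriction}); Lemma \ref{lemma:starimplieshashtag}(1) applied to $(u_n, u_n, \tilde w_n)$ lets us further arrange $u_{n+1} \res M_\al \geq \tilde w_n$ along with a residue $\tilde w_{n+1} \geq \tilde w_n$ of $u_{n+1}$; and the density of residues above any given residue in $\R_\rho \cap M_\al$ (Lemma \ref{lemma:quotientFUN}(1)) combined with the genericity of $G_\al$ allows us to take $\tilde w_{n+1}$ inside $G_\al \cap X$.

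\textbf{The limit step and the contradiction.} Let $u^* := \sup_{\R_\rho} u_n$. By the $=^*$-countable closure of $D(\vp^{M_\al}, \rho)$, we have $u^* \in D(\vp^{M_\al}, \rho)$. By the limit analysis from the proof of Lemma \ref{lemma:starimplieshashtag}(2), $\#^\rho_{\vp^{M_\al}}(u^*, u^*, w^*)$ holds, where $w^* := \sup \tilde w_n$; moreover $w^* \in G_\al$ by a standard genericity argument (any $v \in G_\al$ is compatible with each $\tilde w_n$, hence compatible with $w^*$ via the $\om_1$-closure with sups of $\R_\rho \cap M_\al$). The dual residue property at $M_\al$ provided by Corollary \ref{cor:IH1and2}(2) yields an extension $\tilde w^{**} \geq w^*$ in $\R_\rho \cap M_\al$ that is a genuine residue of $u^*$; the density of such residues above $w^*$ in $\R_\rho \cap M_\al$ (Lemma \ref{lemma:quotientFUN}(1)) together with the genericity of $G_\al$ allows us to take $\tilde w^{**} \in G_\al$. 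By Lemma \ref{lemma:residueForceQuotient}, $u^* \in \dot Q_\al$. Since $u^*$ extends every $u_n$, it forces $\{\de_n : n < \om\} \subseteq \dot C$, and by closure of $\dot C$, $u^* \Vdash \de \in \dot C$, contradicting $u \Vdash \dot C \cap S = \es$ as $\de \in S$.

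\textbf{Main obstacle.} The principal difficulty is showing that the limit condition $u^*$ remains inside the quotient $\dot Q_\al$, since (as flagged at the start of this section) $\dot Q_\al$ is not $\sigma$-closed in general. The resolution rests on coordinated bookkeeping at every finite stage between the vertical side --- producing and carrying forward residues in $\R_\rho \cap M_\al$ of the growing conditions in $\R_\rho$, supported by the exact strong residue functions, the $=^*$-closure of $D(\vp^{M_\al}, \rho)$, the $*$-to-$\#$ transfer from Lemma \ref{lemma:starimplieshashtag}, and the dual residue property of Corollary \ref{cor:IH1and2}(2) --- and the horizontal side --- placing these residues into the generic filter $G_\al$ using the density statement of Lemma \ref{lemma:quotientFUN}(1) and the genericity of $G_\al$. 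In effect this reorganizes the classical Baumgartner argument for preservation of stationary subsets of $\cof(\om)$ ordinals by $\sigma$-closed forcing, replacing bare $\sigma$-closure of the quotient with the $*/\#$-residue machinery developed in the preceding sections.
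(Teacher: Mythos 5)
Your plan founders at the limit step, and the failure is exactly the obstruction the paper flags at the start of Section \ref{sec:FWCstronglyProper}: the quotient is not known to be countably closed (or even strategically closed), so a Baumgartner-style fusion inside the quotient cannot be closed off. Concretely, at the limit you have $\#^\rho_{\vp^{M_\al}}(u^*,u^*,w^*)$ with $w^*\in G_\al$, and the dual residue property gives \emph{some} residue $\tilde w^{**}\geq w^*$ of $u^*$ in $\R_\rho\cap M_\al$; but you have no way to place a residue of $u^*$ \emph{inside the already fixed generic} $G_\al$. Lemma \ref{lemma:quotientFUN}(1) does not make the residues of $u^*$ dense above $w^*$: it only applies to conditions compatible with $u^*$, and since $w^*$ is merely $=^*u^*\res M_\al$ (not a residue --- residuehood is precisely the property that does \emph{not} pass to sups, which is why the paper distinguishes $\#$ from $*$), an extension of $w^*$ in $G_\al$ may well be incompatible with $u^*$. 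The only dense set available in $V$ is ``residue of $u^*$ or incompatible with $u^*$'', and $G_\al$ hitting its incompatible part is exactly the event $u^*\notin\dot Q_\al$ that you need to exclude; genericity cannot rule it out. So the key assertion ``$u^*\in\dot Q_\al$'' is unproved, and with it the whole contradiction. (Two smaller points: your parenthetical that a condition compatible with every $\tilde w_n$ is compatible with $w^*$ is false in general --- $w^*\in G_\al$ does hold, but by a density argument using that the sequence lies in $V$ by distributivity; and your opening claim that one may take $B^*=\dom(\vec M)$ is not established by anything you wrote --- the paper shrinks to $B^*=B\cap\tr(B)$ because its argument needs $B\cap\al$ stationary in $\al$.)

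For contrast, the paper never tries to keep a limit condition inside a quotient over a fixed generic. Instead it fixes $\al\in B\cap\tr(B)$, finds $(q,g)\in\R_\rho\cap M_\al$ forcing some $\nu\in\dot S$ which is a limit of points $\be_n\in B\cap\al$ with $q\geq p^*(M_{\be_n})$, and then builds \emph{two} intertwined sequences: $(p_n,f_n)$ in $\R_\rho$ deciding elements of $\dot C$ below $\nu$ (obtained by reflecting splitting data into the hulls $K_{\be_{n+1}}$), and $(q_n,g_n)$ in $\R_\rho\cap M_\al$ above $(q,g)$. The point of Lemma \ref{lemma:splittingpairswithresidues} and the common splitting function is to make $f_{n+1}$ and $g_{n+1}$ \emph{strongly compatible} over $p_{n+1},q_{n+1}$, so that after taking sups on each side the two conditions can be amalgamated outright; the amalgam forces $\nu\in\dot S\cap\dot C$. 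This sidesteps the quotient-closure issue that your argument implicitly assumes, and it is also where the hypothesis $\al\in\tr(B)$ and the full splitting machinery earn their keep.
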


Thus the quotient forcing of $\R_\rho$ above the condition $(p^*(M_\al),0_{\dot{\S}_\rho})$ preserves the stationarity of $\dot{S}$. The remainder of the section is devoted to the proof.
\vspace{1mm}

\begin{proof}
To begin, we define the set $B^*:=\tr(B)\cap B$, where $B=\dom(\vec{M})$. {Since $B\in\cal{F}^+$, Lemma \ref{lemma:SetAndTrace} implies that $B\bsl B^*\in\cal{I}$.}

Now fix, for the rest of the proof, an ordinal $\al\in B^*$ and a residue pair $\la p^*(M_\al),\vp^{M_\al}\ra$ for $(M_\al,\ps^*)$; since $\vec{M}$ is in pre-splitting configuration up to $\rho$, we may also fix, for each $\ga\in B\cap\al$, a residue pair $\la p^*(M_\ga),\vp^{M_\ga}\ra$ for $(M_\ga,\ps^*)$.

Next, fix a condition $(p,f)$ in $\R_\rho/(p^*(M_\al),0_{\dot{\S}_\rho})$ and an $\R_\rho$-name $\dot{C}$ for a closed unbounded subset of $\al$. We will find some extension $(p^*,f^*)$ of $(p,f)$ which forces in $\R_\rho$ that $ \dot{C}\cap\dot{S}\neq\es$.
By Lemma \ref{lemma:denserestriction}, we may assume that $(p,f)\in D(\vp^{M_\al},\rho)$.

In $V$, let $\theta>\ka^+$ be a large enough regular cardinal, and let $K\prec H(\theta)$ be chosen so that $|K|=\ka$, $\,^{<\ka}K\seq K$, and so that $K$ has the following parameters as elements: 
\begin{enumerate}[label=(\roman*)]
\item the sequences $\vec{M}$ and $\la\la p^*(M_\ga),\vp^{M_\ga}\ra:\ga\in B\cap (\al+1)\ra$, the set $B^*$, the poset $\R_\rho$, the $\R_\rho$-condition $(p,f)$, the $\R_\rho$-name $\dot{C}$, and the $(\R_\rho\cap M_\al)$-name $\dot{S}$;
\item the fixed well-order $\lhd$ of $H(\ka^+)$ from Notation \ref{notation:wo}.
\end{enumerate}
Finally, let $\bb{K}$ denote the tuple 
$(K,\in,\al,\vec{M},B^*,\R_\rho,(p,f),\dot{C},\dot{S},\lhd).
$

Define $E_0$ to be the club of $\be<\al$ so that $\Sk^{\bb{K}}(\be)\cap\al=\be$. Since $\al\in B^*$, we know that $B\cap\al$ is stationary in $\al$. Thus {$B\cap\lim(B)\cap\al$ is stationary in $\al$}, and therefore $E:=\lim(E_0\cap B)$ is a club in $\al$.
\vspace{3mm}

Recalling that $(p,f)\in D(\vp^{M_\al},\rho)$, we can find a residue $(\bar{p},\bar{f})$ of $(p,f)$ to $M_\al$ which extends the condition  $(\vp^{M_\al}(p),f\res M_\al)$.
 Let $\dot{X}$ be the $(\R_\rho\cap M_\al)$-name for 
 $$
  \lb\be\in E_0\cap B{\cap\lim(B)}\cap\al:(p^*(M_\be),0_{\dot{\S}_\rho})\in\dot{G}_{\R_\rho\cap M_\al}\rb;
  $$
 by Lemma \ref{lemma:unbounded} we know that $\dot{X}$ is forced by $\R_\rho\cap M_\al$ to be unbounded in $\al$.
 Since $\dot{S}$ is an $(\R_\rho\cap M_\al)$-name of a stationary subset of $\al\cap\cof(\omega)$, {$\dot{S}$} is forced to contain a limit point of $\dot{X}$.
 This, combined with the fact that $\R_\rho\cap M_\al$ does not add new $\omega$-sequences, implies that we can find an extension $(q,g)\geq_{\R_\rho\cap M_\al}(\bar{p},\bar{f})$ and
 an increasing sequence $\la\be_n:n\in\om\ra$ in $E_0\cap B{\cap\lim(B)}$ with $\sup_n\beta_n = \nu \in E \cap \cof(\omega)$, so that 
 (i) $(q,g)\Vdash_{\R_\rho\cap M_\al}\nu\in\dot{S}$, and 
 (ii) for all $n\in\om$, $q\geq_{\ps^*} p^*(M_{\be_n})$.

For the rest of the proof, we will fix $(q,g) \in \R_\rho \cap M_\alpha$, $\la \beta_n \mid n< \omega\ra$, and $\nu$ with the above properties. Define
$ 
K_{\be_n}:=\Sk^{\bb{K}}(\be_n),
$  
noting that $K_{\be_n}\cap\al=\be_n$ because  $\be_n\in E_0$. {It will be helpful for later to see that $M_{\be_n}\seq K_{\be_n}$ for each $n$. Indeed, $\be_n\in B\cap\lim(B)$ which implies that $M_{\be_n}=\bigcup_{\eta\in B\cap\be_n}M_\eta$. Moreover, $\be_n\seq K_{\be_n}$, and therefore applying the elementarity of $K_{\be_n}$, we see that for all $\eta\in B\cap\be_n$, $M_\eta\in K_{\be_n}$. Since $\be_n\seq K_{\be_n}$, $\eta\seq K_{\be_n}$ too. Thus $M_\eta\seq K_{\be_n}$ since $K_{\be_n}$ sees a bijection between $M_\eta$ and $\eta$. Combining all of this, we see that $M_{\be_n}=\bigcup_{\eta\in B\cap\be_n}M_\eta\seq K_{\be_n}$.}

We proceed to find an  extension $(p^*,f^*)$ of $(p,f)$ which is compatible with $(q,g)$ and forces that $\nu\in\dot{C}$. We will secure this by building two increasing $\om$-sequences of conditions, one above $(p,f)$ and another above $(q,g)$, in such a way that the limits of each sequence can be amalgamated; the resulting condition will then force $\nu$ into $\dot{S}\cap\dot{C}$. Let $(p_0,f_0):=(p,f)$ and $(q_0,g_0):=(q,g)$.

\begin{claim}\label{claim:sequences} There exists an increasing sequence $\la (p_n,f_n):n\in\om\ra$ of conditions in $\R_\rho$ and an increasing sequence $\la (q_n,g_n):n\in\om\ra$ of conditions in $\R_\rho\cap M_\al$ so that for each $n\in\om$,
\begin{enumerate}

\item $(p_n,f_n)\in K_{\be_n}$;
\item $(p_{n+1},f_{n+1})\Vdash_{\R_\rho}\dot{C}\cap(\be_n,\nu)\neq\es$;
\item $(p_n,f_n)\in D(\vp^{M_\al},\rho)$;
\item $q_n\geq_{{\ps^*}\cap M_\al}\vp^{M_\al}(p_n)$;
\item $f_{n+1}$ and $g_{n+1}$ are strongly compatible (Definition \ref{def:stronglycompatibleover}) over $p_{n+1}$ and $q_{n+1}$.
\end{enumerate}
\end{claim}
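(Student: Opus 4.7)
The plan is to define the sequences by simultaneous recursion on $n$, using the splitting pair machinery from this section. Set $(p_0, f_0) := (p, f)$ and $(q_0, g_0) := (q, g)$; property (1) holds for $n=0$ since $(p, f)$ is a parameter of $\bb{K}$, and (2), (5) are vacuous at the base.

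For the inductive step, with $(p_n, f_n)$ and $(q_n, g_n)$ satisfying the invariants, the construction takes place inside $K_{\be_{n+1}}$, which contains $(p_n, f_n)$, $\dot{C}$, $\be_n$, the residue system, and the models $M_{\be_n}, M_\al$ as definable objects. Using elementarity and the fact that $\dot{C}$ is forced unbounded in $\al$, find an extension $(p', f')$ of $(p_n, f_n)$ in $K_{\be_{n+1}}$ forcing $\dot{C} \cap (\be_n, \be_{n+1}) \neq \es$; since $K_{\be_{n+1}} \cap \al = \be_{n+1}$, any such witness lies below $\be_{n+1}$, hence in $(\be_n, \nu)$. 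By Lemma \ref{lemma:denserestriction} and Proposition \ref{prop:twomasterconditions}, further extend so that $(p', f') \in D(\vp^{M_\al}, \rho) \cap D(\vp^{M_{\be_n}}, \rho)$ with $p_{p'} \in E(\vp^{M_{\be_n}}, \vp^{M_\al})$. Then apply Lemma \ref{lemma:splittingpairswithresidues} inside $K_{\be_{n+1}}$ (with $M_{\be_n}, M_\al$ in the roles of $M_\al, M_\be$ there) to extend $(p', f')$ to a splitting pair $(u^*, v^*)$ for $(\vp^{M_{\be_n}}, \rho)$ such that $(u^* \res M_\al, v^* \res M_\al)$ is also a splitting pair for $(\vp^{M_{\be_n}}, \rho)$ with the same splitting function. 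Take $(p_{n+1}, f_{n+1}) := u^*$, which satisfies (1), (2), (3).

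To define $(q_{n+1}, g_{n+1}) \in \R_\rho \cap M_\al$, note that $v^* \res M_\al \in \R_\rho \cap M_\al$; take $(q_{n+1}, g_{n+1})$ to be a common extension inside $M_\al$ of $(q_n, g_n)$ and $v^* \res M_\al$ whose $\ps^*$-part dominates $\vp^{M_\al}(p_{n+1})$, securing (4). For (5), strong compatibility of $f_{u^*}$ with $g_{n+1}$ follows by the splitting pair property of $(u^*, v^*)$: on any common coordinate in $M_{\be_n}$, pairs of nodes from $f_{u^*}$ and $f_{v^*}$ at or above level $\be_n$ are split by distinct predecessors below $\be_n$, hence pairwise incomparable in $\dot{T}_\zeta$; nodes below $\be_n$ lie in the common $\#$-residue $w \in M_{\be_n}$, which is itself a condition. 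Since $g_{n+1}$ extends $v^* \res M_\al$ and the nodes of $f_{v^*}$ lie below $\be_{n+1} < \al$, they appear inside $g_{n+1}$, and the resulting argument mirrors the amalgamation in Lemma \ref{lemma:amalgamatesplittingpair}.

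The hard part will be coordinating the splitting pair construction inside $K_{\be_{n+1}}$ with the $M_\al$-side construction, since $(q_n, g_n)$ need not lie in $K_{\be_{n+1}}$; this precludes a direct requirement like $\vp^{M_\al}(p_{u^*}) \geq q_n$ during the $K_{\be_{n+1}}$-step. This is navigated via the doubled splitting output of Lemma \ref{lemma:splittingpairswithresidues}, which guarantees that the splitting pair built inside $K_{\be_{n+1}}$ projects to a splitting pair for $M_{\be_n}$ inside $M_\al$, enabling the subsequent amalgamation of $(q_n, g_n)$ with $v^* \res M_\al$ in $M_\al$. A related technicality is that coordinates outside $M_{\be_n}$ do not receive new tree data from the splitting construction, so on those coordinates the antichain structure is inherited from invariant (5) at stage $n$.
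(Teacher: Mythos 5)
Your proposal has the right ingredients (Skolem hulls $K_{\be_{n+1}}$ for (1)--(2), the doubled splitting Lemma \ref{lemma:splittingpairswithresidues}, an amalgamation in the style of Lemma \ref{lemma:amalgamatesplittingpair} for (5)), but the order of operations is wrong, and this creates a genuine gap at exactly the crux of the claim. You build the splitting pair $(u^*,v^*)$ over an extension $(p',f')$ of $(p_n,f_n)$ \emph{alone}, constructed inside $K_{\be_{n+1}}$, and only afterwards try to reconnect with $(q_n,g_n)$ by taking ``a common extension inside $M_\al$ of $(q_n,g_n)$ and $v^*\res M_\al$ whose $\ps^*$-part dominates $\vp^{M_\al}(p_{n+1})$.'' Nothing guarantees that $(q_n,g_n)$ is compatible with $v^*\res M_\al$ (or that the resulting $\ps^*$-parts are compatible with $\vp^{M_\al}(p_{n+1})$): since $(q_n,g_n)\in M_\al$ need not lie in $K_{\be_{n+1}}$, the extensions made inside $K_{\be_{n+1}}$ carry no information about $(q_n,g_n)$, and $f'$ or $f_{v^*}$ may already place nodes comparable with nodes of $g_n$, or force $\ps^*$-incompatibility with $q_n$. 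The doubled output of Lemma \ref{lemma:splittingpairswithresidues} only says that the restrictions of the pair to the larger model split \emph{each other}; it says nothing about compatibility with a pre-existing external condition. Your fallback for the same problem in item (5) --- that coordinates outside the small model ``inherit'' the antichain structure from invariant (5) at stage $n$ --- also does not work: invariant (5) covers $f_n$ versus $g_n$, not the new nodes of $f_{n+1}\setminus f_n$ versus $g_n$ (and the auxiliary assertion that the nodes of $f_{v^*}$ lie below $\be_{n+1}$ is false in general, since $K_{\be_{n+1}}$ contains ordinals $\geq\al$, e.g.\ $\al$ itself).

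The paper's proof resolves this by folding $(q_n,g_n)$ into the construction \emph{before} splitting and \emph{before} reflecting: first amalgamate $(p_n,f_n)$ and $(q_n,g_n)$ into $(r,h)$ (using inductive item (5)), extend so that $(r,h)$ forces some $\mu>\be_n$ into $\dot C$ and $\vp^{M_\al}(r)\geq q_n$; then apply Lemma \ref{lemma:splittingpairswithresidues} to $(r,h)$ with $\be_{n+1}$ and $\al$ in the roles of ``$\al$'' and ``$\be$'', producing $(r_L,h_L),(r_R,h_R)$ with common residue $(\bar r,\bar h)$ and splitting function $\Si$, all of which lie in $M_{\be_{n+1}}\seq K_{\be_{n+1}}$; then reflect only the \emph{left} half into $K_{\be_{n+1}}$ (keeping $(\bar r,\bar h)$, $\Si$, $x_L,x_R$ fixed, with some $\eta\in B$ replacing $\be_{n+1}$), and set $(p_{n+1},f_{n+1}):=(r^*_L,h^*_L)$, while $(q_{n+1},g_{n+1})$ is read off the \emph{unreflected} right half: $q_{n+1}\geq\vp^{M_\al}(r_R)\geq q_n$ and $g_{n+1}:=h_R\res M_\al\supseteq g_n$. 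Cross-compatibility of the reflected left half with the outside right half is then recovered through the shared $(\bar r,\bar h)$ and $\Si$ --- including a separate argument (the paper's subclaim) that $\vp^{M_\al}(p_{n+1})\in M_{\be_{n+1}}$ extends $\bar r$ and hence is compatible with $\vp^{M_\al}(r_R)$, a point your proposal simply presupposes. Without this ``amalgamate first, reflect the left half only'' architecture, items (4) and (5) cannot be verified, so the proposal as written does not prove the claim.
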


Before we prove this claim, we show that proving it suffices to obtain the desired condition $(p^*,f^*)$. So suppose that Claim \ref{claim:sequences} is true. Let $(p^*,f^*)$ be a sup of $\la(p_n,f_n):n\in\om\ra$, and let $(q^*,g^*)$ be a sup of  $\la(q_n,g_n):n\in\om\ra$. 

Observe that by item (2) of Claim \ref{claim:sequences} and the fact that the sequence $\la\be_n:n\in\om\ra$ is cofinal in $\nu$,  we have that $(p^*,f^*)\Vdash_{\R_\rho}\nu\in\lim(\dot{C})$ and hence forces that $\nu\in\dot{C}$ as $\dot{C}$ names a club. Also, since $(q^*,g^*)\geq (q_0,g_0)$ and since $(q_0,g_0)=(q,g)$ forces that $\nu\in\dot{S}$,  $(q^*,g^*)$ forces that $\nu\in\dot{S}$ too.
We claim that $p^*$ and $q^*$ are compatible in $\ps^*$, from which it follows by item (5) of Claim \ref{claim:sequences} that $f^*$ and $g^*$ are strongly compatible over $p^*$ and $q^*$. Indeed, $(p^*,f^*)\in D(\vp^{M_\al},\rho)$ since this set is closed under sups of increasing $\om$-sequences by Lemma \ref{lemma:denserestriction}. Furthermore,  by the countable continuity of $\vp^{M_\al}$, $\vp^{M_\al}(p^*)$ is a sup of the increasing sequence $\la\vp^{M_\al}(p_n):n\in\om\ra$. Thus to show that $p^*$ and $q^*$ are compatible, since $q^*\in\ps^*\cap M_\al$, it suffices to show that $q^*\geq_{\ps^*\cap M_\al}\vp^{M_\al}(p^*)$. However, we know that $q^*\geq q_n$ for all $n$ and so by (4) of Claim \ref{claim:sequences}, $q^*\geq\vp^{M_\al}(p_n)$ for all $n$. Therefore $q^*$ extends $\vp^{M_\al}(p^*)$, by definition of a supremum.

Now let $(p^{**},f^{**})$ be a condition in $\R_\rho$ above both $(p^*,f^*)$ and $(q^*,g^*)$. Then because $(p^{**},f^{**})$ extends $(p^*(M_\al),0_{\dot{\S}_\rho})$ as well as $(q^*,g^*)$, which in turn forces in $\R_\rho\cap M_\al$ that $\nu\in\dot{S}$, we have that $(p^{**},f^{**})\Vdash_{\R_\rho}\nu\in\dot{S}$. And finally, as $(p^{**},f^{**})$ extends $(p^*,f^*)$ which forces in $\R_\rho$ that $\nu\in\dot{C}$, we conclude that $(p^{**},f^{**})\Vdash_{\R_\rho}\nu\in\dot{S}\cap\dot{C}$. Thus it suffices to prove Claim \ref{claim:sequences} in order to finish the proof of Proposition \ref{prop:statsetpreservation}.
\vspace{3mm}

\noi\emph{Proof.} (of Claim 4.5) We will construct the sequences satisfying (1)-(5) of Claim \ref{claim:sequences} recursively. For the base case $n=0$, items (2) and (5) hold vacuously. For item (1), we have that $(p_0,f_0)=(p,f)\in K_{\be_0}$ as $(p,f)=(p_0,f_0)$ was chosen to be definable by a constant in the language of $\bb{K}$. We also ensured that $(p_0,f_0)\in D(\vp^{M_\al},\rho)$, which establishes (3). Finally, 
$
q_0\geq_{\ps^*\cap M_\al}\bar{p}\geq_{\ps^*\cap M_\al}\vp^{M_\al}(p_0),
$
which establishes (4).

Suppose, then, that we have defined $(p_n,f_n)$ and $(q_n,g_n)$ satisfying (1)-(5). We first observe that $(p_n,f_n)$ and $(q_n,g_n)$ are compatible. If $n=0$, this holds since $(q_0,g_0)$ is in $\R_\rho\cap M_\al$ and extends $(\bar{p},\bar{f})$, which is a residue of $(p_0,f_0)$ to $\R_\rho\cap M_\al$. If $n>0$, then we have that $q_n\geq_{\ps^*\cap M_\al}\vp^{M_\al}(p_n)$, and therefore $p_n$ and $q_n$ are $\ps^*$-compatible. Moreover, $f_n$ and $g_n$ are strongly compatible over the compatible conditions $p_n$ and $q_n$, and therefore $(p_n,f_n)$ and $(q_n,g_n)$ are compatible in $\R_\rho$.

Next choose some condition $(r,h)$ in $\R_\rho$ which extends $(p_n,f_n)$ and $(q_n,g_n)$, and by extending if necessary, we may assume that there is some ordinal $\mu>\be_n$ so that $(r,h)\Vdash_{\R_\rho}\mu\in\dot{C}\bsl(\be_n+1)$. Since $r\geq p_n\geq p^*(M_\al)$ and since $r\geq q_n\geq q_0\geq p^*(M_{\be_{n+1}})$, we may also extend if necessary to assume, by Lemma \ref{lemma:firstresiduelemma}(1), that $r\in\dom(\vp^{M_{\be_{n+1}}})\cap\dom(\vp^{M_\al})$ {and also that $\vp^{M_\al}(r)\geq q_n$}.

We now apply Lemma \ref{lemma:splittingpairswithresidues}, with $\al$ and $\be_{n+1}$ playing the respective roles of ``$\be$" and ``$\al$" in the statement thereof,  to find extensions $(r_L,h_L)$ and $(r_R,h_R)$ of $(r,h)$ which satisfy the conclusion of that lemma. We let $(\bar{r},\bar{h})$ be a condition so that $\#^\rho_{\vp^{M_{\be_{n+1}}}}((r_L,h_L),(r_R,h_R),(\bar{r},\bar{h}))$.

Let $\Si$ be a splitting function for $(r_L,h_L)$ and $(r_R,h_R)$ with respect to the model $M_{\be_{n+1}}$ which satisfies  Lemma \ref{lemma:splittingpairswithresidues}. For $Z\in\lb L,R\rb$, set 
$$
x_Z:=\dom(h_Z)\cap M_{\be_{n+1}};
$$
this is a countable subset of $M_{\be_{n+1}}$ and therefore is a member of $M_{\be_{n+1}}$. {Since $M_{\be_{n+1}}\seq K_{\be_{n+1}}$, as shown earlier,}  $x_Z$ is also an element of $K_{\be_{n+1}}$.

We are now in a position to reflect into the model $K_{\be_{n+1}}$. We observe that in $H(\theta)$ the following statement is true in the following parameters $\beta_n,\Sigma,\bar{r},\bar{h}, \R_\rho, (p_n,f_n), \alpha$, $B, \dot{C}, x_L$, $x_R$, {$\vec{M}$, and $\la\la p^*(M_\ga),\vp^{M_\ga}\ra:\ga\in B\cap (\al+1)\ra$}, all of which are in
$ K_{\be_{n+1}}$: there exists a condition $(r^*,h^*)$ in $\R_\rho$ and a  pair $(r^*_Z,h^*_Z)_{Z\in\lb L,R\rb}$ of conditions above $(r^*,h^*)$ in $\R_\rho$ as well as ordinals $\mu^*,\eta$, so that
\begin{enumerate}[label=(\roman*)]
\item $(r^*,h^*)\geq_{\R_\rho} (p_n,f_n)$;
\item $\eta\in B$;
\item $\#^\rho_{\vp^{M_\eta}} ((r^*_L,h^*_L),(r^*_R,h^*_R),(\bar{r},\bar{h}))$;
\item for each $Z\in\lb L,R\rb$, $(r^*_Z,h^*_Z)$ and $(r^*,h^*)$ are in $E^*(\vp^{M_\eta},\vp^{M_\al})$ (see Proposition \ref{prop:twomasterconditions});
\item $(r^*,h^*)\Vdash_{\R_\rho}\mu^*\in\dot{C}\bsl(\be_n+1)$;
\item $\dom(h^*_Z)\cap M_\eta=x_Z$;
\item $(r^*_L,h^*_L)$ and $(r^*_R,h^*_R)$ are an $(M_\eta,\rho)$-splitting pair, and $\Si$ is a splitting function for $(r^*_L,h^*_L)$ and $(r^*_R,h^*_R)$ with respect to the model $M_\eta$.
\end{enumerate}
This statement is true in $H(\theta)$ as witnessed by the conditions $(r_Z,h_Z)_{Z\in\lb L,R\rb}$ and $(r,h)$, the ordinal $\mu$ playing the role of $\mu^*$, and the ordinal $\be_{n+1}$ playing the role of $\eta$. Since the parameters of this statement are in $K_{\be_{n+1}}$,
we may therefore find, in $K_{\be_{n+1}}$, conditions $(r^*_Z,h^*_Z)_{Z\in\lb L,R\rb}$ extending some $(r^*,h^*)\geq (p_n,f_n)$, an ordinal $\mu^*$, and an ordinal $\eta\in B$ so that (i)-(vii) above are satisfied of these objects.

We now define $(p_{n+1},f_{n+1}):=(r^*_L,h^*_L)$. We need to extend the condition $(\vp^{M_{\al}}(r_R),h_R\res M_\al)$ a bit more before defining $(q_{n+1},g_{n+1})$. The following claim will help us do this:

\begin{subclaim}\label{sc:meow} $\vp^{M_\al}(p_{n+1})$ and $\vp^{M_\al}(r_R)$ are compatible in $\ps^*\cap M_\al$.
\end{subclaim}

\noi\emph{Proof.}
Both the condition $p_{n+1}$ and the function $\vp^{M_\al}$ are members of  $K_{\be_{n+1}}$. Therefore $\vp^{M_\al}(p_{n+1})\in K_{\be_{n+1}}\cap M_\al\cap\ps^*$. Recall that $\bb{K}$ contained the fixed well-order $\lhd$ of $H(\ka^+)$ and that all suitable models are elementary in $H(\ka^+)$ with respect to $\lhd$. Thus if we let $e^{\ps^*}$ denote the $\lhd$-least bijection from $\ka$ onto $\ps^*$, then we have that $e^{\ps^*}$ is in $M_\al$ and in $K_{\be_{n+1}}$. Since $M_\al$ is elementary and contains $e^{\ps^*}$, we see that $\vp^{M_\al}(p_{n+1})=e^{\ps^*}(\zeta)$ for some $\zeta<\al$. But then by the elementarity of $K_{\be_{n+1}}$, we see that $\zeta\in K_{\be_{n+1}}\cap\al=\be_{n+1}$. Therefore $\vp^{M_\al}(p_{n+1})=e^{\ps^*}(\zeta)\in M_{\be_{n+1}}$. Furthermore, we know that
$$
\bar{r}=^*\vp^{M_{\be_{n+1}}}(r_R)=^*\vp^{M_{\be_{n+1}}}(\vp^{M_\al}(r_R))
$$
where the first equality holds by definition of $\bar{r}$ and the second because $r_R$ satisfies Lemma \ref{lemma:splittingpairswithresidues}. Applying (iii) and (iv) above we also have that,
$$
\bar{r}=^*\vp^{M_\eta}(p_{n+1})=^*\vp^{M_\eta}(\vp^{M_\al}(p_{n+1})).
$$
Additionally, since $\vp^{M_\eta}$ is an exact, strong residue function and $\vp^{M_\al}(p_{n+1})\in\dom(\vp^{M_\eta})$, we know that 
$$
\vp^{M_\al}(p_{n+1})\geq \vp^{M_\eta}(\vp^{M_\al}(p_{n+1}))=^*\bar{r}.
$$
Therefore, as $\vp^{M_\al}(p_{n+1})\in M_{\be_{n+1}}$ extends $\bar{r}$, which is a residue of $\vp^{M_\al}(r_R)$ to $M_{\be_{n+1}}$, we conclude that $\vp^{M_\al}(p_{n+1})$ is compatible with $\vp^{M_\al}(r_R)$ in $\ps^*\cap M_\al$.
\qed(Subclaim \ref{sc:meow})

\vspace{.1in}

Using Subclaim \ref{sc:meow}, we may fix some condition $q_{n+1}$ in $\ps^*\cap M_\al$ which is above both $\vp^{M_\al}(p_{n+1})$ and $\vp^{M_\al}(r_R)$. We finally set $g_{n+1}:=h_R\res M_\al$, noting that $(q_{n+1},g_{n+1})\in M_\al$.

We next verify that items (1)-(5) of Claim \ref{claim:sequences} hold for $n+1$. We have that $(p_{n+1},f_{n+1})\geq (p_n,f_n)$ by (i) of the reflection, more precisely, since 
$$
(p_{n+1},f_{n+1})=(r^*_L,h^*_L)\geq (r^*,h^*)\geq (p_n,f_n).
$$
Additionally, because  
$$
q_{n+1}\geq\vp^{M_\al}(r_R)\geq\vp^{M_\al}(r)\geq q_n,
$$
we have that $q_{n+1}\geq q_n$. Moreover, $g_{n+1}$ extends $g_n$ as a function: {$g_{n+1}=h_R\res M_\al$, $g_n\in M_\al$, and $(r_R,h_R)\geq (r,h)\geq (q_n,g_n)$.} Thus $(q_{n+1},g_{n+1})$ extends $(q_n,g_n)$. (1) of Claim \ref{claim:sequences} holds because we found the witnesses in the model $K_{\be_{n+1}}$. For (2), $\mu^*\in K_{\be_{n+1}}\cap\al=\be_{n+1}\seq\nu$, and since $\mu^*>\be_n$, we have that $(p_{n+1},f_{n+1})\Vdash\mu^*\in\dot{C}\cap(\be_n,\nu)$. For (3), we have $(p_{n+1},f_{n+1})\in D(\vp^{M_\al},\rho)$ by (iii) and the definition of $\#^\rho_{\vp^{M_\eta}}$ (see Definition \ref{def:starhashtag}). For (4), we have that $q_{n+1}\geq\vp^{M_\al}(p_{n+1})$ by choice of $q_{n+1}$.

It remains therefore to check that item (5) of Claim \ref{claim:sequences} holds. Since $q_{n+1}\geq_{\ps^*\cap M_\al}\vp^{M_\al}(p_{n+1})$, we know that $p_{n+1}$ and $q_{n+1}$ are compatible in $\ps^*$; let $p^*\in\ps^*$ be any condition extending both. We claim that 
$$
p^* \Vdash_{\ps^*}\check{f}_{n+1}\cup \check{g}_{n+1}\in\dot{\S}_\rho.
$$

Suppose by induction on $\de<\rho$ that $\la\de,\nu\ra\in\dom(f_{n+1})\cap\dom(g_{n+1})$ and that {$p^*$} forces that the union of $\check{f}_{n+1}\res\de$ and $\check{g}_{n+1}\res\de$ is a condition in $\dot{\S}_\de$. Again using the fixed well-order $\lhd$ of $H(\ka^+)$, we may let $\psi$ be the $\lhd$-least bijection from $\ka$ onto $\rho$, so that $\psi$ is a member of $K_{\be_{n+1}}$ as well as every model on the $\R_\rho$-suitable sequence $\vec{M}$. Since $\la\de,\nu\ra\in\dom(g_{n+1})$ and $g_{n+1}\in M_\al$, $\de\in M_\al\cap\rho=\psi[\al]$. Furthermore, since $\la\de,\nu\ra\in\dom(f_{n+1})$ and $f_{n+1}=h^*_L\in K_{\be_{n+1}}$, we have that $\de\in K_{\be_{n+1}}$. Thus 
$$
\de\in\psi[\al]\cap K_{\be_{n+1}}=\psi[K_{\be_{n+1}}\cap\al]=\psi[\be_{n+1}]\seq M_{\be_{n+1}}.
$$
Therefore (recalling that $g_{n+1}=h_R\res M_\al$), 
$$
\la\de,\nu\ra\in\dom(h_R)\cap M_{\be_{n+1}}= x_R=\dom(h^*_R)\cap M_\eta.
$$

Continuing, fix a pair 
$$
\la\theta,\tau\ra\in f_{n+1}(\de,\nu)\times g_{n+1}(\de,\nu)
$$ 
with $\theta\neq\tau$. We need to show that $\theta$ and $\tau$ are forced to be incompatible nodes in the tree $\dot{T}_\de$ by the condition $\big(p^*,(f_{n+1}\cup g_{n+1})\res\de\big)$. Recall, going forward, that $(\bar{r},\bar{h})$ equals both $(r_R,h_R)\res M_{\be_{n+1}}$ and $(p_{n+1},f_{n+1})\res M_\eta$; in particular, $f_{n+1}$ and $h_R\res M_\al=g_{n+1}$ both extend $\bar{h}$. Continuing, if $\tau$ is below level $\be_{n+1}$, then $\tau\in\bar{h}(\de,\nu)\seq f_{n+1}(\de,\nu)$ and we are done. Furthermore, if $\theta$ is below level $\eta$, then $\theta\in\bar{h}(\de,\nu)\seq g_{n+1}(\de,\nu)$ and we are done in this case too. Thus we assume that $\theta$ is at or above level $\eta$ and that $\tau$ is at or above level $\be_{n+1}$. With respect to the fixed enumerations, let $k$ and $m$ be chosen so that $\theta$ is the $k$th element of $f_{n+1}(\de,\nu)\bsl(\eta\times\om_1)$ and $\tau$ is the $m$th element of $h_R(\de,\nu)\bsl(\be_{n+1}\times\om_1)$. Then because the function $\Si$ is the same for both pairs of splitting conditions, we know that
$$
(p_{n+1},f_{n+1}\res\de)\Vdash_{\R_\de}\Si(\de,\nu,k,m)(L)<_{\dot{T}_\de}\theta
$$
and that 
$$
(r_R,h_R\res\de)\Vdash_{\R_\de}\Si(\de,\nu,k,m)(R)<_{\dot{T}_\de}\tau.
$$
However, $\tau\in g_{n+1}(\de,\nu)=(h_R\res M_\al)(\de,\nu)$, and therefore $\tau$ is below level $\al$ of the tree $\dot{T}_\de$. Therefore by Lemma \ref{lemma:splittingpairswithresidues}, we have that
$$
\Big(\vp^{M_\al}(r_R),(h_R\res M_\al)\res\de\Big)\Vdash_{\R_\de}\Si(\de,\nu,k,m)(R)<_{\dot{T}_\de}\tau.
$$
Since $q_{n+1}\geq\vp^{M_\al}(r_R)$ and $g_{n+1}=h_R\res M_\al$, we conclude that
$$
(q_{n+1},g_{n+1}\res\de)\Vdash_{\R_\de}\Si(\de,\nu,k,m)(R)<_{\dot{T}_\de}\tau.
$$
Finally, since $\big(p^*,(f_{n+1}\cup g_{n+1})\res\de\big)$ is above both $(p_{n+1},f_{n+1}\res\de)$ and $(q_{n+1},g_{n+1}\res\de)$, it follows that
$$
\big(p^*,(f_{n+1}\cup g_{n+1})\res\de\big)\Vdash\Si(\de,\nu,k,m)(R)<_{\dot{T}_\de}\tau\we\Si(\de,\nu,k,m)(L)<_{\dot{T}_\de}\theta.
$$
Since the distinct nodes $\Si(\de,\nu,k,m)(L)$ and $\Si(\de,\nu,k,m)(R)$ are on the same level, $\big(p^*,(f_{n+1}\cup g_{n+1})\res\de\big)$ therefore forces that $\theta$ and $\tau$ are incompatible nodes in the tree $\dot{T}_\de$. 

This completes the proof that $f_{n+1}$ and $g_{n+1}$ are strongly compatible over $p_{n+1}$ and $q_{n+1}$. Therefore the proof of Claim \ref{claim:sequences} is now complete.
\qed(Claim \ref{claim:sequences})

\vspace{.1in}

As remarked earlier, this completes the proof of Proposition \ref{prop:statsetpreservation}.
\end{proof}

\begin{remark}\label{remark:LSWC} {As we've noted before, if $\ps^*$ is just equal to the collapse poset $\ps$,  then the results from Section 3 which are needed for this section hold only assuming that $\ka$ is weakly compact (since then $\cal{F}=\cal{F}_{WC}$; see Definition \ref{def:fksp}). We then see that if $\ps^*$ is just $\ps$, then the arguments in this section can also be carried out only using a weakly compact.}
\end{remark}

As a corollary of Proposition \ref{prop:statsetpreservation}, we can now prove Theorem \ref{theorem:OriginalLS}.
\begin{proof}
Recall that the Laver-Shelah model $V[G*F]$ is obtained by starting from a ground model $V$ with a weakly compact cardinal $\ka$, and forcing with the Levy collase $\ps$ followed by a countable support iteration $\S = \la \S_\tau,\S(\tau)\mid \tau < \ka^+\ra$ of specializing posets $\S(\tau) = \S(\name{T_\tau})$ of Aronszajn trees on $\ka$, chosen by a bookkeeping function. Since $\S$ satisfies the $\ka$-c.c, every sequence of stationary sets $\la S_\al \mid \al < \ka\ra$ as in the statement of Theorem \ref{theorem:OriginalLS}, belongs to an intermediate extension $V[G*F_\tau]$, where  $F_\tau := F \cap \S_\tau$, for some $\tau<\ka^+$. Now work in $V$, and take a $(\po*\dot\S_\tau)$-name $\la \name{S}_\al \mid \al < \ka\ra$ for the sequence of stationary sets. Let $\vec{M}$ be suitable with respect to these parameters. By the weak compactness of $\ka$, let $\be\in\dom(\vec{M})$ be such that $\la \name{S}_\al \cap V_\beta \mid \al < \be\ra$ are names for stationary subsets of $\beta$ in the restricted poset $(\po * \dot\S_\tau)\cap M_\beta$. {Recalling Remark \ref{remark:LSWC}, we see that  Proposition \ref{prop:statsetpreservation} can be applied to $\ps$, and in this case, $\cal{F}=\cal{F}_{WC}$.} We conclude that each $S_\al \cap \beta$ remains stationary in the full $\po * \dot\S_\tau$ generic extension $V[G*F_\tau]$ and hence in $V[G\ast F]$.

To see that $\mathsf{CSR}(\om_2)$ fails {in the Laver-Shelah model}, observe that in the ground model, there exist stationary sets $S\seq\ka\cap\cof(\om)$ and $T\seq\ka\cap\cof(\om_1)$ so that $S$ does not reflect at any point in $T$ (see Proposition 1.1 of \cite{JechShelah}). The stationarity of $S$ and $T$ are preserved by the $\ka$-c.c. forcing of Laver and Shelah, and since $\om_1$ is also preserved, we have that $S$ and $T$ witness the failure of $\mathsf{CSR}(\om_2)$ in the final model.
\end{proof}

\section{{$\cal{F}$}-completely proper posets}\label{sec:FWCcompletelyproper}

In this section, we will specify what $\ps$-names $\dot{\C}$ for posets are such that $\ps\ast\dot{\C}$ is ${\cal{F}}$-strongly proper, and we will draw some conclusions from this. {Since $\ps\ast\dot{\C}$ will be playing the role of $\ps^*$ in Definition \ref{def:fksp}, and since $\ps\ast\dot{\C}$ is not merely the collapse, we are in the case when $\ka$ is ineffable and $\cal{F}=\cal{F}_{in}$. However, we only use the ineffability of $\ka$ in the following section when applying Proposition \ref{prop:InductiveIandII} through its use in Proposition \ref{prop:statsetpreservation}.} Recall Definition \ref{def:fksp} for the definition of ${\cal{F}}$, and also recall that $\ps$ denotes the Levy collapse poset $\col(\om_1,<\ka)$, where $\ka$ is either ineffable or weakly compact. Two main ideas come into play in this section. The first is an axiomatization of various properties of the iterated club adding $\dot{\C}_{\text{Magidor}}$ from \cite{Magidor}, which will allow us to place upper bounds on various ``local" filters added by the Levy collapse. We then couple this axiomatization with a generalization of a result of Abraham's (\cite{Abraham}) that, in current language, if $\dot{\Q}$ is an $\Add(\om,\om_1)$-name for an $\om_1$-closed poset, then $\Add(\om,\om_1)\ast\dot{\Q}$ is strongly proper; see \cite{GiltonNeeman} for a proof of this fact as stated here. The strong properness results from using  so-called ``guiding reals."

We recall that in \cite{Magidor}, to show that an iteration $\dot{\C}_{\text{Magidor}}$ of length $<\ka^+$ adding the desired clubs is $\ka$-distributive, Magidor argued, in part, as follows: let $j:M\lra N$ be a weakly compact embedding, where $M$ has the relevant parameters. Let $G^*$ be $N$-generic over $j(\ps)$ and $G:=G^*\cap\ps$, so that in $N[G^*]$, we may construct an $M[G]$-generic filter $H$ for $\C_{\text{Magidor}}$. Moreover, $j[H]$ has a least upper bound in $j(\C_{\text{Magidor}})$, namely, the function obtained by placing $\ka$ on top of each coordinate in the domain of $j[H]$; by the closure of the quotient (which implies the preservation of the stationary sets appearing along the way in the definition of $\C_{\text{Magidor}}$), this is indeed a condition.

The property of $\dot{\C}_{\text{Magidor}}$ which we will axiomatize is a reflection of the above to an {$\cal{F}$-positive} set of $\al<\ka$. Roughly, we want to say that for many $\al$, if you ``cut off" $\ps\ast\dot{\C}$ at $\al$, then many generics added by the tail of the collapse for ``$\dot{\C}$ cut off at $\al$'' have upper bounds in the full poset $\dot{\C}$. More precisely, given a $\ps$-name $\dot{\C}$ in $H(\ka^+)$ for a poset which is $\om_1$-closed with sups and given a $\dot{\C}$-suitable model (see Definition \ref{def:suitable}) $M$, say with $M\cap\ka=\al<\ka$, we consider the poset $\pi_M(\dot{\C})$, where $\pi_M$ denotes the transitive collapse of $M$ to $\bar{M}$. An easy absoluteness argument shows that $\pi_M(\dot{\C})$ is a name in $\pi_M(\ps)=\ps\res\al$. Appealing to the closure of $\bar{M}$ under $<\al$-sequences, and hence $\om$-sequences, we see that $\pi_M(\dot{\C})$ is forced by $\ps\res\al$ to be $\om_1$-closed with sups. The desired condition on $\ps$-names $\dot{\C}$ can now be stated a bit more precisely: we will demand that after forcing with $\ps$, say to add the generic $G$, for many $\al$ as above and many $V[G_{\ps\res\al}]$-generics $H$ for $\pi_M(\dot{\C})[G_{\ps\res\al}]$ in $V[G]$, $\pi^{-1}_{M[G]}[H]$ has an upper bound in $\dot{\C}[G]$. Note that we are implicitly appealing to the properness of $\ps$ with respect to $M$ to see that $\pi_M:M\lra\bar{M}$ lifts to $\pi_{M[G]}:M[G]\lra\bar{M}[G_{\ps\res\al}]$; we discuss this more later.

The first step to making this work is to isolate exactly which filters we will use; for reasons related to building strong, exact residue functions later, we will not consider all filters added by the tail of the collapse for $\pi_M(\dot{\C})$. The definition is meant to capture the behavior of filters generated by using the generic surjections to guide choices of conditions, similar to how Abraham used guiding reals in \cite{Abraham}.

\subsection{Residue Functions from Local Filters}

\begin{definition}\label{def:filtersfromcollapse} Let $\al<\ka$ be inaccessible, and let $\dot{\Q}$ be a $(\ps\res\al)$-name for a poset of size $\al$ which is $\om_1$-closed with sups. Since $\po\res\alpha$ is $\alpha$-c.c there exists a list $\la\dot{\ga}_i:i<\al\ra$ of $(\po\res \alpha)$-names, which is forced to enumerate all conditions in $\dot{\Q}$. We say that a sequence $\dot{s}=\la\dot{d}_\nu:\nu<\om_1\ra$ of $\ps$-names (not $(\ps\res\al)$-names)  for conditions in $\dot{\Q}$ is \textbf{guided by the collapse at $\al$ for $\dot{\Q}$} if the following conditions are satisfied:
\begin{enumerate}
\item $\Vdash_\ps\la\dot{d}_\nu:\nu<\om_1\ra$ is $\leq_{\dot{\Q}}$-increasing, $\dot{d}_0$ is the weakest  condition in $\dot{\Q}$, and if $\nu$ is limit, then $\dot{d}_\nu$ is a sup of $\la\dot{d}_\mu:\mu<\nu\ra$;
\item if $p\in\ps$ and $\dom(p(\al))$ is an ordinal $\nu<\om_1$, then there exists $p'\geq p$ with $p'\res[\al,\ka)=p\res[\al,\ka)$ and a sequence $\la\be(\mu):\mu\leq\nu\ra$ of ordinals in $V$ so that $p'\Vdash\dot{d}_\mu=^*\dot{\ga}_{\be(\mu)}$ for all $\mu\leq\nu$. In this case we will say that $p'$ \textbf{determines an initial segment of $\dot{s}$};
\item if $p'$ as in (2) determines an initial segment of $\dot{s}$ and if $\dot{\ga}$ is a $(\ps\res\al)$-name for a $\dot{\Q}$-extension of $\dot{\ga}_{\be(\nu)}$, then there exists $p^*\geq p'$ so that $p^*\Vdash\dot{d}_{\nu+1}\geq\dot{\ga}$.
\end{enumerate}
\end{definition}

\begin{lemma}\label{lemma:itgeneratesthegoods} Suppose that $\dot{s}:=\la\dot{d}_\nu:\nu<\om_1\ra$ is guided by the collapse at $\al$. Let $H(\dot{s})$ be the $\ps$-name for the filter on $\dot{\Q}$ generated by $\dot{s}$. Then $\ps$ forces that $H(\dot{s})$ is $V[\dot{G}\res\al]$-generic over $\dot{\Q}$.
\end{lemma}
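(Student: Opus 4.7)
The plan is to fix a $V$-generic filter $G \seq \ps$ and let $\bar{G} := G\res\al$, $Q := \dot{\Q}[\bar{G}]$, $s := \dot{s}[G] = \la d_\nu : \nu < \om_1\ra$, and $H := H(\dot{s})[G]$. Clause (1) of Definition \ref{def:filtersfromcollapse} forces $s$ to be $\leq_Q$-increasing with $d_0 = 0_Q$, so $H = \lb q \in Q : q \leq_Q d_\nu \text{ for some } \nu < \om_1\rb$ is a filter in $V[G]$. To prove $V[\bar{G}]$-genericity, I would reduce to showing that $H$ meets every dense open $D \in V[\bar{G}]$, which since $D$ is upward closed is equivalent to the existence of some $\nu < \om_1$ with $d_\nu \in D$. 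Working with names, this amounts to the density statement: for every $p \in \ps$ and every $(\ps\res\al)$-name $\dot{D}$ forced to be a dense open subset of $\dot{\Q}$, there exist $p^* \geq_\ps p$ and $\nu < \om_1$ so that $p^* \Vdash_\ps \dot{d}_\nu \in \dot{D}$.

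Now I would fix such $p$ and $\dot{D}$ and argue in three steps. First, by extending in the $\al$-coordinate if necessary, I may assume that $\dom(p(\al))$ equals some countable ordinal $\nu$. Clause (2) then supplies an extension $p_0 \geq_\ps p$ with $p_0\res[\al,\ka) = p\res[\al,\ka)$ and a sequence $\la \be(\mu) : \mu \leq \nu\ra \in V$ satisfying $p_0 \Vdash \dot{d}_\mu =^* \dot{\ga}_{\be(\mu)}$ for all $\mu \leq \nu$; in particular, $p_0$ differs from $p$ only below $\al$, and $p_0(\al) = p(\al)$ still has domain $\nu$. Second, using the factorization $\ps \cong (\ps\res\al) \times (\ps\res[\al,\ka))$, I work below $p_0\res\al$ in $\ps\res\al$: since $\dot{D}$ is forced to be dense in $\dot{\Q}$ and $\dot{\ga}_{\be(\nu)}$ is forced to be a condition in $\dot{\Q}$, density gives $p_1 \geq_{\ps\res\al} p_0\res\al$ and a $(\ps\res\al)$-name $\dot{\ga}$ with $p_1 \Vdash_{\ps\res\al} \dot{\ga} \geq_{\dot{\Q}} \dot{\ga}_{\be(\nu)} \we \dot{\ga} \in \dot{D}$. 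Setting $p_2 := p_1 \cup p_0\res[\al,\ka)$, I obtain a condition in $\ps$ extending $p_0$ whose $\al$-coordinate is unchanged, so $p_2$ still determines the initial segment of $\dot{s}$ given by $\la\be(\mu):\mu\leq\nu\ra$, and moreover $p_2$ forces $\dot{\ga}$ to be a $\dot{\Q}$-extension of $\dot{\ga}_{\be(\nu)}$ lying in $\dot{D}$.

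Finally, applying clause (3) of Definition \ref{def:filtersfromcollapse} to $p_2$ and $\dot{\ga}$ yields $p^* \geq_\ps p_2$ with $p^* \Vdash_\ps \dot{d}_{\nu+1} \geq_{\dot{\Q}} \dot{\ga}$. Since $\dot{D}$ is upward closed and $p^* \Vdash \dot{\ga} \in \dot{D}$, we get $p^* \Vdash_\ps \dot{d}_{\nu+1} \in \dot{D}$, completing the density argument. The only subtle point is the bookkeeping involved in factoring $\ps$ across $\al$ so that the extension chosen in $\ps\res\al$ to meet $\dot{D}$ does not disturb the initial segment determination guaranteed by clause (2); but this is precisely what clauses (2) and (3) of Definition \ref{def:filtersfromcollapse} are tailored to handle, and no further ideas beyond a direct unpacking of the definition are needed.
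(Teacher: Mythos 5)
Your proof is correct and takes essentially the same route as the paper's: reduce to a density statement over $\ps$, use clause (2) of Definition \ref{def:filtersfromcollapse} to determine an initial segment of $\dot{s}$, produce a $(\ps\res\al)$-name for a member of the dense set extending $\dot{\ga}_{\be(\nu)}$, and apply clause (3) to force $\dot{d}_{\nu+1}$ above it. The only cosmetic difference is that you obtain the name $\dot{\ga}$ by extending on the $\ps\res\al$ side and then conclude $\dot{d}_{\nu+1}\in\dot{D}$ via openness, whereas the paper chooses $\dot{\ga}$ outright forced into the dense set (via the maximum principle) and concludes $\dot{\ga}\in\dot{D}\cap H(\dot{s})$; nothing essential changes.
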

\begin{proof} Fix a condition $p\in\ps$ and a $\ps$-name $\dot{D}$ for a dense subset of $\dot{\Q}$ which is a member of $V[\dot{G}\res\al]$. We find an extension of $p$ which forces that $\dot{D}\cap H(\dot{s})\neq\es$.  By extending $p$ and applying (2) of Definition \ref{def:filtersfromcollapse} if necessary, we may assume the following:
\begin{enumerate}
\item  there is a $(\ps\res\al)$-name $\dot{D}_0$ for a dense subset of $\dot{\Q}$ so that $p\Vdash\dot{D}=\dot{D}_0$;
\item $\dom(p(\al))$ is an ordinal $\nu$, and there is a sequence $\la\be(\mu):\mu\leq\nu\ra$ of ordinals in $V$ so that $p\Vdash\dot{d}_\mu=^*\dot{\ga}_{\be(\mu)}$ for all $\mu\leq\nu$. 
\end{enumerate}
Let $\dot{\ga}$ be a $(\ps\res\al)$-name for a condition in $\dot{D}_0$ forced to extend $\dot{\ga}_{\be(\nu)}$. By item (3) of Definition \ref{def:filtersfromcollapse}, we may find an extension $p^*$ of $p$ so that $p^*\Vdash\dot{d}_{\nu+1}\geq\dot{\ga}$. Then $p^*\Vdash\dot{\ga}\in\dot{D}\cap H(\dot{s})$, finishing the proof.
\end{proof}

\begin{definition}\label{def:genbycoll} Let $\dot{H}$ be a $\ps$-name for a filter on $\dot{\Q}$. We say that $\dot{H}$ is \textbf{guided by the collapse at $\al$} if there is a sequence $\dot{s}=\la\dot{d}_\nu:\nu<\om_1\ra$ of conditions guided by the collapse at $\al$ so that $\dot{H}=H(\dot{s})$.
\end{definition}

Suppose that $M$ is a suitable model. Let $\al:=M\cap\ka$, and let $\pi_M: M \to \bar{M}$ be the transitive collapse map of $M$. 
Let $G \subseteq \ps$ be generic over $V$, and set $G_\alpha = G \cap (\po\res\alpha)$.
We have that 
$\po\res\alpha = \pi_M(\ps) \in \bar{M}$, 
and $G_\alpha \subset \pi_M(\ps)$ is generic for $\bar{M}$. 
Moreover, setting $M[G] = \{ \name{x}[G] \mid \name{x} \in M \text{ is a }{\ps}\text{-name}\}$, we have that 
$\bar{M}[G_\alpha]$ is the transitive collapse of $M[G]$, with the transitive collapse map $\pi_{M[G]}$ being the natural extension of $\pi_M$, given by 
$\pi_{M[G]}(\name{x}[G]) = \pi_M(\name{x})[G_\alpha]$.

\begin{lemma}\label{lemma:getCGConditions}
Suppose that $M$ is a $(\ps\ast\dot{\C})$-suitable model, where $\dot{\C}$ is a $\ps$-name for a poset on $\ka$ which is $\om_1$-closed with sups. Let $\al:=M\cap\ka$ and $\pi_{M}$ be the transitive collapse map of $M$.  Suppose that $\dot{H}$ is a $\ps$-name
for a subset of $\pi_M(\dot{\C})$ which is guided by the collapse at $\al$ for $\pi_M(\dot{\C})$, and further suppose that there is a $\ps$-name $\dot{c}$ for a condition in $\dot{\C}$ which is forced to be an upper bound for $\pi^{-1}_{M[\name{G}]}[\dot{H}]$. Then $\ps$ forces that $\dot{c}$ is an $(M[\dot{G}],\dot{\C})$-completely-generic condition.
\end{lemma}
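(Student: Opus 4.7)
The plan is to unpack the definition of complete genericity and then pull back through the collapse map. Recall that we need to show that for every dense $D \subseteq \dot{\C}[\dot{G}]$ with $D \in M[\dot{G}]$, there is some $\bar{u} \in D \cap M[\dot{G}]$ with $\dot{c}[\dot{G}] \geq \bar{u}$.

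First, I would apply Lemma \ref{lemma:itgeneratesthegoods} with $\pi_M(\dot{\C})$ playing the role of $\dot{\Q}$ and $\al = M\cap\ka$: since $\dot{H}$ is guided by the collapse at $\al$ for $\pi_M(\dot{\C})$, $\ps$ forces that $\dot{H}$ is $V[\dot{G}\res\al]$-generic over $\pi_M(\dot{\C})[\dot{G}\res\al]$. Second, I would use the standard fact that because $M$ is $(\ps\ast\dot{\C})$-suitable and $\ps\cap M = \pi_M(\ps) = \ps\res\al$ (by suitability and Example \ref{example:collapsetrivialresfunctions}), the transitive collapse $\pi_M$ lifts in the natural way to an $\in$-isomorphism $\pi_{M[\dot{G}]} : M[\dot{G}] \to \bar{M}[\dot{G}\res\al]$, which sends $\dot{\C}[\dot{G}]\cap M[\dot{G}]$ onto $\pi_M(\dot{\C})[\dot{G}\res\al]$ and preserves the respective poset orderings.

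Now fix a dense $D \subseteq \dot{\C}[\dot{G}]$ with $D \in M[\dot{G}]$. Its image $\pi_{M[\dot{G}]}[D]$ is then a dense subset of $\pi_M(\dot{\C})[\dot{G}\res\al]$ belonging to $\bar{M}[\dot{G}\res\al]$, and in particular to $V[\dot{G}\res\al]$. By the genericity obtained in step one, there exists $\bar{d} \in \dot{H}[\dot{G}] \cap \pi_{M[\dot{G}]}[D]$. Pulling back, $\bar{u} := \pi^{-1}_{M[\dot{G}]}(\bar{d})$ lies in $\pi^{-1}_{M[\dot{G}]}[\dot{H}[\dot{G}]] \cap D \cap M[\dot{G}]$. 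Finally, by hypothesis $\dot{c}[\dot{G}]$ is an upper bound for $\pi^{-1}_{M[\dot{G}]}[\dot{H}[\dot{G}]]$, so $\dot{c}[\dot{G}] \geq \bar{u}$, which gives the desired witness and completes the verification that $\dot{c}$ is $(M[\dot{G}],\dot{\C})$-completely generic.

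The only step that requires care is the lifting of $\pi_M$ to $\pi_{M[\dot{G}]}$, but this is routine given that $\ps\cap M = \ps\res\al$ and $\ps\res\al$ is a complete subforcing of $\ps$ for $\al = M\cap\ka$ inaccessible; I would record this lifting explicitly (using the definition $\pi_{M[\dot{G}]}(\name{x}[\dot{G}]) := \pi_M(\name{x})[\dot{G}\res\al]$ for $\name{x}\in M$) before the main argument, and otherwise the proof is a direct chase through the definitions.
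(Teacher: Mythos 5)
Your proof is correct and follows essentially the same route as the paper: the paper records the lifting $\pi_{M[G]}(\name{x}[G])=\pi_M(\name{x})[G_\al]$ in the discussion immediately before the lemma, then in the proof pushes a dense set $E\in M[G]$ forward via the elementarity of $\pi_{M[G]}$, applies Lemma \ref{lemma:itgeneratesthegoods} to meet $\pi_{M[G]}(E)$ with $\dot{H}[G]$, and pulls back, exactly as you do. No gaps to report.
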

\begin{proof} 
Fix $G$. To see that $c = {\dot{c}[G]}$ is $(M[G],\C)$-completely generic, fix a dense, open $E\seq\C$ with $E\in M[G]$. We show that $c$ extends some condition in $E$.

By the elementarity of $\pi_{M[G]} : M[G] \to \bar{M}[G_\al]$, we know that $\pi_{M[G]}(E)$ is dense in $\pi_{M[G]}(\C)=\pi_M(\dot{\C})[G_\al]$. Since $H:=\dot{H}[G]$ is a $V[{G_\al}]$-generic filter, by Lemma \ref{lemma:itgeneratesthegoods}, $H\cap\pi_{M[G]}(E)\neq\es$, and thus $\pi^{-1}_{M[G]}[H] \cap E \neq \es$.
\end{proof}

There are two particularly useful properties of this class of names for generic filters. On the one hand, filters in this class will allow us to generate strong, exact residue functions by isolating the information which a given conditions determines about the filters. On the other hand, the class of such filters for one poset, such as a two-step iteration, often projects to the class of such filters for another poset, such as the first step in a two-step iteration. This property will be particularly useful in Section \ref{section:clubscompletelyproper}  when we want to show, by induction, that our club adding poset is well-behaved.

It is straightforward to verify that the notion of
$\ps$-names {of} filters, which are guided by the collapse at a given cardinal, factor well in iterations.
\begin{lemma}\label{lemma:projectcollapsefilter} Suppose that $\al<\ka$ is inaccessible and that $\dot{\Q_0}\ast\dot{\Q}_1$ is a $(\ps\res\al)$-name for a two-step poset of size $\al$ which is $\om_1$-closed with sups. 
Let $\la\dot{d}_\nu:\nu<\om_1\ra$ be a sequence of $\ps$-names which is guided by the collapse at $\al$ for $\dot{\Q}_0\ast\dot{\Q}_1$. Then the sequence $\la\dot{d}_\nu(0):\nu<\om_1\ra$  of $\ps$-names of conditions in $\dot{\Q}_0$ is guided by the collapse at $\al$ for $\dot{\Q}_0$.
\end{lemma}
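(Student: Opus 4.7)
The plan is to verify each clause of Definition \ref{def:filtersfromcollapse} directly for the projected sequence $\la \dot{d}_\nu(0):\nu<\om_1\ra$, by leveraging the corresponding clauses for the original sequence in $\dot{\Q}_0\ast\dot{\Q}_1$. Fix $(\ps\res\al)$-name enumerations $\la\dot{\ga}^0_i : i<\al\ra$ of $\dot{\Q}_0$ and $\la\dot{\ga}^*_i : i<\al\ra$ of $\dot{\Q}_0\ast\dot{\Q}_1$ to be used in the definition for the projected and original sequences, respectively.

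Clause (1) is routine: increasing-ness and the weakest-condition clause transfer immediately under projection to the first coordinate. For sups at a limit $\nu$, if $(d_\nu(0),\dot{d}_\nu(1))$ is a sup of $\la(d_\mu(0),\dot{d}_\mu(1)):\mu<\nu\ra$ in the iteration, then given any $p^*\in\dot{\Q}_0$ extending all $d_\mu(0)$, the forced $\om_1$-closure-with-sups of $\dot{\Q}_1$ furnishes a name $\dot{q}^*$ for a sup of $\la\dot{d}_\mu(1):\mu<\nu\ra$; the condition $(p^*,\dot{q}^*)$ lies above each $(d_\mu(0),\dot{d}_\mu(1))$, hence above the sup, so $p^*\geq d_\nu(0)$ as required.

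For clause (2), given $p$ with $\dom(p(\al))=\nu$, I would first apply clause (2) for the iteration to obtain $p_0\geq p$ with $p_0\res[\al,\ka)=p\res[\al,\ka)$ and $V$-ordinals $\la\be(\mu):\mu\leq\nu\ra$ such that $p_0\Vdash\dot{d}_\mu=^*\dot{\ga}^*_{\be(\mu)}$. Using the $\al$-c.c.\ and $\om_1$-closure of $\ps\res\al$, extend $p_0\res\al$ in countably many steps to some $p_1\res\al$ deciding, for each $\mu\leq\nu$, an ordinal $j(\mu)<\al$ with $p_1\res\al\Vdash(\dot{\ga}^*_{\be(\mu)})(0)=^*\dot{\ga}^0_{j(\mu)}$. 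Setting $p':=(p_1\res\al)\cup(p\res[\al,\ka))$ yields a condition with $p'\res[\al,\ka)=p\res[\al,\ka)$ and $p'\Vdash\dot{d}_\mu(0)=^*\dot{\ga}^0_{j(\mu)}$ for all $\mu\leq\nu$, witnessing clause (2) for the projected sequence.

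For clause (3), suppose $p^{**}$ determines the projected sequence through $\nu$ via ordinals $\la j(\mu):\mu\leq\nu\ra$, and let $\dot{\ga}$ be a $(\ps\res\al)$-name for a $\dot{\Q}_0$-extension of $\dot{\ga}^0_{j(\nu)}$. First apply clause (2) for the iteration to $p^{**}$ to obtain $p'\geq p^{**}$ and $V$-ordinals $\la\be'(\mu):\mu\leq\nu\ra$ with $p'\Vdash\dot{d}_\mu=^*\dot{\ga}^*_{\be'(\mu)}$; compatibility with $p^{**}$ forces $(\dot{\ga}^*_{\be'(\nu)})(0)=^*\dot{\ga}^0_{j(\nu)}$ below $p'\res\al$. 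Define the $(\ps\res\al)$-name $\dot{\ga}':=(\dot{\ga},(\dot{\ga}^*_{\be'(\nu)})(1))$; below $p'\res\al$ this names a $\dot{\Q}_0\ast\dot{\Q}_1$-extension of $\dot{\ga}^*_{\be'(\nu)}$, and outside that region the name may be set trivially. Applying clause (3) for the iteration with $\dot{\ga}'$ then yields $p^*\geq p'$ with $p^*\Vdash\dot{d}_{\nu+1}\geq\dot{\ga}'$, whence $p^*\Vdash\dot{d}_{\nu+1}(0)\geq\dot{\ga}$. The only real bookkeeping challenge throughout is the translation between the enumerations of $\dot{\Q}_0$ and $\dot{\Q}_0\ast\dot{\Q}_1$, which the $\al$-c.c.\ of $\ps\res\al$ handles smoothly.
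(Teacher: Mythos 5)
The paper gives no proof of this lemma at all (it is introduced with ``it is straightforward to verify\dots''), so there is nothing to compare against; your clause-by-clause verification of Definition~\ref{def:filtersfromcollapse} for the projected sequence is exactly the intended routine argument, and it is essentially correct. Two small points should be repaired. First, in the limit case of clause (1) you invoke ``the forced $\om_1$-closure-with-sups of $\dot{\Q}_1$,'' which is not among the hypotheses: only the two-step poset $\dot{\Q}_0\ast\dot{\Q}_1$ is assumed $\om_1$-closed with sups. What you need is weaker and does follow: it suffices that $p^*$ forces the increasing sequence $\la\dot{d}_\mu(1):\mu<\nu\ra$ to have an \emph{upper bound} in $\dot{\Q}_1$ (an upper bound already yields $(p^*,\dot{q}^*)$ above every $(d_\mu(0),\dot{d}_\mu(1))$, hence above the sup $d_\nu$, so $p^*\geq d_\nu(0)$). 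If some $p^{**}\geq p^*$ forced the contrary, then a sup in $\dot{\Q}_0\ast\dot{\Q}_1$ of $\la(p^{**},\dot{d}_{\mu_n}(1)):n<\om\ra$, for a cofinal $\om$-subsequence, would have first coordinate extending $p^{**}$ and second coordinate forced to be such an upper bound, a contradiction; then mix to obtain the name $\dot{q}^*$ forced by $p^*$ to be an upper bound. Second, in clause (3), ``set trivially outside'' should be read as ``set equal to $\dot{\ga}^*_{\be'(\nu)}$ off $p'\res\al$,'' so that $\dot{\ga}'$ is outright forced to be a $(\dot{\Q}_0\ast\dot{\Q}_1)$-extension of $\dot{\ga}^*_{\be'(\nu)}$ and clause (3) of Definition~\ref{def:filtersfromcollapse} literally applies; mixing with the trivial condition of the iteration would not satisfy its hypothesis. (Also, in your clause (2) the $\al$-c.c.\ of $\ps\res\al$ is only needed for the existence of the enumerations; the index translation itself uses just countable closure and a density argument.) With these minor adjustments your proof is complete.
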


The following proposition shows how to generate exact, strong residue functions from the filters discussed above.

\begin{proposition}\label{prop:gettingESRFs}
Suppose that $\dot{\C}$ is a $\ps$-name in $H(\ka^+)$ for a poset of size $\ka$ which is $\om_1$-closed with sups, and set $\ps^*:=\ps\ast\dot{\C}$. Let $M$ be a $(\ps\ast\dot{\C})$-suitable model, say with $\al:=M\cap\ka<\ka$, and let $\pi_M$ denote the transitive collapse of $M$. Also let
 $\dot{s}=\la\dot{d}_\nu:\nu<\om_1\ra$ be a sequence of $\ps$-names guided by the collapse at $\al$ for $\pi_M(\dot{\C})$. Additionally, suppose that there is a $\ps$-name $\dot{d}^*$ for a condition in $\dot{\C}$ which is forced to be an upper bound for the sequence $\pi_{M[\name{G}]}^{-1}[\dot{s}]$.

Define $p^*(M)$ to be the condition 
$$
p^*(M):=(0_{\ps},\dot{d}^*),
$$
and let
$$
D(M):=\lb (p,\dot{d})\geq p^*(M):p\emph{ determines an initial segment of }\dot{s}\rb.
$$
Finally, define $\vp^M$ on $D(M)$ by
$$
\vp^M(p,\dot{d})=(p\res\al,\pi_M^{-1}(\dot{\ga}_\be)),
$$
where $\be<\al$ is the least so that $p\Vdash\dot{d}_{\dom(p(\al))}=^*\dot{\ga}_\be$ ($\be$ exists by definition of ``$p$ determines an initial segment of $\dot{s}$"). Then
\begin{enumerate}
\item[(a)]  $D(M)$ is a dense, countably $=^*$-closed subset of $\ps^*/p^*(M)$;
\item[(b)] $p^*(M)$ is compatible with every condition in $\ps^*\cap M$; and 
\item[(c)] $\vp^M$ is an exact, strong residue function from $D(M)$ to $M\cap \ps^*$.
\end{enumerate}
\end{proposition}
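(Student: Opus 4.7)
The plan is to verify (a), (b), and (c) in sequence, with (b) following most directly from Lemma \ref{lemma:getCGConditions}.

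For (b), note that $\ps \cap M = \ps\res\al$ by the closure of $M$ under ${<}\al$-sequences, and any $\dot{c} \in M$ which is a $\ps$-name for a condition in $\dot{\C}$ collapses via $\pi_M$ to a $(\ps\res\al)$-name for a condition in $\pi_M(\dot{\C})$. Since Lemma \ref{lemma:getCGConditions} gives that $\dot{d}^*$ is $(M[\dot{G}],\dot{\C})$-completely generic, for any $(p,\dot{c}) \in \ps^* \cap M$ and any $V$-generic $G \seq \ps$ containing $p$, the $V[G\res\al]$-generic filter $\dot{H}[G]$ meets the dense subset of $\pi_M(\dot{\C})[G\res\al]$ consisting of extensions of $\pi_M(\dot{c})[G\res\al]$. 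Pulling back via $\pi_{M[G]}^{-1}$ and using that $\dot{d}^*[G]$ is an upper bound of the resulting preimage, one concludes $\dot{d}^*[G] \geq \dot{c}[G]$ in $\dot{\C}[G]$, so $(p,\dot{d}^*)$ is a common extension of $p^*(M)$ and $(p,\dot{c})$.

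For (a), density is immediate from clause (2) of Definition \ref{def:filtersfromcollapse}: given $(p,\dot{d}) \geq p^*(M)$, first extend $p$ at coordinate $\al$ (if necessary) to make $\dom(p(\al))$ an ordinal, then extend below $\al$ to some $p' \geq p$ determining an initial segment of $\dot{s}$. For countable $=^*$-closure, clause (a) of Definition \ref{def:ESRF} is routine since $p =^* p'$ forces the same statements about $\dot{s}$ and preserves $\dom(p(\al))$. Clause (b) at an increasing sequence $\la (p_n,\dot{d}_n) : n < \om\ra$ in $D(M)$ with sup $(p^*,\dot{d}^{**})$, where $\nu_n := \dom(p_n(\al))$ and $\la \be_n(\mu) : \mu \leq \nu_n\ra$ are the witnessing sequences, is straightforward in the successor case (the $\be_n$'s stabilize when $\nu^* := \sup_n \nu_n$ is eventually attained); in the limit case, one exploits that $p^*$ forces $\dot{d}_{\nu^*}$ to be a sup of $\la \dot{\ga}_{\be_n(\mu)} : \mu < \nu^*\ra$ and combines the $\al$-c.c.\ of $\ps\res\al$ with the structure of the enumeration $\la \dot{\ga}_i : i < \al\ra$ to secure a single $\be^* \in V$ witnessing $p^* \Vdash \dot{d}_{\nu^*} =^* \dot{\ga}_{\be^*}$.

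For (c), projection holds because $p \Vdash \dot{d} \geq \dot{d}^* \geq \pi_{M[\dot{G}]}^{-1}(\dot{d}_\nu) =^* \pi_M^{-1}(\dot{\ga}_{\be(\nu)})$; order preservation follows from the increasingness of $\dot{s}$, which forces $\dot{\ga}_{\be(\nu')} \geq \dot{\ga}_{\be(\nu)}$ whenever $\nu' \geq \nu$ (using the minimality clause in the definition of $\vp^M$); strong residue is proved by a two-step argument combining clauses (3) and (2) of Definition \ref{def:filtersfromcollapse}: given $r = (p,\dot{d}) \in D(M)$ with $\vp^M(r) = (p\res\al, \pi_M^{-1}(\dot{\ga}_{\be(\nu)}))$ and $u = (q, \pi_M^{-1}(\dot{\ga}_{\be'})) \geq \vp^M(r)$ in $M \cap \ps^*$, one first pastes $q$ onto $p\res[\al,\ka)$, then applies clause (3) to force $\dot{d}_{\nu+1} \geq \dot{\ga}_{\be'}$, then re-applies clause (2) to determine the initial segment up to $\nu+1$; countable continuity is handled exactly as in the limit case of $=^*$-closure. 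The main obstacle throughout is the limit step in the countable closure and continuity arguments, where the $\al$-c.c.\ of $\ps\res\al$ and the enumeration $\la\dot{\ga}_i\ra$ must combine to pin down a single $\be^* \in V$ at the sup; the successor and strong-residue arguments are essentially bookkeeping once the framework is in place.
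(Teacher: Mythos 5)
Your proof of (b) contains a genuine gap, and it is the step where you claim that the generic filter $\dot{H}[G]$ ``meets the dense subset of $\pi_M(\dot{\C})[G\res\al]$ consisting of extensions of $\pi_M(\dot{c})[G\res\al]$.'' The set of extensions of a fixed condition is not dense (any condition incompatible with $\pi_M(\dot{c})[G\res\al]$ has no extension in it), so neither the genericity of $H(\dot{s})$ from Lemma \ref{lemma:itgeneratesthegoods} nor the complete genericity of $\dot{d}^*$ given by Lemma \ref{lemma:getCGConditions} produces an element of $\dot{H}[G]$ above $\pi_{M[G]}(\dot{c}[G])$. In fact the conclusion you draw --- that $\dot{d}^*[G]\geq\dot{c}[G]$ for every generic $G$, so that $(p,\dot{d}^*)$ itself extends $(p,\dot{c})$ --- is false in general: by elementarity $M$ contains $\ps$-names $\dot{c}_1,\dot{c}_2$ for incompatible conditions of $\dot{\C}$, and no single name can be forced to extend both. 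Statement (b) only asserts compatibility, and its proof must go through the collapse coordinate: given $(u,\dot{c}_0)\in\ps^*\cap M$, note that $\al\notin\dom(u)$, so $u$ vacuously determines the initial segment of $\dot{s}$ up to $0$ (and $\dot{d}_0$ is trivial); then clause (3) of Definition \ref{def:filtersfromcollapse} yields $p\geq u$ forcing $\dot{d}_1\geq\pi_M(\dot{c}_0)$, and $(p,\dot{d}^*)$ is a common extension of $(u,\dot{c}_0)$ and $p^*(M)$, since $\dot{d}^*$ is forced to bound $\pi^{-1}_{M[\dot{G}]}[\dot{s}]$. This steering of the guided sequence through a prescribed condition is exactly the move you make, correctly, in your strong-residue argument for (c); the missing idea in (b) is to apply it there instead of appealing to genericity alone.

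The rest of your proposal follows essentially the paper's route: density and $=^*$-closure of $D(M)$ from clause (2) of Definition \ref{def:filtersfromcollapse}, projection from $p\Vdash\dot{d}\geq\dot{d}^*$, the pasting-plus-clause-(3) argument for the strong residue property of Definition \ref{def:ESRF}, and the case split (limit versus eventually constant $\dom(p_n(\al))$) for countable closure and continuity; at the limit step the paper, like you, needs an index $\be^*<\al$ such that $\dot{\ga}_{\be^*}$ is forced to be a sup of the ground-model sequence of names, obtaining it from the enumeration directly rather than from the $\al$-c.c. One further small caution: a condition $u\in\ps^*\cap M$ above $\vp^M(r)$ has second coordinate an arbitrary $M$-name $\dot{\de}$, not literally some $\pi_M^{-1}(\dot{\ga}_{\be'})$; clause (3) accepts any $(\ps\res\al)$-name for an extension, so it should be applied to $\pi_M(\dot{\de})$, as in the paper.
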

\begin{proof} 
Let $\la\dot{\ga}_i:i<\al\ra$ be a sequence of  $(\ps\res\al)$-names which is forced to enumerate all conditions in $\pi_M(\dot{\C})$, and with $\dot{d}^*$, satisfies Definition \ref{def:filtersfromcollapse}, witnessing that $\dot{s}=\la\dot{d}_\nu:\nu<\om_1\ra$ is guided by the collapse at $\al$ for $\pi_M(\dot{\C})$.

We first prove item (a). Given a condition $(p,\dot{d})$ in $\ps^*/p^*(M)$, by item (2) of Definition \ref{def:filtersfromcollapse}, we may find an extension $p'$ of $p$ so that $p$ determines an initial segment of $\dot{s}$. Then $(p',\dot{d})\geq (p,\dot{d})$ is in $D(M)$, proving density. Similarly, $D(M)$ is $=^*$-closed: if $(p_1,\dot{d}_1)\in D(M)$ and $(p_1,\dot{d}_1)=^*(p_2,\dot{d}_2)$, then $p_2$ determines an initial segment of $\dot{s}$, because $p_2=p_1$ (recall these are collapse conditions) and because $(p_2,\dot{d}_2)\geq(p_1,\dot{d}_1)\geq(0_\ps,\dot{d}^*)$. 

To see that $D(M)$ is closed under sups of increasing $\om$-sequences, suppose that $\la(p_n,\dot{c}_n):n\in\om\ra$ is an increasing sequence of conditions in $D(M)$, and let $(p^*,\dot{c}^*)$ be a sup. Set $\nu_n:=\dom(p_n(\al))$ and $\nu^*:=\dom(p^*(\al))$. If $\nu^*=\nu_m$ for some $m\in\om$, then because $p_m$ determines $\dot{s}$ up to $\nu_m$, we have that $p^*$ determines $\dot{s}$ up to $\nu^*=\nu_m$. Thus $p^*$ determines an initial segment of $\dot{s}$ in this case. So consider the case that $\nu^*>\nu_m$ for all $m$; in particular, $\nu^*$ is a limit ordinal. Since for all $n\in\om$, $p_n$ determines an initial segment of $\dot{s}$ and $p^*\geq p_n$, we may find a sequence $\la\be(\mu):\mu<\nu^*\ra$ in $V$ so that $p^*\Vdash\dot{d}_\mu=^*\dot{\ga}_{\be(\mu)}$ for all $\mu<\nu^*$. Now let $\be(\nu^*)$ be chosen so that $\dot{\ga}_{\be(\nu^*)}$ is forced to be a sup of $\la\dot{\ga}_{\be(\mu)}:\mu<\nu\ra$, if this sequence is increasing, and equals the trivial condition otherwise. Since $\nu^*$ is a limit, $\Vdash_{\ps}\dot{d}_{\nu^*}$ is a sup of $\la\dot{d}_\mu:\mu<\nu^*\ra$. But $p^*$ forces that $\dot{d}_\mu=^*\dot{\ga}_{\be(\mu)}$ for all $\mu<\nu^*$, and therefore $p^*$ forces that $\dot{d}_{\nu^*}=^*\dot{\ga}_{\be(\nu^*)}$. Thus, in either case, $p^*$ determines an initial segment of $\dot{s}$, which finishes the proof of (a).

Now we verify item (b).  
Fix a condition $(u,\dot{c}_0)$ in $\ps^*\cap M$, and we will show that it is compatible with $p^*(M)$. We observe that, trivially, $u$ determines an initial segment of $\dot{s}$ since $\dom(u(\al))=0$ and $\Vdash_\ps\dot{d}_0$ is the trivial condition in $\pi_M(\dot{\C})$, by (1) of Definition \ref{def:filtersfromcollapse}. By (3) of the same definition, we may find an extension $p\geq u$ s.t. $p\Vdash\dot{d}_1\geq\pi_M(\dot{c}_0)$. Then $(p,\dot{d}^*)$ extends $(u,\dot{c}_0)$ since $p$ forces that $\dot{d}^*$ is an upper bound for the sequence $\pi_M^{-1}[\dot{s}]$ and that $\pi^{-1}_M(\dot{d}_1)\geq\dot{c}_0$.

It therefore remains to verify that $\vp^M$ is an exact, strong residue function. Condition (1) of Definition \ref{def:ESRF} holds since, by (a), $D(M)$ is dense and countably $=^*$-closed in $\ps^*/p^*(M)$. For the projection condition of Definition \ref{def:ESRF}, fix $(p,\dot{c})\in D(M)$, and let $\dot{\ga}$ be the $(\ps\res\al)$-name so that $\vp^M(p,\dot{c})=(p\res\al,\pi_M^{-1}(\dot{\ga}))$. Since $(p,\dot{c})\in D(M)$, $p$ determines an initial segment of $\dot{s}$, and therefore $p\Vdash\dot{d}_{\dom(p(\al))}=^*\dot{\ga}$. Since $p$ also forces that $\dot{c}\geq\dot{d}^*$, $p$ forces that $\dot{c}$ is an upper bound for $\pi_M^{-1}[\dot{s}]$ and therefore that $\dot{c}$ extends $\pi_M^{-1}(\dot{d}_{\dom(p(\al))})=^*\pi_M^{-1}(\dot{\ga})$. Therefore $(p,\dot{c})\geq(p\res\al,\pi_M^{-1}(\dot{\ga}))=\vp^M(p,\dot{c})$.

{It is straightforward to verify that $\vp^M$ is order preserving, i.e., condition (3) of Definition \ref{def:ESRF}.} So we prove that $\vp^M$ has the strong residue property (condition (4) of Definition \ref{def:ESRF}). Thus fix $(p,\dot{c})\in D(M)$, where we let $\nu:=\dom(p(\al))$ and $\dot{\ga}$ so that $\vp^M(p,\dot{c})=(p\res\al,\pi_M^{-1}(\dot{\ga}))$. Fix a condition $(u,\dot{\de})$ in $\ps^*\cap M$ with $(u,\dot{\de})\geq(p\res\al,\pi_M^{-1}(\dot{\ga}))$, and we will verify that $(u,\dot{\de})$ is compatible with $(p,\dot{c})$. Let $p':=u\cup p$, a condition in $\ps$, and observe that $p'$ still determines an initial segment of $\dot{s}$ and $\nu=\dom(p'(\al))$. By item (3) of Definition \ref{def:filtersfromcollapse}, we may find some $p^*\geq p'$ so that $p^*\Vdash\dot{d}_{\nu+1}\geq\pi_M(\dot{\de})$. Then $(p^*,\dot{c})$ extends both $(p,\dot{c})$ and $(u,\dot{\de})$.

We now check that $\vp^M$ is $\om$-continuous, which will finish the proof of (c) and thereby the proof of the proposition. Fix an increasing sequence of conditions $\la(p_n,\dot{c}_n):n\in\om\ra$ in $D(M)$, and let $(p^*,\dot{c}^*)$ be a supremum of this sequence. Then $p^*:=\bigcup_np_n$, and $\dot{c}^*$ is forced by $p^*$ to be a sup of $\la\dot{c}_n:n\in\om\ra$. By item (a) of the proposition, $(p^*,\dot{c}^*)\in D(M)$. We need to show that $\vp^M(p^*,\dot{c}^*)$ is a sup of $\la\vp^M(p_n,\dot{c}_n):n\in\om\ra$. For each $n<\om$, set $\nu_n:=\dom(p_n(\al))$, and also set $\nu^*:=\dom(p^*(\al))$. Additionally, for each $n$, fix the least ordinal $\be(\nu_n)$ with $p_n\Vdash\dot{d}_{\nu_n}=^*\dot{\ga}_{\be(\nu_n)}$, so that $\vp^M(p_n,\dot{c}_n)=(p_n\res\al,\pi_M^{-1}(\dot{\ga}_{\be(\nu_n)}))$. Finally let $\be(\nu^*)$ be least so that $\vp^M(p^*,\dot{c}^*)=(p^*\res\al,\pi_M^{-1}(\dot{\ga}_{\be(\nu^*)}))$. 

We claim that $p^*\Vdash\dot{\ga}_{\be(\nu^*)}$ is a sup of $\la\dot{\ga}_{\be(\nu_n)}:n\in\om\ra$. Note that proving this claim suffices: indeed, then $p^*\res\al$ forces that $\dot{\ga}_{\be(\nu^*)}$ is a sup of $\la\dot{\ga}_{\be(\nu_n)}:n\in\om\ra$, and as a result $\vp^M(p^*,\dot{c}^*)=(p^*\res\al,\pi_M^{-1}(\dot{\ga}_{\be(\nu^*)}))$ is a sup of $\la\vp^M(p_n,\dot{c}_n):n\in\om\ra$.

To prove the claim, we have two cases on $\nu^*$. Since $\nu^*=\sup_m\nu_m$, either $\nu^*>\nu_m$ for all $m$, or $\nu^*=\nu_m$ for almost all $m$. In the first case, $\nu^*$ is a limit, and so $\ps$ forces that $\dot{d}_{\nu^*}$ is a sup of $\la\dot{d}_\nu:\nu<\nu^*\ra$. Since $p_n\Vdash\dot{d}_{\nu_n}=^*\dot{\ga}_{\be(\nu_n)}$ for each $n$, $p^*$ forces that $\la\dot{\ga}_{\be(\nu_n)}:n\in\om\ra$ is cofinal in $\la\dot{d}_\nu:\nu<\nu^*\ra$. Therefore $p^*$ forces that these two sequences have the same sups. Consequently, $p^*$ forces that $\dot{\ga}_{\be(\nu^*)}=^*\dot{d}_{\nu^*}$ is a sup of $\la\dot{\ga}_{\be(\nu_n)}:n\in\om\ra$. 
For the second case, $\nu^*=\nu_m$ for all $m$ above some $k$. Then $\ps$ forces that $\la\dot{d}_{\nu_n}:n\in\om\ra$ is eventually equal to $\dot{d}_{\nu^*}$. Because $p_n\Vdash\dot{d}_{\nu_n}=^*\dot{\ga}_{\be(\nu_n)}$ for all $n$ and $p^*\geq p_n$, we have that $p^*\Vdash\la\dot{\ga}_{\be(\nu_n)}:n\in\om\ra$ is eventually equal to $\dot{d}_{\nu^*}$. Finally, $p^*\Vdash\dot{\ga}_{\be(\nu^*)}=^*\dot{d}_{\nu^*}$, and therefore $p^*$ forces that $\la\dot{\ga}_{\be(\nu_n)}:n\in\om\ra$ is eventually $=^*$-equal to $\dot{\ga}_{\be(\nu^*)}$, and therefore that $\dot{\ga}_{\be(\nu^*)}$ is a sup. This finishes the proof of the claim and thereby the proof that $\vp^M$ is $\om$-continuous.
\end{proof}

The final result in this subsection shows that we can create filters which are guided by the collapse at $\al$ by using the generic surjection from $\om_1$ onto $\al$ to guide extensions in the second coordinate. This combines ideas of collapse absorption with, as previously mentioned, Abraham's use of guiding reals.

\begin{lemma}\label{lemma:gettingthefilters} Suppose that $\dot{\Q}$ is a $(\ps\res\al)$-name for a poset of size $\al$, which is $\om_1$-closed with sups, and let $\la\dot{\ga}_i:i<\al\ra$ be forced to enumerate all  conditions in $\dot{\Q}$. Let $\dot{f}_\al$ be the $\ps$-name for the standard surjection added from $\om_1$ onto $\al$.

Suppose that $\dot{s}=\la\dot{d}_\nu:\nu<\om_1\ra$ is a sequence of $\ps$-names for conditions in $\dot{\Q}$ forced by $\ps$ to satisfy the following properties:
\begin{enumerate}
\item $\dot{s}$ is $\leq_{\dot{\Q}}$-increasing, $\dot{d}_0$ names the trivial condition, and if $\nu$ is a limit, then $\dot{d}_\nu$ is a $\leq_{\dot{\Q}}$-sup of $\la\dot{d}_\mu:\mu<\nu\ra$;
\item if $\nu<\om_1$ and $\dot{\ga}_{\dot{f}_\al(\nu)}$ extends $\dot{d}_\nu$ in $\dot{\Q}$, then $\dot{d}_{\nu+1}$ extends $\dot{\ga}_{\dot{f}_\al(\nu)}$;
\item for each $\nu<\om_1$, the sequence $\la\dot{d}_\mu:\mu\leq\nu\ra$ is definable in $V[\dot{G}\res\al]$ from $\dot{f}_\al\res\nu$. 
\end{enumerate}
Then $\dot{s}$ is guided by the collapse at $\al$ for $\dot{\Q}$.

In particular, there exists a $\ps$-name for a sequence which is guided by the collapse at $\al$ for $\dot{\Q}$.
\end{lemma}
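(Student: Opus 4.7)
My plan is to verify the three clauses of Definition \ref{def:filtersfromcollapse} for $\dot{s}$ and then construct such a sequence explicitly. Clause (1) of that definition is precisely hypothesis (1) of the lemma, so no work is needed there; the real content is in clauses (2) and (3).

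To verify clause (2), I would start from a condition $p\in\ps$ with $\dom(p(\al))=\nu$. Since $\ps$ factors as the product $(\ps\res\al)\times\ps\res[\al,\ka)$ and $p$ forces $\dot{f}_\al\res\nu$ to equal the specific function $p(\al)\in V$, hypothesis (3) of the lemma yields, for each $\mu\leq\nu$, a $(\ps\res\al)$-name $\dot{e}_\mu$ such that $p\Vdash\dot{d}_\mu=\dot{e}_\mu$. Because $\la\dot{\ga}_i:i<\al\ra$ is forced to enumerate all conditions of $\dot{\Q}$, for any $r\in\ps\res\al$ there are $r'\geq r$ and $\be<\al$ with $r'\Vdash\dot{e}_\mu=^*\dot{\ga}_\be$. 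Applying this $(\nu+1)$-many times and using the $\om_1$-closure with sups of $\ps\res\al$ to amalgamate the countably many extensions, I would obtain $q^*\geq p\res\al$ and a sequence $\la\be(\mu):\mu\leq\nu\ra\in V$ with $q^*\Vdash\dot{e}_\mu=^*\dot{\ga}_{\be(\mu)}$ for all $\mu\leq\nu$. Then $p':=q^*\cup p\res[\al,\ka)$ witnesses clause (2).

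Clause (3) is more direct. Given $p'$ as in clause (2) with decided ordinals $\la\be(\mu):\mu\leq\nu\ra$, and a $(\ps\res\al)$-name $\dot{\ga}$ forced to extend $\dot{\ga}_{\be(\nu)}$, I would first extend $p'\res\al$ in $\ps\res\al$ to some $q$ deciding an ordinal $i_0<\al$ with $q\Vdash\dot{\ga}=^*\dot{\ga}_{i_0}$ (such $i_0$ exists by density, since $\dot{\ga}$ names a condition of $\dot{\Q}$ and the $\dot{\ga}_i$'s enumerate them). Then I obtain $p^*$ from $p'$ by replacing its $(\ps\res\al)$-part by $q$ and extending its $\al$-coordinate by the pair $(\nu,i_0)$, so that $p^*\Vdash\dot{f}_\al(\nu)=i_0$. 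Since $p^*$ also forces $\dot{\ga}_{i_0}=^*\dot{\ga}\geq\dot{\ga}_{\be(\nu)}=^*\dot{d}_\nu$, hypothesis (2) of the lemma gives $p^*\Vdash\dot{d}_{\nu+1}\geq\dot{\ga}_{\dot{f}_\al(\nu)}=\dot{\ga}_{i_0}=^*\dot{\ga}$, finishing clause (3).

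For the ``in particular'' statement, I would define $\dot{s}$ by recursion on $\nu<\om_1$, using the well-order $\lhd$ of $H(\ka^+)$ from Notation \ref{notation:wo} for canonical choices: $\dot{d}_0$ names the trivial condition; $\dot{d}_{\nu+1}$ names $\dot{\ga}_{\dot{f}_\al(\nu)}$ if this condition extends $\dot{d}_\nu$ and $\dot{d}_\nu$ otherwise; and at limit $\nu$, $\dot{d}_\nu$ names the $\lhd$-least sup of $\la\dot{d}_\mu:\mu<\nu\ra$ (which exists since $\dot{\Q}$ is $\om_1$-closed with sups). Properties (1) and (2) of the lemma hold by construction, and property (3) is an easy induction: the value of $\dot{d}_\mu$ in any extension depends only on $V[\dot{G}\res\al]$-data (which contains $\dot{\Q}$, the enumeration, and $\lhd$) together with the successor values $\dot{f}_\al(\xi)$ for $\xi<\mu$. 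The hard step in the whole argument is clause (2) of Definition \ref{def:filtersfromcollapse}: it is exactly the local definability from hypothesis (3) of the lemma that allows me to replace the $\ps$-names $\dot{d}_\mu$ by $(\ps\res\al)$-names, and the $\om_1$-closure of $\ps\res\al$ that allows the countably many $=^*$-decisions to be aggregated into a single condition $q^*$.
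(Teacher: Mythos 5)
Your proposal is correct and follows essentially the same route as the paper's proof: both verify clauses (2) and (3) of Definition \ref{def:filtersfromcollapse} directly, using hypothesis (3) to replace the $\ps$-names $\dot{d}_\mu$ by $(\ps\res\al)$-names (equivalently, decided indices $\be(\mu)$) and then extending the $\al$-th coordinate at $\nu$ to point at an index for $\dot{\ga}$, and both obtain the ``in particular'' sequence by the same canonical recursion. The only difference is cosmetic: where you decide the countably many indices $\be(\mu)$ one at a time and amalgamate using the countable closure of $\ps\res\al$, the paper passes to a generic filter containing $p$ and pulls back a single condition forcing the whole decided sequence.
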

\begin{proof} 

We will verify that items (1)-(3) of Definition \ref{def:filtersfromcollapse} hold. Item (1) of the definition is immediate from assumption (1) of the lemma.

For item (2) of Definition \ref{def:filtersfromcollapse}, suppose that $p\in\ps$ is a condition where $\dom(p(\al))$ is an ordinal $\nu<\om_1$. Let $G$ be $V$-generic over $\ps$ containing $p$, and let $\bar{G}:=G\cap (\ps\res\al)$. For each $\mu\leq\nu$, let $d_\mu:=\dot{d}_\mu[G]$, and let $\be(\mu)$ be an ordinal $<\al$ so that $d_\mu=\dot{\ga}_{\be(\mu)}[\bar{G}]$. By assumption (3) of the lemma, the sequence $\la d_\mu:\mu\leq\nu\ra$ is definable in $V[\bar{G}]$ from $f_\al\res\nu=p(\al)$. Therefore, there exists a condition $\bar{p}\geq p\res\al$ with $\bar{p}\in\bar{G}$ so that $\bar{p}$ forces that $\la\dot{\ga}_{\be(\mu)}:\mu\leq\nu\ra$ satisfies the definition with respect to $p(\al)$. Then $p':=p\cup\bar{p}$ witnesses item (2) of Definition \ref{def:filtersfromcollapse}, since $p'$ forces that $\la\dot{\ga}_{\be(\mu)}:\mu\leq\nu\ra$ and $\la\dot{d}_\mu:\mu\leq\nu\ra$ both satisfy the same definition in $V[\dot{\bar{G}}]$ with the parameter $\dot{f}_\al\res\nu=p(\al)$.

Turning to item (3) of Definition \ref{def:filtersfromcollapse}, let $p'$ be a condition as in the previous paragraph. Fix a $(\ps\res\al)$-name $\dot{\ga}$ for a $\dot{\Q}$-extension of $\dot{\ga}_{\be(\nu)}$, {and let $\de<\al$ and $\bar{p}^*\geq p'\res\al$ so that $\bar{p}^*\Vdash_{\ps\res\al}\dot{\ga}=\dot{\ga}_\de$.} Define $p^*$ to be the minimal extension of $\bar{p}^*\cup p'\res[\al,\ka)$ so that $p^*\Vdash\dot{f}_\al(\nu)=\de$. Then $p^*\Vdash\dot{\ga}_{\dot{f}_\al(\nu)}\geq\dot{\ga}_{\be(\nu)}=^*\dot{d}_\nu$, so there exists an extension $p^{**}$ of $p^*$ so that $p^{**}$ forces $\dot{d}_{\nu+1}\geq\dot{\ga}_{\dot{f}_\al(\mu)}=\dot{\ga}$. 

{For the ``in particular" claim of the lemma}, define a sequence $\dot{s}$ by recursion so that it satisfies (1) and so that if $\nu<\om_1$, then $\dot{d}_{\nu+1}$ is forced to be equal to $\ga_{\dot{f}_\al(\nu)}$ if this extends $\dot{d}_\nu$, and otherwise equals $\dot{d}_\nu$. Then (2) and (3) are also satisfied, so $\dot{s}$ is guided by the collapse at $\al$ for $\dot{\Q}$.
\end{proof}

\subsection{${\cal{F}}$-Complete Properness}

We are now ready to isolate a sufficient condition on names $\dot{\C}$ so that $\ps\ast\dot{\C}$ is ${\cal{F}}$-strongly proper.

\begin{definition}\label{def:fkcp} Let $\dot{\C}$ be a $\ps$-name in $H(\ka^+)$ for a poset forced to be $\om_1$-closed with sups.  We say that $\dot{\C}$ is ${\cal{F}}$-\textbf{Completely Proper} if for any $(\ps\ast\dot{\C})$-suitable sequence $\vec{M}$ {there is some $A\seq\dom(\vec{M})$ with $A\in\cal{I}$ so that for each $\al\in \dom(\vec{M})\bsl A$ and each} $\ps$-name $\dot{H}$ for filter over $\pi_{M_\al}(\dot{\C})$ which is guided by the collapse at $\al$, there exists a $\ps$-name $\dot{c}_{\dot{H}}$ for a condition in $\dot{\C}$ which is forced to be a least upper bound for $\pi^{-1}_{M_\alpha[\dot{G}_{\ps}]}[\dot{H}]$.
\end{definition}

We recall that a poset $\bb{U}$ is \emph{$\lam$-distributive} if forcing with $\bb{U}$ adds no sequences of ordinals of length less than $\lam$. {A sufficient condition to guarantee this is that the intersection of fewer than $\lam$-many dense, open subsets of $\bb{U}$ is dense, open. They are equivalent if $\bb{U}$ is separative.}

\begin{lemma}\label{lemma:cpimpliesdist} Suppose that $\dot{\C}$ is ${\cal{F}}$-completely proper. Then $\ps$ forces that {the intersection of fewer than $\ka$-many dense, open subsets of $\dot{\C}$ is dense, open. Hence} $\dot{\C}$ is forced to be $\ka$-distributive.
\end{lemma}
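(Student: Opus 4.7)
The plan is to establish density of the intersection, from which openness (which is immediate, since an intersection of sets each closed under extensions is closed under extensions) and $\ka$-distributivity follow. Given $\lam<\ka$, a $\ps$-name $\la\dot{D}_i:i<\lam\ra$ for a sequence of dense open subsets of $\dot{\C}$, and a condition $(p^0,\dot{c}_0)\in\ps\ast\dot{\C}$, I would construct an extension $(p',\dot{c}')\geq(p^0,\dot{c}_0)$ forcing $\dot{c}'\in\bigcap_{i<\lam}\dot{D}_i$. The $\ka$-distributivity conclusion follows from the standard equivalence with density of intersections of fewer than $\ka$ dense open sets.

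First, fix a $(\ps\ast\dot{\C})$-suitable sequence $\vec{M}$ having $p^0$, $\dot{c}_0$, and the sequence $\la\dot{D}_i:i<\lam\ra$ as members. By the definition of $\cal{F}$-complete properness, the set $A\subseteq\dom(\vec{M})$ on which the required least upper bounds fail to exist lies in $\cal{I}$, so $\dom(\vec{M})\setminus A\in\cal{F}^+$ is unbounded in $\ka$. Pick $\al$ in this set with $\al>\lam$ and set $M:=M_\al$. Since $\la\dot{D}_i:i<\lam\ra\in M$ and each $i<\lam$ lies in $M\cap\ka=\al$, elementarity gives $\dot{D}_i\in M$ for every $i<\lam$.

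Next, apply Lemma \ref{lemma:gettingthefilters} to obtain a $\ps$-name $\dot{s}$ for a sequence guided by the collapse at $\al$ for $\pi_M(\dot{\C})$, and let $\dot{H}=H(\dot{s})$ be the associated filter name. By $\cal{F}$-complete properness, there is a $\ps$-name $\dot{c}_{\dot{H}}$ for an upper bound of $\pi_{M[\dot{G}]}^{-1}[\dot{H}]$ in $\dot{\C}$. Proposition \ref{prop:gettingESRFs} then furnishes the condition $p^*(M)=(0_\ps,\dot{c}_{\dot{H}})\in\ps\ast\dot{\C}$ with the key property that $p^*(M)$ is compatible with every condition in $(\ps\ast\dot{\C})\cap M$; in particular, $p^*(M)$ is compatible with $(p^0,\dot{c}_0)$, and I can pick a common extension $(p',\dot{c}')$.

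Finally, I would verify $(p',\dot{c}')\Vdash\dot{c}'\in\bigcap_{i<\lam}\dot{D}_i$. Since $(p',\dot{c}')$ extends both $(p^0,\dot{c}_0)$ and $p^*(M)$, $p'$ forces $\dot{c}'\geq\dot{c}_0$ and $\dot{c}'\geq\dot{c}_{\dot{H}}$. Lemma \ref{lemma:getCGConditions} ensures $\dot{c}_{\dot{H}}$ is forced to be $(M[\dot{G}],\dot{\C})$-completely generic, so for each $i<\lam$, since $\dot{D}_i\in M$ is dense open, $p'$ forces $\dot{c}_{\dot{H}}$ to extend some element of $\dot{D}_i$, and openness of $\dot{D}_i$ yields $\dot{c}_{\dot{H}}\in\dot{D}_i$; applying openness again to $\dot{c}'\geq\dot{c}_{\dot{H}}$ gives $\dot{c}'\in\dot{D}_i$. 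The main obstacle is the simultaneous requirement that the final condition both extend the prescribed $\dot{c}_0$ and be completely generic over $M[\dot{G}]$, but this is resolved directly by the universality of $p^*(M)$ furnished by Proposition \ref{prop:gettingESRFs}.
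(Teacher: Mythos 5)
Your proposal is correct and takes essentially the same route as the paper's proof: a suitable sequence containing the relevant parameters, a sequence guided by the collapse from Lemma \ref{lemma:gettingthefilters}, the upper bound supplied by ${\cal{F}}$-complete properness, and complete genericity via Lemma \ref{lemma:getCGConditions}. If anything you are slightly more careful than the paper, which only exhibits one condition in the intersection of $\om_1$-many dense open sets, whereas your use of the universality clause of Proposition \ref{prop:gettingESRFs} to amalgamate with the prescribed $(p^0,\dot{c}_0)$ gives the full density statement for arbitrary $\lam<\ka$.
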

\begin{proof} Let $\vec{D}=\la\dot{D}_i:i<\om_1\ra$ be a sequence of $\ps$-names for dense, open subsets of $\dot{\C}$. We show that the intersection is forced to be non-empty. Fix a sequence $\vec{M}$ which is suitable with respect to $\ps\ast\dot{\C}$ and $\vec{D}$ so that $\dom(\vec{M})$  satisfies the conclusion of Definition 
\ref{def:fkcp}. Let $\al\in\dom(\vec{M})$. By Lemma \ref{lemma:gettingthefilters}, there exists a $\ps$-name $\dot{H}$ for a filter which is guided by the collapse at $\al$ for $\pi_{M_\al}(\dot{\C})$. Since $\dom(\vec{M})$ satisfies Definition \ref{def:fkcp}, we may find a $\ps$-name $\dot{d}$ for a condition which is forced to be an upper bound in $\dot{\C}$ for $\pi^{-1}_{M_\al[\name{G}_{\po}]}[\dot{H}]$. Then, by Lemma \ref{lemma:getCGConditions}, $\dot{d}$ is forced to be an $(M_\al[\dot{G}],\dot{\C})$-\emph{completely} generic condition. But $\dot{D}_i\in M_\al$ for each $i<\om_1$ and is dense, open. Therefore it is forced that $\dot{d}\in\bigcap_{i\in\om_1}\dot{D}_i$.
\end{proof}

By combining Lemma \ref{lemma:gettingthefilters}, Proposition \ref{prop:gettingESRFs}, and Lemma \ref{lemma:refineToGetSP}, we conclude {with} the following key result:

\begin{proposition}\label{prop:2stepSP} Suppose that $\dot{\C}$ is ${\cal{F}}$-completely proper. Then $\ps\ast\dot{\C}$ is ${\cal{F}}$-strongly proper.
\end{proposition}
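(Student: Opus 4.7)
The plan is to combine the hypothesis of $\cal{F}$-complete properness with the machinery of Proposition \ref{prop:gettingESRFs}: $\cal{F}$-complete properness supplies, on an $\cal{F}$-large subset of any $(\ps\ast\dot{\C})$-suitable sequence, upper bounds $\dot{c}_{\dot{H}}$ for the pullbacks of collapse-guided filters on $\pi_{M_\al}(\dot{\C})$, and Proposition \ref{prop:gettingESRFs} is exactly the tool that converts such an upper bound into an exact, strong residue function of the required shape. The only thing that does not come for free is the coherence demand in Definition \ref{def:fksp}(2) that each pair $(p^*(M_\al),\vp^{M_\al})$ lie in every later model $M_\be$ on the sequence; the plan is to secure this by making all choices canonically via the fixed well-order $\lhd$ of $H(\ka^+)$ from Notation \ref{notation:wo}.

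In more detail, I would start with a $(\ps\ast\dot{\C})$-suitable sequence $\vec{M}$ and invoke Definition \ref{def:fkcp} to obtain $A\subseteq\dom(\vec{M})$ with $\dom(\vec{M})\setminus A\in\cal{I}$ on which the complete properness clause holds. For each $\al\in A$, I would use Lemma \ref{lemma:gettingthefilters} to produce a $\ps$-name $\dot{s}_\al$ for a sequence guided by the collapse at $\al$ for $\pi_{M_\al}(\dot{\C})$, taking $\dot{s}_\al$ to be the $\lhd$-least such name; let $\dot{H}_\al=H(\dot{s}_\al)$. By the choice of $A$, pick the $\lhd$-least $\ps$-name $\dot{c}_\al$ for a condition in $\dot{\C}$ forced to be an upper bound of $\pi^{-1}_{M_\al[\dot{G}_\ps]}[\dot{H}_\al]$. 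Set $p^*(M_\al):=(0_\ps,\dot{c}_\al)$ and let $\vp^{M_\al}$ be the function on $D(M_\al)$ defined in Proposition \ref{prop:gettingESRFs} from this data. That proposition directly gives that $\vp^{M_\al}$ is an exact, strong residue function for $(M_\al,\ps\ast\dot{\C})$ above $p^*(M_\al)$, which is clause (1) of Definition \ref{def:fksp}; the supplementary requirement $\vp^{M_\al}(p^*(M_\al))=^*0_{\ps\ast\dot{\C}}$ follows directly from the recipe, since $\dom(0_\ps(\al))=0$ and $\dot{d}_0$ is forced to be the trivial condition of $\pi_{M_\al}(\dot{\C})$, so the formula for $\vp^{M_\al}(p^*(M_\al))$ returns $(0_\ps,0_{\dot{\C}})$ up to $=^*$.

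For clause (2) of Definition \ref{def:fksp}, let $\al<\be$ both lie in $A$. By Definition \ref{def:suitable}(4) we have $M_\al\in M_\be$, hence the collapse map $\pi_{M_\al}$ and the $\ps$-name $\pi_{M_\al}(\dot{\C})$ are both members of $M_\be$; since $\al=M_\al\cap\ka\in M_\be$, all parameters needed for the $\lhd$-definitions of $\dot{s}_\al$, $\dot{H}_\al$, and $\dot{c}_\al$ are in $M_\be$, and $\lhd\in M_\be$ by Definition \ref{def:suitable}(3), so $\dot{s}_\al,\dot{H}_\al,\dot{c}_\al\in M_\be$ by elementarity. Consequently $p^*(M_\al)=(0_\ps,\dot{c}_\al)\in M_\be$, and since $\vp^{M_\al}$ and its domain $D(M_\al)$ are $\lhd$-definable from $\pi_{M_\al}$, $\al$, and $\dot{s}_\al$, the function $\vp^{M_\al}$ is a member of $M_\be$ as well. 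The main and essentially only obstacle is this coherence check, and it is not really an obstacle at all once the $\lhd$-least convention is adopted; no new large cardinal input is needed here, because the only $\cal{I}$-smallness of an exceptional set has already been absorbed into the definition of $\cal{F}$-complete properness.
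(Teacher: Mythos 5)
Your proposal is correct and takes essentially the same route as the paper, whose proof simply combines Lemma \ref{lemma:gettingthefilters} and Proposition \ref{prop:gettingESRFs} with Definition \ref{def:fkcp}; your $\lhd$-least-choice argument for the coherence clause (2) of Definition \ref{def:fksp} is precisely the detail the paper leaves implicit. One cosmetic correction: $\lhd$ is a predicate of the structure $(H(\ka^+),\in,\lhd)$ rather than an element of $M_\be$, so membership of the canonically chosen $p^*(M_\al)$ and $\vp^{M_\al}$ in $M_\be$ follows from elementarity of $M_\be$ in that expanded structure (with parameters $M_\al,\ps,\dot{\C}\in M_\be$), not from ``$\lhd\in M_\be$''.
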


\section{Properties of the Club-Adding Poset}\label{section:clubscompletelyproper}

In this section, we have two main tasks. In the first subsection, we will prove that our intended club adding iteration, as well as useful variants thereof, are ${\cal{F}}$-completely proper, and in the second subsection, we will prove that our intended club adding iteration does not add branches through various Aronszajn trees. Each of these results will be used as part of a larger inductive argument in the final section in which we prove Theorem \ref{thm:main}. {Again we comment that, in this case, $\ka$ is ineffable, but we are only using the ineffability of $\ka$ when we apply Proposition \ref{prop:statsetpreservation}, since this requires Proposition \ref{prop:InductiveIandII}.}

\subsection{Adding Clubs is ${\cal{F}}$-Completely Proper}

In order to anticipate arguments in the next subsection, where we show that appropriate ${\cal{F}}$-completely proper posets do not add branches through certain Aronszajn trees, we will need to not only show that our club adding poset is ${\cal{F}}$-completely proper, but also show that variants of it have this property. These variants are created by iterating the process of taking an initial segment of the iteration followed by products of finitely-many copies of the tail.

The following iteration follows Magidor's work \cite{Magidor} on adding clubs through reflection points of stationary subsets of a weakly compact cardinal $\kappa$, which has been collapsed to become $\omega_2$. 

Let $\rho<\ka^+$, and suppose that we have defined a $\ps$-name for an iteration $\la\dot{\C}_\si,\dot{\C}(\eta):\si\leq\rho,\eta<\rho\ra$ and a $(\ps\ast\dot{\C}_\rho)$-name $\la\dot{\S}_\si,\dot{\S}(\eta):\si\leq\rho,\eta<\rho\ra$ for an iteration  so that for all $\si<\rho$ the following assumptions are satisfied:
\begin{enumerate}
\item $\ps$ forces that the iteration $\la\dot{\C}_\si,\dot{\C}(\eta):\si\leq\rho,\eta<\rho\ra$ has $<\kappa$-support, and $\ps\ast\dot{\C}_\rho$ forces that $\la\dot{\S}_\si,\dot{\S}(\eta):\si\leq\rho,\eta<\rho\ra$ has countable support. Furthermore, $\dot{\S}_\si$ is a $(\ps\ast\dot{\C}_\si)$-name;
\item $\dot{\C}(\si)$ is a $(\ps\ast\dot{\C}_\si)$-name for $\mathsf{CU}(\dot{S}_\si,\dot{\S}_\si)$ (see Definition \ref{def:clubaddingposet}), where $\dot{S}_\si$ is a $(\ps\ast\dot{\C}_\si\ast\dot{\S}_\si)$-name for a stationary subset of $\ka\cap\cof(\om)$ and $\dot{\C}_{\si+1}=\dot{\C}_\si\ast\dot{\C}(\si)$;
\item $\dot{\S}(\si)$ is a $(\ps\ast\dot{\C}_{\si+1}\ast\dot{\S}_\si)$-name for $\S(\dot{T}_\si)$ (see Definition \ref{def:specposet}), where $\dot{T}_\si$ is a $(\ps\ast\dot{\C}_{\si+1}\ast\dot{\S}_\si)$-name for an Aronszajn tree on $\ka$;
\item $\dot{\C}_\si$ is ${\cal{F}}$-completely proper (and hence $\ps\ast\dot{\C}_\si$ is $\cal{F}$-strongly proper, by Proposition \ref{prop:2stepSP}), and $\ps\ast\dot{\C}_\si$ forces that $\dot{\S}_\si$ is a countable support iteration specializing Aronszajn trees, as defined in Section \ref{section:specialize}.
\end{enumerate}

Working in an arbitrary generic extension by $\ps$, we now define the variations of $\dot{\C}_\rho$ mentioned above; we call these \emph{Doubling Tail Products}. These will be the posets $\C_\rho(\vec{\delta})$, 
where $\vec{\delta} = \la \delta_0,\delta_1,\dots,\delta_{n-1}\ra \in [\rho]^n$ is a strictly decreasing sequence of ordinals. We use $[\rho]^{<\om}_\text{dec}$ to denote the set of all finite, strictly decreasing tuples from $\rho$; $[\rho]^n_\text{dec}$ is defined similarly.

We first introduce an auxiliary name for a poset $\dot\C_{\delta^*,\rho}(\vec{\delta})$, where  $\vec{\delta}$ is as above and $\de^*\leq\de_{n-1}$ is an additional ordinal.
This is done by recursion on $n = |\vec{\delta}|$ as follows:
\begin{itemize}
    \item For $n = 0$ (i.e., $\vec{\delta} = \emptyset$) we define $\dot\C_{\delta^*,\rho}(\emptyset) = \dot\C_{\delta^*,\rho}$ to be the tail-segment of the iteration $\C_\rho$, starting from stage $\delta^*$.
    \item For $n \geq 1$, $\vec{\delta} \in [\rho]^n$, and $\delta^* \leq \delta_{n-1}$, the poset $\C_{\delta^*,\rho}(\vec{\delta})$ is given by- 
     \[\dot\C_{\delta^*,\rho}(\vec{\delta})  = \dot\C_{\delta^*,\delta_{n-1}} * (\dot\C_{\delta_{n-1},\rho}(\vec{\delta}\uhr n-1))^2,\]
     where $\dot\C_{\delta^*,\delta_{n-1}}$ is the segment of the iteration $\C_\rho$ starting from (and including) stage $\delta^*$ to stage $\delta_{n-1}$, 
     and $\vec{\delta}\uhr (n-1) = \la \delta_0,\dots,\delta_{n-2} \ra$. 
\end{itemize}

We can now define $\C_{\rho}(\vec{\delta})$.
\begin{definition}
For $\vec{\delta} \in [\rho]^{<\omega}_\text{dec}$, define $\C_\rho(\vec{\delta}) = \C_{0,\rho}(\vec{\delta})$ (i.e., as $\C_{\delta^*,\rho}(\vec{\delta})$ with $\delta^* =0$).
\end{definition}

For example, if $\vec{\delta} = \la \delta_0\ra$ is a singleton, then $\C_{\rho}(\la \delta_0 \ra) = \C_{\delta_0}* \dot\C_{\delta_0,\rho}^2$.
Similarly, if $\vec{\delta} = \la \delta_0,\delta_1\ra$ has two elements $\delta_0 > \delta_1$ then 
\[
\C_\rho(\la \delta_0,\delta_1\ra) = \C_{\delta_1} * \dot\C_{\delta_1,\rho}^2(\la \delta_0\ra) = 
\C_{\delta_1} * \left( \dot\C_{\delta_1,\delta_0} * \dot\C^2_{\delta_0,\rho}\right)^2
\]

We refer to posets $\C_\rho(\vec{\delta})$ as the \textbf{ doubling tail products} of $\C_\rho$. We now return to working in $V$, in particular with the statement of the next item.

\begin{proposition}\label{prop:clubsareCP} 
(Given assumptions (1)-(4) stated at the beginning of the subsection) For each $\rho < \kappa^+$ and $\vec{\delta} = \la \delta_{0},\dots,\delta_{n-1}\ra \in [\rho]^{<\omega}_\text{dec}$,  the doubling tail product  $\dot\C_\rho(\vec{\delta})$ is ${\cal{F}}$-completely proper. 
In particular, $\dot{\C}_\rho$ is ${\cal{F}}$-completely proper. 
\end{proposition}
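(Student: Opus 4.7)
The plan is to prove, by simultaneous induction on $\rho$ (outer) and on $n = |\vec{\delta}|$ (inner), the slightly stronger statement that for every $\delta^{*} \leq \rho$, the tail variant $\dot{\C}_{\delta^{*},\rho}(\vec{\delta})$ is $\cal{F}$-completely proper. Assumption (4) at the start of this subsection together with the outer inductive hypothesis supplies $\cal{F}$-complete properness of each smaller factor $\dot{\C}_{\delta^{*},\delta_{n-1}}$ appearing below.

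The base case ($n = 0$, so the poset is $\dot{\C}_{\delta^{*},\rho}$) carries the substantive content. Given a $(\ps \ast \dot{\C}_{\delta^{*},\rho})$-suitable sequence $\vec{M}$, first refine $\dom(\vec{M})$ to an $\cal{F}$-positive $B^{*}$ by applying Proposition \ref{prop:statsetpreservation}---noting that $\ps \ast \dot{\C}_{\delta^{*},\rho}$ is $\cal{F}$-strongly proper by Proposition \ref{prop:2stepSP} combined with the outer IH---so that at every $\alpha \in B^{*}$ and every $\xi \in M_\alpha \cap [\delta^{*},\rho)$ the set $S_\xi \cap \alpha$ (stationary in $\alpha$ in the intermediate $\R_\xi \cap M_\alpha$-extension by the weak-compactness-style reflection available at such $\alpha$) remains stationary in the full generic extension. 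For such $\alpha$ and a filter $\dot{H} = H(\langle \dot{d}_\nu : \nu < \omega_1\rangle)$ on $\pi_{M_\alpha}(\dot{\C}_{\delta^{*},\rho})$ guided by the collapse at $\alpha$, define $\dot{c}_{\dot{H}}$ coordinatewise: at each $\xi \in M_\alpha \cap [\delta^{*},\rho)$, take $\dot{c}_{\dot{H}}(\xi)$ to be the name for $\bigcup_{\nu < \omega_1} \pi^{-1}_{M_\alpha[\dot{G}]}[\dot{d}_\nu(\xi)] \cup \{\alpha\}$. That this defines a valid condition of $\dot{\C}(\xi) = \mathsf{CU}(\dot{S}_\xi,\dot{\S}_\xi)$ reduces, since $\cf^{V[\dot{G}]}(\alpha) = \omega_1$, to the requirement $\alpha \in \tr(S_\xi)$, which is precisely the stationary preservation just secured; and $\dot{c}_{\dot{H}}$ is visibly a least upper bound of $\pi^{-1}_{M_\alpha[\dot{G}]}[\dot{H}]$ by construction.

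For the inductive step ($n \geq 1$) use the decomposition
\[
\dot{\C}_{\delta^{*},\rho}(\vec{\delta}) \;=\; \dot{\C}_{\delta^{*},\delta_{n-1}} \,\ast\, \bigl(\dot{\C}_{\delta_{n-1},\rho}(\vec{\delta}\uhr (n-1))\bigr)^{2}.
\]
Iterated applications of Lemma \ref{lemma:projectcollapsefilter}, together with its obvious analogue for products, transform a collapse-guided filter $\dot{H}$ on the full poset into a collapse-guided filter $\dot{H}_0$ on the first factor and, after forcing below the upper bound for $\dot{H}_0$, two collapse-guided filters $\dot{H}_1, \dot{H}_2$ on the two copies of the tail factor. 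The outer IH at $\delta_{n-1} < \rho$ yields an upper bound $\dot{c}_0$ for $\dot{H}_0$, while the inner IH at $(\rho, n-1)$ yields upper bounds $\dot{c}_1,\dot{c}_2$ for $\dot{H}_1, \dot{H}_2$; their concatenation produces the desired $\dot{c}_{\dot{H}}$.

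The main obstacle lies in the base case: verifying $\alpha \in \tr(S_\xi)$ when $\dot{S}_\xi$ depends on the specializing iteration $\dot{\S}_\xi$ rather than merely on $\dot{\C}_\xi$. No countable-closure argument on $\dot{\C}_\rho$ suffices, precisely because the specializing iteration is \emph{not} $\sigma$-closed in the relevant quotients---this is the tension identified in item (2) of the introduction---and Proposition \ref{prop:statsetpreservation} is the tool expressly built to resolve it. The secondary concern in the inductive step is confirming that ``guided by the collapse at $\alpha$'' survives the projections furnished by Lemma \ref{lemma:projectcollapsefilter}, in particular after passing to the extension below $\dot{c}_0$, but this is routine once the projection lemma and the absorption properties of the collapse are in place.
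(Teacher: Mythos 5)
Your proposal assembles the right ingredients---a coordinatewise ``flat'' upper bound whose membership in each club-adding coordinate is certified by stationary preservation (Proposition \ref{prop:statsetpreservation}), together with an induction on $|\vec{\delta}|$ driven by projecting collapse-guided filters---but it has two genuine gaps, both at points where the paper's organization differs from yours for a reason. First, the base case: working directly at $M_\alpha$ for $\alpha<\kappa$, the conditionhood of your candidate bound at coordinate $\xi$ needs that $\dot{S}_\xi\cap\alpha$ is stationary in $\alpha$ in the relevant extension of $\R_\xi=\ps\ast\dot{\C}_\xi\ast\dot{\S}_\xi$. You reduce this to (i) ``$\dot{S}_\xi\cap M_\alpha$ names a stationary subset of $\alpha$ over $\R_\xi\cap M_\alpha$, by weak-compactness-style reflection'' plus (ii) quotient preservation. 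But (i) is not one of the clauses of Proposition \ref{proposition:ccreflect} or of pre-splitting configuration; it is an additional reflection statement (needed simultaneously for all $\xi\in M_\alpha\cap\rho$, hence also requiring a normality/diagonal argument as in Lemma \ref{lemma:DRPBelow}), asserted without proof, and one must also identify $\dot{S}_\xi\cap\alpha$ with $(\dot{S}_\xi\cap M_\alpha)[\dot{G}\cap M_\alpha]$ below the strongly generic condition. The paper is structured precisely to avoid needing (i): it proves the key statement $(*)$ on the $j$-side of an ultrapower embedding, at the model $M_\kappa$, where the restricted name $j[\dot{S}_\sigma]$ is isomorphic to $\dot{S}_\sigma$ itself and is therefore already known to name a stationary set; $\cal{F}$-complete properness at the levels $\alpha<\kappa$ is then obtained by a reflection-to-$\kappa$ contradiction argument rather than by a direct construction at $\alpha$. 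Your route could be repaired by proving the missing reflection clause with the same ultrafilter technology, but as written it is a hole.

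Second, the inductive step: your ``inner IH'' asserts $\cal{F}$-complete properness of the tail posets $\dot{\C}_{\delta^*,\rho}(\cdot)$, but these are $(\ps\ast\dot{\C}_{\delta^*})$-names, and Definitions \ref{def:filtersfromcollapse} and \ref{def:fkcp} (guidedness, complete properness), as well as Proposition \ref{prop:2stepSP}, apply only to $\ps$-names; moreover Lemma \ref{lemma:projectcollapsefilter} projects a guided filter only onto the \emph{first} factor of a two-step iteration, so the claimed ``obvious analogue'' for the second factor after forcing below $\dot{c}_0$, and for products, is exactly the content that would need an argument (plus a reformulated induction statement for tails). The paper's step avoids all of this: it splits the guided filter on $\C_{\delta_{n-1}}\ast(\dot{\C}_{\delta_{n-1},\rho}(\vec{\delta}\uhr (n-1)))^2$ into two guided filters $J_0,J_1$ on the shorter doubling product $\C_\rho(\vec{\delta}\uhr(n-1))$, which agree on the $\C_{\delta_{n-1}}$ part and are mutually generic above it; each $J_i$ is then literally an instance of the induction hypothesis (a $\ps$-name poset), and the two flat suprema agree on the common part and can be concatenated. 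You should either adopt that decomposition or supply the missing tail and product machinery; as it stands, the step does not go through on the basis of the cited lemmas.
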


We will explain the necessity of proving Proposition \ref{prop:clubsareCP} for the doubling tail products of $\dot\C_\rho$ in the final section of the paper.

\begin{proof}

We will first work with $\dot{\C}_\rho$, rather than with the doubling tail products, in order to establish that a certain statement $(*)$ (see below) holds. We will then show that this statement $(*)$ can be used to prove the desired result for the doubling tail products. We use $\R_\si$, for $\si\leq\rho$, to denote $\ps\ast\dot\C_\si*\dot{\S}_\si$.

To begin, we fix an $\R_\rho$-suitable sequence $\vec{M}$; by removing a set in $\cal{I}$, we may assume that $\vec{M}$ is in pre-splitting configuration up to $\rho$. {By removing a further $\cal{I}$-null set, we may assume that $\vec{M}$ satisfies the conclusion of Proposition \ref{prop:statsetpreservation}.}

Let $M$ be a $\ka$-model so that $M$ contains the relevant parameters, including $\vec{M}$. {Since $\dom(\vec{M})$ is in $\cal{F}^+$, we may apply Proposition \ref{proposition:characterizefwc} to find an $M$-normal ultrafilter $U$ so that, letting $j:M\lra N$ be the corresponding ultrapower map, $\ka\in j(\dom(\vec{M}))$. Let $M_\ka$ be the $\ka$-th model on the sequence $j(\vec{M})$.}


 Fix a $V$-generic filter $G^*$ over $j(\ps)$, and let $G:=G^*\cap\ps$. For notational simplicity, we continue using $j$ to denote the lifted map $j:M[G]\lra N[G^*]$. {Recall by Lemma \ref{lemma:UsefulSuitable} that $j^{-1}\res M_\ka$ is the transitive collapse map of $M_\ka$ and that $j^{-1}$ lifts in the standard way to $M_\ka[G^*]$.}

 Suppose that $\si\leq\rho$ and that $\dot{H}$ is a $j(\ps)$-name in $N$ for a generic filter over
 $\pi_{M_\ka[\dot{G}^*]}(j(\dot{\C}_\si))=\dot\C_\si$. We define the \textbf{$\ka$-flat function for (the pull-back of) $\dot{H}$} to be the $j(\ps)$-name for the function with domain $j[\si]$,\footnote{{We note that $j[\si]\in N$: $M_\ka\cap j(\rho)=j[\rho]$, and so $j[\rho]$ is in $N$. Then intersect with $j(\si)$.}} so that for each $\eta<\si$, $\dot{r}(j(\eta))$ is forced to be equal to $\left(\bigcup\dot{H}(\eta)\right)\cup\lb\ka\rb$.

We will prove the following proposition $(*)$ by induction:
\begin{enumerate}
\item[$(*)$] for any $\si\leq\rho$, if $\dot{H}$ is a $j(\ps)$-name in $N$ for a generic filter over $\dot\C_\si=\pi_{M_\ka[G^*]}(j(\dot{\C}_\si))$ which is guided by the collapse at $\ka$ (see Definition \ref{def:genbycoll}), then it is forced by $j(\ps)$ that the $\ka$-flat function for $\dot{H}$ is a condition in $j(\dot{\C}_\si)$.
\end{enumerate}

We first consider the case that $\si\leq\rho$ is limit. Suppose that we know the result for all $\eta<\si$. We use throughout the fact that $j^{-1}$ equals the transitive collapse map of $M_\ka[G^*]$.

Let $H\in N[G^*]$ be a filter over $\C_\si$ which is guided by the collapse at $\ka$, and let $r$ be the $\ka$-flat function for $H$. Since $|\dom(r)|^N<j(\ka)$ and $j(\C_\si)$ is taken with $<j(\ka)$-supports, in order to see that $r\in j(\C_\si)$, it suffices to show that for all $\eta<\si$, $r\res j(\eta)\in j(\C_\eta).$
So let $\eta<\si$ be fixed. Since $\C_\si\cong\C_\eta\ast\dot{\C}_{{\eta},\si}$ and since $H$ is guided by the collapse at $\ka$ over $\C_\si$, 
we have  by Lemma \ref{lemma:projectcollapsefilter} that $H\res \C_\eta$ is also guided by the collapse at $\ka$ over $\C_\eta$. By induction, this implies that the $\ka$-flat {function} for $H\res \C_\eta$, namely $r\res j(\eta)$, is a condition in $j(\C_\eta)$. This completes the proof of $(*)$ in the limit case.

Now suppose that $\si+1\leq\rho$ and that we know that $(*)$ holds at $\si$. Let $H\in N[G^*]$ be a filter over $\C_{\si+1}$ which is guided by the collapse at $\ka$, and let $H_\si$ denote the restriction of $H$ to $\C_\si$. Again appealing to Lemma \ref{lemma:projectcollapsefilter}, we know that $H_\si$ is guided by the collapse at $\ka$. 

Let $r$ be the $\ka$-flat function for $H$, and let $\bar{r}$ denote $r\res j(\si)$, the $\ka$-flat function for $H_\si$. Since $H_\si$ is guided by the collapse at $\ka$, we may apply the induction hypothesis to conclude that $\bar{r}$ is a condition in $j(\C_\si)$. By Proposition \ref{prop:gettingESRFs}, since $H_\si$ is guided by the collapse at $\ka$, we know that in $N$ we may find a residue pair $\la(0_{j(\ps)},\dot{\bar{r}}),\vp^{M_\ka}\ra$ for the pair $(M_\ka,j(\ps^*))$, where $\dot{\bar{r}}$ is a $j(\ps)$-name in $N$ for $\bar{r}$. We use $p^*(M_\ka)$ to denote $(0_{j(\ps)},\dot{\bar{r}})$.

Since $\bar{r}$ is an upper bound for $ \pi_{M_\ka[G^*]}^{-1}[H_\si]=j[H_\si]$, we conclude that $\bar{r}$ forces in $j(\C_\si)$ over $N[G^*]$ that $\bigcup j[H(\si)]{=\bigcup H(\si)}$ is {club} in $\ka$ {(equality follows since $j$ is the identity on bounded subsets of $\ka$)}. Therefore, to see that $r\in j(\C_{\si+1})$ (which finishes the proof of $(*)$ in the successor case), it suffices to show that
$$
\bar{r}\Vdash^{N[G^*]}_{j(\C_\si)}\left(\bigcup H(\si)\cup\lb\ka\rb\right)\in j(\dot{\C}(\si)).
$$
Since $j(\dot{\C}(\si))$ is a $j(\ps\ast\dot{\C}_\si)$-name for $\mathsf{CU}(j(\dot{S}_\si),j(\dot{\S}_\si))$, the above holds if and only if 
$$
(\bar{r},0_{j(\dot{\S}_\si)})\Vdash^{N[G^*]}_{j(\C_\si\ast\dot{\S}_\si)}\left(\bigcup H(\si)\cup\lb\ka\rb\right)\seq   \left( \tr\left(j(\dot{S}_\si)\right)\cup \left(j(\ka)\cap\cof(\om)\right)\right).
$$

By the elementarity of $j$ and since $\bar{r}$ is an upper bound for $j[H_\si]$, we see that 
$$
(\bar{r},0_{j(\dot{\S}_\si)})\Vdash^{N[G^*]}_{j(\C_\si\ast\dot{\S}_\si)}\bigcup H(\si) \seq\left(\tr\left(j(\dot{S}_\si)\right)\cup \left(j(\ka)\cap\cof(\om)\right)\right).
$$
Since $\ka$ has cofinality $\om_1$ after forcing with $j(\C_\si\ast\dot{\S}_\si)$, it therefore suffices to show that
$$
(\bar{r},0_{j(\dot{\S}_\si)})\Vdash^{N[G^*]}_{j(\C_\si\ast\dot{\S}_\si)} (j(\dot{S}_\si)\cap\ka)\text{ is stationary in }\ka.
$$

Before continuing, we recall that $\R_\si$ denotes $\ps\ast\dot{\C}_\si\ast\dot{\S}_\si$. By Proposition \ref{proposition:ccreflect}, we know that 
$$
(p^*(M_\ka),0_{j(\dot{\S}_\si)})\Vdash^N_{j(\R_\si)} j(\dot{S}_\si)\cap\ka=(j(\dot{S}_\si)\cap M_\ka)[\dot{G}_{j(\R_\si)}\cap M_\ka].
$$
However, $j(\R_\si)\cap M_\ka=j[\R_\si]$ is isomorphic to $\R_\si$, and $j(\dot{S}_\si)\cap M_\ka=j[\dot{S}_\si]$. Since $\dot{S}_\si$ is a nice $\R_\si$-name for a stationary subset of $\ka\cap\cof(\om)$, $j[\dot{S}_\si]$ is therefore a $j(\R_\si)\cap M_\ka$-name for a stationary subset of $\ka\cap\cof(\om)$. 

We now apply Proposition \ref{prop:statsetpreservation} in $N$, recalling that $\vec{M}$ satisfies the conclusion of that proposition and that $\ka\in j(\dom(\vec{M}))$. Thus, 
applying the observations in the previous paragraph, we conclude that $(p^*(M_\ka),0_{j(\dot{\S}_\si)})$ forces over $N$ that $j[\dot{S}_\si]$ is stationary in $\ka$. Recalling that $p^*(M_\ka)=(0_{j(\ps)},\dot{\bar{r}})$, we now conclude that
$$
(\bar{r},0_{j(\dot{\S}_\si)})\Vdash^{N[G^*]}_{j(\C_\si\ast\dot{\S}_\si)} j(\dot{S}_\si)\cap\ka=j[\dot{S}_\si][\dot{G}_{j(\R_\si)}\cap M_\ka]\text{ is stationary}.
$$
This completes the proof that $r$, the $\ka$-flat function for $H$, is a condition in $j(\C_\si)$ and also finishes the proof that $(*)$ holds.\\

To finish the proof of Proposition \ref{prop:clubsareCP}, we prove by induction on $k<\om$ that
for any $\vec{\delta} = \la \delta_0,\dots,\delta_{k-1}\ra \in [\rho]^k_\text{dec}$,
the poset $\C_\rho(\vec{\delta})$ is $\cal{F}$-completely proper. Working by contradiction, let $k\in\om$ be the least so that for some (empty in case $k=0$) $\vec{\de}\in [\rho]^k_\text{dec}$, the proposition fails for $\C_\rho(\vec{\de})$. Let $\vec{M}$ be a $\vec{\de}$-suitable sequence which witnesses that $\C_\rho(\vec{\de})$ is not $\cal{F}$-suitable. Since $\vec{M}$ is $\vec{\de}$-suitable it is also $\ps\ast\dot{\C}_\rho(\vec{\delta})$-suitable. {By removing a $\cal{I}$-null set, we may assume that $\vec{M}$ satisfies the conclusion of Proposition \ref{prop:statsetpreservation}.}

Let $M$ be a $\ka$-model containing the relevant parameters, including $\vec{M}$. Since $\dom(\vec{M})\in{\cal{F}^+}$, we may find some $M$-normal ultrafilter $U$ so that, letting $j:M\lra N$ be the ultrapower embedding, $\ka\in j(\dom(\vec{M}))$.

{First we deal with the case $k=0$. Since $\vec{M}$ witnesses that $\dot{\C}_\rho$ is not $\cal{F}$-completely proper, $N$ satisfies that the conclusion of Definition \ref{def:fkcp} fails at $\ka$ with respect to $j(\ps)$ and $j(\dot{\C}_\rho)$. However, this directly contradicts $(*)$, which we showed holds in this set-up.}

{Now we assume that $k=l+1$ is a successor. Then we may find an $N$-generic filter $G^*$ over $j(\ps)$ so that in $N[G^*]$ there is a filter $H^*$ over $\pi_{{M_\ka}[G^*]}(j(\C_\rho(\vec{\de})))=\C_\rho(\vec{\de})$ so that no condition in $j(\C_\rho(\vec{\de}))$ is a least upper bound for $\pi^{-1}_{M_\ka[G^*]}[H^*]$. We will show, on the contrary, that there is such a least upper bound for the pull-back of $H^*$.}

{Write $\vec{\de}=\la\de_0,\dots,\de_{k-1}\ra$. For simplicity of notation, we also write $\C_\rho(\vec{\de}\res k-1)=\C_{\de_{k-2}}\ast\dot{\bb{D}}$ so that  $\C_\rho(\vec{\delta})=\C_{\delta_{k-1}}\ast\left(\dot{\C}_{[\delta_{k-1},\de_{k-2})}\ast\dot{\bb{D}}\right)^{2}$. Note in the case $k=1$, we just have $\C_\rho(\vec{\de})=\C_{\de_0}\ast(\dot{\C}_{[\de_0,\rho)})^2$.
The filter $H^*$ adds generics $J_0,J_1$ over $\C_{\rho}(\vec{\delta}\uhr k-1)$ so that 
$J_{0}$ and $J_{1}$ agree on $\C_{\delta_{k-1}}$ (recall that $\de_{k-1}<\de_{k-2}$) but are mutually generic afterwards. Since $H^*$ is guided by the collapse at $\ka$, both $J_0$ and $J_1$ are guided by the collapse at $\ka$. 
Hence, our inductive assumption implies that for each $i\in 2$, $\pi^{-1}_{M_\ka[G^*]}[J_i]$ has a sup $r_i$ in $j(\C_\rho(\vec{\delta}\uhr k-1))$. By the agreement between $J_0$ and $J_1$ up to stage $\delta_{k-1}<\de_{k-2}$, we know that $r_{0}\res j(\C_{\delta_{k-1}})=r_{1}\res j(\C_{\delta_{k-1}})$. We let $\bar{r}$ denote the common value. Finally, we define $r^*$ to be the function $\bar{r}\,^\frown\la r_i\res j(\C_\rho(\vec{\delta}\uhr k-1)):i<2\ra$. Then $r^*$ is a condition in $j(\C_\rho(\vec{\delta}))$ which is a sup of $\pi^{-1}_{M_\ka[G^*]}[H^*]$.}

This completes the inductive step and the proof of the proposition.

\end{proof}

\subsection{No New Branches}\label{ss:nnb}

In this subsection, we will show that various ${\cal{F}}$-completely proper posets do not add branches through Aronszajn trees of interest. We will use this general result to show, in particular, that {tails of} the club adding poset $\C_\rho$ {do} not add any branches to trees $\dot{T}$ which are Aronszajn trees in an intermediate extension obtained by forcing with $\C_\si{\ast\dot{\S}_\si}$, for $\si<\rho$. This will ensure that the tree specializing iteration of length $\rho$ above is in fact an iteration of specializing \emph{Aronszajn} trees in the extension by $\C_\rho$, a conclusion which is essential in order to see that the specializing iteration does not collapse $\ka$.

Arguments for securing that certain posets do not add new cofinal branches to trees play a crucial role in consistency results concerning the tree property, going back to the work of Mitchell and Silver (\cite{MitchellUgh}), and Magidor and Shelah (\cite{MagidorShelah}). Lemma 6 of Unger \cite{Unger} provides such an argument with respect to closed posets and trees named by posets with reasonable chain condition, given constraints on the continuum function.
Here, we prove a version of these results, in which the relevant posets (which in practice are variants of the club-adding poset) are
$\cal{F}$-completely proper (and thus $\ka$-distributive) but not $\kappa$-closed.

The statement of the following Proposition involves (names of) posets $\dot{\Q}_1,\dot{\Q}_2$, and $\dot{\S}$. 
To relate the statement to our scenario, we suggest keeping in mind the following assignments of the posets: Fixing $\rho <\rho^*< \kappa^+$, $\dot{\Q}_1 = \dot\C_\rho$ is the $(\ps$-name) of the first $\rho$ steps of the club adding iteration, $\dot{\S} = \dot\S_\rho$ is the $\ps* \dot\C_\rho$-name of the first $\rho$ steps of the iteration specializing trees, and  $\dot{\Q}_2 = \dot\C_{[\rho,\rho^*)}$ is the $\ps* \dot\C_\rho$-name of the segment of the final iteration from (and including) stage $\rho$ to stage $\rho^*$ (i.e. $\dot{\Q}_1 * \dot{\Q}_2 = \dot\C_{\rho^*}$).

\begin{proposition}\label{prop:NNB} Suppose that $\dot{\Q}_1$ is a $\ps$-name and that $\dot{\Q}_2$ and $\dot{\S}$ are $(\ps\ast\dot{\Q}_1)$-names so that $\dot{\Q}_1\ast\dot{\Q}^2_2$ is ${\cal{F}}$-completely proper and so that $\ps\ast\dot{\Q}_1\ast\dot{\Q}^2_2$ forces that $\dot{\S}$ is $\ka$-c.c. Let $\dot{T}$ be a $(\ps\ast\dot{\Q}_1\ast\dot{\S})$-name for an Aronszajn tree on $\ka$. Then $\ps\ast\dot{\Q}_1\ast(\dot{\Q}_2\times\dot{\S})$ forces that $\dot{T}$ is an Aronszajn tree.
\end{proposition}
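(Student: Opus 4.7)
The argument proceeds by contradiction: assume some condition in $\ps * \dot{\Q}_1 * (\dot{\Q}_2 \times \dot{\S})$ forces $\dot{T}$ to acquire a new cofinal branch $\dot{b}$. Pass to a $\ps * \dot{\Q}_1$-generic extension $V_1 = V[G*H]$, so that $\dot{\Q}_2$ and $\dot{\S}$ are now posets in $V_1$; the hypothesis becomes that over $V_1$, the product $\dot{\Q}_2 \times \dot{\S}$ introduces a new cofinal branch $\dot{b}$ through the $\dot{\S}$-name $\dot{T}$. The strategy then exploits two mutually generic copies of $\dot{\Q}_2$, made available precisely by the hypothesis that $\dot{\Q}_1 * \dot{\Q}_2^2$ is $\cal{F}$-completely proper. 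Inside the extension by $\dot{\Q}_2^2 \times \dot{\S}$, interpreting $\dot{b}$ along each of the two $\dot{\Q}_2$-coordinates yields branch names $\dot{b}^L$ and $\dot{b}^R$ through $\dot{T}$. A standard splitting observation shows that, below any condition in $\dot{\Q}_2^2 \times \dot{\S}$ and any level $\al < \ka$, some extension forces $\dot{b}^L$ and $\dot{b}^R$ to disagree at a level above $\al$; otherwise, mutual genericity of the two $\dot{\Q}_2$-copies over $V_1[L]$, where $L$ is the $\dot{\S}$-generic, would place the common value into $V_1[L]$, contradicting newness.

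Working in $V_1$, fix a suitable sequence $\vec{M}$ for the relevant parameters, with residue pairs $\la p^*(M_\al), \vp^{M_\al}\ra$ for $(M_\al, \dot{\Q}_1 * \dot{\Q}_2^2)$ as supplied by Definition \ref{def:fksp} via Proposition \ref{prop:2stepSP}. Combining the splitting observation with the residue framework, in the spirit of Lemma \ref{lemma:splittingpairs}, produce for each $\al$ in an $\cal{F}$-positive set $B \subseteq \dom(\vec{M})$ a splitting datum: a condition $(q_0^\al, q_1^\al) \in \dot{\Q}_2^2$ lying in $\dom(\vp^{M_\al})$, a condition $s^\al \in \dot{\S}$, a level $\beta_\al \in [\al, \ka)$, and distinct nodes $t_0^\al, t_1^\al$ on level $\beta_\al$ of $\dot{T}$ such that $(q_i^\al, s^\al) \Vdash t_i^\al \in \dot{b}$ for $i \in \{0, 1\}$. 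A pressing-down argument using normality of $\cal{F}$ then refines $B$ so that the residues $\vp^{M_\al}((q_0^\al, q_1^\al))$ are all $=^*$-equivalent to a single condition $\bar{r}$, which ensures, as in Lemma \ref{lemma:amalgamatesplittingpair}, that for $\al < \be$ in $B$ the $\dot{\Q}_2^2$-parts $(q_0^\al, q_1^\al)$ and $(q_0^\be, q_1^\be)$ are $\dot{\Q}_2^2$-compatible.

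The culmination is to extract from this family a $\ka$-sized antichain $\{s^\al : \al \in B\}$ in $\dot{\S}$, living in $V[G*H*K_0*K_1]$, thereby contradicting the hypothesized $\ka$-c.c.\ of $\dot{\S}$ in that extension. Indeed, if $s^\al, s^\be$ were compatible, then combining a common $\dot{\S}$-extension with the amalgamated $\dot{\Q}_2^2$-part would yield a single condition forcing $t_0^\al, t_0^\be \in \dot{b}^L$ and $t_1^\al, t_1^\be \in \dot{b}^R$; since a branch of $\dot{T}$ is a chain, this would require each of $\{t_0^\al, t_0^\be\}$ and $\{t_1^\al, t_1^\be\}$ to be $\dot{T}$-comparable. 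The main obstacle of the proof is therefore to orchestrate the splitting data so that, for a $\ka$-positive subfamily of indices, at least one of these comparability requirements provably fails — a delicate coordination between the choice of splitting levels $\beta_\al$, the residue information at $M_\al$, and the tree structure of $\dot{T}$, closely paralleling the splitting-function analysis developed in Section \ref{section:specialize}. Once this incomparability is secured, pairwise incompatibility of the $s^\al$'s follows immediately and delivers the desired antichain.
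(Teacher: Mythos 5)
There is a genuine gap, and it lies exactly where you flag it: the claimed contradiction. Your endgame is to extract a $\ka$-sized antichain $\lb s^\al:\al\in B\rb$ in $\dot{\S}$ and contradict its $\ka$-c.c., but the incompatibility of the $s^\al$'s is never established (you concede the needed cross-incomparability of the nodes $t^\al_i$, $t^\be_i$ is the ``main obstacle''), and in fact this target cannot be right. The proposition is nontrivial even when $\dot{\S}$ is the trivial poset (then it just says $\dot{\Q}_2$ adds no branch to a $\ps\ast\dot{\Q}_1$-named Aronszajn tree), in which case there are no antichains of size greater than one in $\dot{\S}$ to contradict; more generally, nothing prevents all your $s^\al$ from being the trivial condition. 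In the paper the $\ka$-c.c.\ of $\dot{\S}$ is used \emph{constructively}, not as the thing to be refuted: in Lemma \ref{lemma:densesplit} one takes a maximal antichain of $\S$-conditions deciding a disagreement level and uses the chain condition to find a single $\xi<\ka$ at which the splitting of $\dot{b}_L$ and $\dot{b}_R$ is forced with \emph{trivial} $\S$-part, so that all further work happens on the $\dot{\Q}_1\ast\dot{\Q}_2^2$ side. Relatedly, your opening claim that the two mutually generic copies of $\dot{\Q}_2$ are ``made available precisely by'' $\cal{F}$-complete properness is off: product forcing always provides them; that hypothesis plays a different role entirely.

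Indeed, your proposal never uses the actual content of $\cal{F}$-complete properness, namely the existence of upper bounds for $\om_1$-sequences guided by the collapse (together with the $\ka$-distributivity of $\dot{\Q}_1$ it yields via Lemma \ref{lemma:cpimpliesdist}), and that is the engine of the correct argument. The paper's proof builds, below an embedding $j$ with $\ka\in j(\dom(\vec{M}))$, a tree of conditions $\la \dot{c}_\nu,\dot{d}_s:s\in 2^{<\om_1}\ra$ (Claim \ref{claim:split}) such that along every $f\in 2^{\om_1}$ the sequence is guided by the collapse at $\ka$, and such that distinct $s,t$ force $j(\dot{b})_L\res\ka\neq j(\dot{b})_R\res\ka$ with trivial $j(\dot{\S})$-part; complete properness then gives sups $(c^*,\dot{d}_f)$ for all $2^{\om_1}$-many $f$, and after forcing with $j(\Q_1\ast\dot{\S})$ each $d_f$ can be extended to decide a node $\al_f$ on level $\ka$ of $j(\dot{T})$, with $f\mapsto\al_f$ injective by mutual genericity. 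The contradiction is thus with the width of level $\ka$ of the Aronszajn tree $j(\dot{T})$ (it would have $j(\ka)$-many nodes), not with any chain condition of $\dot{\S}$. Your pressing-down on residues to amalgamate the $\dot{\Q}_2^2$-parts is in the right spirit but serves a strategy whose final step is both unproven and, as the trivial-$\dot{\S}$ case shows, unprovable as stated.
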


That is to say, forcing with $\dot{\Q}_2$ after $\ps\ast\dot{\Q}_1\ast\dot{\S}$ does not add branches to $\dot{T}$. 
To show this, we will follow the standard approach and show that if $\dot{\Q}_2$ were to add such a branch, then we can find some model in which a level of the tree has too many nodes.

For the rest of this subsection, we suppose for a contradiction that $\dot{b}$ is $(\ps\ast\dot{\Q}_1\ast(\dot{\Q}_2\times\dot{\S}))$-name for a branch through $\dot{T}$, where $\dot{T}$ is a $(\ps\ast\dot{\Q}_1\ast\dot{\S})$-name for an Aronszajn tree on $\ka$. In the context of working with the forcing $\R^*:=\ps\ast\dot{\Q}_1\ast(\dot{\Q}^2_2\times\dot{\S})$, for which a typical generic looks like $G\ast Q_1\ast (Q_2^L\times Q_2^R\times F)$, we will use $\dot{b}_L$ to denote the $(\ps\ast\dot{\Q}_1\ast(\dot{\Q}_2\times\dot{\S}))$-name for $\dot{b}[\dot{G}\ast{\dot{Q}}_1\ast(\dot{Q}_2^L\times \dot{F})]$, i.e., the interpretation of $\dot{b}$ using the left generic filter added by $\dot{\Q}^2_2$. $\dot{b}_R$ is defined similarly.

The next lemma will be used as a successor step in obtaining a tree of conditions forcing incompatible information about a branch.

\begin{lemma}\label{lemma:densesplit} (Under the assumptions of Proposition \ref{prop:NNB}) $\ps$ forces that for each $\dot{\Q}_1$-name $\dot{d}$ for a condition in $\dot{\Q}_2$, there is a dense, open set of $c$ in $\dot{\Q}_1$ satisfying the following property: there exist names $\dot{d}_L,\dot{d}_R$ for conditions in $\dot{\Q}_2$ and an ordinal $\xi<\ka$ so that
\begin{enumerate}
\item $c\Vdash \dot{d}_Z\geq\dot{d}$ for each $Z\in\lb L,R\rb$;
\item $\la c,\dot{d}_L,\dot{d}_R,0_{\dot{\S}}\ra\Vdash^{V[\dot{G}]}_{\dot{\Q}_1\ast(\dot{\Q}_2\times\dot{\Q}_2\times\dot{\S})}\dot{b}_L(\xi)\neq\dot{b}_R(\xi)$.
\end{enumerate}
\end{lemma}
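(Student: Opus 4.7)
The plan is to argue by contradiction and to combine the Aronszajn hypothesis on $\dot{T}$ with the product structure of the larger forcing $\R^*=\ps\ast\dot{\Q}_1\ast(\dot{\Q}_2^2\times\dot{\S})$. Suppose the lemma fails. Working in $V[\dot G]$, the failure gives a $\dot{\Q}_1$-name $\dot d$ for a condition in $\dot{\Q}_2$ and a condition $c_0\in\dot{\Q}_1$ such that for every $c\geq c_0$, every pair $\dot{d}_L,\dot{d}_R$ of $\dot{\Q}_1$-names which $c$ forces to extend $\dot d$, and every $\xi<\ka$, the condition $(c,\dot{d}_L,\dot{d}_R,0_{\dot{\S}})$ does \emph{not} force $\dot{b}_L(\xi)\neq\dot{b}_R(\xi)$.

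First I would set up the standard mutual generics framework. Take a $V[\dot G]$-generic filter $H_1\ast(H_2^L\times H_2^R)\ast F$ for $\dot{\Q}_1\ast(\dot{\Q}_2^2\times\dot{\S})$ containing $(c_0,\dot d,\dot d,0_{\dot{\S}})$. Since $\dot{\Q}_2^2\times\dot{\S}$ is a product, reordering gives the rewriting $V[\dot G][H_1][F][H_2^L\times H_2^R]$, and in the intermediate model $V[\dot G][H_1][F]$ the tree $\dot T$ is Aronszajn. The filters $H_2^L$ and $H_2^R$ are mutually $V[\dot G][H_1][F]$-generic for $\dot{\Q}_2$, so if the branches $b_L,b_R$ obtained from $\dot b$ were equal the common value would lie in
$V[\dot G][H_1][H_2^L][F]\cap V[\dot G][H_1][H_2^R][F]=V[\dot G][H_1][F]$,
contradicting that $\dot T$ is Aronszajn there. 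This being true in every such generic extension, we conclude $(c_0,\dot d,\dot d,0_{\dot{\S}})\Vdash \dot{b}_L\neq\dot{b}_R$.

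Next I would refine this global disagreement to a level-specific disagreement forced with trivial $\dot{\S}$-component, directly contradicting the failure. Letting $\dot\xi$ name the least $\xi<\ka$ with $\dot{b}_L(\xi)\neq\dot{b}_R(\xi)$, one extends $(c_0,\dot d,\dot d,0_{\dot{\S}})$ to some $(c,\dot{d}_L,\dot{d}_R,f)$ deciding $\dot\xi=\check{\xi}_0$ and hence forcing $\dot{b}_L(\xi_0)\neq\dot{b}_R(\xi_0)$. The crucial step is to arrange that $f$ can be taken to be $0_{\dot{\S}}$. For this I would invoke both the $\ka$-distributivity of $\dot{\Q}_1\ast\dot{\Q}_2^2$, which follows from Lemma \ref{lemma:cpimpliesdist} together with the $\cal F$-complete properness hypothesis, and the hypothesis that $\ps\ast\dot{\Q}_1\ast\dot{\Q}_2^2$ forces $\dot{\S}$ to be $\ka$-c.c. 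The $\ka$-c.c. of $\dot{\S}$ restricts the $\dot{\S}$-antichain deciding values at the $<\ka$-sized tree level $\xi_0$ to size $<\ka$, and the distributivity of $\dot{\Q}_1\ast\dot{\Q}_2^2$ allows these antichain decisions to be absorbed into the $\dot{\Q}_1\ast\dot{\Q}_2^2$-part, producing a condition $(c^*,\dot{d}_L^*,\dot{d}_R^*,0_{\dot{\S}})$ forcing a level-specific disagreement and thereby contradicting the failure.

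The hardest step is precisely this final extraction. Because $\dot T$ itself depends on $\dot{\S}$, the branch-values $\dot{b}_L(\xi),\dot{b}_R(\xi)$ at a level have genuinely $\dot{\S}$-dependent supports, so the $0_{\dot{\S}}$-condition must \emph{uniformly} force inequality across all $\dot{\S}$-extensions while fixing a single level. Engineering this uniform disagreement via the interplay of the $\ka$-c.c. of $\dot{\S}$ and the $\ka$-distributivity of $\dot{\Q}_1\ast\dot{\Q}_2^2$ — essentially, choosing $c^*,\dot{d}_L^*,\dot{d}_R^*$ so that the $<\ka$ many possible $\dot{\S}$-outcomes all lie in the "inequality" side of the decomposition — is the main technical content, and it is the step I expect to require the most care.
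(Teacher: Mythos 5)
Your first half is essentially the paper's argument: the product/mutual-genericity computation showing that $V[G\ast Q_1\ast(F\times Q_2^L)]\cap V[G\ast Q_1\ast(F\times Q_2^R)]=V[G\ast Q_1\ast F]$, together with $\dot{T}$ being Aronszajn in that intermediate model, does yield that $(c_0,\dot{d},\dot{d},0_{\dot\S})$ forces $\dot{b}_L\neq\dot{b}_R$. The gap is in the step you yourself flag as the crux. You fix $\xi_0$ by deciding the least level of disagreement with a condition $(c,\dot{d}_L,\dot{d}_R,f)$ whose $\S$-part $f$ is nontrivial, and then hope to replace $f$ by $0_{\dot\S}$ by ``absorbing'' $\S$-antichain decisions into the $\dot\Q_1\ast\dot\Q_2^2$-part via its $\ka$-distributivity. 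This mechanism does not exist: for fixed $\Q_1$- and $\Q_2^2$-generics, the tree $T$ (and hence both branches and the location of their first disagreement) genuinely varies with the $\S$-generic, since $\dot{T}$ is a $(\ps\ast\dot\Q_1\ast\dot\S)$-name; so the particular $\xi_0$ you extracted under $f$ need not be a level of disagreement under other $\S$-conditions, and no extension on the $\Q_1\ast\Q_2^2$ side (nor any appeal to its distributivity, which only says it adds no short sequences of ordinals) can force a statement whose truth depends on which $\S$-generic is chosen. Distributivity of $\dot\Q_1\ast\dot\Q_2^2$ plays no role in this lemma.

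The correct way to finish, and the paper's route, is to work over $V[G\ast Q_1\ast(Q_2^L\times Q_2^R)]$ with the $\Q_1\ast\Q_2^2$-part made generic, and to use two further ingredients you omit: (i) disagreement between two branches of a tree persists upward, so any $g\in\S$ forcing $\dot{b}_L(\zeta_g)\neq\dot{b}_R(\zeta_g)$ also forces disagreement at every level $\geq\zeta_g$; and (ii) by the hypothesis that $\ps\ast\dot\Q_1\ast\dot\Q_2^2$ forces $\dot\S$ to be $\ka$-c.c., a maximal antichain $A$ of such $g$ has size $<\ka$, so $\xi:=\sup_{g\in A}\zeta_g<\ka$ and every element of $A$ forces disagreement at $\xi$; hence $0_{\S}$ forces $\dot{b}_L(\xi)\neq\dot{b}_R(\xi)$ over that model. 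One then pulls this back to a single condition $(c^*,\dot{d}_L,\dot{d}_R)\geq(c,\dot{d},\dot{d})$ in $\Q_1\ast\Q_2^2$ forcing it, which is exactly the member of the dense set. So your proposal needs to be repaired at this point: replace the ``decide the least level, then strip the $\S$-part'' plan by the antichain-plus-upward-persistence argument, taking the supremum of the $<\ka$ many levels rather than a single generically chosen one.
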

\begin{proof} We work in $V[G]$. Fix $c\in\Q_1$ and a $\Q_1$-name $\dot{d}$ for a condition in $\dot{\Q}_2$. Let $Q_1$ be $V[G]$-generic over $\Q_1$ containing $c$, and let $Q_2^L\times Q_2^R$ be $V[G\ast Q_1]$-generic over $\Q_2^2$ containing $(d,d)$.

We first claim that $0_{\S}$ forces over $V[G\ast Q_1\ast (Q_2^L\times Q_2^R)]$ that $\dot{b}_L\neq\dot{b}_R$. Thus let $F$ be an arbitrary $V[G\ast Q_1\ast (Q_2^L\times Q_2^R)]$-generic filter for $\S$. Since $\S$ and $\Q_2^2$ both live in $V[G\ast Q_1]$, the product lemma implies that $Q_2^L\times Q_2^R$ is $V[G\ast Q_1\ast F]$-generic over $\Q_2^2$. Since $Q_2^L$ and $Q_2^R$ are mutually $V[G\ast Q_1\ast F]$-generic filters over $\Q_2$, we conclude that
$$
V[G\ast Q_1\ast F]=V[G\ast Q_1\ast (F\times Q_2^L)]\cap V[G\ast Q_1\ast (F\times Q_2^R)].
$$
Therefore, if $b:=b_L=b_R$, then $b$ is in $V[G\ast Q_1\ast F]$, and therefore $T$ is not an Aronszajn tree in that model, a contradiction.

We now claim that there is an ordinal $\xi$ so that $0_\S$ forces over $V[G\ast Q_1\ast (Q_2^L\times Q_2^R)]$ that $\dot{b}_L(\xi)\neq\dot{b}_R(\xi)$. Let $A\seq\S$ be a maximal antichain in $V[G\ast Q_1\ast (Q_2^L\times Q_2^R)]$ consisting of conditions $g\in\S$ so that for some $\zeta_g<\ka$,
$$
g\Vdash^{V[G\ast Q_1\ast (Q_2^L\times Q_2^R)]}_{\S}\dot{b}_L(\zeta_g)\neq\dot{b}_R(\zeta_g).
$$
Because $\dot{b}_L$ and $\dot{b}_R$ name branches in $\dot{T}$, we see that for any $g\in A$ and $\zeta\geq\zeta_g$, $g$ forces that $\dot{b}_L(\zeta)\neq\dot{b}_R(\zeta)$. Since $\S$ is still $\ka$-c.c. after forcing to add $Q_2^L\times Q_2^R$, we know that $A$ has size $<\ka$ in $V[G\ast Q_1\ast (Q_2^L\times Q_2^R)]$. Therefore, letting $\xi:=\sup_{g\in A}\zeta_g$, $\xi<\ka$. Then $\xi$ witnesses the claim: indeed, if $f\in\S$ is any condition, we may extend it to $f^*$ so that $f^*$ is above some $g\in A$. By the remarks above and since $\xi\geq\zeta_g$, we know that $f^*\Vdash\dot{b}_L(\xi)\neq\dot{b}_R(\xi)$, completing the proof of the second claim.

Since $(c,\dot{d},\dot{d})\in Q_1\ast (Q_2^L\times Q_2^R)$, we may find an extension $(c^*,\dot{d}_L,\dot{d}_R)$ of $(c,\dot{d},\dot{d})$ as well as an ordinal $\xi<\ka$ so that $(c^*,\dot{d}_L,\dot{d}_R)$ forces that $0_{\dot{\S}}$ forces that $\dot{b}_L(\xi)\neq\dot{b}_R(\xi)$. Then $c^*\geq c$ is in the desired dense set.
\end{proof}

For the rest of the subsection, let $\vec{M}=\la M_\al:\al\in B\ra$ be a sequence which is suitable with respect to all parameters of interest. Since $\dot{\Q}_1\ast\dot{\Q}^2_2$ is $\cal{F}$-completely proper, which implies that $\dot{\Q}_1\ast\dot{\Q}_2$ is $\cal{F}$-completely proper, we may assume that $\dom(\vec{M})$ satisfies the conclusion of Definition \ref{def:fkcp} with respect to $\dot{\Q}_1\ast\dot{\Q}^2_2$ and with respect to $\dot{\Q}_1\ast\dot{\Q}_2$.

{Fix $M^*\prec H(\ka^{++})$, where $M^*$ has size $\ka$, is closed under $<\ka$-sequences, and contains $\vec{M}$ as well as $\lhd$ from Notation \ref{notation:wo} as an element.}
{Let $M$ denote the transitive collapse of $M^*$, so that $M$ is a $\ka$-model. Since $\dom(\vec{M})\in\cal{F}^+$, we may find an $M$-normal ultrafilter $U$ so that, letting $j:M\lra N$ be the associated ultrapower map, $\ka\in j(\dom(\vec{M}))$. As usual, we use $M_\ka$ to denote $j(\vec{M})(\ka)$.} The following claim shows that we can build the desired tree of conditions forcing incompatible information about the branch.

\begin{claim}\label{claim:split} $j(\ps)$ forces over $N$ that there exist sequences $\la \dot{c}_\nu:\nu<\om_1\ra$ and $\la\dot{d}_s:s\in 2^{<\om_1}\ra$ so that the following properties hold:
\begin{enumerate}
\item for each $f\in (2^{\om_1})^{N[\dot{G}_{j(\ps)}]}$, $\la (\dot{c}_\nu,\dot{d}_{f\res\nu}):\nu<\om_1\ra$ {is an increasing sequence of conditions in $\dot\Q_1\ast\dot{\Q}_2$ which is guided by the collapse at $\ka$ (see Definition \ref{def:genbycoll});}
\item if $s\neq t$ are in $2^\nu$ for some $\nu<\om_1$, then 
$$
j(\la \dot{c}_\nu,\dot{d}_s,\dot{d}_t,0_{\dot{\S}}\ra)\Vdash^{N[\dot{G}_{j(\ps)}]}_{j\left(\dot\Q_1\ast(\dot{\Q}_2\times\dot{\Q}_2\times\dot{\S})\right)} j(\dot{b})_L\res\ka\neq j(\dot{b})_R\res\ka.
$$
\end{enumerate}
\end{claim}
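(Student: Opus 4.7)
The plan is to construct the sequences by recursion on $\nu<\om_1$ in $N[\dot{G}_{j(\ps)}]$, beginning with $\dot{c}_0$ and $\dot{d}_\emptyset$ taken as trivial conditions. At each successor stage $\nu \to \nu+1$, we apply Lemma~\ref{lemma:densesplit} in $N[\dot{G}_{j(\ps)}]$ separately to each name $\dot{d}_s$, $s \in 2^\nu$, obtaining a dense open subset $D_s \subseteq \dot{\Q}_1$ of conditions $c \geq \dot{c}_\nu$ above which there exist names $\dot{d}^s_0, \dot{d}^s_1$ extending $\dot{d}_s$ in $\dot{\Q}_2$ and an ordinal $\xi_s < \ka$ with $(c, \dot{d}^s_0, \dot{d}^s_1, 0_{\dot{\S}})$ forcing $\dot{b}_L(\xi_s) \neq \dot{b}_R(\xi_s)$. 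Since $\dot{\Q}_1$ is $\ka$-distributive in the $\ps$-extension, which follows from $\cal{F}$-complete properness of $\dot{\Q}_1 \ast \dot{\Q}_2^2$ via Lemma~\ref{lemma:cpimpliesdist}, and since $|2^\nu| \leq \om_1 < \ka$, the intersection $\bigcap_{s \in 2^\nu} D_s$ is dense in $\dot{\Q}_1/\dot{c}_\nu$. We pick $\dot{c}_{\nu+1}$ in this intersection, and for each $s$, set $\dot{d}_{s^\frown 0} := \dot{d}^s_0$ and $\dot{d}_{s^\frown 1} := \dot{d}^s_1$. At limit stages, we take sups using the $\om_1$-closed-with-sups property of $\dot{\Q}_1$ and $\dot{\Q}_2$. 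To ensure that for each branch $f \in (2^{\om_1})^{N[\dot{G}_{j(\ps)}]}$ the sequence $\la (\dot{c}_\nu, \dot{d}_{f\res\nu}) : \nu<\om_1\ra$ is guided by the collapse at $\ka$, the choice of $\dot{c}_{\nu+1}$ in the intersection and the selection of the splitting triples are arranged using the generic collapse surjection $\dot{f}_\ka : \om_1 \to \ka$ added by $j(\ps)$, following the recipe of Lemma~\ref{lemma:gettingthefilters}.

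For clause (2), distinct $s, t \in 2^\nu$ first disagree at some $\mu < \nu$; writing $r = s\res\mu = t\res\mu$ and without loss of generality $s\res(\mu+1) = r^\frown 0$, $t\res(\mu+1) = r^\frown 1$, the stage-$(\mu+1)$ construction yields $\xi_r < \ka$ with $(\dot{c}_{\mu+1}, \dot{d}_{r^\frown 0}, \dot{d}_{r^\frown 1}, 0_{\dot{\S}})$ forcing $\dot{b}_L(\xi_r) \neq \dot{b}_R(\xi_r)$. Since $\dot{c}_\nu \geq \dot{c}_{\mu+1}$, $\dot{d}_s \geq \dot{d}_{r^\frown 0}$, and $\dot{d}_t \geq \dot{d}_{r^\frown 1}$, and since $\dot{b}_L, \dot{b}_R$ name branches through $\dot{T}$, this disagreement persists under $(\dot{c}_\nu, \dot{d}_s, \dot{d}_t, 0_{\dot{\S}})$; applying $j$, which fixes $\xi_r < \ka$, yields the desired assertion for $j(\dot{b})_L \res \ka$ and $j(\dot{b})_R \res \ka$.

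The main obstacle is handling the potentially $\om_1$-many nodes of $2^\nu$ at the successor step, since naive dovetailing is unavailable -- $\dot{\Q}_1$ is only $\om_1$-closed with sups, not $\om_2$-closed. The resolution is to invoke the stronger $\ka$-distributivity of $\dot{\Q}_1$ provided by $\cal{F}$-complete properness, which enables the simultaneous use of all $<\ka$-many relevant dense open sets in one extension. A secondary subtlety -- organizing the selections at each stage so that Definition~\ref{def:filtersfromcollapse} is satisfied along every branch $f$ -- is handled by coding extensions through the generic collapse surjection, as in Lemma~\ref{lemma:gettingthefilters}.
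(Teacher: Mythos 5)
Your proposal is correct and follows essentially the same route as the paper: a recursion with trivial base, sups at limits, a successor step that applies Lemma \ref{lemma:densesplit} to each $s\in 2^\nu$ and meets all the dense open sets $D_s$ at once using the dense-intersection property coming from $\cal{F}$-complete properness (Lemma \ref{lemma:cpimpliesdist}), guidedness via the Lemma \ref{lemma:gettingthefilters} mechanism, and clause (2) obtained by tracing back to the level of first disagreement and applying $j$, which fixes the splitting ordinal below $\ka$. The only part you leave schematic---arranging guidedness along every branch simultaneously---is exactly what the paper also does via that recipe, namely absorbing the $f_\ka(\nu)$-th condition of $\Q_1\ast\dot\Q_2$ before splitting (noting it is relevant to at most one $s$, since the $\dot d_s$ form an antichain) and making $\lhd$-minimal choices to get definability from $f_\ka\res\nu$.
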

\begin{proof} {The definition is by recursion. Let $G^*$ be an arbitrary $V$-generic over $j(\ps)$, and let $G=G^*\cap\ps$. Let $(c_0,\dot{d}_0)$ be the trivial condition in $\Q_1\ast\dot{\Q}_2$. In order to show that the desired sequences generate filters which are guided by the collapse at $\ka$ (which in turn will guarantee that they have upper bounds), we will show that (1)-(3) of Lemma \ref{lemma:gettingthefilters} are satisfied. In particular, to secure (3) of that lemma, throughout the construction we will select objects which are minimal according to the fixed well-order $\lhd$ of $H(\ka^+)$. We remark that the entire construction takes place in $N[G^*]$, but the proper initial segments can be carried out in $M[G]$ using a proper initial segment of $f_\ka$, the standard surjection from $\ka$ onto $\om_1$ added by $G^*$.}

Suppose that $\nu$ is a limit and that for all $\mu<\nu$ and all $s\in 2^\mu$, we have defined $c_\mu$ and $\dot{d}_s$. {Then we let $\dot{c}_\nu$ be the $\lhd$-least $\ps$-name for a condition in $\dot{\Q}_1$ so that $c_\nu:=\dot{c}_\nu[G]$ is a sup of $\la c_\mu:\mu<\nu\ra$. Similarly, for $s\in 2^\nu$, we let $\dot{d}_s$ be a $\Q_1$-name forced to be a sup of $\la\dot{d}_{s\res\mu}:\mu<\nu\ra$ so that a $\ps$-name for $\dot{d}_s$ is $\lhd$-minimal.} Note that item (2) in the claim still holds since if $s\neq t$ are in $2^\nu$, then there exists some $\mu<\nu$ so that $s\res\mu\neq t\res\mu$. {So $j(\la c_\mu,\dot{d}_{s\res\mu},\dot{d}_{t\res\mu},0_{\dot{\S}}\ra)$ forces that $j(\dot{b})_L\res\ka\neq j(\dot{b})_R\res\ka$. Hence the extension $j(\la c_\nu,\dot{d}_s,\dot{d}_t,0_{\dot{\S}}\ra)$ also forces this.}


Now for the successor step. Suppose that we have defined $c_\nu$ and $\dot{d}_s$ for all $s\in 2^\nu$. In order to ensure that the assumptions of Lemma \ref{lemma:gettingthefilters} are satisfied, and thereby ensure that the sequences are guided by the collapse at $\ka$ (which in turn will guarantee they have an upper bound), we will first define an auxiliary extension $c^*_\nu\geq c_\nu$ and for each $s\in 2^\nu$, a $\Q_1$-name $\dot{d}^*_s$ forced by $c^*_\nu$ to extend $\dot{d}_s$. Towards this end, let $\ga:=f_\ka(\nu)$, where $f_\ka$ is the standard surjection added by $G^*$ from $\om_1$ onto $\ka$. {Let $u_\ga$ be the $\ga$-th condition in $\Q_1 \ast \dot{\Q}_2$}, and write $u_\ga$ as $\la c^\ga,\dot{d}^\ga\ra$. If $c^\ga$ does not extend $c_\nu$ in {$\Q_1$}, set $c^*_\nu=c_\nu$ and $\dot{d}^*_s=\dot{d}_s$. On the other hand, if $c^\ga\geq c_\nu$, we set $c^*_\nu=c^\ga$. Then, given $s\in 2^\nu$, if $c^*_\nu\Vdash\dot{d}^\ga\geq\dot{d}_s$, we set $\dot{d}^*_s=\dot{d}^\ga$, and otherwise we set $\dot{d}^*_s=\dot{d}_s$. Note that there is at most one $s$ that falls into the first of these, since $c^*_\nu\Vdash\lb\dot{d}_s:s\in 2^\nu\rb$ is an antichain in {$\dot{\Q}_2$}.

Now we move to defining $c_{\nu+1}$ and $\dot{d}_t$ for all $t\in 2^{\nu+1}$. By Lemma \ref{lemma:densesplit}, for each $s\in 2^\nu$, the set $D_s$ of all $c\in {\Q_1}$ for which there exist names $\dot{d}_L$ and $\dot{d}_R$ and an ordinal $\xi<\ka$ satisfying
\begin{enumerate}
\item[(i)] $c\Vdash\dot{d}_Z\geq\dot{d}_s$ for each $Z\in\lb L,R\rb$; and
\item[(ii)] $\la c,\dot{d}_L,\dot{d}_R,0_{\dot{\S}}\ra\Vdash {\dot{b}_L(\xi)\neq \dot{b}_R(\xi)}$
\end{enumerate}
{is dense, open in $\Q_{1}$. By Lemma \ref{lemma:cpimpliesdist} applied to $\Q_1$, there exists an extension of $c_\nu$ inside $\bigcap_{s\in 2^\nu}D_s$. Let $\dot{c}_{\nu+1}$ be the $\lhd$-minimal $\ps$-name so that $c_{\nu+1}:=\dot{c}_{\nu+1}[G]$ is such an extension. For each $s\in 2^\nu$, we may find an ordinal $\xi_s<\ka$ and $\Q_1$-names $\dot{d}_{s^\frown\la 0\ra}$ and $\dot{d}_{s^\frown\la 1\ra}$ so that $c_{\nu+1}\Vdash\dot{d}_{s^\frown\la i\ra}\geq\dot{d}_s$, so that $\la c_{\nu+1},\dot{d}_{s^\frown\la 0\ra},\dot{d}_{s^\frown\la 1\ra},0_{\dot{\S}}\ra$ forces that $\dot{b}_L(\xi_s)\neq \dot{b}_R(\xi_s)$, and so that $\ps$-names for $\dot{d}_{s^\frown\la 0\ra}$ and $\dot{d}_{s^\frown\la 1\ra}$ are $\lhd$-minimal. Applying $j$ to this statement, we secure (2) of the claim. This completes the proof.}
\end{proof}

Now that we have proven the above claim, we can finish the proof of Proposition \ref{prop:NNB}.

\begin{proof} (of Proposition \ref{prop:NNB}) Let $G^*$ be $N$-generic over $j(\ps)$, let $G:=G^*\cap\ps$, and fix sequences $\la c^{{0}}_\nu:\nu<\om_1\ra$ and $\la\dot{d}^{{0}}_s:s\in 2^{<\om_1}\ra$ satisfying Claim \ref{claim:split}. {Let $\la c_\nu:\nu<\om_1\ra$ and $\la\dot{d}_s:s\in 2^{<\om_1}\ra$ denote the sequences of their $j$-images.} For each $f\in (2^{\om_1})^{N[G^*]}$, $\la {(c^0_\nu,\dot{d}^0_{f\res\nu})}:\nu<\om_1\ra$ is guided by the collapse at $\ka$. {Moreover, $\dom(\vec{M})$ satisfies Definition \ref{def:fkcp}, and $\ka\in j(\dom(\vec{M}))$. Since $j(\Q_1\ast\dot{\Q}_2)$ is $j(\cal{F})$}-completely proper, we may find a condition $(c^*,\dot{d}_f)$ which is a sup in $j(\Q_1\ast\dot{\Q}_2)$ of $\la (c_\nu,\dot{d}_{f\res\nu}):\nu<\om_1\ra$ (note that $c^*$ is independent of $f$ since any two sups are $=^*$-equal). By {item (2) of the previous claim}, we know that if $f\neq g$ are in $(2^{\om_1})\cap N[G^*]$, then $\la c^*,\dot{d}_f,\dot{d}_g,0_{j(\dot{\S})}\ra$ forces that $j(\dot{b})_L\res\ka\neq j(\dot{b})_R\res\ka$.

Now let $Q_1^*\ast F^*$ be $N[G^*]$-generic over $j(\Q_1\ast\dot{\S})$ with $Q_1^*$ containing $c^*$. Applying {item (2) of the previous claim again}, we know that if $f\neq g$ are in $(2^{\om_1})\cap N[G^*]$, then $\la d_f,d_g\ra$ forces in $j(\Q^2_2)$ that $j(\dot{b})_L\res\ka\neq j(\dot{b})_R\res\ka$. We note here that the tree of interest, namely $T^*:=j(\dot{T})[G^*\ast Q_1^*\ast F^*]$, is a member of $N[G^*\ast Q_1^*\ast F^*]$, i.e., exists prior to forcing with $j(\Q_2^2)$.

For each $f\in (2^{\om_1})\cap N[G^*]$, let $d^*_f$ be an extension of $d_f$ in $j(\Q_2)$ which decides the value of $j(\dot{b})(\ka)$, say as $\al_f$. We claim that if $f\neq g$ are in $(2^{\om_1})\cap N[G^*]$, then $\al_f\neq\al_g$. Indeed, suppose for a contradiction that there were $f\neq g$ with $\al_f=\al_g$. Then force in $j(\Q^2_2)$ above the condition $\la d^*_f,d^*_g\ra$ to obtain a pair $\bar{Q}_2^L\times \bar{Q}_2^R$ of mutually generic filters for $j(\Q_2)$. Since $\al_f=\al_g$, the branch of $T^*$ below $\al_f$ is the same as the branch of $T^*$ below $\al_g$. But the branch of $T^*$ below $\al_f$ equals $(j(\dot{b})[\bar{Q}_2^L])\res\ka$ and the branch of $T^*$ below $\al_g$ equals $(j(\dot{b})[\bar{Q}_2^R])\res\ka$, contradicting the fact that $\la d^*_f,d^*_g\ra$ forces that the interpretations diverge below $\ka$.

Since $(2^{\om_1})\cap N[G^*]$ has size $j(\ka)$ in $N[G^*]$ and $j(\Q_2\ast\dot{\S})$ preserves $j(\ka)$, this set still has size $j(\ka)$ in $N[G^*\ast Q_1^*\ast F^*]$. Thus in the model $N[G^*\ast Q_1^*\ast F^*]$, the function taking $f\in (2^{\om_1})\cap N[G^*]$ to $\al_f$ is an injection. Therefore level $\ka$ of $T^*$ has size $j(\ka)$ which contradicts the fact that $j(\ka)$ is $\aleph_2$ in $N[G^*\ast Q_1^*\ast F^*]$ and that  $T^*$ is an Aronszajn tree on $j(\ka)$.
\end{proof}

\section{Putting it all Together}\label{section:final}

Up to this point in the paper, we have worked to establish a number of isolated results. In this section, we will now define the poset which will witness Theorem \ref{thm:main}. Each of the previous sections will function as a component in the inductive verification that this poset has the desired properties.

We recall that $\ps$ denotes $\col(\om_1,<\ka)$, the Levy collapse of the {ineffable} cardinal $\ka$. We define a $\ps$-name $\dot{\C}_{\ka^+}$ for a $\ka^+$-length iteration adding clubs, and we also define a $(\ps\ast\dot{\C}_{\ka^+})$-name $\dot{\S}_{\ka^+}$ for an iteration which specializes Aronszajn trees. This is done in such a way that for all $\rho<\ka^+$, the $(\ps\ast\dot{\C}_{\ka^+})$-name $\dot{\S}_\rho$ for the first $\rho$-stages of $\dot{\S}_{\ka^+}$ is actually a $(\ps\ast\dot{\C}_\rho)$-name.

More precisely, we define by recursion on $\rho\leq\ka^+$ the names $\dot{\C}_\rho$ and $\dot{\S}_\rho$. Suppose that $\rho=\rho_0+1$ is a successor and that $\dot{\C}_{\rho_0}$ and $\dot{\S}_{\rho_0}$ are both defined. Using the fixed well-order $\lhd$ from Notation \ref{notation:wo} as a bookkeeping device, we select a $(\ps\ast\dot{\C}_{\rho_0}\ast\dot{\S}_{\rho_0})$-name $\dot{S}_{\rho_0}$ for a stationary subset of $\ka\cap\cof(\om)$, and we define $\dot{\C}_\rho:=\dot{\C}_{\rho_0}\ast\mathsf{CU}(\dot{S}_{\rho_0},\dot{\S}_{\rho_0})$; see Definition \ref{def:clubaddingposet}. Next, we use $\lhd$ to select a $(\ps\ast\dot{\C}_{\rho}\ast\dot{\S}_{\rho_0})$-name $\dot{T}_{\rho_0}$ for an Aronszajn tree on $\ka$, and we set $\dot{\S}_\rho$ to be the $(\ps\ast\dot{\C}_\rho)$-name for $\dot{\S}_{\rho_0}\ast\S(\dot{T}_{\rho_0})$; see Definition \ref{def:specposet}.

Now suppose that $\rho$ is a limit and that we have defined the sequences $\la\dot{\C}_\xi,\dot{\C}(\xi):\xi<\rho\ra$ and $\la\dot{\S}_\xi,\dot{\S}(\xi):\xi<\rho\ra$. We first let $\dot{\C}_\rho$ be the $<\ka$-support limit of $\la\dot{\C}_\xi,\dot{\C}(\xi):\xi<\rho\ra$. Second, we see that $\ps\ast\dot{\C}_\rho$ forces that $\la\dot{\S}_\xi,\dot{\S}(\xi):\xi<\rho\ra$ names an iteration with countable support: by induction, if $\xi<\rho$ is a limit, then $\dot{\S}_\xi$ is the $(\ps\ast\dot{\C}_\xi)$-name for the countable support limit of $\la\dot{\S}_\zeta,\dot{\S}(\zeta):\zeta<\xi\ra$. But $\dot{\C}_\rho$ is $\om_1$-closed, and consequently, the countable support limit of $\la\dot{\S}_\zeta,\dot{\S}(\zeta):\zeta<\xi\ra$ is the same in both the extension by $\ps\ast\dot{\C}_\xi$ and the extension by $\ps\ast\dot{\C}_\rho$. In light of this, we let $\dot{\S}_\rho$ denote the countable support limit of $\la\dot{\S}_\xi,\dot{\S}(\xi):\xi<\rho\ra$, noting that this is an $\om_1$-closed poset in the extension by $\ps\ast\dot{\C}_\rho$.
This completes the definitions of the names. 
We may now define $\R^*:=\ps\ast\dot{\C}_{\ka^+}\ast\dot{\S}_{\ka^+}$.

We begin our analysis of $\R^*$ with some simple remarks. First, $\R^*$ is $\om_1$-closed, since all the posets under consideration are (and since our iterations were taken with supports which are at least countable). Additionally, $\R^*$ is $\ka^+$-c.c. Indeed, $\ps$ trivially is. Furthermore, $\ka^{<\ka}=\ka$ after forcing with $\ps$, and so if $\be<\ka^+$, $\dot{\C}_\be$ is forced to be a poset of size $\ka$. Since direct limits in the iteration $\dot{\C}_{\ka^+}$ are taken at all stages in $\ka^+\cap\cof(\ka)$, standard arguments (e.g., see \cite{BaumgartnerIF}) show that $\dot{\C}_{\ka^+}$ is $\ka^+$-c.c. Finally, since for every $\be<\ka^+$, $\dot{\S}_\be$ is forced to have size $\ka$ by $\ps\ast\dot{\C}_{\ka^+}$, and since $\dot{\S}_{\ka^+}$ is taken with countable supports, a standard $\Delta$-System argument shows that $\dot{\S}_{\ka^+}$ is $\ka^+$-c.c.

We next claim that if $\R^*$ preserves $\ka$, then it forces all Aronszajn trees on $\ka$ are special, that such trees exist, and that every stationary subset $S\subseteq \kappa \cap \cof(\omega)$ reflects on every ordinal of cofinality $\omega_1$ in some closed unbounded subset on $\kappa$. 
First, suppose that $\dot{T}$ is an $\R^*$-name for an Aronszajn tree on $\ka$. Because $\R^*$ is $\ka^+$-c.c., $\dot{T}$ is an $(\RR{\ga}{\ga})$-name for some
$\ga<\ka^+$, and hence names an Aronszajn tree in any extension between that given by $\RR{\ga}{\ga}$ and the full $\R^*$-extension. By our bookkeeping device, there is some $\de\geq\ga$ so that $\dot{\S}(\de)$ is forced by $\RR{\de+1}{\de}$ to equal $\dot{\S}(\dot{T})$. Hence $\R^*$ forces that $\dot{T}$ is special. Similarly, if $\dot{S}$ is an $\R^*$-name for a stationary subset of $\ka\cap\cof(\om)$, then there is some $\al<\ka^+$ so that $\dot{S}$ is a $(\RR{\al}{\al})$-name, and hence there is some $\be\geq\al$ so that $\dot{\C}(\be)$ is forced by $\ps*\dot\C_{\be}$ to equal $\mathsf{CU}(\dot{S},\dot{\S}_\be)$. Thus in the extension by $\RR{\be+1}{\be}$, $\dot{S}$ reflects almost everywhere, and since the forcing to complete $\RR{\be+1}{\be}$ to $\R^*$ is $\om_1$-closed, $\dot{S}$ still reflects almost everywhere in the full $\R^*$-extension.

As a result of the previous discussion, we see that in order to show that $\R^*$ witnesses Theorem \ref{thm:main}, we need to prove that $\R^*$ preserves $\ka$. To achieve this we verify that (i) $\ps$ forces $\dot\C_{\ka^+}$ is $\ka$-distributive, and that (ii) $\ps* \dot\C_{\ka^+}$ forces that $\dot\S_{\ka^+}$ is $\ka$-c.c.

To this end, we consider the following simplifications. 
First, concerning (i), we note that since $\dot\C_{\ka^+}$ is forced to be $\ka^+$-c.c, it is sufficient to verify that $\dot\C_\rho$ is forced to be $\ka$-distributive for all $\rho < \ka^+$ to show that (i) holds. 
We will use Lemma \ref{lemma:cpimpliesdist} to verify this, by proving that 
for every $\rho<\ka^+$, $\dot{\C}_\rho$ is {$\cal{F}$}-completely proper.

Second, concerning (ii), since $\dot\S_{\ka^+}$ names a countable support iteration, any $\ka$-sized antichain would witness that some proper initial segment $\dot\S_{\ga}$ is not $\ka$-c.c.
Therefore, it is sufficient to verify that $\ps*\dot\C_{\ka^+}$ forces $\dot\S_\ga$ is $\ka$-c.c for every $\ga < \ka^+$. Howover, 
as $\dot\C_{\ka^+}$ names a $\kappa^+$-{cc} poset, every $(\ps*\dot\C_{\ka^+})$-name for a subset of $\dot\S_{\ga}$ of size $\ka$ is equivalent to a $(\ps*\dot\C_{\rho})$-name, for some $\rho \geq \ga$. 
Clearly, if $\ps*\dot\C_{\rho}$ forces $\dot\S_{\ga}$ fails to satisfy the $\ka$-c.c for some $\ga \leq \rho$, then it forces $\dot\S_\rho$ is not $\ka$-c.c.
We conclude that (ii) follows from the assertion that  for every $\rho < \ka^+$,  $\ps*\dot\C_\rho$ forces that $\dot\S_\rho$ is $\ka$-c.c.

Combining the two simplifications, it remains to prove the next claim.
\begin{claim}\label{claim:InMainTHM}
The following holds for every $\rho<\ka^+$:
\begin{enumerate}
    \item $\dot{\C}_\rho$ is {$\cal{F}$}-completely proper, and
    \item $\ps*\dot\C_\rho$ forces $\dot\S_\rho$ is $\ka$-c.c.
\end{enumerate}
\end{claim}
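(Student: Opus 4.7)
The plan is to prove the claim by induction on $\rho < \ka^+$, strengthening the hypothesis to include the auxiliary statement $(\dagger)_\rho$: for every $\si < \rho$ and every strictly decreasing tuple $\vec{\de}$ of ordinals in the interval $(\si+1, \rho)$, the poset $\ps \ast \dot{\C}_\rho(\vec{\de}) \ast \dot{\S}_\si$ forces that $\dot{T}_\si$ is Aronszajn. Taking $\vec{\de} = \es$ in $(\dagger)_\rho$ gives that in the $\ps \ast \dot{\C}_\rho$-extension, $\dot{\S}_\rho$ is a genuine countable-support iteration of specializing Aronszajn trees, which is the precise input needed for Corollary \ref{cor:IH1and2}. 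The base case $\rho = 0$ is vacuous.

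At the inductive step, assume Claim \ref{claim:InMainTHM} and $(\dagger)_{\rho'}$ have been established for all $\rho' < \rho$. First, the assumptions (1)--(3) at the beginning of Section \ref{section:clubscompletelyproper} hold by construction at $\rho$, and assumption (4) is the conjunction of Claim \ref{claim:InMainTHM}(1) at stages $\si < \rho$ and $(\dagger)_\si$ (at $\vec{\de} = \es$). Proposition \ref{prop:clubsareCP} then delivers part (1) of Claim \ref{claim:InMainTHM} at $\rho$: every doubling tail product $\dot{\C}_\rho(\vec{\de})$ is $\cal{F}$-completely proper, and by Proposition \ref{prop:2stepSP}, $\ps \ast \dot{\C}_\rho(\vec{\de})$ is $\cal{F}$-strongly proper for every $\vec{\de} \in [\rho]^{<\om}_{\operatorname{dec}}$.

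To verify $(\dagger)_\rho$, I sub-induct on $\si < \rho$, proving at each $\si$ the statement for all admissible $\vec{\de}$. Fix such a pair $(\si, \vec{\de})$ and apply Proposition \ref{prop:NNB} with $\dot{\Q}_1 := \dot{\C}_{\si+1}$, $\dot{\S} := \dot{\S}_\si$, $\dot{T} := \dot{T}_\si$, and $\dot{\Q}_2$ the complementary factor determined by $\dot{\Q}_1 \ast \dot{\Q}_2 = \dot{\C}_\rho(\vec{\de})$. A direct unfolding of the recursive definition of doubling tail products yields $\dot{\Q}_1 \ast \dot{\Q}_2^2 = \dot{\C}_\rho(\vec{\de}^\frown \la \si+1 \ra)$, which is $\cal{F}$-completely proper by the previous step (and the concatenated tuple is strictly decreasing because $\min(\vec{\de}) \geq \si+2$ by the admissibility of $\vec{\de}$). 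The remaining hypothesis of Proposition \ref{prop:NNB}---the $\ka$-c.c.\ of $\dot{\S}_\si$ in the $\ps \ast \dot{\C}_\rho(\vec{\de}^\frown \la \si+1 \ra)$-extension---is Corollary \ref{cor:IH1and2}(1) applied to the $\cal{F}$-strongly proper poset $\ps \ast \dot{\C}_\rho(\vec{\de}^\frown \la \si+1 \ra)$, where the needed input that $\dot{\S}_\si$ is a specializing iteration in this extension is exactly the sub-induction hypothesis at each $\eta < \si$ with the enlarged tuple $\vec{\de}^\frown \la \si+1 \ra$ (which remains admissible for $\eta$ since $\si+1 \geq \eta+2$).

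With $(\dagger)_\rho$ in hand, part (2) of Claim \ref{claim:InMainTHM} at $\rho$ follows by invoking Corollary \ref{cor:IH1and2}(1) one final time, with $\ps^* := \ps \ast \dot{\C}_\rho$, using $(\dagger)_\rho$ at $\vec{\de} = \es$ to supply the specializing-iteration hypothesis. The main obstacle is organizing the nested dependencies: each application of Proposition \ref{prop:NNB} requires $\ka$-c.c.\ of $\dot{\S}_\si$ in an extension by a doubling tail product, and invoking Corollary \ref{cor:IH1and2} to obtain this in turn requires Aronszajn-preservation for earlier trees in still longer doubling tail products. The statement $(\dagger)_\rho$ is formulated so that every such recursion strictly decreases the $\si$-index while lengthening the $\vec{\de}$-tuple, and thus terminates at the trivial case $\si = 0$ where $\dot{\S}_0$ is the trivial poset.
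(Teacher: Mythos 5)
Your proposal is correct and follows essentially the same route as the paper: induction on $\rho$, Proposition \ref{prop:clubsareCP} (via Proposition \ref{prop:2stepSP}) for part (1) and for the $\cal{F}$-complete properness of all doubling tail products, and then the alternating reduction between Corollary \ref{cor:IH1and2}(1) and Proposition \ref{prop:NNB} over longer and longer decreasing tuples for part (2). The only differences are organizational --- you run the inner recursion as an induction on the stage $\si$ with the tuples universally quantified (appending $\si+1$, so that $\dot{\Q}_1=\dot{\C}_{\si+1}$ matches the fact that $\dot{T}_\si$ is a $(\ps\ast\dot{\C}_{\si+1}\ast\dot{\S}_\si)$-name), whereas the paper runs it as an induction on the reverse-lexicographic order of the tuples $\vec{\de}$ themselves, appending $\ga<\de_{n-1}$ --- which is the same well-founded recursion in slightly different packaging.
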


We prove the claim by induction on $\rho < \ka^+$. 
Let $\rho < \ka^+$, and suppose that the claim holds for every $\sigma < \rho$. In particular, if $\si<\rho$, then since $\ps\ast\dot{\C}_\si$ forces that $\dot{\S}_\si$ is a countable support iteration specializing trees and since (2) holds at $\si$, we must have that $\ps\ast\dot{\C}_\si$ forces that $\dot{\S}_\si$ is a countable support iteration specializing \emph{Aronszajn} trees. Hence we see that assumptions (1)-(4) 
from the beginning of section \ref{section:clubscompletelyproper} hold. Applying Proposition \ref{prop:clubsareCP}, 
it follows that $\dot\C_\rho$ is {$\cal{F}$}-completely proper, and moreover, so is $\C_{\rho}(\vec{\delta})$ for every finite, decreasing sequence
$\vec{\delta}$ of ordinals below $\rho$. 
We use this to prove that (2) of the claim holds at $\rho$.

We aim to apply Corollary \ref{cor:IH1and2} to the {$\cal{F}$}-strongly proper poset $\ps^*:=\ps\ast\dot{\C}_\rho$, and to do so, we need to verify that for each $\si<\rho$, $\ps^*\ast\dot{\S}_\si\Vdash\dot{T}_\si$ is an Aronszjan tree. This is where the doubling tail products come into play. Indeed,
we consider a slightly more general statement, which would allow us to use Proposition \ref{prop:NNB}.
    For each decreasing sequence $\vec{\delta} = \la \delta_{0},\dots,\delta_{n-1}\ra$ in $\rho$, we let $\star_\rho(\vec{\delta})$  be the statement that $\po*\dot\C_\rho(\vec{\delta})$ 
    forces $\dot\S_{\delta_{n-1}}$ is $\ka$-c.c. (note that if $\vec{\delta} = \emptyset$ then the statement holds vacuously).
    We prove by induction on the reverse lexicographic order $<_{\rlex}$
    on $[\rho]^{<\omega}_\text{dec}$ 
    that $\star_\rho(\vec{\delta})$ holds.  
    The base case of the induction, where $\vec{\delta} = \emptyset$, trivially holds, as mentioned above. For the induction step, fix $\vec{\delta} = \la \delta_{{0}},\dots,\delta_{{n-1}}\ra \in [\rho]^{<\omega}_\text{dec}$ and suppose that $\star_\rho(\vec{\gamma})$ holds for every $\vec{\gamma} <_{\rlex} \vec{\delta}$. 
    To show that $\star_\rho(\vec{\delta})$ holds, we need to verify that $\ps * \dot\C_\rho(\vec{\delta})$ forces that $\dot\S_{\delta_{n-1}}$ is $\ka$-c.c.
    For this in turn, by Corollary \ref{cor:IH1and2}, it is sufficient to verify that for every $\gamma < \delta_{n-1}$, $\ps * \dot\C_\rho(\vec{\delta})\ast\dot{\S}_\ga$ forces that $\dot{T}_\gamma$ is an Aronszajn tree on $\kappa$. 
    If $\delta_{n-1} = 0$ there is nothing to prove. Otherwise, let $\gamma < \delta_{n-1}$, and 
    set $\vec{\gamma} :=  \vec{\delta}^\frown \la \gamma\ra$.
    By Proposition \ref{prop:NNB} and the definition of $\dot\C_\rho(\vec{\gamma})$, 
    it suffices to verify that $\ps * \dot\C_\rho( \vec{\gamma})$ forces that $\dot\S_\gamma$ is $\ka$-c.c, to conclude that 
    $\ps * \dot\C_\rho(\vec{\delta})\ast\dot{\S}_\ga$ forces that $\dot{T}_\gamma$ is Aronszajn.
    However, the last is just $\star_\rho(\vec{\gamma})$, which holds by our inductive assumption and the fact that 
     $\vec{\gamma} <_{\rlex} \vec{\delta}$.
This concludes the proof of Claim \ref{claim:InMainTHM}, which in turn finishes the proof of Theorem \ref{thm:main}.\\

{We conclude the paper with two questions:}

\begin{question} {Is an ineffable cardinal necessary for proving Theorem \ref{thm:main}?  Is a weakly compact cardinal sufficient?}
\end{question}

{As we've remarked throughout the paper, we only use  ineffability in the proof of Proposition \ref{prop:InductiveIandII} (and thus in the corollaries of this proposition). If one could prove this proposition assuming only a weakly compact, then that would suffice to show that a weakly compact is optimal.}\\

{We also mention the following long-standing question:}

\begin{question}{ Is a weakly compact cardinal needed for $\mathsf{SATP}(\om_2)+2^{\om_1}=\om_3$?}
\end{question}

{We recall that in the Laver-Shelah model of $\mathsf{SATP}(\om_2)$, $2^{\om_1}=\om_3$. Moreover, Rinot has shown (\cite{RinotHigherSouslin}) that if the $\mathsf{GCH}$ and $\mathsf{SATP}(\om_2)$ both hold, then $\om_2$ is weakly compact in $L$. }

\section{Acknowledgements}

The authors would like to thank Itay Neeman, for helpful comments and corrections, and also the referee, for a thorough reading of the manuscript and many valuable suggestions and comments.

\end{document}